\documentclass[a4paper,reqno]{amsart}
\pdfoutput=1
\usepackage[utf8]{inputenc}
\usepackage[T1]{fontenc}
\usepackage{amssymb,mathtools}
\usepackage{mathrsfs}
\usepackage{dsfont}
\usepackage{bbm}

\usepackage[a4paper]{geometry}
\geometry{
    tmargin= 3.5cm,
    bmargin= 3cm,
    rmargin= 2.8cm,
    lmargin= 2.8cm,
    }

\usepackage{lmodern}
\usepackage{color}
\usepackage{comment}
\usepackage[normalem]{ulem}
\usepackage{tikz}


\usepackage[all]{xy}
\usepackage{microtype}

\usepackage{enumitem}
\setlist[enumerate,1]{label=\textup{(\arabic*)}}

\usepackage[lite]{amsrefs}

\renewcommand*{\PrintDOI}[1]{\href{http://dx.doi.org/\detokenize{#1}}{doi: \detokenize{#1}}}

\BibSpec{book}{%
  +{}  {\PrintPrimary}                {transition}
  +{,} { \textit}                     {title}
  +{.} { }                            {part}
  +{:} { \textit}                     {subtitle}
  +{,} { \PrintEdition}               {edition}
  +{}  { \PrintEditorsB}              {editor}
  +{,} { \PrintTranslatorsC}          {translator}
  +{,} { \PrintContributions}         {contribution}
  +{,} { }                            {series}
  +{,} { \voltext}                    {volume}
  +{,} { }                            {publisher}
  +{,} { }                            {organization}
  +{,} { }                            {address}
  +{,} { \PrintDateB}                 {date}
  +{,} { }                            {status}
  +{}  { \parenthesize}               {language}
  +{}  { \PrintTranslation}           {translation}
  +{;} { \PrintReprint}               {reprint}
  +{.} { }                            {note}
  +{.} {}                             {transition}
  +{} { \PrintDOI}                   {doi}
  +{} { available at \url}            {eprint}
  +{}  {\SentenceSpace \PrintReviews} {review}
}

\usepackage
{hyperref}

\hyphenation{cor-res-pon-den-ce}

\numberwithin{equation}{section}

\theoremstyle{plain}
\newtheorem{thm}[equation]{Theorem}
\newtheorem{cor}[equation]{Corollary}
\newtheorem{lem}[equation]{Lemma}
\newtheorem{prop}[equation]{Proposition}

\theoremstyle{definition}
\newtheorem{defn}[equation]{Definition}
\newtheorem{note}[equation]{Notation}
\newtheorem{example}[equation]{Example}

\theoremstyle{remark}
\newtheorem{rem}[equation]{Remark}

\newcommand{\ZZ}{\mathbb{Z}}
\newcommand{\FF}{\mathbb{F}}

\newcommand{\QQ}{\mathbb{Q}}
\newcommand{\NN}{\mathbb{N}}

\newcommand{\CC}{\mathbb{C}}

\newcommand{\ma}{\mathcal{A}}
\def\A{\ma}
\newcommand{\mb}{\mathcal{B}}

\newcommand{\mh}{\mathcal{H}}
\def\H{\mh}


\newcommand*{\nb}{\nobreakdash}
\newcommand*{\Star}{\(^*\)\nobreakdash-}

\newcommand{\Cst}{\mathrm{C}^*}

\newcommand{\idealin}{\mathrel{\triangleleft}} 
\newcommand*{\Bound}{\mathcal{B}}
\newcommand*{\coloneqq}{\mathrel{\vcentcolon=}}
\newcommand{\Hilm}[1][E]{\mathcal{#1}}
\newcommand{\Toep}{\mathcal{T}}
\newcommand{\Toepr}{\mathcal{T}_\lambda}
\newcommand{\Toepu}{\mathcal{T}_u}

\newcommand*\xbar[1]{%
   \hbox{%
     \vbox{%
       \hrule height 0.5pt 
       \kern0.5ex
       \hbox{%
         \kern-0.1em
         \ensuremath{#1}%
         \kern-0.1em
       }%
     }%
   }%
} 

\DeclarePairedDelimiterX{\braket}[2]{\langle}{\rangle}{#1\,\delimsize\vert\,\mathopen{}#2}
\DeclarePairedDelimiterX{\BRAKET}[2]{\langle}{\rangle}{\!\delimsize\langle#1\,\delimsize\vert\,\mathopen{}#2\delimsize\rangle\!}

\DeclarePairedDelimiterX{\setgiven}[2]{\{}{\}}{#1\,{:}\,\mathopen{}#2}

\newcommand{\thmref}[1]{Theorem~\textup{\ref{#1}}}
\newcommand{\secref}[1]{Section~\textup{\ref{#1}}}
\newcommand{\proref}[1]{Proposition~\textup{\ref{#1}}}
\newcommand{\lemref}[1]{Lemma~\textup{\ref{#1}}}
\newcommand{\corref}[1]{Corollary~\textup{\ref{#1}}}

\newcommand{\defref}[1]{Definition~\textup{\ref{#1}}}

\def\W{\mathcal W}
\def\inv{^{-1}}

\def\iqs{Q}
\def\J{\frak J}
\def\I{\mathcal I}
\def\T{\mathcal T}

\def\OO{\mathcal O}

\def\mc{\mathcal C}

\def\dn{\sqrt[3]{19}}
\def\clsp{\operatorname{\overline {span}}}
\def\lsp{\operatorname{span}}

\def\ma{\frak A}

\def\jj{j}
\def\1{\mathbbm 1}

\def\mfc{\mathfrak c}
\def\mfp{\mathfrak p}

\def\mfa{\mathfrak a}

\synctex=1

\begin{document}
\title[Toeplitz algebras of semigroups]{Toeplitz algebras of semigroups}

\author{Marcelo Laca}

\address{Department of Mathematics and Statistics, University of Victoria, Victoria, BC V8W 2Y2, Canada}
\thanks{This research was partially supported by the Natural Sciences and Engineering Research Council of Canada, 
Discovery Grant RGPIN-2017-04052}
\email{laca@uvic.ca}

\author{Camila F. Sehnem}

\address{School of Mathematics and Statistics, Victoria University of Wellington, P.O. Box 600, Wellington 6140, New Zealand.}

\thanks{C.F. Sehnem was supported by the Marsden Fund of the Royal Society of New Zealand, grant No. \!\!18-VUW-056}

\email{camila.sehnem@vuw.ac.nz}

\subjclass[2010]{Primary 46L55, 46L05; Secondary 20M30, 11R04, 47B35.}
\date{17 January 2021, minor changes 12 May 2022}
\keywords{Toeplitz $\Cst$\nb-algebra; semigroup $\Cst$\nb-algebra; constructible ideals; boundary quotient; orders.}

\begin{abstract} 
To each submonoid $P$ of a group we associate a universal Toeplitz $\Cst$-algebra $\Toepu(P)$ defined via generators and relations; $\Toepu(P)$ is a quotient of  Li's semigroup $\Cst$\nb-algebra $\Cst_s(P)$ and they are isomorphic iff $P$ satisfies independence. We give a partial crossed product realization of $\Toepu(P)$ and  show that
several results known for $\Cst_s(P)$ when $P$ satisfies independence are also valid for $\Toepu(P)$ when independence fails.
At the level of the reduced semigroup $\Cst$\nb-algebra $\Toepr(P)$, we show that nontrivial ideals have nontrivial intersection with the reduced crossed product of the diagonal subalgebra by the action of the group of units of~$P$, generalizing a result of  Li for monoids with trivial unit group. We characterize when the action of the group of units is topologically free, in which case a representation  of $\Toepr(P)$ is faithful  iff  it is jointly proper. This yields a uniqueness theorem that generalizes and unifies several classical results. We provide a concrete presentation for the covariance algebra of the product system over $P$ with one-dimensional fibers in terms of a new notion of foundation sets of constructible ideals. We show that the covariance algebra is a universal analogue of the boundary quotient and give conditions on~$P$ for the boundary quotient to be purely infinite simple. We discuss applications to a numerical semigroup and to the $ax+b$-monoid of an integral domain. This  is particularly interesting in the case of 
nonmaximal orders in  number fields, for which we show independence always fails.
\end{abstract}
\maketitle

\section{Introduction}
Let $P$ be a submonoid of a group and consider the left regular representation $L\colon P \to \mb(\ell^2(P))$ 
determined by $L_p\delta_x = \delta_{px}$ on the usual orthonormal basis $\{\delta_x \mid x\in P\}$ of $\ell^2(P)$. 
The operators $L_p$ associated to $p\in P$ are isometries and generate the Toeplitz $\Cst$\nb-algebra $\Toepr(P)$.
The spatial nature of $\Toepr(P)$ provides many useful  tools for its study, such as the existence of a faithful conditional expectation onto a diagonal subalgebra. 
 But along with this comes the notoriously difficult problem of estimating norms of operators, in this case, of polynomials on the generating isometries and their adjoints. 
 As a result, it is often quite hard to decide whether a given representation of $P$ by isometries generates 
 a homomorphic image of $\Toepr(P)$.  
 One strategy that has been successfully used to get around this problem is to characterize $\Toepr(P)$ by way of generators and relations that replicate distinguished properties of $\Toepr(P)$. 
When such a universal characterization is possible and conditions are given for faithfulness of the resulting representations, one has what is known as a uniqueness theorem.  Examples include celebrated theorems of  Coburn for the natural numbers \cite{Cob}, of Douglas for discrete submonoids of the additive reals \cite{Dou}, and of Cuntz for the free semigroup $\FF_n^+$ \cite{Cun:On}, as well as several  generalizations, see e.g. \cites{Salas:commrange,Nica:Wiener--hopf_operators,LACA1996415,Crisp-Laca,NNN1}.

It has long been clear that the universal $\Cst$\nb-algebra for  isometric representations of a monoid  is often too large to be of much use. Indeed, as Murphy  observed in \cite{Mur:RMJM}, already in the case of two commuting isometries, that is, for representations of $\NN^2$, one obtains a nonnuclear  universal $\Cst$\nb-algebra. In many of the aforementioned situations, however,  it is possible to identify extra conditions that distinguish a specific class of isometric representations and give tractable $\Cst$\nb-algebras.

Substantial progress along this path took place in the early 90's when Nica introduced a $\Cst$\nb-algebra  $\Cst(G,P)$ that is universal for a class of `covariant' representations of a quasi-lattice ordered group $(G,P)$. These are pairs $(G,P)$ consisting of a submonoid $P$ of a group $G$ such that $P \cap P\inv = \{e\}$ and for which the intersection $xP \cap yP$ of any two cones with vertices in $G$ is either empty or equal to another cone; this condition is not exactly Nica's definition of quasi-lattice order, but is equivalent to it, see \cite[Definition~6 and Lemma~7]{CL-JAustralMS}. The covariance relation used by Nica 
stemmed from the observation that the multiplication of range projections of the generating isometries in the left regular 
representation replicates the intersection of cones with vertices in $P$.
Nica showed that  $\Cst(G,P)$ is actually isomorphic to  $\Toepr(P)$ in many interesting cases, including the $\Cst$\nb-algebra generated by~$k$  \Star commuting isometries, which is  universal for Nica covariant representations of $\NN^k$, and the Toeplitz--Cuntz algebra $\T \OO_n$ which is universal for the free monoid $\FF_n^+$. 
 
In general if $P$ is a submonoid of a group $G$, the set 
of cones having vertices in $P$  does not have to be closed under intersection, so Nica-covariance cannot be imposed {\em ipsissimis verbis}. Thus, when the interest in monoids that are not quasi-lattice ordered began to surge in the mid 2000's, sparked mainly by semigroups arising from algebraic number theory, it became clear that a new  idea was needed to go beyond the {\em ad hoc} analysis of examples. The major breakthrough was achieved by Li in \cite{Li:Semigroup_amenability,Li:Semigroup_nuclearity}, and was later summarized and extended in \cite[Chapter 5]{CELY}. Motivated by what happens for $ax+b$\nb-semigroups of algebraic integers, Li's insight was to realize that the idea of mirroring the behaviour of projections in the left regular representation does carry over to the general situation. The key to this was to replace the cones by a collection of subsets of $P$ that he called {\em constructible right ideals}, which form a semilattice under intersection. 

We pause briefly to carry out a concrete computation that illustrates how the constructible right ideals arise naturally, and what their role is in the definition of Li's semigroup $\Cst$\nb-algebra; this computation also motivates the notation we introduce in \secref{sec:neutralwords}. Assume throughout that $P$ is a submonoid of a group and suppose $ p,q,r,s\in P$ satisfy $p\inv q r\inv s = e$. 
Let us work out what the product $L_p^* L_q L_r^* L_s$ really is in the left regular representation by computing it on a basis vector~$\delta_x$  of  the standard orthonormal basis of $\ell^2(P)$. Clearly
$L_p^* L_q L_r^* L_s \delta_x= L_p^*  L_q L_r^*  \delta_{sx} $ vanishes unless $sx \in rP$, in which case
$L_p^*  L_q L_r^*  \delta_{sx} = L_p^*  \delta_{qr\inv sx}$. This, in turn, vanishes unless $qr\inv sx \in pP$, in which case
$L_p^* L_q L_r^* L_s \delta_x =\delta_{p\inv qr\inv sx} =\delta_x$, because $p\inv qr\inv s=e$.
Thus, 
\[L_p^* L_q L_r^* L_s \delta_x = \begin{cases} \delta_x &\text{ if } x\in P \cap s\inv r P \cap s\inv r q\inv pP,\\
0 & \text{otherwise;}
\end{cases}
\]
in other words, $L_p^* L_q L_r^* L_s$ is (the operator of multiplication by) the characteristic function of the set  $P \cap s\inv r P \cap s\inv r q\inv pP$, which is a typical constructible right ideal. To define the semigroup $\Cst$\nb-algebra of a submonoid of a group, \cite[Definition 3.2]{Li:Semigroup_amenability}, Li associates an isometry $v_p$ to each $p\in P$ and a projection $e_S$ to each constructible right ideal $S$,
and then imposes  relations that say that the isometries $v_p$ multiply as the elements of $P$ and that a product
such as $v_p^* v_q v_r^* v_s$ is the projection $e_S$ where $S$ is the constructible right ideal $P \cap s\inv r P \cap s\inv r q\inv pP$ mentioned above, see \defref{def:cstar_s(P)} for the precise statement. 
The $\Cst$\nb-algebra with this presentation has many nice features, and is isomorphic to $\Toepr(P)$ in several interesting examples, notably those arising from $ax+b$\nb-semigroups of algebraic integers. However,  in addition to the unavoidable issue of amenability, the construction is fully satisfactory  only when  $P$ satisfies independence, equivalently, when the characteristic functions of ideals are linearly independent. 

Here we introduce a universal Toeplitz algebra $\Toepu(P)$, for  $P$  a submonoid of a group, that works just as well even if $P$ does not satisfy independence. The presentation of $\Toepu(P)$ includes extra relations that only apply when independence fails, and we show that $\Toepu(P)$ is the quotient of Li's semigroup $\Cst$\nb-algebra under a canonical homomorphism whose kernel reflects the failure of independence. We also relate  $\Toepu(P)$ to various other $\Cst$\nb-algebras associated to semigroups, giving, in particular, a partial crossed product realization. Our main results are a characterization of  faithful representations of $\Toepr(P)$ in terms of the action of units on the diagonal and  a generalized uniqueness theorem for the $\Cst$\nb-algebra generated by a collection of elements satisfying the presentation 
of $\Toepu(P)$. Furthermore, we give a presentation of the covariance algebra of the one-dimensional product system over~$P$
that allows us to realize it as a universal boundary quotient. We also give conditions on~$P$ that are equivalent to topological freeness of the partial action on the boundary, and so are sufficient for the boundary quotient to be purely infinite simple when~$P$ is nontrivial.
In the setting of submonoids of groups, our results represent  significant improvements and in some cases conceptual simplifications of earlier work on $\Cst$\nb-algebras of monoids that have trivial unit group or satisfy independence \cite{CELY}, and of right LCM monoids  \cite{NNN1}.
Throughout our work, we make a point of formulating the presentations and characterizations in terms of the original data, with an eye towards direct applications.

We  describe next the contents of this paper, highlighting the main results along the way.
 In \secref{sec:neutralwords}  we review the set of constructible right ideals, which we view as the range of a map 
 $\alpha \mapsto K(\alpha)$ from words to ideals, through which, e.g., the ideal $P \cap s\inv r P \cap s\inv r q\inv pP$ corresponds to the word $( p,q,r,s)$. We also state explicitly and in detail the properties of this map that are needed in the remaining sections.
In \secref{sec:toeplitzalgebras} we begin by showing that if $v\colon P\to B$ is a unital map from~$P$ to a $\Cst$\nb-algebra~$B$ and the products such as  $v_p^* v_q v_r^* v_s$ only depend  on the constructible ideal associated to the word $(p,q,r,s)$, rather than on the word itself, then $p \mapsto v_p$ is an isometric representation satisfying  important additional properties,  \proref{pro:propertiesofsharpgenerators}. We give the presentation of the universal Toeplitz algebra $\Toepu(P)$ in \defref{def:toeplitz-semigroup},  and we then proceed to examine it in  relation to the reduced Toeplitz algebra $\Toepr(P)$. We establish that $\Toepr(P)$ is a canonical quotient of $\Toepu(P)$, \proref{prop:diagonal-sub}, and in \proref{pro:faithfulonD_u} we show that  a representation of $\Toepu(P)$ is faithful on the diagonal subalgebra  $D_u$ if and only if it satisfies a joint properness condition, \defref{def:jointlyproper}. We conclude that $D_u$ is always  canonically isomorphic to $D_r$, the diagonal in $\Toepr(P)$, \corref{cor:faithfulondiagonal}. We complete \secref{sec:toeplitzalgebras} by showing that a proper subset of the relations defining $\Toepu(P)$ gives the presentation of Li's semigroup $\Cst$\nb-algebra, \proref{pro:equivalentcovs}. This leads to \corref{cor:canonicalSharptoLimap} where we show that $\Toepu(P)$ is a canonical quotient of $\Cst_s(P)$; the two are isomorphic if and only if~$P$ satisfies independence.

In \secref{sec:partialactions} we give a brief introduction to partial actions and their crossed products and review the reduced partial crossed product realization of $\Toepr(P)$ given by Li in \cite{CELY}. We do this explicitly in terms of constructible right ideals, bypassing the use of inverse semigroups. The main result here is \thmref{thm:partial-picture}, where we show that $\Toepu(P)$ is isomorphic to the full partial crossed product of the partial action of $G$ on the diagonal algebra. This allows us to verify that the improvements predicted by Li in \cite[Remark  5.6.46]{CELY} for the full partial crossed product are realized by our universal Toeplitz $\Cst$\nb-algebra, see \thmref{thm:modifiedresult} and \corref{cor:ifamenallequiv}.

In \secref{sec:faithfulness} we study conditions that ensure that a representation of the reduced Toeplitz algebra $\Toepr(P)$ is faithful. We notice first that $\Toepr(P)$ has a copy of the reduced crossed product of the diagonal by the restriction of the partial action of $G$ to the group of units $P^*$.  
The first main result, \thmref{thm:faithfulreps},  shows that a representation of $\Toepr(P)$
is faithful if and only if  its restriction to the crossed product $D_r\rtimes_{\gamma,r} P^*$ is faithful.  Setting $P^* =\{e\}$ recovers \cite[Corollary~5.7.3]{CELY} and assuming $P$ is a right LCM monoid  gives a stronger version of the faithfulness result  \cite[Theorem~7.4]{NNN1}, albeit, under the extra assumption that $P$ embeds in a group.
The second main result in this section is \thmref{thm:idealsifftopfree}, where we show that the partial action of $G$ on $D_r$ is topologically free if and only if the restricted action of $P^*$ is topologically free.  Using a recent result of Abadie--Abadie \cite[Theorem 4.5]{AbaAba} we then conclude that the action of $P^*$ is topologically free if and only if every ideal of $\Toepu(P)$ that has trivial intersection with $D_u$ is contained in the kernel of the canonical map $\Toepu(P) \to \Toepr(P)$. 
 We also give there a criterion in terms of constructible ideals to decide whether the action of $P^*$ on the diagonal is topologically free.
As a consequence,  when the action of $P^*$ is topologically free,  faithfulness of representations of $\Toepr(P)$ is decided by their restrictions to the diagonal, \corref{cor:topfreeimpliesideals}. When we combine these results with the  faithfulness criteria for representations of the diagonal from \secref{sec:toeplitzalgebras}, we obtain \thmref{thm:uniqueiftopfreejointproper}, the third main result of this section, which shows that if $\Toepu(P) \to \Toepr(P)$ is an isomorphism, then $\Toepr(P)$ is the unique $\Cst$\nb-algebra generated by a jointly proper semigroup of isometries satisfying the presentation of  $\Toepu(P)$.

We begin \secref{sec:full-boundary} by showing that there is a canonical homomorphism of 
$\Toepu(P)$ onto the covariance algebra  $\CC\times_{\CC^P}P$ of
the canonical product system over $P$ with one-dimensional fibers from \cite{SEHNEM2019558}.
In \lemref{lem:charac-projections} we identify projections in the diagonal that are in the kernel of this map; these lead  naturally to a  notion of foundation sets, generalizing those 
introduced by  Sims and Yeend in \cite{Sims-Yeend:Cstar_product_systems} for quasi-lattice orders.
In  \corref{cor:generating-projections} we arrive
at an explicit presentation of the covariance algebra in which the extra relations 
are a natural augmentation of the presentation of $\Toepu(P)$ by the new foundation sets. 
This is not a coincidence: the original motivation for our presentation of $\Toepu(P)$ was 
the view that the covariance algebra from \cite{SEHNEM2019558} had to be the universal  boundary quotient of an appropriately defined universal Toeplitz algebra. 
In \proref{prop:non-foundation-sets} we show that the presentation of $\CC\times_{\CC^P}P$ in terms of generalized foundation sets is maximal in the sense that if one imposes  further relations associated to other sets, then the resulting $\Cst$\nb-algebra is trivial.
The main result of \secref{sec:full-boundary} is \thmref{thm: several-characteriz}, where we establish several equivalent descriptions of $\CC\times_{\CC^P}P$. In particular, we show that $\CC\times_{\CC^P}P$ is isomorphic to $\mathrm{C}(\partial\Omega_P)\rtimes G$, where $\partial\Omega_P$ is the boundary of $\Omega_P$ as defined in \cite[Definition~5.7.8]{CELY}. At this point we refer to $\CC\times_{\CC^P}P$ as the full boundary quotient of $\Toepu(P)$. As immediate consequences of our analysis of boundary quotients,  we derive in \corref{cor:qu-isomor} a criterion for when the full boundary quotient is $\Toepu(P)$ itself and then we characterize topological freeness of the partial action of $G$ on the boundary of the diagonal, \thmref{thm:equivalenttopfree}. Our second main result of the section is the characterization of purely infinite simple boundary quotients, \corref{cor:charac-inf-simple}.

In the final four sections we discuss  several classes of examples that illustrate the range of application of our results.
In \secref{sec:numerical} we discuss a specific numerical semigroup studied by Raeburn and Vittadello in \cite{Rae-Vit}, giving a characterization of the left regular $\Cst$-algebra in terms of generators and relations and faithfulness criteria for representations. In \secref{sec:integral} we adapt first our criteria for topological freeness to the $ax+b$-monoid of an integral domain, which we then apply in the following section to orders in number fields. Then we give a direct proof that the associated boundary quotient is purely infinite simple using \corref{cor:charac-inf-simple}, a fact that can also be derived from earlier work of Cuntz and Li \cite{C-LiClay2010}, see also \cite{Li:RingC*}. In \secref{sec:orders} we discuss the $ax+b$-monoids of orders in algebraic number fields. We first obtain a uniqueness theorem for their left regular $\Cst$-algebras generalizing earlier results for rings of algebraic integers \cite{CDLMathAnn2013}. We then prove that the independence condition fails for the multiplicative and $ax+b$-monoids of all nonmaximal orders, establishing them as a rich source of new examples. We also show how our presentation applies in the concrete case of $\ZZ[\sqrt{-3}]$. Finally, in \secref{sec:rightLCM} we show that our results lead to the simplification and strengthening, for  right LCM submonoids of groups, of recent results of Brownlowe, Larsen and Stammeier on uniqueness, simplicity  and pure infiniteness of semigroup $\Cst$\nb-algebras \cite{NNN1}.

\subsection*{Acknowledgments:} This project was started at the workshop {\em Cuntz--Pimsner Cross-Pollination} at the Lorentz Center in Leiden, and we would like to thank the organizers and the institute for providing the opportunity and a wonderful environment for creative interaction.  We are also very grateful for the hospitality of the departments of mathematics at Victoria and  Florian\'opolis during visits in which part of this research was carried out.

\section{Neutral words, quotient sets, and constructible ideals}\label{sec:neutralwords}
We recall here the basic facts leading to the constructible right ideals introduced  in \cite{Li:Semigroup_amenability}. 
Our approach and the notation we use are inspired by those of \cite{SEHNEM2019558}.
Let $P\subset G$ be a submonoid of a group~$G$, and for each $k\in \NN$ consider the set of words of length $2k$ in $P$,
\[
\W(P)^k \coloneqq \{ (p_1, p_2, \cdots, p_{2k-1}, p_{2k}) \mid p_j \in P,  \text{ for } j = 1, 2, \cdots, 2k\}.
\]
When the context makes it clear what  $P$ is,  we write simply $\W$ instead of $\W(P)$. 
By convention we write $\W^0 = \{\emptyset\}$. Using concatenation of words as composition law, it is easy to see that $\W^k \W^l = \W^{k+l}$.
We define  a {\em generalized (iterated) left quotient map}, assigning an element of  $G$ to each word $\alpha \in \W^k$,  
 by 
\[
\alpha = (p_1, p_2, \cdots, p_{2k-1}, p_{2k})\  \longmapsto \ \dot\alpha\coloneqq p_1\inv p_2 \cdots p_{2k-1}\inv p_{2k}.
\]
 
 Thus, $\dot{\W}^k \subset G$ is the set of products of $k$ left quotients of elements of $P$.  Again by convention, we write $\dot\W^0 = \{e\}$, 
 and note that $(\alpha\beta)\dot{} = \dot\alpha\dot\beta$.
For each $\alpha \in \W^k$ we define the reverse word $\tilde\alpha \coloneqq (p_{2k}, p_{2k-1}, \cdots  p_2, p_1) \in \W^k$. It is easy to see that  $\dot{\tilde{\alpha}} = (\dot\alpha)\inv$. We shall say that a word $\alpha$ is {\em neutral} if $\dot \alpha = e$.

\begin{lem} \label{lem:wordsandqotients} 
Suppose $P$ is a submonoid of a group $G$, and assume $P$ generates $G$. Then
$\{e\} \subseteq \dot{\W^1} \subseteq \cdots \subseteq \dot{\W}^k \subseteq \dot{\W}^{k+1} \cdots$
and $\bigcup_k \dot{\W}^k  = G$.

\begin{proof} Since $e\in P$  by assumption, for each $\alpha \in \W^k$ the concatenation $\alpha(e,e)$ is  in $\W^{k+1}$ and satisfies $(\alpha(e,e))\dot{} = \dot\alpha$, proving that $\dot\W^k \subseteq \dot\W^{k+1}$. 
Suppose now $\alpha \in \W^k$ and $\beta \in \W^l$ so that  $\dot\alpha \in \dot\W^k$ and $\dot\beta \in \dot\W^l$. Then 
$\dot\alpha \dot\beta = (\alpha \beta)\dot{} \in \dot\W^{k+l}$, and also $\dot\alpha\inv = \dot{\tilde{\alpha}} \in \dot \W^k$. This shows that the subset $\bigcup_k \dot{\W}^k $ of $G$ contains the products and  inverses of its elements, hence is a subgroup of $G$.
Since  $(e,p)\dot{}= p$ for every $p\in P$ we have
$P \subseteq \dot{\W}^1 \subseteq \bigcup_k \dot{\W}^k $, and clearly any subgroup of $G$ that contains $P$ must contain $\bigcup_k \dot{\W}^k $.
\end{proof}
\end{lem}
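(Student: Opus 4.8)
The plan is to prove the two assertions separately, in both cases leveraging the elementary algebraic identities for the quotient map $\alpha\mapsto\dot\alpha$ recorded just before the statement.

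For the increasing chain, I would observe that since $e\in P$, any word $\alpha\in\W^k$ extends to the word $\alpha(e,e)\in\W^{k+1}$, and since $e\inv e=e$ is neutral we get $(\alpha(e,e))\dot{}=\dot\alpha\cdot e=\dot\alpha$. Hence every element of $\dot\W^k$ lies in $\dot\W^{k+1}$, which gives the inclusions $\{e\}=\dot\W^0\subseteq\dot\W^1\subseteq\cdots$, the base case $\dot\W^0=\{e\}$ being the stated convention. Note this part needs no hypothesis on how $P$ sits in $G$.

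For the identification of the union with $G$, I would show that $H\coloneqq\bigcup_k\dot\W^k$ is a subgroup of $G$ containing $P$, and then invoke the hypothesis that $P$ generates $G$. That $H$ is a subgroup follows from the identities already noted: $e\in\dot\W^0\subseteq H$; if $\dot\alpha\in\dot\W^k$ and $\dot\beta\in\dot\W^l$ then $\dot\alpha\dot\beta=(\alpha\beta)\dot{}\in\dot\W^{k+l}\subseteq H$, using $\W^k\W^l=\W^{k+l}$ under concatenation; and $\dot\alpha\inv=\dot{\tilde\alpha}\in\dot\W^k\subseteq H$ since $\tilde\alpha\in\W^k$. That $P\subseteq H$ is immediate from $(e,p)\dot{}=p$, so $p\in\dot\W^1$ for every $p\in P$. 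Finally any subgroup of $G$ containing $P$ contains the subgroup generated by $P$, which is $G$ by assumption, and $H\subseteq G$ trivially, so $H=G$.

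The argument is entirely formal, so I do not anticipate any genuine obstacle; the only things to be careful about are checking each subgroup axiom and using the conventions $\W^0=\{\emptyset\}$, $\dot\W^0=\{e\}$ consistently. The mild point worth flagging is that the hypothesis ``$P$ generates $G$'' is used only for the last equality $\bigcup_k\dot\W^k=G$, whereas the nested chain of inclusions is valid for an arbitrary submonoid of an arbitrary group.
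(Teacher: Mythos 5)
Your proposal is correct and follows essentially the same route as the paper: the padding trick $\alpha\mapsto\alpha(e,e)$ for the inclusions, and the observation that $\bigcup_k\dot\W^k$ is a subgroup of $G$ containing $P$ (via $\dot\alpha\dot\beta=(\alpha\beta)\dot{}$ and $\dot\alpha\inv=\dot{\tilde\alpha}$) combined with the generating hypothesis. Nothing to add.
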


\begin{lem}  If $\dot \W^k = \dot{\W}^{k+m}$ for some $m\geq1$, then $\dot \W^k = \dot\W^{k+m}$ for every $m\geq1$, and $\dot \W^k$ is a group. 

\begin{proof}By \lemref{lem:wordsandqotients} we see immediately that $\dot \W^k = \dot{\W}^{k+j}$ for every $j = 1, 2, \ldots, m$. An easy induction argument shows that
the sequence remains constant after $k+m$ too, because
$\dot{\W}^{k+m+1} = \dot \W^{k+1}  \dot{\W}^{m} = \dot \W^{k}  \dot{\W}^{m} = \dot \W^{k+m}  =\dot{\W}^{k}$.
\end{proof}
\end{lem}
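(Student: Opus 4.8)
The plan is to run everything off two structural facts already recorded above: the monotone chain $\dot\W^0\subseteq\dot\W^1\subseteq\dot\W^2\subseteq\cdots$ from \lemref{lem:wordsandqotients} (whose proof of monotonicity uses only $e\in P$, not that $P$ generates $G$), and the pointwise identity $\dot\W^a\,\dot\W^b=\dot\W^{a+b}$, which follows from $\W^a\W^b=\W^{a+b}$ together with $(\alpha\beta)\dot{}=\dot\alpha\dot\beta$. First I would dispose of the indices between $k$ and $k+m$: if $\dot\W^k=\dot\W^{k+m}$, then the inclusions $\dot\W^k\subseteq\dot\W^{k+1}\subseteq\cdots\subseteq\dot\W^{k+m}$ have equal endpoints, so $\dot\W^k=\dot\W^{k+j}$ for every $0\le j\le m$; this already yields the stated equality for $j=1,\dots,m$.

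Next I would extend constancy beyond the index $k+m$ by induction on $j$. Suppose $\dot\W^{k+i}=\dot\W^k$ for all $0\le i\le j$, where $j\ge m$ (the base case $j=m$ being the previous paragraph). Writing $j+1=(j+1-m)+m$ with $0\le j+1-m\le j$, the multiplicativity identity gives
\[
\dot\W^{k+j+1}=\dot\W^{k+j+1-m}\,\dot\W^{m}=\dot\W^{k}\,\dot\W^{m}=\dot\W^{k+m}=\dot\W^{k},
\]
where the second equality invokes the inductive hypothesis at index $k+j+1-m$. Hence $\dot\W^{k+j}=\dot\W^k$ for every $j\ge 0$, which is the first assertion.

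For the second assertion I would verify the subgroup axioms for $\dot\W^k\subseteq G$. It contains $e$, e.g.\ from $\{e\}\subseteq\dot\W^k$, and it is closed under inversion with no hypothesis at all, since $\dot\alpha\inv=\dot{\tilde\alpha}$ and $\tilde\alpha\in\W^k$ whenever $\alpha\in\W^k$. Closure under multiplication is the only place the stabilization just proved is used: $\dot\W^k\,\dot\W^k=\dot\W^{2k}=\dot\W^k$, the last equality because $2k\ge k$ and constancy holds at all indices $\ge k$. I do not expect a genuine obstacle here; the only points deserving care are the bookkeeping in the induction (keeping $j+1-m$ inside the already-established range) and the observation that, unlike \lemref{lem:wordsandqotients}, this statement does not require $P$ to generate $G$.
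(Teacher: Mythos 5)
Your proof is correct and takes essentially the same route as the paper's: equality between indices $k$ and $k+m$ from the monotone chain of \lemref{lem:wordsandqotients}, then constancy beyond $k+m$ by induction via the multiplicativity $\dot\W^{a}\dot\W^{b}=\dot\W^{a+b}$, which is precisely the paper's computation $\dot\W^{k+m+1}=\dot\W^{k+1}\dot\W^{m}=\dot\W^{k}\dot\W^{m}=\dot\W^{k+m}=\dot\W^{k}$. Your explicit verification of the group axioms (identity, inverses via reversal, closure via $\dot\W^{k}\dot\W^{k}=\dot\W^{2k}=\dot\W^{k}$) merely fills in detail the paper leaves implicit, and your remark that generation of $G$ by $P$ is not needed here is a correct, if minor, observation.
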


\begin{rem}
Recall the celebrated  result of Ore stating that a cancellative monoid $P$ embeds in a group $G$ in such a way that
$ G = P\inv P$  if and only if  $PP\inv \subseteq P\inv P$, namely if and only if every right quotient can be written as a left quotient. Such semigroups $P$ are called right reversible. 
In this case our sequence $(\dot\W^k)_{k\in \NN}$ stabilizes at the first step because
$(P\inv P) (P\inv  P) = P\inv (P P\inv) P \subseteq P\inv (P\inv P) P =P\inv P$. 
Our choice to start the quotients with $P\inv$ introduces an asymmetry. Indeed, 
when $P$ is left reversible, namely, when $P\inv P \subset P P\inv$ it is clear that $G = PP\inv$. But our sequence $\W^k$ stabilizes at the second step because $(P\inv P)( P\inv P)( P\inv P) = P\inv (P P\inv) P P\inv P = P\inv (P\inv P) P P\inv P  = P\inv  P P\inv P $, and thus we would write $G = P\inv  P P\inv P$.
\end{rem}

\begin{defn}  For each word  $\alpha = (p_1, p_2, \ldots, p_{2k}) \in \W^k$ we define the {\em iterated quotient set} 
of $\alpha$  to be the set
\[
\iqs(\alpha) \coloneqq \{ e, \  p_{2k}\inv  p_{2k-1} , \,   p_{2k}\inv  p_{2k-1} p_{2k-2}\inv  p_{2k-3}, \, \ldots, \, p_{2k}\inv  p_{2k-1} p_{2k-2}\inv  p_{2k-3} \cdots p_2\inv p_1  \},
\]
where the last element listed is $\dot{\tilde\alpha} = (\dot\alpha)\inv$. 
We will often drop the  `iterated' and simply say `quotient set'. The apparent reversal of $\alpha $ in defining the iterated quotients
may seem unmotivated at first but is better adapted to the partial actions that will appear later in \secref{sec:partialactions}.  
If we need to refer to the analogous iterated quotient set taken from left to right, we will just use $\iqs(\tilde\alpha) \coloneqq\{e, \,p_1\inv p_2,\, \ldots,\,  p_1\inv p_2 \cdots p_{2k-1}\inv p_{2k}\}$. Here the last element is $\dot\alpha$. 
\end{defn}

\begin{lem}\label{lem:propertiesofquotientsets}  
Suppose $\alpha \in \W^k$ and $\beta \in \W^l$, and, as before, denote by $\tilde\alpha$ the reverse of $\alpha$ and by $\alpha\beta \in \W^{k+l}$  the concatenation of $\alpha$ and $\beta$. Then
\begin{enumerate}
\item  $\iqs(\tilde\alpha) = \dot{\alpha} \iqs(\alpha)$;

\smallskip\item $\iqs(\beta\alpha) =  \iqs (\alpha) \cup \dot{\tilde\alpha} \iqs (\beta) $;

\smallskip\item $\iqs(\tilde\alpha \alpha) = \iqs (\alpha)$;
\end{enumerate}
in particular, if $\dot\alpha =e $, then
\begin{enumerate}\setcounter{enumi}{3}
\item  $\iqs(\tilde\alpha) =  \iqs(\alpha)$; and
\smallskip\item $\iqs(\beta\alpha) = \iqs (\alpha) \cup  \iqs (\beta)  $.

\end{enumerate}
\end{lem}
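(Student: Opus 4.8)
The plan is to prove all five identities directly from the definitions, exploiting the recursive structure of the iterated quotient set. First I would unwind the definition: for $\alpha=(p_1,\dots,p_{2k})$, the set $\iqs(\alpha)$ consists of the partial products $e$, $p_{2k}\inv p_{2k-1}$, $p_{2k}\inv p_{2k-1}p_{2k-2}\inv p_{2k-3}$, and so on, reading the word from right to left in consecutive pairs; the last such product is $\dot{\tilde\alpha}=(\dot\alpha)\inv$. Correspondingly $\iqs(\tilde\alpha)$ reads the same word from left to right, with partial products $e$, $p_1\inv p_2$, $\dots$, ending in $\dot\alpha$. The cleanest way to organize the argument is to observe that the $j$-th nontrivial element of $\iqs(\tilde\alpha)$ is obtained from the $(k-j)$-th nontrivial element of $\iqs(\alpha)$ by left-multiplying the latter's inverse—more precisely, each element of $\iqs(\alpha)$ is a ``suffix product'' and each element of $\iqs(\tilde\alpha)$ is a ``prefix product,'' and prefix product $=\dot\alpha\cdot(\text{corresponding suffix product})$ once one checks the telescoping.

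For (1), I would note that the generic element of $\iqs(\alpha)$ has the form $p_{2k}\inv p_{2k-1}\cdots p_{2j}\inv p_{2j-1}$ for $1\le j\le k$ (together with $e$). Left-multiplying by $\dot\alpha=p_1\inv p_2\cdots p_{2k-1}\inv p_{2k}$ telescopes: $\dot\alpha\cdot(p_{2k}\inv p_{2k-1}\cdots p_{2j}\inv p_{2j-1})=p_1\inv p_2\cdots p_{2j-2}\inv p_{2j-1}$, which is precisely the generic element of $\iqs(\tilde\alpha)$; the case of $e$ gives $\dot\alpha\in\iqs(\tilde\alpha)$, its last element. So $\dot\alpha\,\iqs(\alpha)=\iqs(\tilde\alpha)$. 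For (2), the key is that $\beta\alpha$ is the word $(q_1,\dots,q_{2l},p_1,\dots,p_{2k})$, so its suffix products of length up to $k$ pairs are exactly the elements of $\iqs(\alpha)$, while the longer suffix products reach back into $\beta$: a suffix product spanning all of $\alpha$ plus $i$ pairs of $\beta$ equals $\dot{\tilde\alpha}$ times the corresponding suffix product of $\beta$ of length $i$ pairs. Running $i$ from $0$ to $l$ yields $\dot{\tilde\alpha}\,\iqs(\beta)$, and taking the union with the short suffixes gives $\iqs(\beta\alpha)=\iqs(\alpha)\cup\dot{\tilde\alpha}\,\iqs(\beta)$. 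I would verify this by splitting any partial product of $\beta\alpha$ at the junction between the two words.

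For (3), I apply (2) with $\beta$ replaced by $\tilde\alpha$: $\iqs(\tilde\alpha\,\alpha)=\iqs(\alpha)\cup\dot{\tilde\alpha}\,\iqs(\tilde\alpha)$. Now invoke (1) in the form $\iqs(\tilde\alpha)=\dot\alpha\,\iqs(\alpha)$, so $\dot{\tilde\alpha}\,\iqs(\tilde\alpha)=(\dot\alpha)\inv\dot\alpha\,\iqs(\alpha)=\iqs(\alpha)$, and the union collapses to $\iqs(\alpha)$. Items (4) and (5) are then immediate specializations: if $\dot\alpha=e$ then $\dot{\tilde\alpha}=e$ too, so (1) gives $\iqs(\tilde\alpha)=\iqs(\alpha)$, which is (4), and (2) gives $\iqs(\beta\alpha)=\iqs(\alpha)\cup\iqs(\beta)$, which is (5).

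I do not anticipate a serious conceptual obstacle here; the statement is essentially bookkeeping about telescoping products in a group. The one place where care is needed—and where I would slow down in the write-up—is making the indexing in (2) precise: one must be explicit that the suffix products of $\beta\alpha$ of ``length $\le k$ pairs'' recover $\iqs(\alpha)$ exactly (including the endpoint $e$ and $\dot{\tilde\alpha}$), and that continuing past the $\alpha$-block introduces the prefactor $\dot{\tilde\alpha}$ uniformly. A clean induction on $l=\lvert\beta\rvert/2$, peeling off one pair of $\beta$ at a time from the left, handles this without fuss and also makes the proof of (5) transparent. The reversal built into the definition of $\iqs$ is exactly what makes the prefactor come out as $\dot{\tilde\alpha}$ rather than $\dot\alpha$, so I would flag that sign-of-asymmetry explicitly to match the remark preceding the lemma.
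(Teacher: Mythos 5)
Your proof is correct and follows essentially the same route as the paper: telescoping $\dot\alpha$ against the suffix products for (1), splitting the partial products of $\beta\alpha$ at the junction between the two blocks for (2), obtaining (3) by applying (2) with $\beta=\tilde\alpha$ and then (1), and reading off (4) and (5) as immediate specializations. (There is only a harmless indexing slip in your displayed telescoped product in (1), which should end in $p_{2j-2}$ rather than $p_{2j-2}^{-1}p_{2j-1}$; this does not affect the argument.)
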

\begin{proof}
Let $\alpha = (p_1, p_2, \ldots, p_{2k})$. 
Multiplying every element of 
\[
\iqs(\alpha)= \{ e, \  p_{2k}\inv  p_{2k-1} , \,   p_{2k}\inv  p_{2k-1} p_{2k-2}\inv  p_{2k-3}, \, \ldots, \, p_{2k}\inv  p_{2k-1} p_{2k-2}\inv  p_{2k-3} \cdots p_4\inv p_3, \, \dot{\tilde\alpha}  \} 
\]
 on the left by  $\dot{\alpha} =p_1\inv p_2 p_3\inv p_4 \cdots p_{2k-1}\inv p_{2k} $ and simplifying each product, gives 
\[ 
\dot\alpha \iqs(\alpha) = \{ \dot \alpha , p_1\inv p_2 \cdots p_{2k-2}, \ldots ,\  p_1\inv p_2 , \, e \}
\]
which is precisely the set $\iqs(\tilde\alpha)$ listed in reverse. This proves (1). 

 Now let $\beta = (q_1, q_2, \ldots, q_{2l})$. It is easy to see that the first $k$ iterated quotients in 
$\iqs( \beta\alpha )$ are precisely those of $\alpha$, with the last one being $\dot{\tilde\alpha}$, and the following $l$ iterated quotients are those of $\beta$ multiplied by $\dot{\tilde\alpha}$ on the left. This gives (2).

In order to prove (3), set $\beta = \tilde\alpha$, then use first (2) and then (1) to compute
\[
\iqs(\tilde\alpha \alpha) = \iqs (\alpha)  \cup \dot{\tilde\alpha} \iqs (\tilde\alpha) = \iqs (\alpha) \cup  \dot{\tilde\alpha} \dot \alpha \iqs (\alpha)
\] 
which proves (3) because $ \dot{\tilde\alpha} \dot\alpha= e$. Assertions (4) and (5) follow immediately from (1) and (2).
 \end{proof}

 Notice that neutral words suffice to generate all quotient sets. Indeed, by \lemref{lem:propertiesofquotientsets} (3), we may substitute $\alpha$ with the neutral word $\tilde\alpha \alpha$ without changing the iterated quotient set. 

There is a left action of $P$ on words:  if $p\in P$ and  $\alpha \in \W^k$, then $p\alpha \coloneqq (pp_1,pp_2, \ldots pp_{2k})$. This satisfies 
$(p\alpha)\dot{} = \dot\alpha$, \ $(p\alpha)\tilde{} = p \tilde\alpha$, and \   $\iqs(p\alpha) = \iqs(\alpha)$.
Following \cite{SEHNEM2019558}, given a finite subset  $F\subseteq G$, we set 
\[
K_F\coloneqq\underset{g\in F}{\bigcap}gP.
\]
With the usual notation for multiplication of group elements and sets, 
we have $gK_F = K_{gF}$.  We will be interested in the sets $K_F$ arising from taking $F $ to be the iterated quotient set of a word;
this produces subsets  of $P$ because  $e\in \iqs(\alpha)$ for every $\alpha$. In order to lighten the notation
 we will write $K(\alpha)$ instead of $K_{\iqs(\alpha)}$ when possible, so that 
\[
 K(\alpha) \coloneqq  P\cap (p_{2k}\inv p_{2k-1}) P \cap (p_{2k}\inv p_{2k-1}p_{2k-2}\inv p_{2k-3}) P \cap \cdots \cap (\dot{\tilde\alpha}) P.
\]
For further reference we list the following properties of $K(\alpha)$ in relation to concatenation and reversal of words.
\begin{prop}[cf. Section 2.1 of \cite{Li:Semigroup_amenability}]\label{pro:propertiesKalpha}
If $\alpha $ and $\beta$ are in $\W$, then
\begin{enumerate}
\item $ K(\tilde\alpha) = \dot\alpha  K(\alpha)$;
\item $ K( \beta \alpha) =  K(\alpha) \cap \dot {\tilde\alpha}  K(\beta) $;
\item $ K( \tilde\alpha \alpha) =  K( \alpha)$;
\end{enumerate}
in particular, if $\dot\alpha =e$, then
\begin{enumerate}\setcounter{enumi}{3}
\item $  K(\tilde\alpha)=  K(\alpha)$; and
\item $ K( \beta\alpha) =    K(\alpha) \cap K(\beta)  $;
\end{enumerate}
and if, instead,  $\dot{\beta} =e$ and $p\in P$, then
\begin{enumerate}\setcounter{enumi}{5}
\item  $ K((e,p)\beta (p,e))= K(\beta (p,e)) = pK(\beta)$
\item $ K((p,e)\beta (e,p)) = K(\beta (e,p)) = P \cap p\inv K(\beta) $;
\end{enumerate}
\begin{proof} The verification of the first five items is by  straightforward application of  the corresponding properties of the iterated quotient sets from \lemref{lem:propertiesofquotientsets}. We show next how to derive the last two by repeated applications of item (2). Notice first that $K(\beta(p,e)) \subset K((p,e)) = P \cap e\inv pP = pP$ and that $K((e,p)) = P \cap p\inv e P = P$. Then  
\begin{multline*}
 K((e,p)\beta (p,e)) = K(\beta(p,e)) \cap (\beta(p,e))\dot{\tilde{}} \, K((e,p))  =K(\beta(p,e)) \cap p P = \\
 =K(\beta(p,e))  = K(p,e)\cap pK(\beta) 
  = pP \cap pK(\beta) =  p K(\beta), 
\end{multline*} 
 proving (6).
Similarly,
\begin{multline*}
K((p,e)\beta (e,p)) = K(\beta(e,p)) \cap (\beta(e,p))\dot{\tilde{}} \, K((p,e)= K(\beta(e,p)) \cap p\inv  (pP) \\ = K(\beta(e,p)) = K(e,p) \cap p\inv K(\beta)  = P\cap p\inv K(\beta),
\end{multline*} proving (7).
\end{proof}
  \end{prop}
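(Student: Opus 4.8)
The plan is to deduce all seven identities from the corresponding statements about iterated quotient sets in \lemref{lem:propertiesofquotientsets}, using two formal properties of the assignment $F\mapsto K_F=\bigcap_{g\in F}gP$: it is equivariant, $gK_F=K_{gF}$ (already recorded in the text just above), and it converts finite unions into intersections, $K_{F_1\cup F_2}=K_{F_1}\cap K_{F_2}$, directly from the definition. Since $K(\alpha)=K_{\iqs(\alpha)}$ by definition, items (1)--(5) come out by applying $K_{(-)}$ to the five identities of \lemref{lem:propertiesofquotientsets}: for instance $K(\tilde\alpha)=K_{\iqs(\tilde\alpha)}=K_{\dot\alpha\,\iqs(\alpha)}=\dot\alpha\,K_{\iqs(\alpha)}=\dot\alpha\,K(\alpha)$ gives (1), and $K(\beta\alpha)=K_{\iqs(\alpha)\cup\dot{\tilde\alpha}\iqs(\beta)}=K_{\iqs(\alpha)}\cap K_{\dot{\tilde\alpha}\iqs(\beta)}=K(\alpha)\cap\dot{\tilde\alpha}\,K(\beta)$ gives (2); items (3)--(5) are the analogous translations, using in addition that $\dot{\tilde\alpha}=\dot\alpha\inv=e$ when $\dot\alpha=e$.

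For (6) and (7) the plan is to peel off the two outer length-two blocks by two applications of the identity in (2). The ingredients are the evaluations $K((p,e))=P\cap pP=pP$ (because $pP\subseteq P$) and $K((e,p))=P\cap p\inv P=P$ (because $x\in P$ forces $px\in P$, hence $x\in p\inv P$), together with the reversed quotients $\dot{\tilde\gamma}=\dot\gamma\inv$ of the intermediate words $\gamma$, which collapse thanks to the hypothesis $\dot\beta=e$: indeed $(\beta(p,e))\dot{}=p\inv$ and $(\beta(e,p))\dot{}=p$. Then for (6) one obtains first $K(\beta(p,e))=K((p,e))\cap p\,K(\beta)=pP\cap pK(\beta)=pK(\beta)$ (since $K(\beta)\subseteq P$), and then $K((e,p)\beta(p,e))=K(\beta(p,e))\cap p\,K((e,p))=pK(\beta)\cap pP=pK(\beta)$; the argument for (7) is symmetric, yielding first $K(\beta(e,p))=K((e,p))\cap p\inv K(\beta)=P\cap p\inv K(\beta)$ and then $K((p,e)\beta(e,p))=K(\beta(e,p))\cap p\inv(pP)=(P\cap p\inv K(\beta))\cap P=P\cap p\inv K(\beta)$.

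There is no conceptual difficulty; the only point needing care is the bookkeeping in (6) and (7) --- keeping straight which factor plays the role of $\alpha$ and which that of $\beta$ in each use of (2), evaluating $\dot\gamma$ correctly on the concatenated words (where $\dot\beta=e$ is exactly what makes these reduce to $p^{\pm1}$), and observing at each step that the ambient factors $pP$ or $P$ get absorbed because $K(\beta)$, $pK(\beta)$ and $P\cap p\inv K(\beta)$ all lie inside $P$.
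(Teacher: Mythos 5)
Your proposal is correct and follows essentially the same route as the paper: items (1)--(5) by transporting the identities of \lemref{lem:propertiesofquotientsets} through $F\mapsto K_F$ (using $gK_F=K_{gF}$ and $K_{F_1\cup F_2}=K_{F_1}\cap K_{F_2}$), and items (6)--(7) by two applications of item (2) together with the evaluations $K((p,e))=pP$, $K((e,p))=P$ and the collapse $\dot{(\beta(p,e))}=p\inv$, $\dot{(\beta(e,p))}=p$ coming from $\dot\beta=e$. The only cosmetic difference is the order in which you peel off the two outer blocks, which does not change the argument.
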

  We record here for later use the following easy consequence of the proposition.
  \begin{cor}
  Suppose $\alpha$ and $\beta$ are words with $\beta$ neutral, then 
$
  K(\alpha \beta \tilde\alpha) = K(\tilde\alpha) \cap \dot\alpha K(\beta)
$.
\end{cor}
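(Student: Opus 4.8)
The plan is to derive this identity as a purely formal consequence of \proref{pro:propertiesKalpha}, applying item~(2) twice and item~(1) once; the iterated quotient sets themselves never need to be revisited.

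First I would regroup the triple concatenation as $\alpha\beta\tilde\alpha = (\alpha\beta)\tilde\alpha$ and apply item~(2) of \proref{pro:propertiesKalpha} with the outer word $\tilde\alpha$ in the role of ``$\alpha$'' and $\alpha\beta$ in the role of ``$\beta$''. Since the reverse of $\tilde\alpha$ is $\alpha$, the group element $(\tilde\alpha)\dot{\tilde{}}$ occurring in the formula is simply $\dot\alpha$, so this step gives
\[
K(\alpha\beta\tilde\alpha) = K(\tilde\alpha) \cap \dot\alpha\, K(\alpha\beta).
\]
Next I would expand $K(\alpha\beta)$ with a second use of item~(2), this time with $\alpha$ in the role of ``$\beta$'' and $\beta$ in the role of ``$\alpha$'': the coefficient is then $\dot{\tilde\beta} = (\dot\beta)\inv$, which equals $e$ because $\beta$ is neutral, so $K(\alpha\beta) = K(\beta) \cap K(\alpha)$ (this is also exactly item~(5) of the proposition).

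Substituting and distributing the left translation by $\dot\alpha$ over the intersection yields
\[
K(\alpha\beta\tilde\alpha) = K(\tilde\alpha) \cap \bigl(\dot\alpha K(\beta) \cap \dot\alpha K(\alpha)\bigr),
\]
and since item~(1) identifies $\dot\alpha K(\alpha)$ with $K(\tilde\alpha)$, that term is absorbed into the first factor, leaving $K(\alpha\beta\tilde\alpha) = K(\tilde\alpha) \cap \dot\alpha K(\beta)$, as claimed. The only point that needs attention is the bookkeeping of which word plays which role in each application of item~(2), together with the two elementary identities $(\tilde\alpha)\tilde{} = \alpha$ and $(\dot\beta)\inv = e$ for the neutral word $\beta$; there is no genuine obstacle, the statement being a direct formal corollary of the proposition.
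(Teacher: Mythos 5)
Your proof is correct and follows exactly the route the paper intends: the corollary is stated as an immediate consequence of \proref{pro:propertiesKalpha}, and your two applications of item~(2) (the second being item~(5), since $\beta$ is neutral) followed by absorbing $\dot\alpha K(\alpha)=K(\tilde\alpha)$ via item~(1) is the natural derivation. The bookkeeping of roles in item~(2) and the identities $(\tilde\alpha)\tilde{\ }=\alpha$ and $\dot{\tilde\beta}=e$ are handled correctly, so there is nothing to add.
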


The map $\alpha \mapsto K(\alpha)$ is far from injective, but it still  provides a convenient way to parametrize constructible right ideals of $P$ in terms of neutral words. Thus for every monoid $P$ we let  
\[
 \J(P) \coloneqq\{K(\alpha) \mid \alpha \in \W(P)\},
 \]
 dropping the reference to $P$ and writing simply~$\J$ when there is no risk of confusion.  
 We see next that $\J$ is equal to the set of all constructible right ideals as introduced  in Section 2.1 of \cite{Li:Semigroup_amenability}, modulo having to add the empty set, which is not of the form $K(\alpha)$ when $P$ is left reversible. The properties of $K(\alpha)$ listed in \proref{pro:propertiesKalpha} allow us to give direct proofs, and to show that~$\J$ is automatically closed under finite intersections, a fact established in Section 3 of \cite{Li:Semigroup_amenability}.  

\begin{prop}[cf. \cite{Li:Semigroup_amenability}]\label{pro:characterizationconstructibleideals}
The collection 
$ \J =\{K(\alpha) \mid \alpha \in \W\} $
of subsets of $P$ satisfies 
\begin{enumerate}
\item $\J  =  \{K(\alpha) \mid \alpha \in \W, \dot\alpha = e\}
 = \{K(\tilde \alpha \alpha) \mid \alpha \in \W\}$;
\item $P   \in \J$;
\item $K(\alpha) \cap K(\beta) \in \J$ for every $\alpha,\beta\in \W$;
\item $K(\alpha) p \subset K(\alpha)$ for every $\alpha\in W$ and $p\in P$ ($K(\alpha)$ is a right ideal in $P$);
\item $p K(\alpha)  \in \J$ for every $\alpha\in \W$ and $p\in P$; and 
\item $P \cap p\inv K(\alpha) \in \J$ for every $\alpha$ and $p\in P$. 

\end{enumerate}
Moreover, $\J$ is the smallest collection of subsets of $P$ that contains $P$ 
and  is invariant under the left actions of $P$ and of $P\inv$ as in parts \textup{(5)} and  \textup{(6)}. 

\begin{proof}
Since $( \tilde\alpha\alpha) \dot{} =\dot{\tilde \alpha }\dot{\alpha} =  e$ it is obvious that $\J  \supseteq  \{K(\alpha) \mid \alpha \in \W, \dot\alpha = e\}
 \supseteq \{K(\tilde \alpha \alpha) \mid \alpha \in \W\}$. In order to show that $\J  \subseteq \{K(\tilde \alpha \alpha) \mid \alpha \in \W\}$, 
suppose $\alpha \in \W^k$ 
and recall that  $K(\alpha) = K(\tilde\alpha\alpha)$ by \proref{pro:propertiesKalpha}(3). This proves part (1).
 
Part (2) is obvious because $P = K((p,p))$ for each $p\in P$, and part (3) follows easily from \proref{pro:propertiesKalpha}(5) since
 part (1) allows us to work with neutral words.

Part (4) follows from the observation that  the factor $p$ is absorbed by $P$ on the right in each term of the intersection
that defines  $K_F$.

Parts (5) and (6) now follow directly from \proref{pro:propertiesKalpha}(6) and (7), respectively.
 
Up to this point we have verified that $\J$ is a collection of right ideals in $P$ that contains $P$, is closed under intersections, and is invariant under the left actions by $P$ and $P\inv$ given in parts (5) and (6). 

To complete the proof we need to show that $\J$ is contained in any collection of subsets of $P$ that contains $P$ and is invariant under the left actions by $P$ and $P\inv$ given in parts (5) and (6).  This is done by an easy induction argument based upon  rewriting $K(\alpha)$ as
\[K(\alpha) =P\cap  p_{2k}\inv(p_{2k-1} (P\cap p_{2k-2}\inv (p_{2k-3}(\cdots  (P\cap p_{2}\inv (p_{1}P)) \cdots )))). \qedhere\]
 \end{proof}
 \end{prop}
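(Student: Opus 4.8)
The plan is to prove \proref{pro:characterizationconstructibleideals} by a sequence of short arguments, each invoking the relevant part of \proref{pro:propertiesKalpha} and \lemref{lem:propertiesofquotientsets}. For part~(1), the inclusions $\J \supseteq \{K(\alpha) \mid \dot\alpha = e\} \supseteq \{K(\tilde\alpha\alpha) \mid \alpha \in \W\}$ are immediate since $(\tilde\alpha\alpha)\dot{} = e$; the reverse inclusion $\J \subseteq \{K(\tilde\alpha\alpha)\}$ is exactly \proref{pro:propertiesKalpha}(3), which says $K(\alpha) = K(\tilde\alpha\alpha)$, so any $K(\alpha)$ is realized by the neutral word $\tilde\alpha\alpha$. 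Part~(2) follows by observing $P = K((p,p))$ for any $p \in P$ (indeed $\iqs((p,p)) = \{e, p\inv p\} = \{e\}$). For part~(3), first use part~(1) to replace $\alpha, \beta$ by neutral words; then \proref{pro:propertiesKalpha}(5) gives $K(\alpha) \cap K(\beta) = K(\beta\alpha) \in \J$. Part~(4) is the elementary observation that each factor $gP$ in the intersection $K_{\iqs(\alpha)} = \bigcap_{g \in \iqs(\alpha)} gP$ is a right ideal ($gPp \subseteq gP$), so the intersection is too. Parts~(5) and~(6) are then restatements of \proref{pro:propertiesKalpha}(6) and~(7): writing $\alpha$ as a neutral word $\beta$ (legitimate by part~(1)), we get $pK(\beta) = K((e,p)\beta(p,e)) \in \J$ and $P \cap p\inv K(\beta) = K((p,e)\beta(e,p)) \in \J$.

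The substantive part is the final minimality claim: $\J$ is contained in any collection $\mc$ of subsets of $P$ that contains $P$ and is closed under the operations $S \mapsto pS$ and $S \mapsto P \cap p\inv S$ for $p \in P$. The plan here is to show, by induction on $k$, that $K(\alpha) \in \mc$ for every $\alpha \in \W^k$, using the nested-intersection identity
\[
K(\alpha) = P \cap p_{2k}\inv\bigl(p_{2k-1}\bigl(P \cap p_{2k-2}\inv\bigl(p_{2k-3}\bigl(\cdots\bigl(P \cap p_2\inv(p_1 P)\bigr)\cdots\bigr)\bigr)\bigr)\bigr)
\]
recorded at the end of the proof. The base case $k = 0$ is $K(\emptyset) = P \in \mc$. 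For the inductive step one peels off the outermost pair $(p_{2k-1}, p_{2k})$: writing $\alpha = \alpha'(p_{2k-1}, p_{2k})$ with $\alpha' \in \W^{k-1}$, one checks from the definition of $K$ that $K(\alpha) = P \cap p_{2k}\inv\bigl(p_{2k-1} K(\alpha')\bigr)$; since $K(\alpha') \in \mc$ by the induction hypothesis and $\mc$ is closed under $S \mapsto p_{2k-1}S$ and then under $S \mapsto P \cap p_{2k}\inv S$, we conclude $K(\alpha) \in \mc$.

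I should double-check the bookkeeping in that peeling identity, since it is where an off-by-one or a left/right confusion could creep in: the iterated quotients defining $K(\alpha)$ read $\alpha$ from right to left, so the \emph{first} pair consumed is $(p_{2k-1}, p_{2k})$, contributing the inner $P \cap p_{2k}\inv(p_{2k-1}(-))$, and the element of $\iqs(\alpha)$ at each subsequent stage is the accumulated product — consistent with how the nested formula is parenthesized. Equivalently one can verify the identity directly against \proref{pro:propertiesKalpha}(2) applied with $\beta = (p_{2k-1}, p_{2k})$ and using items (6)--(7); matching the two forms is the only place requiring genuine care. Everything else is routine translation between iterated quotient sets and the nested intersections.

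\begin{proof}
Since $(\tilde\alpha\alpha)\dot{} = \dot{\tilde\alpha}\dot\alpha = e$ it is obvious that $\J \supseteq \{K(\alpha) \mid \alpha \in \W,\ \dot\alpha = e\} \supseteq \{K(\tilde\alpha\alpha) \mid \alpha \in \W\}$. In order to show that $\J \subseteq \{K(\tilde\alpha\alpha) \mid \alpha \in \W\}$, suppose $\alpha \in \W^k$ and recall that $K(\alpha) = K(\tilde\alpha\alpha)$ by \proref{pro:propertiesKalpha}(3). This proves part~(1).

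Part~(2) is obvious because $P = K((p,p))$ for each $p \in P$, and part~(3) follows easily from \proref{pro:propertiesKalpha}(5) since part~(1) allows us to work with neutral words.

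Part~(4) follows from the observation that the factor $p$ is absorbed by $P$ on the right in each term of the intersection that defines $K_F$.

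Parts~(5) and~(6) now follow directly from \proref{pro:propertiesKalpha}(6) and~(7), respectively.

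Up to this point we have verified that $\J$ is a collection of right ideals in $P$ that contains $P$, is closed under intersections, and is invariant under the left actions by $P$ and $P\inv$ given in parts~(5) and~(6).

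To complete the proof we need to show that $\J$ is contained in any collection of subsets of $P$ that contains $P$ and is invariant under the left actions by $P$ and $P\inv$ given in parts~(5) and~(6). This is done by an easy induction argument based upon rewriting $K(\alpha)$ as
\[K(\alpha) = P \cap p_{2k}\inv(p_{2k-1}(P \cap p_{2k-2}\inv(p_{2k-3}(\cdots (P \cap p_2\inv(p_1 P))\cdots)))).\qedhere\]
\end{proof}
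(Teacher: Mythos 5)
Your proposal is correct and follows essentially the same route as the paper: parts (1)--(6) via \proref{pro:propertiesKalpha}, and the minimality claim by induction on the nested-intersection rewriting of $K(\alpha)$, with your peeling identity $K(\alpha)=P\cap p_{2k}\inv\bigl(p_{2k-1}K(\alpha')\bigr)$ being exactly the inductive step the paper leaves implicit. The extra bookkeeping you supply checks out, so nothing further is needed.
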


We close this section with a brief discussion of what happens when  $P$ embeds into different groups;
see the argument around \cite[equation~(37)]{Li:Semigroup_amenability} and also \cite[Lemma 3.9]{SEHNEM2019558}.
One issue is that, in principle, the subset $K(\alpha)$ of $P$
could depend on the specific embedding of $P$ in a group. That this is not the case is implicit in
\proref{pro:characterizationconstructibleideals}, because the original definition of constructible ideals in
\cite{Li:Semigroup_amenability} does not use a group at all. 
Nevertheless, we wish to give a direct proof of it in the present context of submonoids of groups.

 Recall first that when $P$ embeds as a submonoid of any group, then there exists
 a universal group $ \Hilm[G](P)$ generated by a canonical copy of $P$. Every embedding $P\hookrightarrow G$ extends to a unique group homomorphism $\gamma\colon \Hilm[G](P)\to G$. 
 
\begin{lem}\label{lem:indep-group} Suppose $P$ is a submonoid of a group $G$. Let $\gamma\colon \Hilm[G](P)\to G$ be the unique group homomorphism extending $P\hookrightarrow G$.
For each $\alpha=(p_1,\ldots,p_{2k})\in\Hilm[W]^k$ we have
\begin{enumerate}
\item $K(\alpha) = \emptyset$ whenever $\gamma(\dot{\alpha}) = e$ in $G$ and $\dot{\alpha} \neq e$ in $ \Hilm[G](P)$;

\smallskip\item $\gamma K(\alpha)=K(\gamma(\alpha)) \coloneqq P\cap \gamma(p_{2k}^{-1}p_{2k-1})P\cap 
 \ldots \cap \gamma(p_{2k}^{-1}p_{2k-1}p_{2k-2}^{-1}\cdots p_2^{-1}p_1)P$, where we may identify 
 $P$ and $\gamma(P)$.
\end{enumerate}
\begin{proof} 
For the first assertion, suppose that $\gamma(\dot{\alpha})=e$ in~$G$ and $K(\alpha) \neq \emptyset$,
and let
\[
s\in K(\alpha)=P\cap p_{2k}^{-1}p_{2k-1}P\cap p_{2k}^{-1}p_{2k-1}p_{2k-2}^{-1}p_{2k-3}P \cap \ldots \cap  p_{2k}^{-1}p_{2k-1}p_{2k-2}^{-1}\cdots p_2^{-1}p_1P.
\]
In particular, $s\in \dot{\tilde \alpha }P$, so there exists a unique $t\in P$ such that $\dot{\tilde{\alpha}} t=s$ in $ \Hilm[G](P)$. Applying $\gamma$ on both sides of the equality and using $\gamma(\dot{\tilde\alpha}) = \gamma(\dot{\alpha})\inv = e$, we obtain 
 \[
 \gamma(t)=\gamma(\dot{\tilde{\alpha}})\gamma(t)=\gamma\big(\dot{\tilde{\alpha}} t\big)=\gamma(s).
 \]
  This implies that $s=t$ because $\gamma$ is injective on~$P$, which forces $\dot{\alpha} = \dot{\tilde{\alpha}}\inv= e $ in $\Hilm[G](P)$.
  
  For the second assertion, notice first that  $ \gamma(K(\alpha)) \subseteq K(\gamma(\alpha))$ is clear. Let~$r\in P$ be such that $\gamma(r)\in K({\gamma(\alpha)})$.  There is a unique~$s_1\in P$ satisfying~$\gamma(r)=\gamma(p_{2k}^{-1}p_{2k-1})\gamma(s_1)=\gamma(p_{2k}^{-1}p_{2k-1}s_1)$. This entails $\gamma(p_{2k}r)=\gamma(p_{2k-1}s_1)$ and hence $r=p_{2k}^{-1}p_{2k-1}s_1$ in~$\Hilm[G](P)$ since~$\gamma$ is injective on~$P$. But~$\gamma(r)$ also lies in $\gamma(p_{2k}^{-1}p_{2k-1}p_{2k-2}^{-1}p_{2k-3})P$. So there is a unique $s_2\in P$ so that $$\gamma(r)=\gamma(p_{2k}^{-1}p_{2k-1}p_{2k-2}^{-1}p_{2k-3})\gamma(s_2).$$ Then $\gamma(s_1)=\gamma(p_{2k-2}^{-1}p_{2k-3}s_2)$ and we conclude as above that $s_1=p_{2k-2}^{-1}p_{2k-3}s_2$ in~$\Hilm[G](P)$. Thus $$r=p_{2k}^{-1}p_{2k-1}s_1=p_{2k}^{-1}p_{2k-1}p_{2k-2}^{-1}p_{2k-3}s_2\in p_{2k}^{-1}p_{2k-1}p_{2k-2}^{-1}p_{2k-3}P.$$  Continuing with this procedure, we deduce that~$r\in K(\alpha)$. Therefore~$K(\gamma(\alpha))=K(\alpha)$ as asserted.
\end{proof}
\end{lem}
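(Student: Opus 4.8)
The plan is to reduce everything to a single fact: since $P$ is assumed to embed in the group $G$, the canonical map $P\to\Hilm[G](P)$ is injective, hence $\gamma$ restricted to the canonical copy of $P$ inside $\Hilm[G](P)$ is injective. The strategy for both parts is then to rewrite any relevant identity of group elements into the form $\gamma(a)=\gamma(b)$ with $a,b\in P$, and deduce $a=b$ in $\Hilm[G](P)$.

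For part (1) I argue by contraposition: assume $\gamma(\dot\alpha)=e$ in $G$ and $K(\alpha)\neq\emptyset$, and show $\dot\alpha=e$ in $\Hilm[G](P)$. Pick $s\in K(\alpha)$. Since the last intersectand in the definition of $K(\alpha)$ is $\dot{\tilde\alpha}P$, there is a unique $t\in P$ with $\dot{\tilde\alpha}\,t=s$ in $\Hilm[G](P)$. Applying $\gamma$ and using $\gamma(\dot{\tilde\alpha})=\gamma(\dot\alpha)^{-1}=e$ gives $\gamma(t)=\gamma(s)$; injectivity of $\gamma|_P$ forces $t=s$, hence $\dot{\tilde\alpha}=e$, and therefore $\dot\alpha=e$ in $\Hilm[G](P)$.

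For part (2), the inclusion $\gamma(K(\alpha))\subseteq K(\gamma(\alpha))$ is immediate, since applying $\gamma$ to an element of each intersectand $(\cdots)P$ lands in $\gamma(\cdots)\gamma(P)$ and we identify $P$ with $\gamma(P)$. The substance is the reverse inclusion. Given $r\in P$ with $\gamma(r)\in K(\gamma(\alpha))$, I peel off the iterated quotients one at a time from the outside. Membership of $\gamma(r)$ in $\gamma(p_{2k}^{-1}p_{2k-1})P$ produces $s_1\in P$ with $\gamma(p_{2k}r)=\gamma(p_{2k-1}s_1)$; as $p_{2k}r,\,p_{2k-1}s_1\in P$, injectivity of $\gamma|_P$ yields $p_{2k}r=p_{2k-1}s_1$, hence $r=p_{2k}^{-1}p_{2k-1}s_1$ in $\Hilm[G](P)$, so in particular $r\in p_{2k}^{-1}p_{2k-1}P$. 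Comparing now with membership of $\gamma(r)$ in $\gamma(p_{2k}^{-1}p_{2k-1}p_{2k-2}^{-1}p_{2k-3})P$ and cancelling $\gamma(p_{2k}^{-1}p_{2k-1})$ in the group gives $\gamma(s_1)\in\gamma(p_{2k-2}^{-1}p_{2k-3})P$; the same argument produces $s_2\in P$ with $s_1=p_{2k-2}^{-1}p_{2k-3}s_2$, whence $r=p_{2k}^{-1}p_{2k-1}p_{2k-2}^{-1}p_{2k-3}s_2$. Iterating through all $k$ quotients shows $r$ lies in every intersectand, i.e.\ $r\in K(\alpha)$, and so $\gamma(r)\in\gamma(K(\alpha))$.

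A more streamlined variant of (2) would first invoke \proref{pro:propertiesKalpha}(3) (so that $K(\alpha)=K(\tilde\alpha\alpha)$) together with part (1) to reduce to neutral words, then induct on the word length using the recursive description $K(\alpha)=P\cap p_{2k}^{-1}\bigl(p_{2k-1}(P\cap p_{2k-2}^{-1}(\cdots))\bigr)$ from the proof of \proref{pro:characterizationconstructibleideals}, applying at each step the elementary identities $\gamma(pK(\beta))=\gamma(p)K(\gamma(\beta))$ and $\gamma(P\cap p^{-1}K(\beta))=P\cap\gamma(p)^{-1}K(\gamma(\beta))$; this merely repackages the same peeling. The one genuine obstacle is bookkeeping: each cancellation must be arranged so that the equation tested by $\gamma$ is between two honest elements of $P$, and this is precisely — and the only — place where the hypothesis that $P$ embeds in $G$, equivalently that $\gamma|_P$ is injective, is used.
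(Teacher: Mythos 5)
Your proposal is correct and follows essentially the same route as the paper's proof: part (1) by extracting $t\in P$ with $\dot{\tilde\alpha}\,t=s$ and using injectivity of $\gamma$ on $P$, and part (2) by peeling off the iterated quotients one at a time, each time rewriting the relation as an equality $\gamma(a)=\gamma(b)$ with $a,b\in P$ before cancelling. The streamlined inductive variant you sketch is, as you say, only a repackaging of the same argument.
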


  \section{Toeplitz \texorpdfstring{$\mathrm{C}^*$}{C*}-algebras for submonoids of groups}\label{sec:toeplitzalgebras}
For each  monoid $P$ that embeds in a group we introduce here a new universal Toeplitz $\Cst$\nb-algebra $\Toepu(P)$, defined via a conceptually simple set of relations. Our choice of relations implies that the linear combinations of  projections associated to constructible ideals always behave exactly as they do in the $\Cst$\nb-algebra $\Toepr(P)$ generated by the left regular representation. 
This is particularly relevant when the underlying monoid does not satisfy independence. 
When we remove some of the relations defining $\Toepu(P)$ we obtain a new presentation of the $\Cst$\nb-algebra $\Cst_s(P)$ from \cite[Definition 3.2]{Li:Semigroup_amenability}. Hence $\Toepu(P)$ is canonically a quotient of $\Cst_s(P)$, and we will see that the two coincide if and only if~$P$ satisfies independence.

\subsection{Presentation of a universal Toeplitz \texorpdfstring{$\mathrm{C}^*$}{C*}-algebra}  The following notation will be very useful throughout.

\begin{note} When $w\colon P\to B$ is a map from~$P$ to a $\Cst$\nb-algebra~$B$ and  $\alpha=(p_1,p_2,\ldots,p_{2k-1},p_{2k})$ is a word in $\W$, we will denote by~$\dot{w}_\alpha$ the product 
\[
\dot{w}_\alpha\coloneqq  w_{p_1}^*w_{p_2}\ldots w_{p_{2k-1}}^*w_{p_{2k}}\in B.
\]
It is easy to see that  $\dot{w}_\alpha \dot{w}_\beta = \dot{w}_{\alpha\beta}$ and  $(\dot{w}_\alpha)^* = \dot{w}_{\tilde\alpha} $. Recall that when  $\alpha \in \W$ and $\dot{\alpha} =e$ we say $\alpha$ is a neutral word.
\end{note}
Next
we draw some consequences from  assuming that the alternating products factor through the constructible ideals. 
 
 \begin{prop} Let~$P$ be a submonoid of a group~$G$. Suppose
 $w \colon P\to B$ is a map from $P$ to a $\Cst$\nb-algebra $B$ such that  
 \label{pro:propertiesofsharpgenerators}
\begin{enumerate}\label{def:toeplitztype_Cstaralgebra}
\item[\textup{(T1)}] $w_e=1$;
\smallskip \item[\textup{(T2)}]  $\dot{w}_{\alpha} = 0$ \ if $K(\alpha)=\emptyset$ with $\dot\alpha =e$; and
\smallskip\item[\textup{(T3)}] 
$
\dot{w}_\alpha=\dot{w}_\beta
$
\  if  $\alpha$ and $\beta$ are neutral words in $\W$ such that $K(\alpha)=K(\beta)$.
\end{enumerate}
Then $w$ also has the following properties:
\begin{enumerate}\label{def:semigroup_Cstaralgebra}
\smallskip \item[\textup{(1)}] $w_p$ is an isometry for all~$p\in P$;

\smallskip \item[\textup{(2)}] $w_pw_q=w_{pq}$ for all $p,q\in P$;
 
\smallskip \item[\textup{(3)}] the set $\{\dot{w}_\alpha\mid \alpha\in\Hilm[W], \ \ \dot\alpha=e\}$ does not depend on the embedding $P\hookrightarrow G$ and is a commuting family of projections that is closed under multiplication and contains the identity; and

\smallskip \item[\textup{(4)}] if $F$ is a finite set of neutral words in~$\W$ with 
$\bigcup_{\beta\in F} K(\beta)= K(\alpha)$ 
for some neutral word $\alpha \in \W$ such that $K(\alpha) \in \{K(\beta)\mid \beta \in F\}$, then
\begin{equation}\label{eqn:property(4)}
\dot{w}_{\alpha} = \sum_{\emptyset \neq A\subset F} (-1)^{|A|+1} \prod_{\beta \in A} \dot{w}_{\beta}.
\end{equation}

\end{enumerate}

\begin{proof} We remark that the key ideas in the proof 
appear  in  \cite{Li:Semigroup_amenability}, especially equations (32) and (33).

Property (1) holds  because $K((p,p)) = P \cap p\inv pP = P = K((e,e))$, so $w_p^*w_p=w_e^* w_e =1$ by conditions (T3) and (T1). In order to prove property (2), notice that 
\begin{equation*}\begin{aligned} (w_pw_q-w_{pq})^*(w_pw_q-w_{pq})&=w_q^*w_p^*w_pw_q-w_q^*w_p^*w_{pq}-w_{pq}^*w_pw_q+w_{pq}^*w_{pq}\\&=2-w_q^*w_p^*w_{pq}-w_{pq}^*w_pw_q.
\end{aligned}
\end{equation*} 
The product $w_q^*w_p^*w_{pq}$ is associated to the word $\alpha = (q,e,p,pq)$, which satisfies $\dot \alpha = e$ and
 $\iqs(\alpha) = \{e,\, (pq)\inv p, \, (pq)\inv p e\inv q)\} = \{e, q\inv\}$, so that $K((q,e,p,pq)) = P\cap q\inv P = P = K(e,e)$. Hence $w_q^*w_p^*w_{pq} = w_e^*w_e = 1$
by conditions (T3) and (T1);
  similarly  $w_{pq}^*w_pw_q = 1$, using $\tilde\alpha$. It follows that $w_pw_q-w_{pq}=0$, proving property (2).

The  set $\{\dot{w}_\alpha\mid \alpha\in\Hilm[W], \ \ \dot\alpha=e\}$ does not depend on the embedding $P\hookrightarrow G$ because of \lemref{lem:indep-group}(1) and condition (T2). It is also obviously closed under multiplication because $\dot{w}_\alpha \dot{w}_\beta = \dot{w}_{\alpha\beta}$ and $\alpha\beta$ is neutral whenever $\alpha$ and $\beta$ are, and it certainly contains 
 $1 = \dot{w}_{(e,e)}$. In order to prove the remainder of property (3), let $\alpha$ and $\beta$ be neutral words in $\W$ and recall that
  $K(\alpha \beta) = K(\alpha) \cap K(\beta) = K(\beta\alpha)$ by \proref{pro:propertiesKalpha}(5). 
  Then condition (T3) applied to the neutral words $\alpha\beta $ and $\beta \alpha$ yields
  \[
  \dot{w}_\alpha \dot{w}_\beta = \dot{w}_{\alpha\beta} = \dot{w}_{\beta\alpha} =  \dot{w}_\beta \dot{w}_\alpha.
  \]
  To see that $\dot{w}_\alpha$ is a projection for each neutral word $\alpha$, recall that $\dot{w}_\alpha^*=\dot{w}_{\tilde{\alpha}}$ and that $K(\tilde{\alpha}\alpha)=K(\alpha)$ by \proref{pro:propertiesKalpha}(3). Hence condition (T3) yields   \[
\dot{w}_\alpha^*  \dot{w}_\alpha =  \dot{w}_{\tilde\alpha} \dot{w}_\alpha= \dot{w}_{ \tilde\alpha \alpha} =\dot{w}_\alpha ,
 \]
 which implies that each $\dot{w}_\alpha$ is a projection, completing the proof of property (3). 
     
Assume now that $F$ is a finite set of neutral words in $\W$ with 
$\bigcup_{\beta\in F} K(\beta)= K(\alpha)$ 
for some neutral word $\alpha \in \W$ such that $K(\alpha) \in \{K(\beta)\mid \beta \in F\}$. Then  
$\dot{w}_\alpha \dot{w}_{\beta} = \dot{w}_{\alpha\beta}  = \dot{w}_{\beta}= \dot{w}_{\beta} \dot{w}_\alpha  $ for every $\beta \in F$ because $K(\alpha\beta) = K(\alpha) \cap K(\beta) = K(\beta)$. Since 
$\dot{w}_\alpha - \dot{w}_{\beta} =0$ for some $\beta \in F$ by condition (T3), an easy expansion  yields
\begin{equation}\label{eqn:INC/EXCidentity}
0 = \prod_{\beta \in F}(\dot{w}_\alpha - \dot{w}_{\beta}) = \dot{w}_\alpha + \sum_{\emptyset \neq A\subset F} (-1)^{|A|} \prod_{\beta\in A} \dot{w}_{\beta} 
\end{equation}
 where the products over all subsets $A$ of $F$ can be taken in any order because of property (3).  This proves equation \eqref{eqn:property(4)}.
\end{proof} 
\end{prop}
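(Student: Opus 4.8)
The plan is to obtain each of (1)--(4) by converting a combinatorial identity for the constructible ideals $K(\alpha)$ from \proref{pro:propertiesKalpha} into an operator identity among the alternating products $\dot w_\alpha$: condition (T3) transfers an equality of ideals between neutral words, while (T1) and (T2) dispose of the two degenerate inputs. The recurring move is to realise the operator of interest as $\dot w_\alpha$ for a well chosen neutral word, compute its iterated quotient set, and identify the resulting $K$. For (1), $(p,p)$ is neutral with $K((p,p)) = P \cap p^{-1}pP = P = K((e,e))$, so $w_p^*w_p = \dot w_{(p,p)} = \dot w_{(e,e)} = 1$ by (T3) and (T1). For (2), I would expand $(w_pw_q - w_{pq})^*(w_pw_q - w_{pq})$ to $2 - w_q^*w_p^*w_{pq} - w_{pq}^*w_pw_q$ and note that $w_q^*w_p^*w_{pq} = \dot w_\alpha$ for the neutral word $\alpha = (q,e,p,pq)$, whose iterated quotient set is $\{e, q^{-1}\}$, so $K(\alpha) = P \cap q^{-1}P = P$ (as $P \subseteq q^{-1}P$ because $qx\in P$ for $x\in P$); then (T3) and (T1) give $w_q^*w_p^*w_{pq} = 1$, the same applied to $\tilde\alpha$ gives $w_{pq}^*w_pw_q = 1$, the expression vanishes, and $w_pw_q = w_{pq}$.

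For (3), the only step needing external input is the embedding-independence of $\{\dot w_\alpha \mid \alpha \in \W,\ \dot\alpha = e\}$: a word $\alpha$ that is neutral in the ambient $G$ but not in the universal group $\Hilm[G](P)$ has $K(\alpha) = \emptyset$ by \lemref{lem:indep-group}(1), hence $\dot w_\alpha = 0$ by (T2); but $\dot w_{\tilde\alpha\alpha} = 0$ as well by (T2), and $\tilde\alpha\alpha$ is already neutral in $\Hilm[G](P)$, so the family is unchanged if ``$\dot\alpha = e$'' is read in $\Hilm[G](P)$ instead of $G$. The remaining claims are formal: $1 = \dot w_{(e,e)}$ lies in the family; it is closed under products since $\dot w_\alpha\dot w_\beta = \dot w_{\alpha\beta}$ with $\alpha\beta$ neutral; it is commutative because $K(\alpha\beta) = K(\alpha)\cap K(\beta) = K(\beta\alpha)$ by \proref{pro:propertiesKalpha}(5), whence (T3) forces $\dot w_{\alpha\beta} = \dot w_{\beta\alpha}$; and each $\dot w_\alpha$ is a projection since $K(\tilde\alpha\alpha) = K(\alpha)$ by \proref{pro:propertiesKalpha}(3) gives $\dot w_\alpha^*\dot w_\alpha = \dot w_{\tilde\alpha\alpha} = \dot w_\alpha$, from which self-adjointness and idempotence follow automatically.

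For (4), the hypotheses amount to $K(\beta)\subseteq K(\alpha)$ for all $\beta\in F$ together with $K(\alpha) = K(\beta_0)$ for some $\beta_0\in F$. The first gives $K(\alpha\beta) = K(\beta) = K(\beta\alpha)$, hence $\dot w_\alpha\dot w_\beta = \dot w_\beta = \dot w_\beta\dot w_\alpha$ by (T3); the second gives $\dot w_\alpha = \dot w_{\beta_0}$, so one factor of the commuting product $\prod_{\beta\in F}(\dot w_\alpha - \dot w_\beta)$ vanishes. Expanding that product and using that $\dot w_\alpha$ is a projection absorbed by each $\dot w_\beta$ with $\beta\in F$, the only surviving terms are $\dot w_\alpha$ (from the empty subset) and $(-1)^{|A|}\prod_{\beta\in A}\dot w_\beta$ for $\emptyset\ne A\subseteq F$, so $0 = \dot w_\alpha + \sum_{\emptyset\ne A\subseteq F}(-1)^{|A|}\prod_{\beta\in A}\dot w_\beta$, which rearranges to \eqref{eqn:property(4)}. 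I do not anticipate a real obstacle: the work is the bookkeeping of which neutral words to substitute --- the choice $\alpha=(q,e,p,pq)$ in (2) being the only slightly clever one --- together with the careful but routine appeal to \lemref{lem:indep-group} for the embedding-independence clause of (3).
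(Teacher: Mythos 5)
Your proposal is correct and follows essentially the same route as the paper's proof: the same word substitutions (including $\alpha=(q,e,p,pq)$ for multiplicativity), the same appeals to \proref{pro:propertiesKalpha}(3),(5) and \lemref{lem:indep-group}(1) with (T2), and the same expansion of the vanishing commuting product for (4). Your extra remark that $\dot w_{\tilde\alpha\alpha}=0$ keeps the zero operator in the family when neutrality is read in $\Hilm[G](P)$ is just a slightly more explicit version of the paper's one-line citation, not a different argument.
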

 \begin{rem}  \label{rem:alternative2} 
  \proref{pro:propertiesofsharpgenerators}  highlights the  significance of Li's constructible right ideals. 
If we view (T1) and (T2) as `calibrations',  we are essentially just asking that the map $\alpha \mapsto \dot w_\alpha$ factor through $\alpha \mapsto K(\alpha)$, and yet we obtain a very strong algebraic structure for the collection $\{w_p\mid p\in P\}$ as a consequence. 
 We point out  that the order in which the projections are multiplied or the words concatenated does not affect the formulas in \eqref{eqn:INC/EXCidentity} because of part (3) and condition (T3) of \proref{pro:propertiesofsharpgenerators}.
\end{rem}

\begin{defn}\label{def:toeplitz-semigroup} Let $P$ be a submonoid of a group $G$.  We define the {\em universal Toeplitz algebra} of~$P$, denoted by $\Toepu(P)$, to be the universal $\Cst$\nb-algebra with generators $\{t_p\colon p\in P\}$ subject to the relations
\begin{enumerate}\label{def:toeplitztype_Cstaralgebra}
\item[\textup{(T1)}] $t_e=1$;

\smallskip \item[\textup{(T2)}]  $\dot{t}_{\alpha} = 0$ \  if $K(\alpha)=\emptyset$ with $\dot\alpha=e$;

\smallskip\item[\textup{(T3)}]
$
\dot{t}_\alpha=\dot{t}_\beta
$
\  if  $\alpha$ and $ \beta$ are neutral words  such that $K(\alpha)=K(\beta)$;

\smallskip\item[\textup{(T4)}] $ 
 \prod_{\beta \in F} (\dot{t}_\alpha - \dot{t}_{\beta})  =  0$ \  if $F$ is a finite  set of neutral words such that $K(\alpha)  =  \bigcup_{\beta \in F} K(\beta)$ for some neutral word $\alpha$.
\end{enumerate}
\end{defn}

\begin{rem}\label{rem:alternative1} 
We would like to make a few comments concerning \defref{def:toeplitz-semigroup}.
\begin{enumerate}
\item Relation (T3) is simply the special case of (T4) for $|F|=1$. Further, with the convention that the union over the empty set is empty, if we interpret an empty product in \textup{(T4)} as being equal to $\dot t_\alpha$ 
then we can also  derive (T2) from (T4).
We have chosen to include (T2) and (T3)  explicitly here mainly for clarity and to facilitate the comparison with further constructions. It seems also plausible that the verification of specific cases would probably have to go through (T2) and (T3) anyway. 

\item Notice that if $P$ is left reversible then  $K(\alpha)$ is never empty, making (T2) vacuous. So (T2) only applies when $P$ is not left reversible. 

\item Further insight into relation (T4) is gained from noticing its relation to independence.
  Recall  from \cite{Li:Semigroup_amenability} that the semigroup $P$ is said to satisfy the {\em independence condition}
if the union $\bigcup_{\beta \in F} K(\beta)$ of constructible ideals in $P$ is a constructible ideal itself only when $ \bigcup_{\beta \in F} K(\beta) = K(\alpha)$ for some neutral word $\alpha \in \W$ such that $K(\alpha) \in \{K(\beta)\mid \beta \in F\}$, equivalently, when the characteristic functions of constructible right ideals are linearly independent.
This, in turn, implies that the product in (T4) vanishes for every $w$ satisfying (T3), because one of the factors is zero.
 Thus, in particular, semigroups that satisfy independence and (T3) also satisfy (T4) automatically. The full implication of this observation is spelled out  in \corref{cor:canonicalSharptoLimap} below. 

\item It is also helpful to notice that in the particular case when the ideals $K(\beta)$ happen to be mutually disjoint,  the only nonzero terms in equation \eqref{eqn:INC/EXCidentity} are those for which the subset $A$ of $F$ is a singleton. In this case
(T4) simply reduces to  the familiar relation $\dot{t}_\alpha = \bigoplus_{\beta\in F}  \dot{t}_{\beta}$ involving a sum of mutually orthogonal projections.

\item Equations like those appearing in (T4) and  \eqref{eqn:property(4)}  go back  at least as far as the characterization of the diagonal in Cuntz--Krieger algebras of infinite matrices, and  can be imposed in various combinations to obtain different quotients. For instance,  relation (T4) applies only to ideals that fail to satisfy independence. Eventually, when we characterize the universal boundary quotient in \secref{sec:full-boundary} we will see how to enlarge the set of relations in a maximal way, \proref{prop:non-foundation-sets}. This will correspond to Exel's tightness condition \cite[Definition~2.6]{MR2534230} and to Donsig and Milan's cover-to-join relations \cite[Section~2]{DM14}, see also the discussion in \cite{Exel21}.
\end{enumerate}
  \end{rem}
We are indebted to Sergey Neshveyev for bringing  to our attention, after an earlier version of this paper was circulated, that the C*-algebras of left cancellative small categories initially defined in terms of two groupoids by Spielberg in \cite{Spi20} are also characterized there in terms of generators and relations, \cite[Theorem~9.4]{Spi20}.
Specifically, for group embeddable monoids, the presentation of Spielberg's $C^*(G_2)$  amounts to the presentation given in \defref{def:toeplitz-semigroup}, see \cite[Proposition~2.6]{NS22}.

We list next several equivalent formulations of relation (T4) that are helpful to understand better its meaning and motivation. In particular,  relation (T4) is  a stronger version of the property stated in
 \proref{pro:propertiesofsharpgenerators}(4) because it applies whenever the union of a finite collection  of constructible ideals is a constructible ideal, regardless of whether this ideal is a member of the collection or not.

\begin{lem} \label{lem:equivalentto(T4)}
Suppose  $ B$ is a unital  $\Cst$\nb-algebra and $t \colon P \to  B$ is a map that satisfies relations \textup{(T1)--(T3)}.
The following are equivalent:
\begin{enumerate}
\item relation \textup{(T4)};

\item  relation \textup{(T4)} restricted to the special cases where independence fails, that is,  for neutral words $\alpha\in\W$ and finite sets $F\subset\W$ such that $K(\alpha)= \bigcup_{\beta\in F} K(\beta)$ and $K(\alpha) \neq K(\beta)$ for all $\beta \in F$;

\item the expanded version of relation \textup{(T4)},
\begin{equation} \label{eqn:condition(T4)}
\dot{t}_\alpha = \sum_{\emptyset \neq A \subset F}(-1)^{|A|+1}  \prod_{\beta\in A} 
\dot{t}_{\beta}
\end{equation}
 if $F\subset\W $ is a finite set of neutral words and $\alpha \in \W$ is a neutral word that satisfies $K(\alpha) = \bigcup_{\beta\in F} K(\beta)$,
 equivalently, restricted to special cases where independence fails.
 
\item  the expanded version of relation \textup{(T4)} written in terms of concatenation of words: 
\begin{equation} \label{eqn:condition(T4)words}
\dot{t}_\alpha = \displaystyle\sum_{\emptyset \neq A \subset F}(-1)^{|A|+1}  
\,\ \dot{t}_{\prod_{\beta\in A}\beta}
\end{equation} f
or $F$ and $\alpha$ as in \textup{(T4)}, equivalently, restricted to the special cases where independence fails.
\end{enumerate}

\begin{proof} 
That (1) $\implies$ (2) is immediate. To see that  (2) $\implies$ (1), we only need to notice that in the cases that are left out of (2), that is to say, for each neutral word $\alpha$  and finite set $F$ of neutral words such that $K(\alpha) = \bigcup_{\beta\in F} K(\beta) = K(\beta_0)$ for some $\beta_0\in F$, the product in (T4) clearly vanishes because it includes the factor $(\dot t_\alpha - \dot t_{\beta_0})$, which is trivial by (T3). The remaining equivalences are easy to see from the expansion in equation \eqref{eqn:INC/EXCidentity}.
\end{proof}
\end{lem}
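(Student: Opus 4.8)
The plan is to verify the four equivalences in turn, using the expansion identity \eqref{eqn:INC/EXCidentity} as the central computational tool. Throughout, $t$ is assumed to satisfy (T1)--(T3), which by \proref{pro:propertiesofsharpgenerators}(3) already gives that $\{\dot t_\alpha \mid \alpha \text{ neutral}\}$ is a commuting family of projections closed under multiplication, so all products below may be taken in any order without ambiguity.

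First I would observe that (1)$\implies$(2) is trivial, since (2) is just (T4) restricted to a subclass of instances. For (2)$\implies$(1), given any neutral word $\alpha$ and finite set $F$ of neutral words with $K(\alpha) = \bigcup_{\beta\in F} K(\beta)$, either this is one of the instances covered by (2), or else $K(\alpha) = K(\beta_0)$ for some $\beta_0 \in F$; in the latter case the product $\prod_{\beta\in F}(\dot t_\alpha - \dot t_\beta)$ contains the factor $\dot t_\alpha - \dot t_{\beta_0}$, which vanishes by (T3), so the product is $0$ and (T4) holds trivially. (Here I am using, as in \remref{rem:alternative1}(3) and the discussion after \proref{pro:propertiesofsharpgenerators}, that when $K(\alpha) = K(\beta_0)$ the two words are neutral with equal $K$-value so (T3) applies.)

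Next, for (1)$\iff$(3), I would simply expand the product: for neutral words $\beta\in F$ and $\alpha$ with $K(\alpha) = \bigcup_{\beta\in F}K(\beta)$, each $\dot t_\beta$ is a projection dominated by $\dot t_\alpha$, because $K(\alpha\beta) = K(\alpha)\cap K(\beta) = K(\beta)$ (using $K(\alpha)\supseteq K(\beta)$) and hence $\dot t_\alpha \dot t_\beta = \dot t_{\alpha\beta} = \dot t_\beta$ by (T3). Therefore $(\dot t_\alpha - \dot t_\beta)$ is itself a projection, the factors commute by \proref{pro:propertiesofsharpgenerators}(3), and multiplying out gives exactly \eqref{eqn:INC/EXCidentity}, namely $\prod_{\beta\in F}(\dot t_\alpha - \dot t_\beta) = \dot t_\alpha + \sum_{\emptyset\neq A\subset F}(-1)^{|A|}\prod_{\beta\in A}\dot t_\beta$. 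Setting this equal to $0$ and moving terms across is precisely the passage between \textup{(T4)} and \eqref{eqn:condition(T4)}, so (1)$\iff$(3); the "equivalently, restricted to special cases where independence fails" clause follows by the same reduction as in the proof of (2)$\implies$(1). Finally (3)$\iff$(4) is immediate from the identity $\prod_{\beta\in A}\dot t_\beta = \dot t_{\prod_{\beta\in A}\beta}$, valid because $\dot t_\gamma \dot t_\delta = \dot t_{\gamma\delta}$ for all words and the concatenation of neutral words is neutral, so \eqref{eqn:condition(T4)} and \eqref{eqn:condition(T4)words} are literally the same relation rewritten.

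I do not expect any genuine obstacle here: the lemma is essentially bookkeeping around the single algebraic identity \eqref{eqn:INC/EXCidentity} together with the projection calculus already established in \proref{pro:propertiesofsharpgenerators}. The only point requiring a little care is making sure that in the reduction step (the "$K(\alpha)=K(\beta_0)$" case) one is entitled to invoke (T3) — which needs both words neutral and with equal $K$-value, both of which hold by hypothesis — and that when rewriting a product of $\dot t_\beta$'s as a single $\dot t$ of a concatenated word, the ordering of the concatenation is immaterial thanks to commutativity of the projections. Neither of these is delicate. I would therefore present the proof compactly, citing \proref{pro:propertiesofsharpgenerators}(3) for commutativity and the projection property, equation \eqref{eqn:INC/EXCidentity} for the expansion, and (T3) for the vanishing of the trivial factors, exactly as the author's own short proof does.
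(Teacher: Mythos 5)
Your proposal is correct and follows essentially the same route as the paper: (1)$\implies$(2) trivially, (2)$\implies$(1) via the vanishing factor $\dot t_\alpha-\dot t_{\beta_0}$ supplied by (T3), and the remaining equivalences via the expansion \eqref{eqn:INC/EXCidentity}, with (3)$\iff$(4) coming from $\prod_{\beta\in A}\dot t_\beta=\dot t_{\prod_{\beta\in A}\beta}$. You merely spell out details (such as $\dot t_\alpha\dot t_\beta=\dot t_\beta$ from $K(\beta)\subseteq K(\alpha)$ and commutativity from \proref{pro:propertiesofsharpgenerators}(3)) that the paper leaves implicit.
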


\subsection{The left regular representation} The (reduced) Toeplitz $\Cst$\nb-algebra $\Toepr(P)$
is, by definition,  the $\Cst$\nb-algebra generated by the image of the left regular representation $L$ of $P$  on $\ell^2(P)$, which is given by 
$L_p\delta_q = \delta_{pq}$ on the standard orthonormal basis elements.
The range projection $L_pL_p^*$ of the generating isometry $L_p$ is the
multiplication operator by the characteristic function $\1_{pP}$ of the set $pP$. Since the representation of  $\ell^\infty(P)$ as multiplication operators on $\ell^2(P)$ is isometric, there is no harm in confusing these multiplication operators with the functions they represent. We will use the following  basic observation from \cite{Li:Semigroup_amenability}.

\begin{lem}\cite[Definition 2.12 and Lemma 3.1]{Li:Semigroup_amenability}
\label{lem:1_KalphaspansD_r}
 Let $P $ be a submonoid of a group $G$, and let $\1_{K(\alpha)}\in\ell^\infty(P)$ denote the characteristic function of~$K(\alpha)$. Then
$\dot{L}_\alpha = \1_{K(\alpha)}$ for every neutral word $\alpha$ and 
$D_r  \coloneqq \clsp\{\1_{K(\alpha)} \in \Toepr(P)  \mid \dot\alpha =e\} $ is a $\Cst$\nb-subalgebra of $\ell^\infty(P)$, which we call the {\em reduced diagonal}.
\end{lem}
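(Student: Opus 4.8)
The plan is to treat the two assertions separately, and the first one carries essentially all the content. For $\dot{L}_\alpha=\1_{K(\alpha)}$ I would just carry out the basis-vector computation already displayed in the introduction for the word $(p,q,r,s)$, now for an arbitrary neutral word $\alpha=(p_1,\dots,p_{2k})$. Fix $x\in P$ and apply $\dot{L}_\alpha=L_{p_1}^*L_{p_2}L_{p_3}^*L_{p_4}\cdots L_{p_{2k-1}}^*L_{p_{2k}}$ to $\delta_x$, working from right to left. Each even-indexed isometry $L_{p_{2j}}$ multiplies the current group label by $p_{2j}$ and, since $P$ is closed under multiplication, imposes no constraint; each odd-indexed adjoint $L_{p_{2j-1}}^*$ multiplies the label by $p_{2j-1}^{-1}$ but annihilates the vector unless the current label lies in $p_{2j-1}P$. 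Bookkeeping the label through the steps, the constraint produced by $L_{p_{2j-1}}^*$ reads $x\in p_{2k}^{-1}p_{2k-1}p_{2k-2}^{-1}\cdots p_{2j}^{-1}p_{2j-1}P$, and as $j$ runs from $k$ down to $1$ these constraints, together with $x\in P$, list exactly membership in $K_{\iqs(\alpha)}=K(\alpha)$ by the definition of the iterated quotient set. When $x\in K(\alpha)$ the vector returns to $\delta_{\dot\alpha x}=\delta_x$ since $\dot\alpha=e$, and otherwise it is $0$; hence $\dot{L}_\alpha$ is multiplication by $\1_{K(\alpha)}$, as claimed (and in particular this depends only on $K(\alpha)$, not on the word).

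For the second assertion I would first record the two inclusions. We have $D_r\subseteq\Toepr(P)$ because, by the first part, each spanning element $\1_{K(\alpha)}=\dot{L}_\alpha$ is a word in the generators $L_p$ and their adjoints; and $D_r\subseteq\ell^\infty(P)$ because $\ell^\infty(P)$, realized by multiplication operators on $\ell^2(P)$, is norm-closed in $\mathcal{B}(\ell^2(P))$ and therefore contains the closed linear span of the $\1_{K(\alpha)}$. It remains to check that $D_r$ is a $*$-subalgebra. Self-adjointness is automatic since each $\1_{K(\alpha)}$ is a self-adjoint idempotent, so $D_r$ is spanned by self-adjoint elements. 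For closure under products it suffices, by bilinearity and continuity of multiplication, to check it on spanning elements, where $\1_{K(\alpha)}\1_{K(\beta)}=\1_{K(\alpha)\cap K(\beta)}=\1_{K(\alpha\beta)}$; here $\alpha\beta$ is again neutral, since $(\alpha\beta)\dot{}=\dot\alpha\dot\beta=e$, and $K(\alpha\beta)=K(\alpha)\cap K(\beta)$ by \proref{pro:propertiesKalpha}(5), so the product is again a spanning element of $D_r$. Being norm-closed by construction, $D_r$ is therefore a $\Cst$-subalgebra of $\ell^\infty(P)$.

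I do not expect a genuine obstacle here: the whole argument reduces to the elementary computation of the first paragraph, which is precisely the one already spelled out in the introduction for $(p,q,r,s)$. The only point that demands care is the indexing bookkeeping — keeping the order of the factors $p_{2k}^{-1}p_{2k-1}p_{2k-2}^{-1}\cdots$ straight so that the chain of nonvanishing conditions accumulated at the adjoint steps matches the iterated quotient set $\iqs(\alpha)$ of the definition exactly, and observing that the even-indexed (isometry) steps contribute no condition because $P$ is a monoid. Everything after that is a formal consequence of $\dot{L}_\alpha=\1_{K(\alpha)}$ together with \proref{pro:propertiesKalpha}(5).
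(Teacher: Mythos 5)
Your proof is correct and is essentially the argument the paper relies on: it cites Li and only sketches the computation in the introduction for the word $(p,q,r,s)$, and your induction-free bookkeeping over a general neutral word, combined with \proref{pro:propertiesKalpha}(5) for closure under products, is exactly that argument carried out in full.
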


 There is also a  {\em full diagonal} subalgebra $D_u\coloneqq \clsp\{\dot{t}_\alpha \in \Toepu(P)\mid \dot\alpha =e\} $ 
 at the level of the universal Toeplitz algebra. This does not depend on the embedding $P\hookrightarrow G$ and is a commutative unital $\Cst$\nb-subalgebra of $\Toepu(P)$ by \proref{pro:propertiesofsharpgenerators}(3).
 
 \begin{prop}\label{prop:diagonal-sub} Let $P$ be a submonoid of a group $G$, and denote by $\{t_p\mid p\in P\}$ the set of canonical  generators for $\Toepu(P)$. Then
 the map $t_p \mapsto L_p$ extends to a surjective \Star homomorphism $\lambda^+\colon \Toepu(P)\to \Toepr(P)$.
  
\begin{proof} Since $\dot{L}_\alpha = \1_{K(\alpha)}$ it is easy to see that relations (T2) and (T3) are satisfied by $L$, and relation (T1) is obvious for $L$.
In order to prove that $L$ also satisfies (T4), let $F$ be a finite set of neutral words such that the union of their corresponding right ideals is a constructible right ideal, that is, such that $\bigcup_{\beta\in F} K(\beta) = K(\alpha) $ for some 
neutral word $\alpha$. Then $\emptyset = \bigcap_{\beta\in F} (K(\alpha) \setminus K(\beta) )$, and hence
\[
0 =  \prod_{\beta\in F} (\1_{K(\alpha)} - \1_{K(\beta)}), 
\]
which shows that $L$ satisfies relation  (T4). The resulting canonical homomorphism
$\lambda^+:\Toepu(P) \mapsto \Toepr(P)$ is surjective  because the image contains the generating isometries $L_p$ for $p\in P$.
\end{proof}
\end{prop}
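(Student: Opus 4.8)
The plan is to invoke the universal property of $\Toepu(P)$: since it is defined as the universal unital $\Cst$\nb-algebra on generators $\{t_p \mid p\in P\}$ subject to relations \textup{(T1)--(T4)}, any family $\{b_p \mid p\in P\}$ in a unital $\Cst$\nb-algebra $B$ that satisfies \textup{(T1)--(T4)} induces a unital $*$-homomorphism $\Toepu(P) \to B$ with $t_p\mapsto b_p$. Thus the entire task reduces to verifying that the left regular isometries $L_p\in \Toepr(P)\subseteq \mb(\ell^2(P))$ satisfy \textup{(T1)--(T4)}; surjectivity will then be automatic.

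The key input is \lemref{lem:1_KalphaspansD_r}, which identifies $\dot{L}_\alpha = \1_{K(\alpha)}$ as a multiplication operator on $\ell^2(P)$ whenever $\alpha$ is neutral. Granting this, relation \textup{(T1)} is immediate since $L_e=\id$; relation \textup{(T2)} holds because $K(\alpha)=\emptyset$ forces $\dot{L}_\alpha = \1_\emptyset = 0$; and relation \textup{(T3)} holds because $K(\alpha)=K(\beta)$ gives $\dot{L}_\alpha=\1_{K(\alpha)}=\1_{K(\beta)}=\dot{L}_\beta$. The only relation requiring an argument is \textup{(T4)}. Suppose $F$ is a finite set of neutral words and $\alpha$ a neutral word with $K(\alpha)=\bigcup_{\beta\in F}K(\beta)$. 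Then $K(\beta)\subseteq K(\alpha)$ for every $\beta\in F$, so $\1_{K(\alpha)}-\1_{K(\beta)}=\1_{K(\alpha)\setminus K(\beta)}$ is again a characteristic function, and since these multiplication operators commute their product is $\prod_{\beta\in F}(\dot{L}_\alpha-\dot{L}_\beta)=\1_{\bigcap_{\beta\in F}(K(\alpha)\setminus K(\beta))}$. But $\bigcap_{\beta\in F}(K(\alpha)\setminus K(\beta))=K(\alpha)\setminus\bigcup_{\beta\in F}K(\beta)=\emptyset$, so the product vanishes; equivalently, evaluating on $\delta_x$, if $x\notin K(\alpha)$ every factor kills it, while if $x\in K(\alpha)$ then $x\in K(\beta_0)$ for some $\beta_0\in F$ and the factor $\dot{L}_\alpha-\dot{L}_{\beta_0}$ kills it.

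With \textup{(T1)--(T4)} verified, the universal property produces a unital $*$-homomorphism $\lambda^+\colon \Toepu(P)\to \Toepr(P)$ with $\lambda^+(t_p)=L_p$ for all $p\in P$. Its range is a $\Cst$\nb-subalgebra of $\Toepr(P)$ containing every generating isometry $L_p$, hence equals $\Toepr(P)$, so $\lambda^+$ is surjective. I do not anticipate any genuine obstacle: the whole argument rests on the already-established identity $\dot{L}_\alpha=\1_{K(\alpha)}$ together with the elementary observation that $\bigcap_{\beta\in F}\bigl(K(\alpha)\setminus K(\beta)\bigr)$ is empty exactly when the ideals $K(\beta)$, $\beta\in F$, cover $K(\alpha)$.
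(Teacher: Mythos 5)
Your proposal is correct and follows essentially the same route as the paper's proof: verify (T1)--(T3) from the identity $\dot{L}_\alpha=\1_{K(\alpha)}$, verify (T4) by noting that $\bigcap_{\beta\in F}\bigl(K(\alpha)\setminus K(\beta)\bigr)=\emptyset$ forces $\prod_{\beta\in F}(\1_{K(\alpha)}-\1_{K(\beta)})=0$, and conclude surjectivity because the image contains the generators $L_p$. The only difference is that you spell out the commuting-projections computation (and its check on basis vectors) slightly more explicitly than the paper does.
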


It is clear from \proref{prop:diagonal-sub} that the restriction $\lambda^+\!\restriction_{D_u}$ maps  $D_u$  onto  $D_r$ and we would like to show next that 
this restriction is in fact an isomorphism. We will obtain this as a corollary of our characterization of the representations of $\Toepu(P)$ that are faithful on $D_u$. The following fact is probably well known, but we have not been able to find a general reference. We include the concrete statement here for completeness and to set the notation.

\begin{lem}[cf. Lemma~1.4 of \cite{LACA1996415}]\label{lem:orthogonaldecomp}
Suppose $F$ is a  finite set of mutually commuting projections in a unital $\Cst$\nb-algebra and let $\lambda_X \in \CC$ for each $X \in F$.
For every subset $A$ of $F$ define
\[
Q_A\coloneqq \prod_{X\in A} X \prod_{X\in F\setminus A} (1-X), 
\]
which includes the extreme cases $Q_\emptyset = \prod_{F} (1-X)$ and $Q_F= \prod_F X$.

Then  $1 = \sum_{A\subset F} Q_A $ is a decomposition of the identity into mutually orthogonal projections, 
\[
\sum_{X\in F} \lambda_X X = \sum_{\emptyset \neq A \subset F} ({\textstyle\sum_{X\in A} \lambda_X}) Q_A
\]
and
\[
\Big\|\sum_{X\in F} \lambda_X X\Big\| = \max\left\{ \big|{\textstyle\sum_{X\in A} \lambda_X}\big| \mid \emptyset \neq  A\subset F, \ \ Q_A \neq 0 \right\}
\]
\begin{proof} The proof follows, mutatis mutandis, from the proof of \cite[Lemma~1.4]{LACA1996415}, keeping the product  $\prod_{X\in A} X $ instead of substituting it by $X_{\sigma A}$. 
\end{proof}
\end{lem}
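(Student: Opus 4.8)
The plan is to reduce everything to a finite-dimensional, purely combinatorial statement about the commutative unital $\Cst$-algebra generated by the projections in $F$ and then invoke the Gelfand picture. First I would note that since the projections $\{X : X\in F\}$ mutually commute, the unital $\Cst$-algebra $\mathcal{C}$ they generate is commutative and finite-dimensional, so $\mathcal{C} \cong \mathrm{C}(\Omega)$ for a finite set $\Omega$; in fact $\mathcal{C}$ is the linear span of the $2^{|F|}$ products $Q_A$, and one checks directly from $X = X^2$ and $(1-X) = (1-X)^2$ together with commutativity that the $Q_A$ are mutually orthogonal projections summing to $\prod_{X\in F}(X + (1-X)) = 1$. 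This gives the decomposition of the identity. The nonzero $Q_A$ are then exactly the minimal projections of $\mathcal{C}$, hence correspond bijectively to the points of $\Omega$.

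Next I would establish the identity $\sum_{X\in F}\lambda_X X = \sum_{\emptyset \neq A\subset F}\bigl(\sum_{X\in A}\lambda_X\bigr) Q_A$. The quick way is to expand each $X$ as $X = \sum_{A\ni X} Q_A$ (which follows by multiplying the resolution $1 = \sum_A Q_A$ by $X$ and using $X Q_A = Q_A$ when $X\in A$ and $XQ_A = 0$ when $X\notin A$), and then interchange the order of summation: $\sum_{X\in F}\lambda_X X = \sum_{X\in F}\lambda_X\sum_{A\ni X}Q_A = \sum_{A\subset F}\bigl(\sum_{X\in A}\lambda_X\bigr)Q_A$, where the $A=\emptyset$ term drops because the inner sum is empty. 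Finally, for the norm formula, apply the Gelfand isomorphism: under $\mathcal{C}\cong \mathrm{C}(\Omega)$ the element $\sum_X\lambda_X X$ is a function whose value at the point corresponding to a nonzero $Q_A$ is precisely $\sum_{X\in A}\lambda_X$ (since the $Q_A$ are the minimal projections and each evaluates to $1$ at its own point and $0$ elsewhere), so its sup norm—which equals its $\Cst$-norm—is the maximum of $|\sum_{X\in A}\lambda_X|$ over nonzero $Q_A$.

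I expect no genuine obstacle here: as the statement itself notes, this is a routine variant of \cite[Lemma~1.4]{LACA1996415}, and the only place requiring a small amount of care is keeping the bookkeeping straight when $A$ ranges over subsets including or excluding $\emptyset$, and making sure the convention for empty products ($Q_\emptyset = \prod_F(1-X)$, $Q_F = \prod_F X$) is consistent throughout. Alternatively, one could bypass Gelfand entirely and argue the norm formula by hand: $\|\sum_{\emptyset\neq A}c_A Q_A\| = \max_{A}|c_A|$ whenever the $Q_A$ are mutually orthogonal nonzero projections, which follows from $\bigl(\sum c_A Q_A\bigr)^*\bigl(\sum c_A Q_A\bigr) = \sum |c_A|^2 Q_A$ and the fact that for a positive combination of orthogonal projections the norm is the largest coefficient. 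Either route is short, so the proof will essentially consist of recording these identities in the order above.
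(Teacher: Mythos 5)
Your argument is correct, and it is exactly the routine computation that the paper's one-line proof delegates to \cite{LACA1996415}*{Lemma~1.4} ``mutatis mutandis'': the mutual orthogonality of the $Q_A$ and the resolution $1=\sum_A Q_A$ from expanding $\prod_{X\in F}(X+(1-X))$, the expansion $X=\sum_{A\ni X}Q_A$ followed by interchanging sums, and the norm formula for a linear combination of mutually orthogonal projections (via Gelfand or via $b^*b=\sum|c_A|^2Q_A$). So you have simply written out the proof the paper cites, with no substantive difference in approach.
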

\begin{defn}\label{def:jointlyproper}
A map $w\colon P \to B$  into a unital $\Cst$\nb-algebra $B$ satisfying  (T1)--(T4) of \defref{def:toeplitz-semigroup}, is said to be {\em jointly proper} if 
\begin{equation}\label{eqn:jproper}
\prod_{\alpha\in F}(1 - \dot{w}_\alpha ) \neq 0
\end{equation}
 for every finite collection $F$ of neutral words such that $K(\alpha)$ is a proper constructible right ideal for each $\alpha \in F$.
 By extension, we will also say that a unital \Star homomorphism $\rho$ of  $\Toepu(P)$ into a $\Cst$\nb-algebra $B$ is {\em jointly proper} if
\[
\prod_{\alpha\in F}(1 - \rho(\dot{t}_\alpha) ) \neq 0
\]
 for every finite collection $F$ as above.
 Because of (T3), it suffices to verify that \eqref{eqn:jproper} holds for  words $\alpha$ in a collection large enough to generate all constructible ideals.

\end{defn}
\begin{example}Recall that an isometry $V$  is called {\em proper} if $V V^* \neq 1$. So Coburn's theorem \cite{Cob} can be rephrased by saying that the $\Cst$\nb-algebra generated by a proper isometry is canonically unique.  In order to see why the stronger condition of joint properness might be necessary for a uniqueness result, let $S$ be the  unilateral shift on $\ell^2(\NN)$ and consider the two  isometries $V\coloneqq S\oplus 1$  and $W \coloneqq 1 \oplus S$ defined on $\ell^2(\NN) \oplus \ell^2(\NN)$.  Then $V$ and $W$ are proper isometries that $*$-commute. Thus the isometric representation $T:\NN^2 \to \mathbb B( \ell^2(\NN)\oplus \ell^2(\NN))$ defined by $T_{(1,0)}\coloneqq V$ and $T_{(0,1)}\coloneqq W$ is covariant in the sense of Nica, equivalently, satisfies (T3), but is not jointly proper.  Denoting as usual by $c$ the $\Cst$\nb-algebra of convergent sequences, we see that  the diagonal in $\Cst(T_{\NN^2})$ is isomorphic to $c\oplus c$ while the diagonal in $\Toepu (\NN^2)$ is $c \otimes c$. 

Other familiar examples of proper isometries that are not jointly proper arise from isometric representations  $V: \FF_n^+ \to \mh$ with $n<\infty$. In this case  Nica-covariance  means that the generating isometries $V_j$ have mutually orthogonal ranges. If $\sum_j V_j V_j^* =1$, or, equivalently, if  $\prod_j(1-V_jV_j^*) =0$, then the resulting representation of $\Toep\!\mathcal O_n$ is not jointly proper. In fact, such a representation  factors through $\mathcal O_n$, whose diagonal is isomorphic to the continuous functions on infinite path space, while the diagonal in $\Toep\!\mathcal O_n$ itself is isomorphic to the continuous functions on the space of finite and infinite paths.
\end{example}
\begin{rem}
If $R$ and $S$ are constructible ideals with $R\subset S$, then $\1_R \leq \1_S$ and hence $\1 -\1_S \leq \1-\1_R$.
Hence the verification of whether a representation is jointly proper can sometimes be reduced to finite subsets of larger ideals, which are generally associated to shorter words. 
\end{rem}

\defref{def:jointlyproper} generalizes to constructible right ideals the condition used in \cite[Proposition~2.3]{LACA1996415} to characterize  representations that are faithful on the diagonal algebra  
when $P$ is the positive cone in a quasi-lattice ordered group. So it should not be too much of a surprise that 
such a characterization is possible for general submonoids of groups too.

\begin{prop}\label{pro:faithfulonD_u}
Suppose $P$ is a submonoid of the group $G$. 
A unital \Star homomorphism $\rho\colon\Toepu(P) \to B$ into a  $\Cst$\nb-algebra $B$ is faithful on the full diagonal subalgebra $D_u$
if and only if  it is jointly proper.
\begin{proof}
Evaluation at $e\in P$ shows that  $\prod_{\alpha\in F}(\1 - \1_{K(\alpha)})\neq 0$ in $D_r$; 
so the representation $\lambda^+$ arising from the map $L\colon P \to \Toepr(P)$ 
is jointly proper. Hence the same is true for the identity representation of $\Toepu(P)$ arising from the universal map 
$t: P \to \Toepu(P)$, 
so the condition is necessary.

For sufficiency, suppose $\rho$ is jointly proper and let $a\neq 0$ be an element  of the form 
\[
a\coloneqq \sum_{\alpha\in F} \lambda_\alpha \dot{t}_\alpha \in D_u,
\]  where $F\subset\Hilm[W]$ is a finite collection of  
neutral words.
 We will show that $\rho(a) \neq 0$ in $B$. 
 Since $a$ is a linear combination of commuting projections, 
 \lemref{lem:orthogonaldecomp} gives a nonempty subset $A$ of $F$ such that the projection
\[
Q_A\coloneqq\prod_{\alpha \in A} \dot{t}_\alpha \prod_{\beta \in F\setminus A} (\1 -\dot{t}_\beta)
\]
is nonzero and satisfies $\| Q_A a\| = \|a\| = | \sum_{\alpha \in A} \lambda_\alpha|$. 

If the set $\bigcap_{\alpha\in A} K(\alpha) \setminus \bigcup_{\beta  \in F\setminus  A} K(\beta)$ 
were empty, then we would have
\[
\bigcap_{\alpha\in A} K(\alpha) = \bigcup_{\beta  \in F\setminus  A} \Big(K(\beta) \cap \bigcap_{\alpha\in A} K(\alpha)\Big),
\]
 and equation \eqref{eqn:condition(T4)words} would realize 
$\prod_A \dot{t}_\alpha = \dot{t}_{\prod_{\alpha\in A} \alpha} $ as a linear combination of subprojections of the projections $\dot{t}_\beta$ for $\beta \in F\setminus A$. This would force $Q_A =0 $, contradicting the choice of $Q_A$. So there exists $p \in \bigcap_{\alpha\in A} K(\alpha) \setminus \bigcup_{\beta  \in F\setminus  A} K(\beta)$, and we may define a projection by
 \[
 Q \coloneqq t_p t_p^* Q_A t_p t_p^* = t_p\prod_{\beta\in F\setminus A} (\1 -  (t_p^*\dot{t}_{\beta}t_p))t_p^*=
 t_p \,  \Big(\prod_{\beta \in F\setminus A} (\1 - \dot{t}_{(p,e)\beta(e,p)}) \Big) t_p^*.
  \]
Clearly $Q$ is a   subprojection of $Q_A$ 
and $Q\neq 0$ because through the left regular representation
we have $\lambda^+(Q) (p) = \prod_{\beta\in F\setminus A} \big(\1 - \1_{K(\beta)\cap pP} \big)(p) = 1$.
Hence $aQ =  \big( \sum_{\alpha \in A} \lambda_\alpha \big)  Q$, and thus $\|aQ\| = \|a\| = |\sum_{\alpha \in A} \lambda_\alpha|$.
 
Passing to the representation $\rho$,  we get
\[
\rho(Q) = \rho(t_p)\,  \Big(\prod_{\beta\in F\setminus A} (\1 - \rho(\dot{t}_{(p,e)\beta(e,p)}) \Big) \rho(t_p)^*,
\]
where the middle factor is of the type that appears in the joint properness condition \eqref{eqn:jproper}.
From \proref{pro:propertiesKalpha}(6) we know that
 \[
 K((p,e)\beta(e,p)) =  P \cap p\inv  K(\beta) 
 \]
and since $p\notin K(\beta)$ by construction, we conclude that the ideal $K((p,e)\beta(e,p))$ is proper for each $\beta\in F \setminus A$. Thus,
our assumption \eqref{eqn:jproper}, together with the fact that $\rho(t_p)$ is an isometry, imply that $\rho(Q) \neq 0$. 
  But since $\rho(a) \rho(Q) =\rho(a Q) = \big( \sum_{\alpha \in A} \lambda_\alpha \big)  \rho(Q)$ and $ \big|\big( \sum_{\alpha \in A} \lambda_\alpha \big)\big| = \|a\| \neq 0$, we must have $\rho(a) \neq 0$ as wanted.
 
 To establish the proposition, observe that for each $\cap$-closed finite subcollection $\mc$ of constructible ideals,
 \[
 \A(\mc) \coloneqq \clsp\{\dot{t}_\alpha\mid \alpha\in\Hilm[W],\,K(\alpha)\in \mc, \dot{\alpha}=e\}
 \]
 is a finite dimensional $\Cst$\nb-subalgebra of $D_u$ spanned by a finite set of projections arising from constructible right ideals. To see this, for each $S\in \mc$, choose a neutral word $\alpha_S$ with $K(\alpha_S)=S$. Then it follows from relation (T3) of \defref{def:toeplitz-semigroup} that $ \A(\mc)=\lsp\{\dot{t}_{\alpha_S}\mid S\in\mc\}$ as claimed. Now notice that
 $D_u = \lim_{\mc} \A(\mc)$, with the limit taken over the $\cap$-closed finite subcollections of constructible ideals 
 directed by inclusion. From the above we deduce that the representation $\rho$
 is faithful on each $\A(\mc)$, and so it is also faithful on $D_u$. 
   \end{proof}
\end{prop}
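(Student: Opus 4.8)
The plan is to prove the two implications separately; necessity is quick, and the content is in sufficiency. For necessity I would first check that the left regular representation is jointly proper: given a finite collection $F$ of neutral words with each $K(\alpha)$ a proper right ideal, evaluating $\prod_{\alpha\in F}(\1-\1_{K(\alpha)})$ at $e\in P$ gives $1$, because any right ideal containing $e$ must be all of $P$, so $e\notin K(\alpha)$ when $K(\alpha)\neq P$. Since $\lambda^+$ sends $\prod_{\alpha\in F}(1-\dot t_\alpha)$ to this nonzero function in $D_r$, the projection $\prod_{\alpha\in F}(1-\dot t_\alpha)$ is nonzero in $\Toepu(P)$, and it lies in $D_u$ by \proref{pro:propertiesofsharpgenerators}(3); hence any $\rho$ faithful on $D_u$ must be jointly proper.

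For sufficiency I would reduce to finite dimensions. Writing $D_u$ as the inductive limit of the subalgebras $\A(\mc)$ attached to $\cap$-closed finite families $\mc$ of constructible ideals, relation (T3) shows each $\A(\mc)$ is finite-dimensional — spanned by the commuting projections $\dot t_{\alpha_S}$, $S\in\mc$ — so it is enough to show $\rho$ is injective on each $\A(\mc)$, i.e. that $\rho(a)\neq 0$ for every nonzero $a=\sum_{\alpha\in F}\lambda_\alpha\dot t_\alpha$ with $F$ a finite set of neutral words. Applying \lemref{lem:orthogonaldecomp} to the family $\{\dot t_\alpha\mid\alpha\in F\}$, I would select a nonempty $A\subseteq F$ with $Q_A\coloneqq\prod_{\alpha\in A}\dot t_\alpha\prod_{\beta\in F\setminus A}(1-\dot t_\beta)\neq 0$ and $\|Q_A a\|=\|a\|=\bigl|\sum_{\alpha\in A}\lambda_\alpha\bigr|$.

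The key step is a dichotomy forced by (T4): if $\bigcap_{\alpha\in A}K(\alpha)\setminus\bigcup_{\beta\in F\setminus A}K(\beta)$ were empty, then $\bigcap_{\alpha\in A}K(\alpha)$ would equal the union of the constructible ideals $K(\beta)\cap\bigcap_{\alpha\in A}K(\alpha)$, and the expanded form \eqref{eqn:condition(T4)words} of (T4) would write $\prod_{\alpha\in A}\dot t_\alpha$ as a combination of $\dot t$'s subordinate to the projections $\dot t_\beta$ with $\beta\in F\setminus A$, giving $Q_A=0$, a contradiction. So I may pick $p\in\bigcap_{\alpha\in A}K(\alpha)\setminus\bigcup_{\beta\in F\setminus A}K(\beta)$ and set $Q\coloneqq t_pt_p^*\,Q_A\,t_pt_p^*$. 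Since $t_pt_p^*$ is a diagonal projection with associated ideal $pP$ and $pP\subseteq K(\alpha)$ for every $\alpha\in A$, the $A$-factors collapse, and using \proref{pro:propertiesKalpha}(6)--(7) to compute $t_p^*\dot t_\beta t_p=\dot t_{(p,e)\beta(e,p)}$ I would rewrite $Q=t_p\bigl(\prod_{\beta\in F\setminus A}(1-\dot t_{(p,e)\beta(e,p)})\bigr)t_p^*$. Then $\lambda^+(Q)$ is nonzero (its value at $p$ is $1$), $Q\leq Q_A$, and $aQ=\bigl(\sum_{\alpha\in A}\lambda_\alpha\bigr)Q$, so $\|aQ\|=\|a\|$.

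To finish, \proref{pro:propertiesKalpha}(6) identifies $K((p,e)\beta(e,p))=P\cap p^{-1}K(\beta)$, which is a \emph{proper} constructible ideal because $p\notin K(\beta)$ means $e\notin p^{-1}K(\beta)$. Joint properness of $\rho$ then gives $\prod_{\beta\in F\setminus A}(1-\rho(\dot t_{(p,e)\beta(e,p)}))\neq 0$, and since $\rho(t_p)$ is an isometry, conjugation by it keeps this nonzero, so $\rho(Q)\neq 0$; finally $\rho(a)\rho(Q)=\bigl(\sum_{\alpha\in A}\lambda_\alpha\bigr)\rho(Q)$ with $\bigl|\sum_{\alpha\in A}\lambda_\alpha\bigr|=\|a\|\neq0$ yields $\rho(a)\neq 0$. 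I expect the main obstacle to be precisely the dichotomy step — recognizing that nonvanishing of the atom $Q_A$ is exactly the obstruction to $\bigcap_{\alpha\in A}K(\alpha)$ being absorbed by the remaining ideals, which is where the full strength of (T4) (beyond \proref{pro:propertiesofsharpgenerators}(4)) is needed — together with the careful bookkeeping needed to conjugate into the range of the joint-properness hypothesis.
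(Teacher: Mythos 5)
Your proposal is correct and follows essentially the same route as the paper's proof: necessity via evaluation at $e$ through $\lambda^+$, and sufficiency via the orthogonal decomposition of \lemref{lem:orthogonaldecomp}, the (T4)-dichotomy producing a point $p\in\bigcap_{\alpha\in A}K(\alpha)\setminus\bigcup_{\beta\in F\setminus A}K(\beta)$, compression by $t_pt_p^*$ to land in the joint-properness hypothesis, and the inductive limit over the finite-dimensional subalgebras $\A(\mc)$. The only difference is cosmetic: you perform the reduction to the $\A(\mc)$'s at the outset rather than at the end, which changes nothing in substance.
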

\begin{cor}\label{cor:faithfulondiagonal}
The restriction of $\lambda^+$ to the full diagonal $D_u$ gives a canonical isomorphism $D_u \cong D_r  $.
A representation of $\Toepr(P)$ is faithful  on $D_r$ if and only if it is jointly proper.
\begin{proof}
The first line of the proof of \proref{pro:faithfulonD_u} 
verifies that $\lambda^+$ is jointly proper, so the first assertion follows by \proref{pro:faithfulonD_u}. 
Once we know that~$D_u$ is canonically isomorphic to $D_r$, the second assertion follows directly also from \proref{pro:faithfulonD_u}.
\end{proof}
\end{cor}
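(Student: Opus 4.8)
The plan is to derive both statements as formal consequences of \proref{pro:faithfulonD_u}, using that $\lambda^+$ already restricts to a \emph{surjection} of $D_u$ onto $D_r$ by \proref{prop:diagonal-sub}. For the isomorphism $D_u\cong D_r$, the only thing left to check is injectivity of $\lambda^+\!\restriction_{D_u}$, and by \proref{pro:faithfulonD_u} this amounts to verifying that $\lambda^+$, regarded as a unital \Star homomorphism out of $\Toepu(P)$, is jointly proper. First I would record the elementary fact that a \emph{proper} constructible right ideal cannot contain $e$: if $e\in K(\alpha)$ then $P=eP\subseteq K(\alpha)$ since $K(\alpha)$ is a right ideal, forcing $K(\alpha)=P$. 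Hence, for any finite collection $F$ of neutral words with every $K(\alpha)$, $\alpha\in F$, proper, the function $\prod_{\alpha\in F}(\1-\dot{L}_\alpha)=\prod_{\alpha\in F}(\1-\1_{K(\alpha)})$ — using $\dot{L}_\alpha=\1_{K(\alpha)}$ from \lemref{lem:1_KalphaspansD_r} — takes the value $1$ at $e\in P$ and so is nonzero in $D_r$. That is exactly joint properness of $\lambda^+$, so \proref{pro:faithfulonD_u} yields injectivity on $D_u$; together with surjectivity onto $D_r$, this gives the canonical isomorphism $D_u\cong D_r$.

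For the second assertion, given a representation $\pi\colon\Toepr(P)\to B$ I would pass to the composite $\pi\circ\lambda^+\colon\Toepu(P)\to B$. Since $\lambda^+\!\restriction_{D_u}$ is now known to be an isomorphism onto $D_r$, the representation $\pi$ is faithful on $D_r$ if and only if $\pi\circ\lambda^+$ is faithful on $D_u$; by \proref{pro:faithfulonD_u} the latter holds precisely when $\pi\circ\lambda^+$ is jointly proper. Finally, because $(\pi\circ\lambda^+)(\dot{t}_\alpha)=\pi(\dot{L}_\alpha)=\pi(\1_{K(\alpha)})$ for every neutral word $\alpha$, the products $\prod_{\alpha\in F}(\1-(\pi\circ\lambda^+)(\dot{t}_\alpha))$ in the joint-properness condition for $\pi\circ\lambda^+$ are literally the products $\prod_{\alpha\in F}(\1-\pi(\1_{K(\alpha)}))$, i.e.\ the joint-properness condition for $\pi$ (the map $p\mapsto\pi(L_p)$ satisfies (T1)--(T4) because $L$ does, so \defref{def:jointlyproper} applies to it). Stringing these equivalences together proves the claim.

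Since everything reduces to \proref{pro:faithfulonD_u}, there is no genuine obstacle. The only care needed is in the two bookkeeping points above: that proper constructible right ideals omit the identity of $P$ (which is what makes $\lambda^+$ jointly proper in the first place), and that the joint-properness condition is transported correctly along the surjection $\lambda^+$ so that it reads the same for $\pi$ and for $\pi\circ\lambda^+$.
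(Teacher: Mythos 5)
Your proposal is correct and follows essentially the same route as the paper: joint properness of $\lambda^+$ is verified by evaluation at $e\in P$ (which is exactly the first line of the paper's proof of \proref{pro:faithfulonD_u}), and both assertions are then deduced from \proref{pro:faithfulonD_u}, with the second obtained by transporting joint properness and faithfulness along the now-established isomorphism $D_u\cong D_r$. The extra bookkeeping you spell out (proper constructible ideals omit $e$, and the joint-properness conditions for $\pi$ and $\pi\circ\lambda^+$ coincide) is exactly what the paper leaves implicit.
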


Let $E_r\colon\Toepr(P)\to D_r$  be the restriction of the canonical diagonal conditional expectation from $\Bound(\ell^2(P))$ onto~$\ell^\infty(P)$. It is determined   
by $$\braket{E_r(b)\delta_p}{\delta_q}=\begin{cases} \braket{b\delta_p}{\delta_p}&\text{ if }p=q,\\
0 & \text{otherwise}
\end{cases}$$ 
for $b\in \Toepr(P)$ and is a faithful conditional expectation.

\begin{cor}\label{cor:reg-kernel} Let $P$ be a submonoid of a group. Then $E_u\coloneqq (\lambda^+\restriction_{D_u})^{-1}\circ E_r\circ\lambda^+$ is a conditional expectation from $\Toep_u(P)$ onto $D_u$ that vanishes on the subspace $$B_g\coloneqq\clsp\{\dot{t}_\alpha\mid \alpha\in\Hilm[W],\, \dot{\alpha}=g\}\subset\Toepu(P)$$ whenever $g\neq e$. Moreover, $$\ker\lambda^+=\{b\in \Toep_u(P)\mid E_u(b^*b)=0\}.$$
\begin{proof} The first assertion is clearly true. For the last one, notice that $E_u(b^*b)=0$ if and only if $\lambda^+(b^*b)=0$ because $E_r$ is faithful. Now the result follows because $\lambda^+(b^*b)=0$ if and only if $\lambda^+(b)=0$.
\end{proof}
\end{cor}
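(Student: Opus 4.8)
The plan is to handle the two assertions in turn, leaning on two facts already in hand: the canonical isomorphism $\lambda^+\restriction_{D_u}\colon D_u\to D_r$ from \corref{cor:faithfulondiagonal}, and the faithfulness of the diagonal expectation $E_r$ on $\Toepr(P)$.

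That $E_u$ is a conditional expectation onto $D_u$ is essentially formal. Since $\lambda^+\restriction_{D_u}$ is a $*$-isomorphism onto $D_r$, the composite $E_u=(\lambda^+\restriction_{D_u})^{-1}\circ E_r\circ\lambda^+$ is a well-defined unital completely positive map with range $D_u$; for $d\in D_u$ one has $\lambda^+(d)\in D_r$, hence $E_r(\lambda^+(d))=\lambda^+(d)$ and $E_u(d)=d$, so $E_u$ restricts to the identity on $D_u$, whence it is a conditional expectation (being a unital completely positive projection onto the $\Cst$\nb-subalgebra $D_u$; alternatively, the $D_u$-bimodule property follows directly from the corresponding property of $E_r$ together with $\lambda^+(d_i)\in D_r$ and the multiplicativity of $(\lambda^+\restriction_{D_u})^{-1}$). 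For the vanishing claim, since $E_u$ is continuous and linear it suffices to show $E_u(\dot{t}_\alpha)=0$ whenever $\dot\alpha=g\neq e$; as $\lambda^+(\dot{t}_\alpha)=\dot{L}_\alpha$, this reduces to $E_r(\dot{L}_\alpha)=0$, i.e.\ to the vanishing of every diagonal matrix coefficient $\braket{\dot{L}_\alpha\delta_x}{\delta_x}$. Reading off the computation performed in the introduction (each factor $L_p$ left-multiplies the basis index by $p$ and each $L_p^*$ by $p\inv$), $\dot{L}_\alpha\delta_x$ is either $0$ or $\delta_{\dot\alpha x}$, and since $G$ is a group $\dot\alpha x=x$ forces $\dot\alpha=e$; hence the diagonal of $\dot{L}_\alpha$ vanishes whenever $g\neq e$, so $E_r(\dot{L}_\alpha)=0$ and $E_u(\dot{t}_\alpha)=(\lambda^+\restriction_{D_u})^{-1}(0)=0$.

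The substantive assertion is the kernel description. For $b\in\Toepu(P)$ we have
\[
E_u(b^*b)=(\lambda^+\restriction_{D_u})^{-1}\big(E_r(\lambda^+(b)^*\lambda^+(b))\big),
\]
and since $(\lambda^+\restriction_{D_u})^{-1}$ is injective, $E_u(b^*b)=0$ if and only if $E_r(\lambda^+(b)^*\lambda^+(b))=0$. Faithfulness of $E_r$ turns the latter into $\lambda^+(b)=0$, i.e.\ $b\in\ker\lambda^+$; conversely $b\in\ker\lambda^+$ plainly gives $E_u(b^*b)=0$. This yields $\ker\lambda^+=\{b\in\Toepu(P)\mid E_u(b^*b)=0\}$.

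I do not expect a genuine obstacle: all of the hard input (the isomorphism $D_u\cong D_r$ and the faithfulness of $E_r$) has been established earlier, and the only calculation required is the elementary, arbitrary-length version of the four-letter computation in the introduction, namely that an alternating word operator $\dot{L}_\alpha$ with $\dot\alpha\neq e$ carries every standard basis vector off itself and hence has zero diagonal.
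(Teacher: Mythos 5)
Your proof is correct and follows essentially the same route as the paper: the kernel description is obtained exactly as there, by composing with the isomorphism $\lambda^+\restriction_{D_u}\colon D_u\to D_r$ of \corref{cor:faithfulondiagonal} and invoking faithfulness of $E_r$ together with $\lambda^+(b^*b)=\lambda^+(b)^*\lambda^+(b)$. The only difference is that you spell out the first assertion (the conditional expectation property and the vanishing of the diagonal of $\dot L_\alpha$ when $\dot\alpha\neq e$), which the paper dismisses as "clearly true," and your verification of those points is accurate.
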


\subsection{Li's semigroup \texorpdfstring{$\mathrm{C}^*$}{C*}-algebra} Next we wish to compare $\Toepu(P)$ with the full semigroup $\Cst$\nb-algebra of a submonoid of a group, denoted by $\Cst_s(P)$ in \cite{Li:Semigroup_amenability}. 
We begin by recalling that definition.
\begin{defn} (cf.\cite[Definition 3.2]{Li:Semigroup_amenability})
\label{def:cstar_s(P)} Let $P$ be a submonoid of a group $G$. The \emph{semigroup $\Cst$\nb-algebra} of~$P$, denoted by~$\Cst_s(P)$, is the universal $\Cst$\nb-algebra generated by a family of isometries~$\{v_p\mid p\in P\}$ and projections~$\{e_S\mid S\in\J \cup\{\emptyset\}\}$ such that
\begin{enumerate}\label{def:semigroup_Cstaralgebra}
\item[\textup{(i)}] $v_pv_q=v_{pq}$ \  whenever $p,q\in P$;
\smallskip\item[\textup{(ii)}] $e_{\emptyset}=0$;
\smallskip\item[\textup{(iii)}] $\dot{v}_\alpha=e_{S}$ \  whenever $S\in \J$ and $\alpha\in \W$ satisfy $\dot\alpha=e$ and $ K(\alpha) = S$.
\end{enumerate}
\end{defn}
The family $\{e_S\mid S\in\J \cup\{\emptyset\}\}$ replicates the semilattice structure of the
corresponding family of subsets of $P$.
This property is included as part of \cite[Definition~2.2]{Li:Semigroup_amenability} for general semigroups, but for submonoids of groups
it is a consequence of the relations listed above. 
Perhaps surprisingly, one can achieve the same effect by requiring a lot less, as shown in the next proposition.

\begin{prop} \label{pro:equivalentcovs}
Let $P$ be a submonoid of a group and let  
$\Cst_{\#}(P)$ be the universal $\Cst$\nb-algebra generated by a family of elements~$\{w_p\mid p\in P\}$ subject to the relations \textup{(T1)--(T3)} of \proref{pro:propertiesofsharpgenerators}.
Then $\Cst_{\#}(P)$ is canonically isomorphic to $\Cst_s(P)$, i.e. there is an isomorphism that maps $w_p$ to $v_p$.
\begin{proof}  
We know from \proref{pro:propertiesofsharpgenerators} that any family $\{w_p\mid p\in P\}$ satisfying  \textup{(T1)} and \textup{(T3)}
consists of a semigroup of isometries, so there is no concern about existence of the universal object $\Cst_{\#}(P)$.
Denote by ${v}_p$ and ${e}_S$ the generating isometries and projections of $\Cst_s(P)$. 
Since $1 = {v}_e^* {v}_e =  {v}_e^* {v}_e  {v}_e =  {v}_e$ by the first relation in \defref{def:cstar_s(P)} we see that  
\textup{(T1)}  holds in $\Cst_s(P)$. 
If $\alpha\in\Hilm[W]$ is a neutral word with $K(\alpha) =\emptyset$, then $\dot{v}_\alpha=  {e}_{K(\alpha)} =0 $, from which it follows that relation \textup{(T2)} in the presentation of $\Cst_{\#}(P)$ 
holds in $\Cst_s(P)$.
A similar argument shows that  \textup{(T3)} also holds.  Hence there is a \Star homomorphism $\Cst_{\#}(P)\to\Cst_s(P)$ that maps the 
element~$w_p\in \Cst_{\#}(P)$ to the corresponding isometry $ {v}_p\in\Cst_s(P)$. This 
homomorphism is surjective by Corollary 2.10 and Lemma~3.3 of \cite{Li:Semigroup_amenability}. 

To obtain the inverse map, recall that the family of constructible right ideals of~$P$ is given by 
\[
 \big\{ K(\alpha) \mid \alpha \in \W, \ \dot\alpha = e\}
\] 
by \proref{pro:characterizationconstructibleideals}. So for each $S\in \J$ we may choose $\alpha_S=(p_1,p_2,\ldots,p_{2k})$ with $\dot{\alpha_S}=e$
such that $S = K(\alpha_S)$. Then
 \[
\dot{w}_{\alpha_S}=w_{p_1}^*w_{p_2}\cdots w_{p_{2k-2}} w_{p_{2k-1}}^*w_{p_{2k}}
\] 
is a projection by \proref{pro:propertiesofsharpgenerators}(3). 
Because  of relation \textup{(T3)}, { the projection $\dot{w}_{\alpha_S}$ in~$\Cst_{\#}(P)$ does not depend on the choice of the neutral word~$\alpha_S$ representing the right ideal~$S$.
It follows from the definition of the~$\dot{w}_{\alpha_S}$'s and \proref{pro:propertiesofsharpgenerators} that the family of projections $\{\dot{w}_{\alpha_S}\mid S\in \J\}$ in $\Cst_{\#}(P)$, together with the family of isometries 
$\{w_p\mid p\in P\}$, satisfy the conditions of \defref{def:cstar_s(P)}. 
Hence the maps $ {v}_p \mapsto w_p$, $e_S\mapsto \dot{w}_{\alpha_S}$ and $e_\emptyset\mapsto 0$ extend to a homomorphism $\Cst_s(P)\to\Cst_{\#}(P)$, which is obviously the inverse of the
 one determined above by $ w_p \mapsto  {v}_p$.}
\end{proof}
\end{prop}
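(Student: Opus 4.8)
The strategy is to construct mutually inverse $*$\nobreakdash-homomorphisms between $\Cst_{\#}(P)$ and $\Cst_s(P)$ directly from the two universal presentations. I would first fix the easy direction: checking that the generating data of $\Cst_s(P)$ satisfies the relations (T1)--(T3), which by universality of $\Cst_{\#}(P)$ yields a $*$\nobreakdash-homomorphism $\varphi\colon\Cst_{\#}(P)\to\Cst_s(P)$ with $\varphi(w_p)=v_p$. Relation (T1) follows from $v_e=v_e^*v_ev_e=v_e$, which is forced by relation (i) of Definition~\ref{def:cstar_s(P)} together with the fact that the $v_p$ are isometries; relations (T2) and (T3) follow from (iii), since $\dot v_\alpha=e_{K(\alpha)}$ for every neutral word $\alpha$, together with $e_\emptyset=0$. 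Surjectivity of $\varphi$ is immediate once one recalls (from \cite{Li:Semigroup_amenability}, specifically Corollary~2.10 and Lemma~3.3 there) that the projections $e_S$ already lie in the $\Cst$\nobreakdash-subalgebra generated by the isometries $v_p$, so the image of $\varphi$, which contains all the $v_p$, is everything.

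For the reverse direction I would build a $*$\nobreakdash-homomorphism $\psi\colon\Cst_s(P)\to\Cst_{\#}(P)$. The point is that in $\Cst_{\#}(P)$ we already have, by Proposition~\ref{pro:propertiesofsharpgenerators}, a semigroup of isometries $\{w_p\}$ and, for each constructible ideal $S\in\J$, a well-defined projection: choose any neutral word $\alpha_S$ with $K(\alpha_S)=S$ (possible by Proposition~\ref{pro:characterizationconstructibleideals}(1)) and set $\psi(e_S):=\dot w_{\alpha_S}$. Relation (T3) guarantees this is independent of the choice of $\alpha_S$, so the assignment is unambiguous; also put $\psi(e_\emptyset):=0$ and $\psi(v_p):=w_p$. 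I then have to verify that this data satisfies relations (i)--(iii) of Definition~\ref{def:cstar_s(P)}: (i) is Proposition~\ref{pro:propertiesofsharpgenerators}(2), (ii) is the definition, and (iii) says that for a neutral word $\alpha$ with $K(\alpha)=S$ we have $\dot w_\alpha=\dot w_{\alpha_S}$, which is again exactly relation (T3). Universality of $\Cst_s(P)$ then produces $\psi$. Finally, $\psi\circ\varphi$ and $\varphi\circ\psi$ agree with the identity on the respective generating sets ($w_p\mapsto v_p\mapsto w_p$, and likewise $v_p\mapsto w_p\mapsto v_p$, $e_S\mapsto\dot w_{\alpha_S}\mapsto e_{K(\alpha_S)}=e_S$ using relation (iii) of Definition~\ref{def:cstar_s(P)} to identify $e_{K(\alpha_S)}$ with $\dot v_{\alpha_S}$), hence are the identity by universality.

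The only genuinely substantive point, and the one I expect to be the main obstacle, is confirming that the family $\{\dot w_{\alpha_S}\mid S\in\J\}\cup\{0\}$ in $\Cst_{\#}(P)$ satisfies \emph{all} of Definition~\ref{def:cstar_s(P)}, in particular that it is internally consistent as a representation of the semilattice $\J\cup\{\emptyset\}$ — i.e.\ that $\dot w_{\alpha_S}\dot w_{\alpha_T}=\dot w_{\alpha_{S\cap T}}$ and that empty intersections yield $0$. This is where one needs the combinatorial bookkeeping of Section~\ref{sec:neutralwords}: if $S=K(\alpha)$ and $T=K(\beta)$ with $\alpha,\beta$ neutral, then $\alpha\beta$ is neutral and $K(\alpha\beta)=K(\alpha)\cap K(\beta)$ by Proposition~\ref{pro:propertiesKalpha}(5), so (T3) gives $\dot w_\alpha\dot w_\beta=\dot w_{\alpha\beta}=\dot w_{\alpha_{S\cap T}}$; the empty-intersection case uses (T2). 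Properly speaking this consistency is already packaged into Proposition~\ref{pro:propertiesofsharpgenerators}(3), so once that proposition is invoked the verification is routine — the work has been front-loaded into establishing (T1)--(T3) $\Rightarrow$ (1)--(4), and the remaining argument is just assembling the two universal maps and checking they invert each other on generators.
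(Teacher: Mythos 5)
Your proposal is correct and follows essentially the same route as the paper: verify (T1)--(T3) in $\Cst_s(P)$ to get the surjection $w_p\mapsto v_p$ (with surjectivity from Li's Corollary~2.10 and Lemma~3.3), then define the inverse on generators by choosing a neutral word $\alpha_S$ for each $S\in\J$ and using (T3) for well-definedness and \proref{pro:propertiesofsharpgenerators} to check the relations of \defref{def:cstar_s(P)}. Your extra remark about semilattice multiplicativity is harmless but not strictly needed, since \defref{def:cstar_s(P)} only imposes relations (i)--(iii) and, as the paper notes, the semilattice structure is automatic for submonoids of groups.
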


\begin{cor} \label{cor:canonicalSharptoLimap} Suppose $P$ is a submonoid of a group. Then the map $v_p \mapsto t_p$  extends to a canonical surjective \Star homomorphism $\lambda^{\#}\colon\Cst_s(P)\to\Toepu(P)$, which is an isomorphism if and only if~$P$ satisfies independence.
\begin{proof} That $v_p\mapsto t_p$ extends to a surjective \Star homomorphism $\lambda^{\#}\colon\Cst_s(P)\to\Toepu(P)$ follows from Proposition~\ref{pro:equivalentcovs} since the relations defining $\Toepu(P)$ include those defining $\Cst_{\#}(P)$. Suppose that $\lambda^{\#}$ is an isomorphism. Then the restriction of~$\lambda^{\#}$ to the diagonal subalgebra $$D_s=\overline{\mathrm{span}}\{e_S\mid S\in \J\}$$ is faithful. Hence $(\lambda^+\circ\lambda^{\#})\restriction_{D_s}\colon D_s\to D_r$ is an isomorphism by Proposition~\ref{prop:diagonal-sub}. But $\lambda^+\circ\lambda^{\#}$ is precisely the left regular representation $\lambda\colon \Cst_s(P)\to \Toepr(P)$. So $P$ satisfies independence by \cite[Corollary~2.27]{Li:Semigroup_amenability}.

From \proref{pro:equivalentcovs} we know that (T1)--(T3) hold in $\Cst_s(P)$ for general $P$. When $P$ satisfies independence,  condition (2) of \lemref{lem:equivalentto(T4)} is void, hence automatically satisfied. Thus \lemref{lem:equivalentto(T4)} implies that (T4) holds in $\Cst_s(P)$. So by the universal property of $\Toepu(P)$ the map that sends $t_p$ to~$v_p$ for each~$p\in P$ extends to a homomorphism
 $\Toepu(P)\to\Cst_s(P)$, which is the inverse of~$\lambda^\#$.
\end{proof}
\end{cor}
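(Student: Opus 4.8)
The plan is to prove the corollary in two halves, exactly matching its two assertions: first that $v_p\mapsto t_p$ extends to a surjective $*$-homomorphism $\lambda^{\#}\colon\Cst_s(P)\to\Toepu(P)$ which is an isomorphism when $P$ satisfies independence, and second that if $\lambda^{\#}$ is an isomorphism then $P$ must satisfy independence. Both directions lean heavily on \proref{pro:equivalentcovs}, which identifies $\Cst_s(P)$ with the universal object $\Cst_{\#}(P)$ defined by relations (T1)--(T3) alone.

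For the existence and surjectivity of $\lambda^{\#}$, the point is simply that the canonical generators $t_p$ of $\Toepu(P)$ satisfy (T1)--(T3) by \defref{def:toeplitz-semigroup}, hence (via the isomorphism $\Cst_s(P)\cong\Cst_{\#}(P)$ of \proref{pro:equivalentcovs}) the universal property of $\Cst_s(P)$ supplies a $*$-homomorphism sending $v_p$ to $t_p$; it is onto because its range contains the generators of $\Toepu(P)$. For the ``independence $\Rightarrow$ isomorphism'' direction, I would invoke \lemref{lem:equivalentto(T4)}: under independence, the restricted form of (T4) described in item (2) of that lemma is vacuous (there are no neutral words $\alpha$ and finite sets $F$ with $K(\alpha)=\bigcup_{\beta\in F}K(\beta)$ and $K(\alpha)\ne K(\beta)$ for all $\beta\in F$), so any family satisfying (T1)--(T3) automatically satisfies (T4). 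Applying this to the generators $v_p$ of $\Cst_s(P)$ (which satisfy (T1)--(T3) by \proref{pro:equivalentcovs}), the universal property of $\Toepu(P)$ gives a homomorphism $t_p\mapsto v_p$; it is a two-sided inverse of $\lambda^{\#}$ because the two composites fix the respective generating sets.

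For the converse, assume $\lambda^{\#}$ is an isomorphism. Then its restriction to the diagonal $D_s=\clsp\{e_S\mid S\in\J\}$ is faithful, so composing with $\lambda^+\colon\Toepu(P)\to\Toepr(P)$ (whose restriction to $D_u$ is an isomorphism onto $D_r$ by \corref{cor:faithfulondiagonal}, or already injective on $D_u$ enough for the argument) yields an injective map $D_s\to D_r$. The key observation is that $\lambda^+\circ\lambda^{\#}$ coincides with the left regular representation $\lambda\colon\Cst_s(P)\to\Toepr(P)$, since both send $v_p$ to $L_p$. Injectivity of $\lambda$ on $D_s$ is precisely Li's criterion \cite[Corollary~2.27]{Li:Semigroup_amenability} for $P$ to satisfy independence, so we are done.

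The only step requiring a moment of care — and the one I would flag as the main obstacle — is the clean application of \lemref{lem:equivalentto(T4)} in the ``independence $\Rightarrow$ (T4)'' direction: one must be sure that the generators $v_p$ of $\Cst_s(P)$ genuinely satisfy the full force of (T1)--(T3) as hypotheses of that lemma (this is exactly what \proref{pro:equivalentcovs} guarantees), and that independence makes condition (2) of the lemma empty rather than merely easy. Everything else is a routine chase of universal properties and the identification $\lambda^+\circ\lambda^{\#}=\lambda$.
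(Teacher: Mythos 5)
Your proposal is correct and follows essentially the same route as the paper: existence and surjectivity of $\lambda^{\#}$ via \proref{pro:equivalentcovs}, the inverse under independence via the vacuity of condition (2) of \lemref{lem:equivalentto(T4)} and the universal property of $\Toepu(P)$, and the converse by identifying $\lambda^+\circ\lambda^{\#}$ with the left regular representation $\lambda$ and invoking Li's Corollary~2.27. The only cosmetic difference is that you cite \corref{cor:faithfulondiagonal} for injectivity of $\lambda^+$ on the diagonal where the paper points to \proref{prop:diagonal-sub}; both are adequate and the argument is otherwise identical.
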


Another semigroup $\Cst$\nb-algebra, denoted $\Cst_s{}^{(\cup)}(P)$, is mentioned in passing, right before Subsection 3.1 of \cite{Li:Semigroup_amenability}. Its presentation is not given explicitly, but 
 the notation makes it clear that $\Cst_s{}^{(\cup)}(P)$ is meant to be the quotient of the $\Cst$\nb-algebra $\Cst{}^{(\cup)}(P)$ from \cite[Definition 2.4]{Li:Semigroup_amenability} by the ideal generated by the relation \textup{III$_G$} in \cite[Definition 3.2]{Li:Semigroup_amenability}. We can deduce from \cite[Lemma~3.3]{Li:Semigroup_amenability} that $\Cst_s{}^{(\cup)}(P)$ coincides with the $\Cst$\nb-algebra with the same defining relations as $\Cst_s(P)$ but with extra generators in the presentation. These come from the projections $\{e_X\mid X\in\J^{(\cup)}\},$ where $$\J^{(\cup)}=\big\{\bigcup_{R\in\mc}R\mid \emptyset\neq\mc\subset  \J,\, |\mc|<\infty\big\}.$$  Our  (T4) implies relation II$^{(\cup)}$(iv) in \cite[Definition 2.4]{Li:Semigroup_amenability} in the special case when $X$ and $Y$ are constructible ideals whose  union is also a constructible ideal. By \cite[Proposition~2.24]{Li:Semigroup_amenability}, it is also reasonable to expect that $\Cst_s{}^{(\cup)}(P)$ should  have the same property as our 
$\Toepu(P)$, of being a quotient of $\Cst_s(P)$
that is isomorphic to it when independence holds.  Thus, it is natural to wonder whether  $\Cst_s{}^{(\cup)}(P)$ and $\Toepu(P)$ are one and the same.
On the path to decide this question we establish next some equivalent forms of relation (T4).

\begin{lem} \label{lem:equivalentto(iv)}
Suppose that  $w:P \to B$ is a map of $P$ into a $\Cst$\nb-algebra $B$ that satisfies relations \textup{(T1)} and \textup{(T2)}.
The following are equivalent:
\begin{enumerate}
\medskip\item $w:P \to B$ satisfies relation \textup{(T4)};

\medskip\item $\displaystyle \sum_{\beta\in F} \lambda_\beta \dot{w}_{\beta} = 0$ \  whenever $F$ is a finite set of neutral words in~$\W$ and the linear combination ${\displaystyle\sum_{\beta\in F} \lambda_\beta \1_{K(\beta)}}$ vanishes in {$D_r$};

\medskip\item  $ \displaystyle\sum_{\emptyset \neq B\subset F} (-1)^{|B|} \prod_{\beta\in B} \dot{w}_{\beta} = \sum_{\emptyset \neq C\subset H} (-1)^{|C|} \prod_{\gamma\in C} \dot{w}_{\gamma}$  
\  whenever  $F$ and $H$ are finite sets of neutral words in $\W$ such that 
${ \bigcup_{\beta\in F}  K(\beta) = \bigcup_{\gamma \in H}K(\gamma) }$; and 

\medskip\item  $ \displaystyle\prod_{\beta\in F} (1 -\dot{w}_{\beta}) =  \prod_{\gamma \in H} (1 -\dot{w}_{\gamma})$  
\  \  \  whenever  $F$ and $H$ are finite sets of neutral words in $\W$ such that 
$ \bigcup_{\beta\in F}  K(\beta) = \bigcup_{\gamma \in H} K(\gamma) $.
\end{enumerate}
\begin{proof} Notice first that each one of the conditions (1)--(4) imply that (T3) holds.
We will prove (3)$\implies$(1)$\implies$(2)$\implies$(3)$\iff$(4). 
Setting $F=\{\alpha\}$ in  (3) gives  (1), and 
 a standard application of the expansion in \eqref{eqn:INC/EXCidentity} shows that  (3) and (4) are equivalent. This takes care of the first and the last (double) implications.

Assume now (2) holds and  ${ \bigcup_{\beta\in F} K(\beta) = \bigcup_{\gamma \in H} K(\gamma) }$. Then 
\[
\sum_{\emptyset \neq B\subset F} (-1)^{|B|+1} \1_{K(\prod_{i\in B}\beta)}  = \sum_{\emptyset \neq C\subset H} (-1)^{|C|+1} \1_{K(\prod_{k\in C}\gamma)}
\]
in $D_r$. So by condition (2) the corresponding equality will hold in~$B$ with $\dot{w}_{K(\prod_B\beta)}$ in place of $\1_{K(\prod_B\beta)}$ and $\dot{w}_{K(\prod_C\gamma)}$ in place of $\1_{K(\prod_C \gamma)}$.
This gives (3) and establishes (2)$\implies$(3).

In order to see that (1)$\implies$(2), assume that $w$ satisfies (T4), and let $\rho_w$ be the resulting representation of $\Toepu(P)$.
If $\sum_{\beta\in F} \lambda_\beta \1_{K(\beta)} = 0$ in $D_r$, then $\sum_{\beta\in F} \lambda_\beta \dot{t}_{\beta} = 0$ in $D_u$ because of
\corref{cor:faithfulondiagonal}, so necessarily 
\[\sum_{\beta\in F} \lambda_\beta \dot{w}_{\beta}= \rho_w\big(\sum_{\beta\in F} \lambda_\beta \dot{t}_{\beta}\big) = 0.\qedhere
\]
\end{proof}
\end{lem}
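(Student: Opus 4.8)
The plan is to prove the chain of implications $(3)\Rightarrow(1)\Rightarrow(2)\Rightarrow(3)\Leftrightarrow(4)$, as the authors indicate. First I would note the cheap implications: taking $F=\{\alpha\}$ in $(3)$ immediately yields $(1)$, since the right-hand side then collapses to $\dot w_\alpha$ and the left-hand side is exactly the inclusion-exclusion expansion of $\prod_{\beta\in H}(1-\dot w_\beta)$ subtracted from $1$, i.e. relation \textup{(T4)} in the form \eqref{eqn:condition(T4)}. For the equivalence $(3)\Leftrightarrow(4)$, I would simply observe that $\prod_{\beta\in F}(1-\dot w_\beta) = 1 + \sum_{\emptyset\neq B\subset F}(-1)^{|B|}\prod_{\beta\in B}\dot w_\beta$ by the standard expansion in \eqref{eqn:INC/EXCidentity}, so the two sides agree in $(4)$ for $F$ and $H$ exactly when they agree in $(3)$; here one uses that the projections $\dot w_\beta$ mutually commute (property \textup{(3)} of \proref{pro:propertiesofsharpgenerators}, which holds once \textup{(T3)} does, and \textup{(T3)} is a consequence of each of $(1)$--$(4)$, a point worth stating at the outset).

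The substantive implications are $(1)\Rightarrow(2)$ and $(2)\Rightarrow(3)$. For $(1)\Rightarrow(2)$, the key is to leverage \corref{cor:faithfulondiagonal}: a map $w$ satisfying \textup{(T1)--(T4)} induces a representation $\rho_w$ of $\Toepu(P)$, and since $\lambda^+\restriction_{D_u}\colon D_u\to D_r$ is an isomorphism, a linear relation $\sum_{\beta\in F}\lambda_\beta\1_{K(\beta)}=0$ in $D_r$ pulls back to $\sum_{\beta\in F}\lambda_\beta\dot t_\beta=0$ in $D_u$, and then $\rho_w$ transports this to $\sum_{\beta\in F}\lambda_\beta\dot w_\beta=0$ in $B$. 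This is exactly the one-line argument the excerpt already exhibits. For $(2)\Rightarrow(3)$, I would start from $\bigcup_{\beta\in F}K(\beta)=\bigcup_{\gamma\in H}K(\gamma)$ as subsets of $P$; applying inclusion-exclusion to the characteristic functions gives the identity $\sum_{\emptyset\neq B\subset F}(-1)^{|B|+1}\1_{K(\prod_{i\in B}\beta_i)} = \1_{\bigcup_{\beta\in F}K(\beta)} = \1_{\bigcup_{\gamma\in H}K(\gamma)} = \sum_{\emptyset\neq C\subset H}(-1)^{|C|+1}\1_{K(\prod_{k\in C}\gamma_k)}$ in $\ell^\infty(P)$, where I use \proref{pro:propertiesKalpha}(5) to rewrite each finite intersection $\bigcap_{i\in B}K(\beta_i)$ as $K$ of a single neutral word (the concatenation $\prod_{i\in B}\beta_i$). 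Rearranging this into the form $\sum(\text{coeff})\cdot\1_{K(\text{word})}=0$ over the combined finite family of words and invoking hypothesis $(2)$ replaces every $\1_{K(\cdot)}$ by the corresponding $\dot w_{(\cdot)}$, yielding exactly the identity in $(3)$.

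The main obstacle — really the only place any care is needed — is bookkeeping in the $(2)\Rightarrow(3)$ step: one must make sure that the inclusion-exclusion rewriting of the two unions is carried out with \emph{neutral} words throughout so that $(2)$ is applicable (every $K(\prod_{i\in B}\beta_i)$ must be realized as $K(\delta)$ for a neutral $\delta$, which holds since concatenations of neutral words are neutral and \proref{pro:propertiesKalpha}(5) gives the intersection formula), and that when the two sums are subtracted one genuinely lands in the span of $\{\1_{K(\delta)}\}$ over a single finite set of neutral words, as required by the hypothesis. A minor secondary point is that in $(1)\Rightarrow(2)$ we silently use that $w$ satisfying \textup{(T1)--(T3)} plus $(1)=$\textup{(T4)} does give a legitimate unital $*$-homomorphism $\rho_w\colon\Toepu(P)\to B$ — this is immediate from the universal property in \defref{def:toeplitz-semigroup}, but should be stated. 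Everything else is routine manipulation of commuting projections via \eqref{eqn:INC/EXCidentity}.
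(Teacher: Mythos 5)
Your proposal is correct and follows essentially the same route as the paper: the same chain $(3)\Rightarrow(1)\Rightarrow(2)\Rightarrow(3)\Leftrightarrow(4)$, with $(1)\Rightarrow(2)$ via the induced representation $\rho_w$ of $\Toepu(P)$ and \corref{cor:faithfulondiagonal}, and $(2)\Rightarrow(3)$ via inclusion-exclusion on characteristic functions together with \proref{pro:propertiesKalpha}(5) to realize intersections as $K$ of concatenated neutral words. The bookkeeping points you flag (neutrality of concatenations, commutativity of the projections, and that each condition implies \textup{(T3)}) are exactly the ones the paper's proof relies on.
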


\begin{prop} \label{pro:charactofToepu} The following $\Cst$\nb-algebras are canonically isomorphic:
\begin{enumerate}
\item the universal Toeplitz $\Cst$\nb-algebra $\Toepu(P)$;

\smallskip\smallskip\item the $\Cst$\nb-algebra with presentation  \textup{(T1)--(T3)}, and

\textup{(T4lc):\ } $\sum_{\alpha \in F}\lambda_\alpha \dot{t}_{\alpha}=0$ \ whenever  
$\sum_{\alpha \in F}\lambda_\alpha\1_{K(\alpha)}$ vanishes in $\ell^\infty(P)$;

\smallskip\smallskip\item the $\Cst$\nb-algebra $\Cst_s{}^{(\cup)}(P)$  
from \cite[Section 3]{Li:Semigroup_amenability}  defined as the quotient of $\Cst{}^{(\cup)}(P)$
by the ideal $\langle \dot{v}_\alpha - e_{K(\alpha)} \mid   \alpha \in \W, \dot \alpha =e\rangle$ from relation \textup{III$_G$} in \cite[Definition~3.2]{Li:Semigroup_amenability}.

\end{enumerate}
\begin{proof}
That $\Toepu(P)$ is canonically isomorphic to the $\Cst$\nb-algebra with the presentation \textup{(T1)--(T3)} and \textup{(T4lc)} given in item (2) follows from  the equivalence of conditions (1) and (2) in \lemref{lem:equivalentto(iv)}. From \proref{pro:equivalentcovs}  we know that (T1)--(T3)  hold for the isometries in $\Cst_s{}^{(\cup)}(P)$. By \cite[Corollary~2.22]{Li:Semigroup_amenability} 
the map $D^{(\cup)} \to D_r$ that sends a generating projection to the characteristic function of the corresponding right ideal is an isomorphism. So condition (2) from \lemref{lem:equivalentto(iv)} also
holds in $\Cst_s{}^{(\cup)}(P)$. Thus, the map ${t}_p \mapsto v_p$ extends to a canonical \Star homomorphism of $\Toepu(P)$ to $\Cst_s{}^{(\cup)}(P)$. 

For the inverse of the above \Star homomorphism, let $\{t_p\mid p\in P\}$ be the canonical generating elements of $\Toepu(P)$.
If $X = \bigcup_{\beta\in F} K(\beta)\in\J^{(\cup)}$,  we may define 
\[
\epsilon_X \coloneqq \sum_{\emptyset \neq B\subset F} (-1)^{|B|+1} \prod_{\beta\in B} \dot{t}_\beta =
1 -  \prod_{\beta\in F} (1 - \dot{t}_\beta) 
\]
because the right hand side above only depends on $X$ by parts (3) and (4) of \lemref{lem:equivalentto(iv)}. It is now easy to verify that the pair of maps $(t,\epsilon) $ satisfies the relations defining  $\Cst_s{}^{(\cup)}(P)$. 
Relation III$_G$ is simply the definition of $\epsilon_X$ with $X = K(\alpha)$. The relation $t_p\epsilon_Xt_p^*=\epsilon_{pX}$ can be derived from relation III$_G$ since it holds when $X$ is in~$\J$ (see \cite[Lemma~3.3]{Li:Semigroup_amenability}). The remaining relations to be verified only involve the $\epsilon_X$'s and come from the presentation of 
$\Cst{}^{(\cup)}(P)$. These relations are satisfied in $D_r$ as observed right after \cite[Definition~2.4]{Li:Semigroup_amenability}. But we know from \corref{cor:faithfulondiagonal} that  there is a canonical isomorphism $D_u \cong D_r$, so the relations are satisfied by the set of projections $\{\epsilon_X\mid X\in\J^{(\cup)}\}$ in $D_u$ as well. Hence  $\Toepu(P)$ and $\Cst_s{}^{(\cup)}(P)$ are canonically isomorphic. 
\end{proof}
\end{prop}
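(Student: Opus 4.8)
The claim asserts a canonical isomorphism between three presentations: $\Toepu(P)$, the algebra presented by (T1)--(T3) together with the ``linear-combination'' relation (T4lc), and Li's $\Cst_s{}^{(\cup)}(P)$. The natural strategy is to prove the two identifications separately, in each case producing mutually inverse $\Star$-homomorphisms, and to reuse the equivalences already packaged in \lemref{lem:equivalentto(iv)}. For (1)$\cong$(2): condition (1) of \lemref{lem:equivalentto(iv)} is exactly relation (T4), and condition (2) of that lemma is exactly (T4lc), so the universal properties immediately give maps in both directions sending $t_p\leftrightarrow t_p$, and these are mutually inverse because each algebra's generators satisfy the other's relations. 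The only subtlety is verifying that the hypotheses of \lemref{lem:equivalentto(iv)} apply, i.e.\ that (T1) and (T2) hold in each algebra --- which they do by fiat in the presentations.

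\textbf{The identification with Li's algebra.} For (2)$\cong$(3) I would first build the map $\Toepu(P)\to\Cst_s{}^{(\cup)}(P)$. By \proref{pro:equivalentcovs} the isometries $v_p$ of $\Cst_s{}^{(\cup)}(P)$ satisfy (T1)--(T3). To get (T4lc), invoke \cite[Corollary~2.22]{Li:Semigroup_amenability}, which identifies the diagonal $D^{(\cup)}$ of $\Cst_s{}^{(\cup)}(P)$ with $D_r$ via $e_S\mapsto\1_S$; this is exactly condition (2) of \lemref{lem:equivalentto(iv)} holding in $\Cst_s{}^{(\cup)}(P)$, hence (T4) and therefore (T4lc) hold there. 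So $t_p\mapsto v_p$ extends. For the inverse, given $X=\bigcup_{\beta\in F}K(\beta)\in\J^{(\cup)}$, define
\[
\epsilon_X \coloneqq \sum_{\emptyset\neq B\subset F}(-1)^{|B|+1}\prod_{\beta\in B}\dot t_\beta = 1-\prod_{\beta\in F}(1-\dot t_\beta)\in D_u,
\]
and note this is well defined --- independent of the presentation $X=\bigcup K(\beta)$ --- precisely by parts (3) and (4) of \lemref{lem:equivalentto(iv)}, which hold in $\Toepu(P)$ since (T4) does. One then checks that $(t,\epsilon)$ satisfies the defining relations of $\Cst_s{}^{(\cup)}(P)$: relation III$_G$ is the definition of $\epsilon_X$ at $X=K(\alpha)$; the covariance $t_p\epsilon_X t_p^*=\epsilon_{pX}$ follows from \cite[Lemma~3.3]{Li:Semigroup_amenability} (which gives it for $X\in\J$, whence the general case by expanding over $F$ and using \proref{pro:propertiesKalpha}(6)); and the purely diagonal relations from the presentation of $\Cst{}^{(\cup)}(P)$ hold in $D_r$ (as observed after \cite[Definition~2.4]{Li:Semigroup_amenability}) and hence in $D_u$ via the canonical isomorphism $D_u\cong D_r$ of \corref{cor:faithfulondiagonal}. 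The two maps are mutually inverse on generators, so we are done.

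\textbf{Main obstacle.} The routine bookkeeping is the verification that the pair $(t,\epsilon)$ satisfies \emph{all} the defining relations of $\Cst{}^{(\cup)}(P)/\langle\mathrm{III}_G\rangle$; the genuinely load-bearing point, and the one to state most carefully, is the well-definedness of $\epsilon_X$, because $X$ does not determine $F$, only the union does --- this is where parts (3)--(4) of \lemref{lem:equivalentto(iv)} (equivalently relation (T4) itself) are indispensable, and without (T4) the map $\Toepu(P)\to\Cst_s{}^{(\cup)}(P)$ would fail to be injective. The transfer of the diagonal relations across $D_u\cong D_r$ via \corref{cor:faithfulondiagonal} is the conceptual shortcut that makes the argument clean rather than requiring one to re-derive those relations by hand in $\Toepu(P)$.
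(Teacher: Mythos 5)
Your proposal is correct and follows essentially the same route as the paper: (1)$\cong$(2) via the equivalence of conditions (1) and (2) in \lemref{lem:equivalentto(iv)}, the map into $\Cst_s{}^{(\cup)}(P)$ via \proref{pro:equivalentcovs} and \cite[Corollary~2.22]{Li:Semigroup_amenability}, and the inverse via the inclusion--exclusion definition of $\epsilon_X$, whose well-definedness rests on parts (3)--(4) of \lemref{lem:equivalentto(iv)}, with the diagonal relations transferred through $D_u\cong D_r$. The only cosmetic difference is that you spell out the passage from $X\in\J$ to general $X\in\J^{(\cup)}$ for the covariance relation by expanding over $F$, which the paper leaves implicit.
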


 \section{Semigroup \texorpdfstring{$\mathrm{C}^*$}{C*}-algebras as partial crossed products} \label{sec:partialactions}
A partial action of~$G$ on $D_r$ is constructed in \cite[Section~5.5.2]{CELY},  
 and the corresponding reduced partial crossed product is shown to be isomorphic to
 $\Toepr(P)$,  \cite[Theorem~5.6.41]{CELY}. Here we aim to
show that the  full partial crossed product of that action is always canonically isomorphic to our~$\Toepu(P)$.  
For ease of reference and to establish our notation we will describe the partial action of~$G$ on~$D_r$ explicitly in terms of the constructible right ideals of~$P$. This will make our study of faithful representations for~$\Toepr(P)$ and of simplicity of the boundary quotient more accessible. We point out, nevertheless, that many of the results of this section could also be extracted from \cite[Section~5]{CELY}.
 
\subsection{Partial action basics} We begin with some basic facts concerning partial actions and partial crossed products.
\begin{defn}[\cite{Exel:Partial_dynamical}*{Definition~11.4}]\label{defn: partial-action}
A \emph{partial action} of a discrete group~$G$ on a $\Cst$\nb-algebra~$A$ is a pair $\gamma=(\{A_g\}_{g\in G},\{\gamma_g\}_{g\in G})$, where
$\{A_g\}_{g\in G}$ is a collection of closed two-sided ideals of~$A$ and $\gamma_g\colon A_{g^{-1}}\to A_g$ is a \Star isomorphism for each~$g\in G$, such that for all~$g,h\in G$

\begin{enumerate}
\item[\rm{(1)}] $A_e=A$ and $\gamma_e$ is the identity on~$A$;

\item[\rm{(2)}] $\gamma_g(A_{g^{-1}}\cap A_h)\subseteq A_{gh}$;

\item[\rm{(3)}] $\gamma_g\circ \gamma_h=\gamma_{gh}$ on~$A_{h^{-1}}\cap A_{(gh)^{-1}}$.
\end{enumerate}
\end{defn}

We recall the construction of full and reduced partial crossed products based on full and reduced cross-sectional $\Cst$\nb-algebras of Fell bundles. Further details can be found in~\cite{Exel:Partial_dynamical}. Let $\gamma=(\{A_g\}_{g\in G},\{\gamma_g\}_{g\in G})$ be a partial action of~$G$ on~$A$. We build a Fell bundle $\Hilm[B]_\gamma=(B_{\gamma_g})_{g\in G}$ over~$G$ as follows. We set $B_{\gamma_g}\coloneqq A_g$ as a complex Banach space. For $a\in A_g$, we write $a\delta_g$ to identify the element in $B_{\gamma_g}$ corresponding to~$a$. The multiplication map $$B_{\gamma_g}\times B_{\gamma_h}\to B_{\gamma_{gh}}$$ is then given by 
\begin{equation}\label{eq:multiplicationmap}
(a\delta_g)\cdot (b\delta_h)\coloneqq \gamma_g(\gamma_{g^{-1}}(a)b)\delta_{gh}, \qquad a\in A_g, b\in A_h, g,h\in G.
\end{equation} This is well defined by condition (2) of Definition~\ref{defn: partial-action}. The resulting multiplication operation on~$\Hilm[B]_{\gamma}$ is associative. For each $g\in G$, we define an involution $^*\colon B_{\gamma_g}\to B_{\alpha_{g^{-1}}}$ by $$(a\delta_g)^*\coloneqq \gamma_{g^{-1}}(a^*)\delta_{g^{-1}}, \qquad a\in A_g.$$ Then
$\Hilm[B]_\gamma=(B_{\gamma_g})_{g\in G}$ is a Fell bundle whose unit fiber algebra is~$A$ (see \cite[Proposition~16.6]{Exel:Partial_dynamical}).

\begin{defn} The Fell bundle $\Hilm[B]_{\gamma}=(B_{\gamma_g})_{g\in G}$ constructed above is called the \emph{semidirect product bundle} relative to~$\gamma=(\{A_g\}_{g\in G},\{\gamma_g\}_{g\in G})$. The \emph{partial crossed product} of $A$ by $G$ under $(\{A_g\}_{g\in G},\{\gamma_g\}_{g\in G})$, denoted by $A\rtimes_{\gamma}G$, is the (full) cross-sectional $\Cst$\nb-algebra of~$\Hilm[B]_{\gamma}=(B_{\gamma_g})_{g\in G}$. The \emph{reduced partial crossed product} $A\rtimes_{\gamma, r}G$ is defined to be the reduced cross-sectional $\Cst$\nb-algebra of~$\Hilm[B]_{\gamma}$.
\end{defn}

Recall that a map $v\colon G\to B$ from $G$ to a unital $\Cst$\nb-algebra~$B$ is said to be a \emph{\Star partial representation} of~$G$ in~$B$ if $v_g$ is a partial isometry for each~$g\in G$ with~$v_e=1$, and the set of partial isometries $\{v_g\mid g\in G\}$ satisfies the relations $$v_{g}^*=v_{g^{-1}}\qquad\text{ and }\qquad v_{g}v_hv_{h^{-1}}=v_{gh}v_{h^{-1}},$$ for all~$g, h\in G$. Let $\gamma=(\{A_g\}_{g\in G},\{\gamma_g\}_{g\in G})$ be a partial action. A \emph{covariant representation} of $(\{A_g\}_{g\in G}, \{\gamma_g\}_{g\in G})$ in $B$ is a pair $(\pi, v)$, where $v\colon G\to B$ is a \Star partial representation and $\pi\colon A\to B$ is a \Star homomorphism, such that for all $g\in G$ and $a\in A_{g^{-1}}$, $$v_g\pi(a)v_{g^{-1}}=\pi(\gamma_g(a)).$$ A covariant representation $(\pi,v)$ of~$(\{A_g\}_{g\in G},\{\gamma_g\}_{g\in G})$ yields a representation $\pi\times v\colon A\rtimes_\gamma G\to B$ induced by the formula $$(\pi\times v)(a\delta_g)=\pi(a)v_g,$$ for $g\in G$ and $a\in A_g$. By \cite[Theorem~13.2]{Exel:Partial_dynamical}, the map $(\pi, v)\mapsto \pi\times v$ gives a one-to-one correspondence between nondegenerate covariant representations of~$(\{A_g\}_{g\in G},\{\gamma_g\}_{g\in G})$ on~$\Hilm[H]$ such that $v_gv_{g^{-1}}$ is the orthogonal projection onto $\pi(A_g)\Hilm[H]=\clsp\{\pi(a)\xi\mid \xi\in\Hilm[H], a\in A_g\}$ and nondegenerate representations of the partial crossed product $A\rtimes_\alpha G$ on $\Hilm[H]$.

\subsection{Toeplitz algebras as partial crossed products}  Suppose that $P$ is a submonoid of a group~$G$. 
By \cite{CELY}*{Section~5.5.2} there is a partial action of~$G$ on~$D_r$ with $D_r\rtimes_r G\cong\Toepr(P)$. We wish to describe this partial action explicitly in
terms of words  and their constructible ideals. For each~$g\in G$, let $$A_{g^{-1}}\coloneqq\overline{\mathrm{span}}\{\1_{K(\alpha)}\mid \alpha\in\Hilm[W],\,\dot{\alpha}=g\}\subseteq D_r.$$ So $A_e=D_r$ by  \lemref{lem:1_KalphaspansD_r}. Also, notice that $A_{g^{-1}}$ is precisely the ideal $D_{g^{-1}}$ defined in \cite[p. 188]{CELY} because in~$\Toepr(P)$ $$\1_{K(\alpha)}=\1_{K(\tilde{\alpha}\alpha)}=\dot{L}_{\tilde{\alpha}}\dot{L}_\alpha=\dot{L}_\alpha^*\dot{L}_\alpha.$$

\begin{prop}[cf. \cite{CELY}*{Section~5.5.2}]\label{prop:partial-const} Let $P$ be a submonoid of a group~$G$. For each $g\in G$, there is a unique \Star isomorphism $\gamma_g\colon A_{g^{-1}}\to A_g$ given on a projection $\1_{K(\alpha)}\in A_{g^{-1}}$ by $$\gamma_g(\1_{K(\alpha)})=\1_{gK(\alpha)}=\1_{K(\tilde{\alpha})}.$$ 
Moreover, $\gamma = (\{A_g\}_{g\in G},\{\gamma_g\}_{g\in G})$ is a partial action of~$G$ on~$D_r$.
\begin{proof} We view $\Bound(\ell^2(P))$ as a closed $\Cst$\nb-subalgebra of~$\Bound(\ell^2(G))$ using the canonical embedding of~$\ell^2(P)$ as a closed subspace of~$\ell^2(G)$. We claim that $$\lambda_g\1_{K(\alpha)}\lambda_{g^{-1}}=\1_{K(\tilde{\alpha})},$$ where $\lambda\colon G\to\Bound(\ell^2(G))$ is the left regular representation of~$G$. To show this, let $h\in G$. Since $\lambda_{g^{-1}}(\delta_h)=\delta_{g^{-1}h}$, it follows that $$(\lambda_g \1_{K(\alpha)}\lambda_{g^{-1}})(\delta_h)=
\begin{cases}
\delta_{h}& \text{if } g^{-1}h\in K(\alpha),\\
0 &\text{otherwise}.
\end{cases}$$
Since  $g=\dot{\alpha}$, \proref{pro:propertiesKalpha} yields $gK(\alpha)=K(\tilde{\alpha})$. 
Hence the automorphism $ X \mapsto \lambda_g X \lambda_{g^{-1}}$ of $\Bound(\ell^2(G))$ restricts to a \Star isomorphism $\gamma_g\colon A_{g^{-1}}\to A_g$ determined by  $\gamma_g(\1_{K(\alpha)}) =\1_{K(\tilde{\alpha})}$.

To see that $A_{g^{-1}}$ is an ideal of~$D_r$, let $\alpha=(p_1,\ldots, p_{2k})$ be such that $\dot{\alpha}=g$. Let $\beta=(q_1,\ldots,q_{2l})\in\Hilm[W]$ with $\dot{\beta}=e$. It follows from \proref{pro:propertiesKalpha}(5) with the roles of $\alpha$ and $\beta$ exchanged that 
\begin{equation*}
\begin{aligned}
\1_{K(\alpha)}\1_{K(\beta)}=\1_{K(\beta)}\1_{K(\alpha)}=\1_{K(\beta)\cap K(\alpha)}=\1_{K(\alpha\beta)}.
\end{aligned}
\end{equation*} This lies in $A_{g^{-1}}$ because $$\dot{\alpha\beta}=\dot{\alpha}\dot{\beta}=\dot{\alpha}=g.$$

Let us now prove that $(\{A_g\}_{g\in G},\{\gamma_g\}_{g\in G})$ satisfies axiom (ii) of Definition \ref{defn: partial-action}. That is, for all $g,h\in G$, one has $$\gamma_g(A_{g^{-1}}\cap A_h)\subseteq A_{gh}.$$ Let $\alpha=(p_1,\ldots,p_{2k})$ and $\beta=(q_1,\ldots, q_{2l})$ be words in~$P$ with $$\dot{\alpha}=g\qquad\text{ and }\qquad\dot{\beta}=h^{-1},$$ so that $\1_{K(\alpha)}\in A_{g^{-1}}$ and $\1_{K(\beta)}\in A_h$. Again we view $\Bound(\ell^2(P))$ as a $\Cst$\nb-subalgebra of $\Bound(\ell^2(G))$ using the canonical inclusion $\ell^2(P)\hookrightarrow\ell^2(G)$. Thus for all $k\in G$, $$\gamma_g(\1_{K(\alpha)}\1_{K(\beta)})(\delta_k)=(\lambda_g\1_{K(\alpha)}\1_{K(\beta)}\lambda_g^*)(\delta_k)=\begin{cases}
\delta_{k}& \text{if } g^{-1}k\in K(\alpha)\cap K(\beta),\\
0 &\text{otherwise}.
\end{cases}$$ 

Now we compute \begin{equation*}
\begin{aligned} g (K(\alpha)\cap K(\beta))=g K(\alpha)\cap gK(\beta)=K(\tilde{\alpha})\cap gK(\beta).
\end{aligned}
\end{equation*} 
Replacing $g$ by $\dot{\alpha}$ in the above and using \proref{pro:propertiesKalpha}, we deduce that  $$ g (K(\alpha)\cap K(\beta))=K(\beta\tilde{\alpha}).$$ Hence $\gamma_g(\1_{K(\alpha)}\1_{K(\beta)})=\1_{K(\beta\tilde{\alpha})}$. Since 
$$\dot{\beta\tilde{\alpha}}=\dot{\beta}\dot{\tilde{\alpha}}=h^{-1}g^{-1}=(gh)^{-1},$$ it follows that $\gamma_g(\1_{K(\alpha)}\1_{K(\beta)})\in A_{gh}$ and so $\gamma_g(A_{g^{-1}}\cap A_h)\subseteq A_{gh}$ as desired. Axiom (iii) of Definition \ref{defn: partial-action} follows from the computation $$\gamma_{gh}(b)=\lambda_{gh}b\lambda_{gh}^*=\lambda_g\lambda_hb\lambda_h^*\lambda_g^*=\lambda_g\gamma_h(b)\lambda_g^*=\gamma_g(\gamma_h(b))$$ for all $b\in A_{g^{-1}}\cap A_h.$ We then conclude that $(\{A_g\}_{g\in G},\{\gamma_g\}_{g\in G})$ is a partial action of~$G$ on~$D_r$. 
\end{proof}
\end{prop}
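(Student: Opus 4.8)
The plan is to obtain every $\gamma_g$ as the restriction of one honest automorphism of $\Bound(\ell^2(G))$, namely conjugation by the left regular unitary $\lambda_g$ of $G$. First I would embed $\ell^2(P)$ isometrically into $\ell^2(G)$, so that $D_r\subseteq\Bound(\ell^2(P))\subseteq\Bound(\ell^2(G))$, and observe that the $*$\nb-automorphism $\mathrm{Ad}\,\lambda_g\colon X\mapsto\lambda_g X\lambda_{g^{-1}}$ sends the multiplication operator $\1_S$, for any $S\subseteq G$, to $\1_{gS}$. Applied to a generating projection $\1_{K(\alpha)}$ of $A_{g^{-1}}$ (so $\dot\alpha=g$), this yields $\1_{gK(\alpha)}$, and \proref{pro:propertiesKalpha}(1) identifies $gK(\alpha)=\dot\alpha K(\alpha)=K(\tilde\alpha)$; since $\dot{\tilde\alpha}=g^{-1}$, the image lies in $A_g$. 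Running the same computation with $\lambda_{g^{-1}}$ shows $\mathrm{Ad}\,\lambda_{g^{-1}}$ carries $A_g$ back into $A_{g^{-1}}$ and inverts $\mathrm{Ad}\,\lambda_g$ there, so $\gamma_g\coloneqq\mathrm{Ad}\,\lambda_g\restriction_{A_{g^{-1}}}$ is a well-defined $*$\nb-isomorphism of $A_{g^{-1}}$ onto $A_g$ obeying the stated formula. Uniqueness is automatic, since $A_{g^{-1}}=\clsp\{\1_{K(\alpha)}\mid\dot\alpha=g\}$ and a $*$\nb-homomorphism is determined on a generating set.

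Before turning to the axioms I would record that each $A_{g^{-1}}$ is a closed two-sided ideal of $D_r$: it is closed by construction and contained in $A_e=D_r$ by \lemref{lem:1_KalphaspansD_r}, and since $D_r$ is commutative it suffices to check that $\1_{K(\beta)}\1_{K(\alpha)}\in A_{g^{-1}}$ for $\dot\beta=e$ and $\dot\alpha=g$. But $\1_{K(\beta)}\1_{K(\alpha)}=\1_{K(\alpha)\cap K(\beta)}=\1_{K(\alpha\beta)}$ by \proref{pro:propertiesKalpha}(5) (applied with the roles of the two words exchanged), and $\dot{\alpha\beta}=\dot\alpha\dot\beta=g$, so this is again a generator of $A_{g^{-1}}$; density then does the rest.

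It remains to verify the three axioms of \defref{defn: partial-action}. Axiom (1) is immediate: $A_e=D_r$ by \lemref{lem:1_KalphaspansD_r} and $\gamma_e=\mathrm{Ad}\,\lambda_e=\id$. Axiom (3) is also painless, because all the $\gamma_g$ are restrictions of the globally defined homomorphism $g\mapsto\mathrm{Ad}\,\lambda_g$ on $\Bound(\ell^2(G))$, which satisfies $\mathrm{Ad}\,\lambda_g\circ\mathrm{Ad}\,\lambda_h=\mathrm{Ad}\,\lambda_{gh}$ identically; restricting this identity to the common domain $A_{h^{-1}}\cap A_{(gh)^{-1}}$ gives (3). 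For axiom (2) I would take a typical element $\1_{K(\alpha)}\1_{K(\beta)}$ of $A_{g^{-1}}\cap A_h=A_{g^{-1}}A_h$ (with $\dot\alpha=g$ and $\dot\beta=h^{-1}$, using commutativity of $D_r$), compute
\[
\gamma_g\big(\1_{K(\alpha)}\1_{K(\beta)}\big)=\1_{g(K(\alpha)\cap K(\beta))}=\1_{gK(\alpha)\cap gK(\beta)}=\1_{K(\tilde\alpha)\cap\dot\alpha K(\beta)},
\]
and recognise the last set as $K(\beta\tilde\alpha)$ by applying \proref{pro:propertiesKalpha}(2) and (1) with $\tilde\alpha$ as the trailing word. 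Since $\dot{\beta\tilde\alpha}=\dot\beta\dot{\tilde\alpha}=h^{-1}g^{-1}=(gh)^{-1}$, this lies in $A_{gh}$, and continuity upgrades it to $\gamma_g(A_{g^{-1}}\cap A_h)\subseteq A_{gh}$, finishing the proof.

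None of these steps is hard; the one requiring genuine care is the combinatorial bookkeeping in \proref{pro:propertiesKalpha} — keeping straight which word plays the role of $\alpha$ and which of $\tilde\alpha$ in the concatenation formulas — together with the accompanying observation that conjugation by $\lambda_g$ really does respect the distinction between $\Bound(\ell^2(P))$ and the larger $\Bound(\ell^2(G))$ precisely because these identities carry each relevant $K(\alpha)$, and each intersection of such sets, back into a subset of $P$. That is the one substantive input, and it is exactly what \proref{pro:propertiesKalpha} supplies.
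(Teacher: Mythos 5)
Your argument is correct and follows essentially the same route as the paper: realize each $\gamma_g$ as the restriction of $\mathrm{Ad}\,\lambda_g$ on $\Bound(\ell^2(G))$, use \proref{pro:propertiesKalpha}(1) to see $gK(\alpha)=K(\tilde\alpha)\subseteq P$ so the restriction lands in $A_g$, check the ideal property and axiom (2) via the concatenation identities of \proref{pro:propertiesKalpha}, and get axiom (3) for free from multiplicativity of $g\mapsto\mathrm{Ad}\,\lambda_g$. The only cosmetic difference is that you invoke $\mathrm{Ad}\,\lambda_g(\1_S)=\1_{gS}$ abstractly where the paper spells out the computation on basis vectors $\delta_h$; the substance is identical.
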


\begin{rem} Observe that $A_{g^{-1}}=\{0\}$ if and only if $g^{-1}P\cap P=\emptyset$. One direction is obvious because $K(\alpha) \subset g^{-1}P\cap P$ whenever $\dot{\alpha}=g$. For the converse assume $ g^{-1}P\cap P\neq\emptyset$ and take  $p,q \in P$ such that $g^{-1}q=p$; then $\alpha = (e,q,p,e)$ satisfies $\dot{\alpha}= qp\inv = g$ and $\1_{K(\alpha)} = \1_{pP} \in A_{g^{-1}}$. Also notice that if~$p\in P$, then the ideal $A_p$ is the corner determined by the projection $\1_{pP}$ and $A_{p^{-1}} = D_r$.
\end{rem}

\begin{lem}\label{lem:generating-set} Let $P$ be a submonoid of a group $G$.  Let $(\{A_g\}_{g\in G},\{\gamma_g\}_{g\in G})$ be the partial action of~$G$ on~$D_r$ from \proref{prop:partial-const}. Let $\alpha=(p_1,p_2,\ldots,p_{2k})\in\Hilm[W]^k$ with $\dot{\alpha}=g^{-1}$. Then {$$\1_{K(\alpha)}\delta_g=\dot{\delta}_{\tilde{\alpha}}\coloneqq\delta_{p_{2k}^{-1}}\1_{p_{2k-1}P}\delta_{p_{2k-1}}\ldots \delta_{p_2^{-1}}\1_{p_1 P}\delta_{p_1}.$$} In particular, for every~$g\in G$, one has
$$B_{\gamma_g}=\overline{\mathrm{span}}\{ \dot{\delta}_{\beta}\mid\beta\in\Hilm[W], \dot{\beta}=g\}$$ and the full and reduced partial crossed products $D_r\rtimes_\gamma G$ and  $D_r\rtimes_{\gamma, r} G$ are generated as $\Cst$\nb-algebras by the semigroup of isometries~$\{\1_{pP}\delta_p\mid p\in P\}$.
\begin{proof} Let $\tilde{\pi}\colon D_r\rtimes_\gamma G\to\Bound(\Hilm[H])$ be a nondegenerate representation of~$ D_r\rtimes_\gamma G$ on a Hilbert space~$\Hilm[H]$. Let $(\pi, w)$ be the unique nondegenerate covariant representation of~$(\{A_g\}_{g\in G},\{\gamma_g\}_{g\in G})$ on~$\Hilm[H]$ such that $w_gw_{g^{-1}}$ is the orthogonal projection onto $\pi(A_g)\Hilm[H]$ and $\tilde{\pi}=\pi\times w$.  We will prove by induction on~$k$ that \begin{equation*}\label{eq:monomials}\tilde{\pi}(\1_{K(\alpha)}\delta_g)=\pi(\1_{K(\alpha)})w_g=w_{p_{2k}^{-1}}w_{p_{2k-1}}\ldots w_{p_2^{-1}}w_{p_1}=\dot{w}_\alpha^*\end{equation*} for all $\alpha=(p_1,p_2,\ldots,p_{2k})\in \Hilm[W]^k$ and $g=\dot{\tilde{\alpha}}$.

The base case $k=0$ only occurs if $g=e$, and clearly $\pi(\1_{K(\alpha)})w_e=\pi(\1)w_e=w_e$ in this case. Suppose~$k=1$, so that $\alpha=(p_1,p_2)$. Then 
\begin{equation*}
\begin{aligned}
\pi(\1_{K(\alpha)})w_{p_2^{-1}p_1}&=\pi(\gamma_{p_2^{-1}}(\1_{p_1P}\1_{p_2P}))w_{p_2^{-1}p_1}\\&=w_{p_2^{-1}}\pi(\1_{p_1P}\1_{p_2P})w_{p_2}w_{p_2^{-1}p_1}\\&= w_{p_2^{-1}}\pi(\1_{p_2P})\pi(\1_{p_1P})w_{p_1}=w_{p_2^{-1}}w_{p_1}.
\end{aligned}
\end{equation*}We used above that $w\colon G\to \Bound(\Hilm[H])$ is a \Star partial representation and $\1_{pP}$ is the unit of the ideal $A_{p}$, so that $w_{p}w_{p^{-1}}=\pi(\1_{pP})$ for all~$p\in P$.

Now fix $k>1$ and assume as induction hypothesis that \begin{equation*}\tilde{\pi}(\1_{K(\beta)}\delta_h)=\pi(\1_{K(\beta)})w_h=\dot{w}_\beta^*\end{equation*} for all $\beta\in\Hilm[W]^{k-1}$ and $h=\dot{\tilde{\beta}}$. Let $\alpha\in\Hilm[W]^k$ and $g=\dot{\tilde{\alpha}}$. Set $\alpha'=(p_1,p_2,\ldots,p_{2k-1}, e)\in\Hilm[W]^k$. Notice that $$\1_{K(\alpha)}=\gamma_{p_{2k}^{-1}}(\1_{K(\alpha')}\1_{p_{2k}P}).$$ Hence \begin{equation*}\begin{aligned}
\tilde{\pi}(\1_{K(\alpha)}\delta_g)=\pi(\1_{K(\alpha)})w_g&=w_{p_{2k}^{-1}}\pi(\1_{K(\alpha')})\pi(\1_{p_{2k}P})w_{p_{2k}}w_g\\&=w_{p_{2k}^{-1}}\pi(\1_{K(\alpha')})w_{p_{2k}g}\\&=w_{p_{2k}^{-1}}\pi(\1_{K(\alpha')})w_{\dot{\tilde{\alpha'}}}.
\end{aligned}
\end{equation*}
 Observe that we still have $\alpha'\in \Hilm[W]^k$. Put $\alpha''\coloneqq (p_1,p_2,\ldots,p_{2k-3},p_{2k-2})$. So $\alpha''\in\Hilm[W]^{k-1}$. Also, $$\pi(\1_{K(\alpha')})=\pi(\gamma_{p_{2k-1}}(\1_{K(\alpha'')}))=w_{p_{2k-1}}\pi(\1_{K(\alpha'')})w_{p_{2k-1}^{-1}}.$$ Therefore 
\begin{equation*}
\begin{aligned}w_{p_{2k}^{-1}}\pi(\1_{K(\alpha')})w_{\dot{\tilde{\alpha'}}}&=w_{p_{2k}^{-1}}w_{p_{2k-1}}\pi(\1_{K(\alpha'')})w_{p_{2k-1}^{-1}}w_{\dot{\tilde{\alpha'}}}\\&=w_{p_{2k}^{-1}}w_{p_{2k-1}}\pi(\1_{K(\alpha'')})w_{p_{2k-1}^{-1}}w_{p_{2k-1}}w_{\dot{\tilde{\alpha''}}}\\&=w_{p_{2k}^{-1}}w_{p_{2k-1}}\pi(\1_{K(\alpha'')})w_{\dot{\tilde{\alpha}}''}.
\end{aligned}
\end{equation*} We can now apply our induction hypothesis to~$\alpha''$ to conclude that $$\tilde{\pi}(\1_{K(\alpha)}\delta_g)=\pi(\1_{K(\alpha)})w_g=w_{p_{2k}^{-1}}w_{p_{2k-1}}\ldots w_{p_2^{-1}}w_{p_1}$$ as asserted. Since we can always take a covariant pair $(\pi, w)$ such that~$\pi$ is a faithful representation of~$D_r$ and, for all~$p\in P$, one has $w_{p^{-1}}=(\pi\times w)(\delta_{p^{-1}})$ and $w_p=(\pi\times w)(\1_{pP}\delta_p)$, we deduce that $$\1_{K(\alpha)}\delta_g=\delta_{p_{2k}^{-1}}\1_{p_{2k-1}P}\delta_{p_{2k-1}}\ldots \delta_{p_2^{-1}}\1_{p_1 P}\delta_{p_1}$$ in~$B_{\gamma_g}=A_g\delta_g$. Hence \begin{equation*}\begin{aligned}B_{\gamma_g}&=\overline{\mathrm{span}}\{ \delta_{q_1^{-1}}\1_{q_2P}\delta_{q_2}\ldots \delta_{q_{2l-1}^{-1}}\1_{q_{2l}P}\delta_{q_{2l}}\mid l\geq 0, q_1^{-1}q_2\cdots q_{2l-1}^{-1}q_{2l}=g\}\\&=\overline{\mathrm{span}}\{\dot{\delta}_\beta\mid\beta\in\Hilm[W], \dot{\beta}=g\}\end{aligned}\end{equation*} for all~$g\in G$. The last assertion in the statement follows because $\bigoplus_{\substack{g\in G}}B_{\gamma_g}$ is dense in~$D_r\rtimes_\gamma G$ and $D_r\rtimes_{\gamma, r}G$. This finishes the proof of the lemma.
\end{proof}
\end{lem}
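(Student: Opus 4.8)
The plan is to reduce the monomial identity to an operator identity in a faithful covariant representation of $D_r\rtimes_\gamma G$, to prove that identity by induction on word length, and then to read off the two ``in particular'' statements. First I would fix a faithful nondegenerate representation $\tilde\pi=\pi\times w\colon D_r\rtimes_\gamma G\to\Bound(\Hilm[H])$, where $\pi$ is the (unital) representation of $D_r=B_{\gamma_e}$ and $w$ is the associated \Star partial representation of $G$, with $w_gw_{g^{-1}}$ the projection onto $\pi(A_g)\Hilm[H]$. Since $A_{p^{-1}}=D_r$ for every $p\in P$, each $w_p$ with $p\in P$ is an isometry, $w_p^*=w_{p^{-1}}$ and $w_pw_p^*=\pi(\1_{pP})$; hence $\tilde\pi(\1_{pP}\delta_p)=\pi(\1_{pP})w_p=w_p$ and $\tilde\pi(\1_P\delta_{p^{-1}})=w_p^*$, so that $\tilde\pi(\dot\delta_{\tilde\alpha})=w_{p_{2k}}^*w_{p_{2k-1}}\cdots w_{p_2}^*w_{p_1}=\dot{w}_\alpha^*$. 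As also $\tilde\pi(\1_{K(\alpha)}\delta_g)=\pi(\1_{K(\alpha)})w_g$ and $\tilde\pi$ is faithful, the first assertion of the lemma is equivalent to
\[
\pi(\1_{K(\alpha)})\,w_g=\dot{w}_\alpha^*\qquad\text{for every }\alpha=(p_1,\dots,p_{2k})\in\W^k\text{ with }\dot\alpha=g^{-1}.
\]

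I would prove this by induction on $k$. The cases $k=0$ (both sides equal $1$) and $k=1$ are direct: for $k=1$, writing $\1_{K((p_1,p_2))}=\gamma_{p_2^{-1}}(\1_{p_1P}\1_{p_2P})$, applying the covariance relation $w_g\pi(a)w_{g^{-1}}=\pi(\gamma_g(a))$, and collapsing with the partial-representation identities $w_gw_h=w_gw_{g^{-1}}w_{gh}$ and $w_qw_q^*w_q=w_q$ gives $w_{p_2}^*w_{p_1}$. For the inductive step I put $\alpha'=(p_1,\dots,p_{2k-1},e)\in\W^k$ and $\alpha''=(p_1,\dots,p_{2k-2})\in\W^{k-1}$. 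The combinatorial input is the pair of identities among constructible ideals $\1_{K(\alpha')}\1_{p_{2k}P}=\1_{p_{2k}K(\alpha)}$ and $K(\alpha')=p_{2k-1}K(\alpha'')$, which follow from $\iqs(\alpha')=\{e\}\cup p_{2k-1}\iqs(\alpha'')$ and the relation between $\iqs(\alpha)$ and $\iqs(\alpha')$ obtained from \lemref{lem:propertiesofquotientsets}, together with \proref{pro:characterizationconstructibleideals} to know $p_{2k-1}K(\alpha'')\in\J$; using $A_{p^{-1}}=D_r$ for $p\in P$ they translate into $\1_{K(\alpha)}=\gamma_{p_{2k}^{-1}}(\1_{K(\alpha')}\1_{p_{2k}P})$ and $\1_{K(\alpha')}=\gamma_{p_{2k-1}}(\1_{K(\alpha'')})$. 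Feeding the first into the covariance relation and peeling off $w_{p_{2k}}^*$ (using $w_{p_{2k}}w_g=w_{p_{2k}}w_{p_{2k}}^*w_{p_{2k}g}$, $p_{2k}g=\dot{\tilde{\alpha'}}$ and $w_{p_{2k}}^*\pi(\1_{p_{2k}P})=w_{p_{2k}}^*$), then the second and peeling off $w_{p_{2k-1}}$ (using $\dot{\tilde{\alpha'}}=p_{2k-1}\dot{\tilde{\alpha''}}$ and $w_{p_{2k-1}}^*w_{p_{2k-1}}=1$), the left-hand side becomes $w_{p_{2k}}^*\,w_{p_{2k-1}}\,\bigl(\pi(\1_{K(\alpha'')})\,w_{\dot{\tilde{\alpha''}}}\bigr)$; since $\dot{\alpha''}=(\dot{\tilde{\alpha''}})^{-1}$, the induction hypothesis turns the bracket into $\dot{w}_{\alpha''}^*$, and $w_{p_{2k}}^*w_{p_{2k-1}}\dot{w}_{\alpha''}^*=\dot{w}_\alpha^*$.

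For the remaining assertions, $B_{\gamma_g}=A_g\delta_g$ with $A_g=\clsp\{\1_{K(\beta)}\mid\beta\in\W,\ \dot\beta=g^{-1}\}$ by definition; the identity just proved rewrites each spanning element $\1_{K(\beta)}\delta_g$ as $\dot\delta_{\tilde\beta}$ with $\dot{\tilde\beta}=g$, whence $B_{\gamma_g}=\clsp\{\dot\delta_\beta\mid\beta\in\W,\ \dot\beta=g\}$. Finally $\1_P\delta_{p^{-1}}=(\1_{pP}\delta_p)^*$ and $(\1_{pP}\delta_p)(\1_{qP}\delta_q)=\1_{pqP}\delta_{pq}$, so $p\mapsto\1_{pP}\delta_p$ is an isomorphism of $P$ onto a semigroup of isometries; since every $\dot\delta_\beta$ is a product of such elements and their adjoints and $\bigoplus_{g\in G}B_{\gamma_g}$ is dense in both $D_r\rtimes_\gamma G$ and $D_r\rtimes_{\gamma,r}G$, both crossed products are generated by $\{\1_{pP}\delta_p\mid p\in P\}$.

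I expect the main difficulty to be purely the bookkeeping in the inductive step: verifying that each projection fed into a covariance relation actually lies in the domain of the relevant $\gamma_g$ (which is exactly where $A_{p^{-1}}=D_r$ for $p\in P$ is needed) and sequencing the partial-isometry manipulations so that the telescoping closes. The only conceptual step is the initial passage to a covariant representation via the universal property of the full crossed product; everything afterwards is mechanical.
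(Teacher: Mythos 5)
Your proposal is correct and follows essentially the same route as the paper: the same induction on word length with the auxiliary words $\alpha'=(p_1,\dots,p_{2k-1},e)$ and $\alpha''=(p_1,\dots,p_{2k-2})$, the same ideal identities $\1_{K(\alpha)}=\gamma_{p_{2k}^{-1}}(\1_{K(\alpha')}\1_{p_{2k}P})$ and $\1_{K(\alpha')}=\gamma_{p_{2k-1}}(\1_{K(\alpha'')})$, and the same partial-representation manipulations, concluding with the same density argument for the generation statement. The only (harmless) cosmetic difference is that you fix a faithful representation of the full crossed product at the outset, whereas the paper runs the computation in an arbitrary nondegenerate covariant representation and then pulls the identity back to the fiber $B_{\gamma_g}$ by choosing a covariant pair with $\pi$ faithful on $D_r$.
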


By  \cite[Theorem~5.6.41]{CELY} the reduced Toeplitz $\Cst$\nb-algebra $\Toepr(P)$ is canonically isomorphic to the reduced partial crossed product 
$D_r\rtimes_{\gamma, r}G$.
  We show next that the full version of this isomorphism holds for  $\Toepu(P)$. As a byproduct of our construction, we also recover  the reduced result. 

\begin{thm}\label{thm:partial-picture} Let $P$ be a submonoid of a group $G$.  Let $(\{A_g\}_{g\in G},\{\gamma_g\}_{g\in G})$  be the partial action of~$G$ on~$D_r$ from \proref{prop:partial-const}. Then the map $t_p\mapsto\1_{pP}\delta_p$ induces an isomorphism $$\Toep_u(P)\cong D_r\rtimes_{\gamma}G.$$ In addition, $L_p\mapsto \1_{pP}\delta_p$ gives rise to an isomorphism between reduced $\Cst$\nb-algebras $\Toepr(P)\cong D_r\rtimes_{\gamma, r} G.$
\begin{proof} To see that the map that sends $t_p$ to $\1_{pP}\delta_p$ induces a surjective \Star homomorphism $\psi\colon \Toep_u(P)\to D_r\rtimes_\gamma G$, notice that $$\1_{K(\alpha)}\delta_e = \1_{K(\tilde{\alpha})}\delta_e= \delta_{p_1^{-1}}\1_{p_2P}\delta_{p_2}\ldots \delta_{p_{2k-1}^{-1}}\1_{p_{2k}P}\delta_{p_{2k}}=\dot{\delta}_\alpha$$ in~$D_r\rtimes_\gamma G $ whenever $\alpha=(p_1,\ldots,p_{2k-1},p_{2k})\in\Hilm[W]^k$ satisfies $\dot{\alpha}=e$. Hence the defining relations (T1)--(T4) of Definition \ref{def:toeplitz-semigroup} are satisfied in~$D_r\rtimes_\gamma G $ and so the map $t_p\mapsto \1_{pP}\delta_p$ extends to a \Star homomorhism $\psi\colon \Toep_u(P)\to D_r\rtimes_\gamma G$. This is surjective by \lemref{lem:generating-set}.

In order to construct an inverse for~$\psi$, for each $g\in G$, consider the subspace of~$\Toep_u(P)$ given by $$B_g= \clsp\{\dot{t}_\alpha\mid \alpha\in\Hilm[W],\, \dot{\alpha}=g\}.$$ Then $\psi\colon \Toep_u(P)\to D_r\rtimes_\gamma G$ is faithful when restricted to~$B_g$ for all~$g\in G$, because it is so on~$B_e=D_u$ and $b^*b\in B_e$ for all~$b\in B_g$. Put $\psi_g\coloneqq\psi_{\restriction_{B_g}}$. Then $\psi_g\colon B_g\to A_g\delta_g$ is an isomorphism by \lemref{lem:generating-set}. Thus we can define a representation $\psi'$ of $(\{A_g\}_{g\in G},\{\gamma_g\}_{g\in G})$ in $\Toep_u(P)$ by $$\psi'(a\delta_g)=\psi_g^{-1}(a\delta_g)$$ for all $g\in G$ and $a\in A_g$. Since $B_gB_h\subseteq B_{gh}$ and $B_g^*=B_{g^{-1}}$, it follows that $\psi_g^{-1}(b)^*=\psi^{-1}_{g^{-1}}(b^*)$ and $\psi_g^{-1}(b)\psi_h^{-1}(c)=\psi_{gh}^{-1}(bc)$ for all $g,h\in G$, $a\in \lambda(B_g)$, $b\in\lambda(B_h)$.  Thus \begin{equation*}\begin{aligned}
\psi'(a\delta_g)\psi'(b\delta_h)&=\psi_g^{-1}(a\delta_g)\lambda_h^{-1}(b\delta_h)\\&=\psi_{gh}^{-1}(a\delta_g\cdot b\delta_h)\\&=\psi'(a\delta_g\cdot b\delta_h).
\end{aligned}
\end{equation*} Similarly, one can show that $\psi'$ preserves the involution operation $^*\colon A_g\delta_g\to A_{g^{-1}}\delta_{g^{-1}}$. Hence it gives rise to a \Star homomorphism $\tilde{\psi'}\colon D_r\rtimes_\gamma G\to \Toep_u(P)$ by universal property of~$ D_r\rtimes_\gamma G$. Since $D_r\rtimes_{\gamma}G$ is generated as a $\Cst$\nb-algebra by the set of isometries $\{\1_{pP}\delta_p\mid p\in P\}$, we see that $\tilde{\psi'}$ is the inverse of~$\psi$ as desired. 

It remains to establish the isomorphism $\Toepr(P)\cong D_r\rtimes_{\gamma, r}G$. Let $\Lambda\colon D_r\rtimes_{\gamma}G\to D_r\rtimes_{\gamma, r}G$ be the left regular representation associated to the partial action $(\{A_g\}_{g\in G}, \{\gamma_g\}_{g\in G})$ and let $E_{\Lambda}\colon D_r\rtimes_{\gamma}G\to D_r\delta_e$ be the corresponding conditional expectation. By \cite[Proposition~19.7]{Exel:Partial_dynamical}, $$\ker\Lambda=\{c\in  D_r\rtimes_{\gamma}G\mid E_\Lambda(c^*c)=0\}.$$ Hence \corref{cor:reg-kernel} and the commutativity of the diagram
$$\xymatrix{
     \Toep_u(P)\ar@{->}[r]^{\psi} \ar@{->}[d]_{E_u}&
   D_r\rtimes_{\gamma}G \ar@{->}[d]_{E_\Lambda}\\
    D_u\ar@{->}[r]^{\psi_{\restriction_{D_u}}} &
   D_r\delta_e
    }$$ yield $\psi(\ker\lambda^+)=\ker\Lambda$. Thus $\Toepr(P)\cong D_r\rtimes_{\gamma, r}G$ via an isomorphism that identifies the canonical generating elements.
\end{proof}
\end{thm}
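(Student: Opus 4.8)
The plan is to establish the full isomorphism directly and then obtain the reduced one by a kernel comparison. First I would produce a surjective $*$-homomorphism $\psi\colon\Toepu(P)\to D_r\rtimes_\gamma G$ with $\psi(t_p)=\1_{pP}\delta_p$. The elements $\1_{pP}\delta_p$ are isometries of $D_r\rtimes_\gamma G$, and \lemref{lem:generating-set} tells us that their alternating product over a neutral word $\beta$ is $\1_{K(\beta)}\delta_e$, i.e.\ it lies in the copy $D_r\delta_e$ of $D_r$ and depends only on $K(\beta)$. Since the characteristic functions $\1_{K(\beta)}$ satisfy relations (T1)--(T4) inside $D_r$ --- this is exactly what was verified in the proof of \proref{prop:diagonal-sub} --- the family $\{\1_{pP}\delta_p\mid p\in P\}$ satisfies the defining relations of $\Toepu(P)$, so the universal property supplies $\psi$; it is surjective because, again by \lemref{lem:generating-set}, these isometries generate $D_r\rtimes_\gamma G$.

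Next I would construct an inverse by exploiting the $G$-gradings on both sides. On $\Toepu(P)$ put $B_g\coloneqq\clsp\{\dot t_\alpha\mid\dot\alpha=g\}$, so $B_e=D_u$, $B_gB_h\subseteq B_{gh}$, $B_g^*=B_{g^{-1}}$, and $\psi(B_g)\subseteq A_g\delta_g$. The restriction $\psi|_{D_u}$ is, under the identification $D_r\delta_e\cong D_r$, just $\lambda^+|_{D_u}$, hence faithful by \corref{cor:faithfulondiagonal} (equivalently, $D_r\rtimes_\gamma G$ contains a copy of $D_r$ which is jointly proper, so \proref{pro:faithfulonD_u} applies). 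For general $g$ and $b\in B_g$ one has $b^*b\in B_e$, whence $\|\psi(b)\|^2=\|\psi(b^*b)\|=\|b\|^2$; thus $\psi_g\coloneqq\psi|_{B_g}\colon B_g\to A_g\delta_g$ is an isometric isomorphism, bijective by \lemref{lem:generating-set}. Since $\psi$ is multiplicative and $*$-preserving, the maps $\psi_g^{-1}$ assemble into a representation of the partial action $(\{A_g\},\{\gamma_g\})$ in $\Toepu(P)$ given by $a\delta_g\mapsto\psi_g^{-1}(a\delta_g)$, and the universal property of the full partial crossed product yields a $*$-homomorphism $D_r\rtimes_\gamma G\to\Toepu(P)$ which is inverse to $\psi$, as one checks on generators.

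For the reduced statement I would compare kernels of the regular representations. Let $\Lambda\colon D_r\rtimes_\gamma G\to D_r\rtimes_{\gamma,r}G$ be the regular representation, with canonical conditional expectation $E_\Lambda$ onto $D_r\delta_e$; by \cite[Proposition~19.7]{Exel:Partial_dynamical} one has $\ker\Lambda=\{c\mid E_\Lambda(c^*c)=0\}$. The isomorphism $\psi$ intertwines $E_u$ and $E_\Lambda$, since both annihilate the off-diagonal graded pieces and agree on the diagonal; combining this with \corref{cor:reg-kernel} gives $\psi(\ker\lambda^+)=\ker\Lambda$, so $\psi$ descends to an isomorphism $\Toepr(P)=\Toepu(P)/\ker\lambda^+\cong (D_r\rtimes_\gamma G)/\ker\Lambda=D_r\rtimes_{\gamma,r}G$, under which $L_p=\lambda^+(t_p)$ corresponds to $\Lambda(\1_{pP}\delta_p)$.

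The main obstacle is the injectivity of $\psi$. Everything rests on the graded-faithfulness reduction $b^*b\in B_e$ together with faithfulness on the diagonal $D_u$, and then on correctly assembling the fibrewise inverses $\psi_g^{-1}$ into a genuine covariant representation of the partial action; checking the covariance identity and multiplicativity of $a\delta_g\mapsto\psi_g^{-1}(a\delta_g)$, so that the universal property of $D_r\rtimes_\gamma G$ can be invoked, is where the real work lies.
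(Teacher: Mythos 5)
Your proposal is correct and follows essentially the same route as the paper: the surjection $\psi$ via the universal property using $\dot\delta_\alpha=\1_{K(\alpha)}\delta_e$ and \lemref{lem:generating-set}, the inverse built from the graded pieces $B_g$ (faithfulness on $D_u$ plus $b^*b\in B_e$, then assembling the $\psi_g^{-1}$ into a representation of the partial action and invoking the universal property of $D_r\rtimes_\gamma G$), and the reduced isomorphism by comparing $\ker\lambda^+$ with $\ker\Lambda$ through the intertwined expectations $E_u$ and $E_\Lambda$ via \corref{cor:reg-kernel} and \cite[Proposition~19.7]{Exel:Partial_dynamical}. The steps you flag as the real work (covariance and multiplicativity of $a\delta_g\mapsto\psi_g^{-1}(a\delta_g)$) are exactly the checks the paper carries out.
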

\begin{rem}
It is pointed out in \cite[Remark  5.6.46]{CELY} that instead of defining the full semigroup $\Cst$\nb-algebra 
 as an inverse semigroup $\Cst$\nb-algebra, one could focus on the full $\Cst$\nb-algebra of the partial transformation groupoid $G\ltimes \Omega_P$ or, equivalently, the full partial crossed product $D_r\rtimes_\gamma G$. When we combine  \thmref{thm:partial-picture} with  \proref{pro:charactofToepu}, we see that the latter $\Cst$\nb-algebra is
 canonically isomorphic to the  $\Cst$\nb-algebra $\Cst_s{}^{(\cup)}(P)$ mentioned in \cite[Section~3]{Li:Semigroup_amenability}. The reason for the isomorphism is that  both coincide with our $\Toepu(P)$. 
As suggested also in \cite[Remark  5.6.46]{CELY}, the full partial crossed product version would  yield stronger, `independence-free', versions  of \cite[Theorem 5.6.44]{CELY} and \cite[Corollary  5.6.45]{CELY}, see below. Arguably, choosing such a path is more justified and the stronger results are more  appealing now that we have introduced the $\Cst$\nb-algebra $\Toepu(P)$ via a transparent presentation.
\end{rem}

\begin{thm}[cf. \cite{CELY}*{Theorem 5.6.44}]\label{thm:modifiedresult}
Suppose $P$ is a submonoid of a group $G$ and consider the following conditions:
\begin{enumerate}
\item $\Toepu(P)$ is nuclear;
\item $\Toepr(P)$ is nuclear;
\item  the groupoid $G\ltimes \Omega_P$ is amenable;
\item the left regular representation $\lambda^+\colon \Toepu(P)\to \Toepr(P)$ is faithful.
\end{enumerate}
Then \textup{(1) $\iff$ (2) $\iff$ (3) $\implies$ (4).} 
If $G$ is exact, then \textup{(4) $\implies$ (1),} and all conditions are equivalent.
\begin{proof} By \thmref{thm:partial-picture}, $\Toepu(P)$  and  $\Toepr(P)$ are the full and the reduced  crossed product of the partial action of $G$ on $\Omega_P$, and hence are respectively isomorphic to the full and reduced groupoid $\Cst$\nb-algebras of the partial transformation groupoid $G\ltimes \Omega_P$. So the first statement follows directly from \cite[Theorem 5.6.7]{CELY}. If $G$ is exact and~$P$ satisfies independence, the implication (4)$\implies$(1)  has already been obtained, for $\Cst_s(P)$, in \cite[Corollary~5.5]{BFS2020}; for general $P$ we use \thmref{thm:partial-picture} and \cite[Theorem~4.10]{BFS2020}.
\end{proof}
\end{thm}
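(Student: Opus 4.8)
The plan is to reduce everything to the groupoid picture provided by \thmref{thm:partial-picture}. First I would observe that, by \thmref{thm:partial-picture}, $\Toepu(P)\cong D_r\rtimes_\gamma G$ and $\Toepr(P)\cong D_r\rtimes_{\gamma,r}G$, and that these are respectively the full and reduced cross-sectional $\Cst$\nb-algebras of the semidirect product Fell bundle $\Hilm[B]_\gamma$, which in turn is the Fell bundle of the partial transformation groupoid $G\ltimes\Omega_P$ (here $D_r\cong\Cont(\Omega_P)$, with $\Omega_P$ the spectrum of the diagonal). So $\Toepu(P)\cong\Cst(G\ltimes\Omega_P)$ and $\Toepr(P)\cong\Cst_r(G\ltimes\Omega_P)$. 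Once this identification is in place, the bulk of the statement is a direct citation of the general theory of étale groupoid $\Cst$\nb-algebras: amenability of an étale groupoid implies that the full and reduced $\Cst$\nb-algebras coincide (giving (3)$\implies$(4)) and that both are nuclear (giving (3)$\implies$(1) and (3)$\implies$(2)); conversely, for this class of groupoids nuclearity of the reduced $\Cst$\nb-algebra is equivalent to amenability of the groupoid, which gives (1)$\iff$(2)$\iff$(3). The reference \cite[Theorem 5.6.7]{CELY} packages exactly these equivalences, so the first sentence of the conclusion follows immediately.

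Next I would handle the exactness hypothesis for the implication (4)$\implies$(1). The point is that when $G$ is exact, faithfulness of $\lambda^+\colon\Toepu(P)\to\Toepr(P)$ forces amenability of the groupoid. When $P$ satisfies independence this is already available: $\Toepu(P)\cong\Cst_s(P)$ by \corref{cor:canonicalSharptoLimap}, and \cite[Corollary~5.5]{BFS2020} gives the conclusion for $\Cst_s(P)$. For general $P$, I would instead invoke \cite[Theorem~4.10]{BFS2020}, applied to the Fell bundle $\Hilm[B]_\gamma$ over the exact group $G$: that result says that if the canonical surjection from the full to the reduced cross-sectional algebra of a Fell bundle over an exact group is an isomorphism, then the bundle is amenable, hence in particular its cross-sectional algebra is nuclear. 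Feeding $\Hilm[B]_\gamma$ into this theorem, faithfulness of $\lambda^+$ (which under the identification of \thmref{thm:partial-picture} is precisely the canonical full-to-reduced map) yields amenability of $\Hilm[B]_\gamma$, whence $\Toepu(P)$ is nuclear; together with the already-established chain this closes the loop and makes all four conditions equivalent.

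The only genuine subtlety—and the step I expect to need the most care—is making sure the identifications are compatible: that the map $\lambda^+$ of \proref{prop:diagonal-sub}, under the isomorphisms of \thmref{thm:partial-picture}, really is the canonical quotient map $D_r\rtimes_\gamma G\to D_r\rtimes_{\gamma,r}G$ (equivalently $\Cst(G\ltimes\Omega_P)\to\Cst_r(G\ltimes\Omega_P)$), and that the hypotheses of \cite[Theorem~4.10]{BFS2020} are literally met by $\Hilm[B]_\gamma$. The first of these is exactly the content of the commuting-diagram argument at the end of the proof of \thmref{thm:partial-picture} (the diagram relating $E_u$ and $E_\Lambda$), so it can be quoted; the second is a matter of unwinding definitions. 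Everything else is a transcription of the cited results, so the proof itself should be short: state the groupoid identification, cite \cite[Theorem 5.6.7]{CELY} for (1)$\iff$(2)$\iff$(3)$\implies$(4), and cite \cite[Corollary~5.5]{BFS2020} (independence case) and \cite[Theorem~4.10]{BFS2020} (general case) for (4)$\implies$(1) under exactness of $G$.
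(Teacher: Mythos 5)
Your proposal is correct and follows essentially the same route as the paper: identify $\Toepu(P)$ and $\Toepr(P)$ with the full and reduced crossed products (equivalently, the groupoid $\Cst$\nb-algebras of $G\ltimes\Omega_P$) via \thmref{thm:partial-picture}, cite \cite[Theorem 5.6.7]{CELY} for the first chain of implications, and cite \cite[Corollary~5.5]{BFS2020} (independence case) and \cite[Theorem~4.10]{BFS2020} (general case) for (4)$\implies$(1) under exactness. Your extra remark about checking that $\lambda^+$ matches the canonical full-to-reduced map is exactly the point settled by the commuting diagram at the end of the proof of \thmref{thm:partial-picture}, so it is a legitimate (and already available) ingredient rather than a divergence.
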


\begin{cor} [cf. \cite{CELY}*{Corollary 5.6.45}]\label{cor:ifamenallequiv}
If the monoid $P$ embeds in an amenable group $G$, then all the conditions in \thmref{thm:modifiedresult} hold.
\begin{proof}  
Since $G$ is amenable, all the conditions in \thmref{thm:modifiedresult} are equivalent. As indicated in the  proof of  \cite[Corollary  5.6.45]{CELY},  
it also follows that the groupoid $G\ltimes \Omega_P$ is amenable  by Theorem~20.7 and Theorem~20.10 in~\cite{Exel:Partial_dynamical}. This proves the corollary.
\end{proof}

\end{cor}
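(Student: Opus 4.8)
The plan is to leverage the logical scaffolding already assembled in \thmref{thm:modifiedresult}. The first observation is that an amenable discrete group is exact, so under the hypothesis of the corollary we are in the situation of \thmref{thm:modifiedresult} in which all four conditions (1)--(4) are equivalent; hence it suffices to verify a single one of them, and I would aim for condition~(3), amenability of the partial transformation groupoid $G\ltimes\Omega_P$.

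To establish~(3) I would invoke the general principle that a partial action of an amenable discrete group on a locally compact Hausdorff space yields an amenable transformation groupoid. Here $\Omega_P$ is the compact Hausdorff spectrum of the unital commutative $\Cst$\nb-algebra $D_r$, and the relevant partial action of $G$ on $\Omega_P$ is the one dual to the partial action $\gamma=(\{A_g\}_{g\in G},\{\gamma_g\}_{g\in G})$ of \proref{prop:partial-const}; since $G$ is amenable, the associated Fell bundle---equivalently the groupoid $G\ltimes\Omega_P$---is amenable by Theorem~20.7 and Theorem~20.10 of \cite{Exel:Partial_dynamical}. This is exactly the argument indicated in the proof of \cite[Corollary~5.6.45]{CELY}, carried over to our setting. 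Once~(3) is in hand, the equivalences in \thmref{thm:modifiedresult} immediately deliver~(1),~(2) and~(4) as well.

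A parallel and equally short route avoids groupoids altogether: by \thmref{thm:partial-picture} we have $\Toepu(P)\cong D_r\rtimes_{\gamma}G$ and $\Toepr(P)\cong D_r\rtimes_{\gamma,r}G$ with $\lambda^+$ corresponding to the regular representation $\Lambda$ of this Fell bundle, and for an amenable group $G$ the full and reduced cross-sectional $\Cst$\nb-algebras of any Fell bundle over $G$ coincide, so $\Lambda$---and hence $\lambda^+$---is faithful; this is condition~(4), which again upgrades to~(1)--(3) via exactness of $G$ and \thmref{thm:modifiedresult}. I do not expect a genuine obstacle: all the substance sits in \thmref{thm:modifiedresult} and in the soft permanence of amenability under partial actions of amenable groups. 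The only point requiring a moment's care is checking that the amenability statements cited from \cite{Exel:Partial_dynamical} apply verbatim to $\Omega_P$ and $\gamma$ as constructed in \secref{sec:partialactions}, which is immediate once one recalls that $\Omega_P$ is the compact Hausdorff spectrum of $D_r$ and that $G\ltimes\Omega_P$ is precisely the groupoid whose full and reduced $\Cst$\nb-algebras are $D_r\rtimes_{\gamma}G$ and $D_r\rtimes_{\gamma,r}G$.
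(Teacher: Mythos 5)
Your first route is exactly the paper's argument: amenability of $G$ gives exactness (so all four conditions of \thmref{thm:modifiedresult} are equivalent) and the amenability of the groupoid $G\ltimes\Omega_P$ via Theorems~20.7 and~20.10 of \cite{Exel:Partial_dynamical}, as in \cite[Corollary~5.6.45]{CELY}. Your alternative route through condition~(4) is also valid, but it rests on the same ingredient (Exel's Theorem~20.7, that Fell bundles over amenable groups are amenable), so it is essentially a repackaging rather than a genuinely different proof.
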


\begin{rem}
We can also see now that the conclusion of \cite[Theorem  5.6.42]{CELY}  holds, without the assumption of independence, for the $\Cst$\nb-algebra $\Toepu(P)$ instead of $\Cst_s(P)$.
The proof goes along the same lines, but relies on our \thmref{thm:modifiedresult} instead of  \cite[Theorem 5.6.44]{CELY}.
\end{rem}

\section{Faithful representations  of $\Toepr(P)$} \label{sec:faithfulness}
Our main purpose in this section is to study faithfulness of representations of~$\Toepr(P)$, for which we use the partial crossed product picture of~$\Toepr(P)$ as described in Section~\ref{sec:partialactions}.  The first result reduces the question of whether a representation of $\Toepr(P)$ is faithful to whether its restriction to the crossed product of~$D_r$ by the action of the group of units is faithful. This generalizes earlier results, from~\cite{CELY}, valid for trivial unit group  and from \cite{NNN1} about right LCM monoids. It turns out that topological freeness of the partial action of $G$ is equivalent to that of its restriction to the group of units, and we characterize this in terms of the action of units on  constructible right ideals. We finish the section by deriving a general uniqueness theorem for the $\Cst$\nb-algebra generated by a collection of elements satisfying  
the presentation of $\Toepu(P)$. 

\subsection{A characterization of faithful representations.} If $P$ is embedded as a submonoid in a group~$G$, then  the group of units of $P$ is the subgroup $P^* \coloneqq P\cap P\inv$ of $G$.
 The partial action of $G$ restricts to an action of $P^*$ on
the diagonal subalgebra $D_r$, and the crossed product $D_r\rtimes_{\gamma, r} P^*$ embeds canonically in the partial crossed product $\Toepr(P) \cong D_r \rtimes_{\gamma, r} G$. 
This observation plays a crucial role in the following characterization of faithful representations of $\Toepr(P)$.

\begin{thm}\label{thm:faithfulreps}
Every nontrivial ideal of {$\Toepr(P) \cong D_r \rtimes_{\gamma, r} G$} has nontrivial intersection with the subalgebra  $D_r\rtimes_{\gamma, r} P^*$. In other words, a representation of $\Toepr(P)$ is faithful if and only if it is faithful on $D_r\rtimes_{\gamma, r} P^*$. 
\begin{proof}  Let $E_r\colon\Toepr(P) \to D_r$ be the canonical faithful conditional expectation of $\Toepr(P)$ onto the diagonal subalgebra. In order to prove the theorem, it suffices to show that if a representation $\rho \colon\Toepr(P) \to \Bound(\H)$ is faithful on the reduced crossed product {$D_r\rtimes_{\gamma, r} P^*$, then there is a
conditional expectation $\varphi_\rho$, defined on the image of $\rho$ and having range $\rho(D_r)$, 
so that the square}
\begin{equation}\label{eqn:commdiagram}
    \centering
    \begin{tikzpicture}[scale=0.6]
    \node at (-4,4) {$\Toepr(P) \cong D_r \rtimes_{\gamma, r} G$\ \ \ \ };
    \node at (4,4) {$\rho(\Toepr(P))$};
    \node at (-4,0) {$D_r$};
    \node at (4,0) {$\rho(D_r)$};
    
    \draw[->] (-1.5,4) -- (2,4);
    \draw[->] (-2,0) -- (2,0);
	\draw[->] (-4,3) -- (-4,1);
    \draw[->] (4,3) -- (4,1);
  
    \node at (-4.5, 2) {$E_r$};
    \node at (4.5, 2) {$\varphi_\rho$};
    \node at (0, 4.5) {$\rho$};
    \node at (0, .5) {$\rho\restriction_{D_r}$};
      \end{tikzpicture}
\end{equation} 
commutes.
The usual argument then completes the proof: if $\rho(b) =0$, then $(\varphi_\rho\circ \rho)(b^*b) =0$, and hence 
$(\rho\restriction_{D_r} \circ E_r)(b^*b) =0$. Since $E_r(b^*b) \in D_r$, this implies that $E_r (b^*b) =0$. Thus $b^*b =0$ and $b=0$ because $E_r$ is faithful.

We denote by $A_g^c$ the dense \Star subalgebra of~$A_g$ spanned by the set $\{\1_{K(\alpha)}\mid \dot{\alpha}=g^{-1}\}$. Thus $\bigoplus_{\substack{g\in G}}A_g^c\delta_g$ is a dense \Star subalgebra of~$ D_r \rtimes_{\gamma,r} G$. In order to show that the conditional expectation~$\varphi_\rho$ exists we show that for each (finite) linear combination 
$\sum_{g\in F} a_g \delta_g $ in $\bigoplus_{\substack{g\in G}}A_g^c\delta_g$
(in which we may assume there is a term $a_e$ by setting it to be zero if necessary), there exist an element $p\in P$ and a projection $Q \in D_r$
such that 
\begin{enumerate}
\item $| a_e(p) |= \| a_e \| $;
\item $Q (p) = 1$;
\item $Q a_g\delta_g Q = 0 $ for every $g\in F\setminus pP^* p\inv$; and
\item $\1_{pP} Q = Q= Q \1_{pP}$.
\end{enumerate}
We relegate the proof of existence of~$p$ and~$Q$ to  \lemref{lem:pQproperties} below. 
Supposing for now that $p$ and $Q$ are as above, we have the following estimate.
\begin{eqnarray*}
\Big\| \rho\big(\sum_{g\in F} a_g \delta_g\big)\Big\| &\geq& \Big\|\rho(Q) \sum_{g\in F} \rho\big(a_g \delta_g\big) \rho(Q)\Big\| \\\\
&=& \Big\| \sum_{g\in F \cap pP^* p\inv} \rho\big(\1_{pP}a_g Q\delta_gQ\1_{pP}\big)\Big\| \qquad\text{because of (3)}\\\\
&=& \Big\| \sum_{g\in F \cap pP^* p\inv} \rho\big(\1_{pP}\delta_p\delta_{p^{-1}}a_g Q\delta_gQ\1_{pP}\delta_p\delta_{p^{-1}}\big)\Big\| 
\end{eqnarray*}
We continue by changing the summation index from $g\in F \cap pP^* p\inv$ to $u \coloneqq p\inv g p \in p\inv F p \cap P^*$ and using the multiplication rule \eqref{eq:multiplicationmap} for generators of the partial crossed product. 
\begin{eqnarray*}
\Big\| \rho\big(\sum_{g\in F} a_g \delta_g\big)\Big\| &\geq&\Big\| \sum_{u\in p\inv F p \cap P^*} \rho\big(\1_{pP}\delta_p\delta_{p^{-1}}a_{pup\inv} Q \delta_{pup\inv} Q\1_{pP}\delta_p\delta_{p^{-1}}\big)\Big\| \\\\
&=&\Big\| \sum_{u\in p\inv F p \cap P^*} \rho(\1_{pP}\delta_p)\rho\big(\delta_{p^{-1}}a_{pup\inv} Q \delta_{pup\inv} Q\1_{pP}\delta_p\big)\rho(\delta_{p^{-1}})\Big\| \\\\
&=&\Big\| \sum_{u\in p\inv F p \cap P^*} \rho(\1_{pP}\delta_p)\rho\big((\gamma_{p^{-1}}(\1_{pP}a_{pup\inv} Q) \delta_{up\inv}) Q\1_{pP}\delta_p\big)\rho(\delta_{p^{-1}})\Big\| \\\\
&=&\Big\| \sum_{u\in p\inv F p \cap P^*} \rho(\1_{pP}\delta_p)\rho\big(\gamma_{up\inv}(\gamma_{pu\inv}(\gamma_{p^{-1}}(\1_{pP}a_{pup\inv} Q))Q\1_{pP}) \delta_{u}\big)\rho(\delta_{p^{-1}})\Big\| \\\\
&=&\Big\| \sum_{u\in p\inv F p \cap P^*} \rho\big(\gamma_{up\inv}(\gamma_{pu\inv}(\gamma_{p^{-1}}(\1_{pP}a_{pup\inv} Q))Q\1_{pP}) \delta_{u}\big)\Big\|.
\end{eqnarray*}
Since this sum is in the crossed product by the action of $P^*$ and  $\rho$ is assumed to be faithful there,
\begin{eqnarray*}
\Big\| \rho\big(\sum_{g\in F} a_g \delta_g\big)\Big\| &\geq& \Big\| \sum_{u\in p\inv F p \cap P^*} \gamma_{up\inv}(\gamma_{pu\inv}(\gamma_{p^{-1}}(\1_{pP}a_{pup\inv} Q))Q\1_{pP}) \delta_{u}\Big\|  \\\\ &\geq&\Big\| \gamma_{p\inv}(\1_{pP}a_{e} Q\1_{pP})\Big\|
\qquad \text{ because $E_r$ is contractive,} \\\\
&=&\Big\|\1_{pP}a_{e} Q\1_{pP}\Big\| 
\\\\
&=& \|a_e\| \qquad \text{because of (4)  and (1),} \\\\
&=&\|\rho(a_e)\| \qquad \text{ because $\rho$ is faithful on $D_r$}.
\end{eqnarray*}

Thus, the map $\sum_{g\in F} \rho(a_g \delta_g) \mapsto \rho(a_e)$ is well defined and contractive on a dense \Star subalgebra of $\rho(\Toepr(P))$. So it extends uniquely by continuity to give a conditional expectation $\varphi_\rho : \rho(\Toepr(P)) \to  \rho(D_r)$ such that the diagram
\eqref{eqn:commdiagram} commutes.
\end{proof}
\end{thm}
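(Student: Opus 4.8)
The plan is to prove the nontrivial implication: a representation $\rho\colon\Toepr(P)\to\Bound(\H)$ that is faithful on the subalgebra $D_r\rtimes_{\gamma,r}P^*$ is automatically faithful on all of $\Toepr(P)\cong D_r\rtimes_{\gamma,r}G$; the converse is immediate since restrictions of faithful representations stay faithful. The mechanism is the standard ``commuting square'' argument: it suffices to produce a conditional expectation $\varphi_\rho\colon\rho(\Toepr(P))\to\rho(D_r)$ with $\varphi_\rho\circ\rho=(\rho\restriction_{D_r})\circ E_r$, where $E_r\colon\Toepr(P)\to D_r$ is the canonical faithful diagonal expectation. Granting $\varphi_\rho$, if $\rho(b)=0$ then $(\rho\restriction_{D_r})\bigl(E_r(b^*b)\bigr)=\varphi_\rho\bigl(\rho(b^*b)\bigr)=0$, hence $E_r(b^*b)=0$ because $\rho\restriction_{D_r}$ is injective, and hence $b=0$ because $E_r$ is faithful. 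So the whole proof reduces to constructing $\varphi_\rho$.

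To build $\varphi_\rho$ one shows that the assignment $\sum_{g\in F}\rho(a_g\delta_g)\mapsto\rho(a_e)$, defined on the dense \Star subalgebra $\bigoplus_{g}\rho(A_g^c\delta_g)$, is contractive, i.e. that $\|\rho(a_e)\|\le\bigl\|\rho\bigl(\sum_{g\in F}a_g\delta_g\bigr)\bigr\|$ for every finite sum (with $a_e$ possibly zero). The estimate is a corner-cutting computation controlled by a well-chosen pair $(p,Q)$, with $p\in P$ and $Q\in D_r$ a projection, such that $|a_e(p)|=\|a_e\|$, $Q(p)=1$, $\1_{pP}Q=Q=Q\1_{pP}$, and $Qa_g\delta_gQ=0$ for every $g\in F\setminus pP^*p^{-1}$. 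Compressing $\rho\bigl(\sum a_g\delta_g\bigr)$ by $\rho(Q)$ then annihilates all terms with $g\notin pP^*p^{-1}$; conjugating the survivors by the isometry $\rho(\1_{pP}\delta_p)$ and re-indexing through $u=p^{-1}gp\in P^*$ lands the remaining sum inside $\rho(D_r\rtimes_{\gamma,r}P^*)$, on which $\rho$ is assumed isometric; finally the contractive canonical expectation of $D_r\rtimes_{\gamma,r}P^*$ onto $D_r\delta_e$ extracts the $u=e$ term $\gamma_{p^{-1}}(\1_{pP}a_eQ\1_{pP})$, of norm $\|a_e\|=\|\rho(a_e)\|$. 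Chaining these (in)equalities gives the bound, and $\varphi_\rho$ follows by continuity.

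Everything therefore hinges on \lemref{lem:pQproperties}, the existence of the pair $(p,Q)$, and that is the main obstacle. The point $p$ is easy: $a_e\in D_r$ is a finite linear combination of characteristic functions $\1_{K(\alpha)}$, hence a simple function on $P$ with finite range, so it attains its norm at some $p\in P$. The work is the projection $Q$. Viewing $D_r$ as $\mathrm{C}(\Omega_P)$ (the spectrum of $D_r$), the character $\mathrm{ev}_p$ has a neighbourhood base inside $\widehat{pP}$ consisting of the clopen sets dual to the projections $\1_{pP}\prod_j(\1-\1_{S_j})$, where the $S_j$ are constructible right ideals with $p\notin S_j$; here one uses that $pP$ is the \emph{smallest} constructible right ideal containing $p$, since any constructible right ideal through $p$ already contains $pP$. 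The decisive structural fact, which can be read off the description of the partial action in \proref{prop:partial-const}, is that the isotropy subgroup of $\mathrm{ev}_p$ for the partial action of $G$ on $\Omega_P$ is exactly $pP^*p^{-1}$ (explicitly, $g$ fixes $\mathrm{ev}_p$ precisely when $gp\in P$ and $p^{-1}gp\in P^*$): for $g\in F\setminus pP^*p^{-1}$ the point $g\cdot\mathrm{ev}_p$ is either undefined (when $gp\notin P$) or distinct from $\mathrm{ev}_p$, so Hausdorffness of $\Omega_P$ lets one shrink the neighbourhood so that $\gamma_{g^{-1}}(Qa_g)$ and $Q$ have disjoint supports. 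Taking $Q$ to be the common refinement, over the finitely many such $g$, of these neighbourhoods together with $\1_{pP}$ secures all four properties simultaneously. The step I expect to require the most care is converting this topological separation into the explicit combinatorics of words and constructible ideals, and handling the degenerate case $gp\notin P$, where $\mathrm{ev}_p$ lies outside the domain of $\gamma_g$ and the vanishing of the compressed term has to be argued on different grounds.
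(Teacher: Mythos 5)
Your proof of the theorem itself is the same as the paper's: the commuting-square reduction via $\varphi_\rho$, the four properties of the pair $(p,Q)$, compression by $\rho(Q)$, conjugation by the isometry $\rho(\1_{pP}\delta_p)$, re-indexing over $u=p\inv gp\in P^*$, faithfulness on $D_r\rtimes_{\gamma,r}P^*$, and contractivity of the canonical expectation extracting the $e$-term all appear verbatim in the paper's argument. The divergence is in the key lemma. The paper proves existence of $Q$ combinatorially (its \lemref{lem:pQproperties}): for each $g\in F\setminus pP^*p\inv$ it uses \lemref{lem:minilemma} to split into the cases $gpP\cap pP\subsetneq pP$ and $g\inv pP\cap pP\subsetneq pP$, and cuts $Q$ by explicit projections $\1-\1_{K((p,e)\alpha_i)}$, resp.\ $\1-\1_{K((p,e)\tilde\alpha_i)}$, verifying the vanishing $Q\,a_g\delta_g\,Q=0$ by direct computation with the multiplication rule. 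You instead argue dynamically on $\Omega_P$: the isotropy of $\omega_p$ is exactly $pP^*p\inv$ (this is correct, and is essentially the content of \lemref{lem:minilemma} as used in \thmref{thm:idealsifftopfree}), so for $g$ outside the isotropy you separate $\omega_p$ from $\hat\gamma_g(\omega_p)$ by clopen sets and shrink. That route works when $\omega_p\in U_{g\inv}$, i.e.\ when $gp\in P$, and it buys a more conceptual explanation of where $pP^*p\inv$ comes from; the paper's computation is longer but uniform in $g$ and stays entirely inside the word/ideal calculus that the rest of the estimate needs.

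The one genuine gap is the case you explicitly set aside, $gp\notin P$, where $\omega_p$ lies outside the domain $U_{g\inv}$; you give no argument there, and Hausdorff separation is not available since $g\cdot\omega_p$ does not exist (and $\omega_p$ may well lie in the closure of $U_{g\inv}$). Fortunately this case closes easily and in the same spirit as the paper's second branch: writing $a_g=\sum_i\lambda_i\1_{K(\alpha_i)}$ with $\dot\alpha_i=g\inv$, each $K(\tilde\alpha_i)=g\inv K(\alpha_i)$ is a constructible ideal contained in $g\inv P$, so $gp\notin P$ forces $p\notin K(\tilde\alpha_i)$; including the factors $\1-\1_{K(\tilde\alpha_i)}$ in $Q$ keeps $Q(p)=1$ and kills the terms, since $(\1_{K(\alpha_i)}\delta_g)(Q\delta_e)=\gamma_g(\1_{K(\tilde\alpha_i)}Q)\delta_g=0$. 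You should also make explicit, in the case $gp\in P$, that the separating neighbourhood can be taken of the basic clopen form $V(S;\mc)$, so that its characteristic function is a genuine projection in $D_r$ expressible through constructible ideals; as you anticipate, this conversion is exactly where the paper's explicit word manipulations (its projections $d^p_{\alpha_i}$) do the work.
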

The following lemma can be extracted from the proof of~\cite[Theorem~5.7.2]{CELY}; we formulate it explicitly because it is useful in a couple of places.
\begin{lem}\label{lem:minilemma}
 Let $g\in G$ and $p\in P$. The following are equivalent.
 \begin{enumerate}
 \item $gpP=pP$;
 \item $g\in pP^*p^{-1}$;
  \item $gp \in pP^*$;
  \end{enumerate}
 \begin{proof}
  Suppose  that $gpP=pP$ and take $x, y\in P$ such that $gp=px$ and $p=gpy$. Multiplying the first identity on the right by $y$, we obtain $gpy=pxy$ and so $xy=e$. Since $P$ is contained in a group, we deduce that $x$ and $y$ are invertible, that is, $x,y\in P^*$. Thus $g=pxp^{-1}\in pP^*p^{-1}$. This proves that (1)$\implies$(2).
The converse holds because if $g=pxp^{-1}$ with $x\in P^*$, then $gpP=pxP=pP$. Clearly (3) is just a reformulation of (2).
 \end{proof}
  \end{lem}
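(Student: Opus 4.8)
The plan is to prove the cycle (1) $\Rightarrow$ (2) $\Rightarrow$ (1) and then to observe that (2) $\Leftrightarrow$ (3) is an immediate reformulation obtained by multiplying on the right by $p$. The one substantive ingredient is that $P$ is a submonoid of the group $G$: this is what makes cancellation available, and hence what turns a one-sided inverse inside $P$ into an honest unit of $P^* = P \cap P^{-1}$.

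For (1) $\Rightarrow$ (2), assuming $gpP = pP$ I would use that $e \in P$ twice: since $gp \in gpP = pP$ there is $x \in P$ with $gp = px$, and since $p \in pP = gpP$ there is $y \in P$ with $p = gpy$. Substituting the first relation into the second gives $p = pxy$ in $G$, so $xy = e$ after cancelling $p$. Hence $x \in P$ has inverse $y \in P$, i.e. $x \in P^*$, and $g = (gp)p^{-1} = pxp^{-1} \in pP^*p^{-1}$.

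For (2) $\Rightarrow$ (1), if $g = pxp^{-1}$ with $x \in P^*$ then $gpP = pxp^{-1}pP = pxP = pP$, because $xP = P$ for a unit $x$. Finally, right translation by $p$ carries $pP^*p^{-1}$ bijectively onto $pP^*$, so (2) and (3) are literally the same condition. The step that demands the most attention — though it is still routine — is (1) $\Rightarrow$ (2), where one must produce both a left and a right witness and combine them; I do not anticipate any real obstacle, the whole content being the observation that the semigroup relation $p = pxy$ upgrades to $xy = e$ precisely because everything takes place in a group.
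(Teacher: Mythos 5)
Your proposal is correct and follows essentially the same argument as the paper: extract $x,y\in P$ with $gp=px$ and $p=gpy$, combine to get $xy=e$ in the group so that $x\in P^*$, and then check the converse and the reformulation (3) directly. There is nothing to add.
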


\begin{lem}\label{lem:pQproperties}
Let $P$ be a submonoid of a group $G$.  Let $F\subseteq G$ be a finite set and let $a=\sum_{\substack{g\in F}}a_g\delta_g$ be an element of the dense \Star subalgebra $\bigoplus_{g\in G}A^c_g\delta_g$ of $D_r\rtimes_{\gamma,r}G$. Then there exist a point $p\in P$ and a projection $Q\in D_r$ 
with the properties \textup{(1)}--\textup{(4)} listed in the proof of  \thmref{thm:faithfulreps}. 
\begin{proof} The proof is an adaptation of the strategy of \cite[Lemma~3.2]{LACA1996415} combined with the main idea of the proof of~\cite[Theorem~~5.7.2]{CELY}. Let $A\subseteq \J$ be a finite collection of constructible right ideals and let $\{\lambda_S\mid S\in A\}\subset \CC$ be scalars such that $a_e=\sum_{S\in A}\lambda_S \1_S$. Because~$a_e$ is a finite linear combination of projections in $\ell^\infty(P)$,  there is $p\in P$ such that $\|a_e\|=|a_e(p)|$. Consider the subset of~$A$ given by $$F_p\coloneqq \{S\in A\mid p\in S\}  $$ 
and put $$Q_{F_p}\coloneqq \prod_{\substack{S\in F_p}}\1_S\prod_{\substack{S'\in A\setminus F_p}}(\1-\1_{S'}).$$ Note that $Q(p)=1$ because $S'\in A\setminus F_p$ implies $(\1-\1_{S'})(p)=1$. We are going to modify $Q_{F_p}$ by taking a subprojection $Q\leq Q_{F_p}$ with $Q(p)=1$ and $Qa_g\delta_gQ=0$ for all~$g\in F\setminus pP^*p\inv$. To do so, we need to find, in the context of a general submonoid of a group, the correct analogues of the elements $ad_{x,y}$ used in the setting of quasi-lattice orders to define a projection~$Q$ right after \cite[equation~(3.6)]{LACA1996415}.

Let  $a=\sum_{\substack{g\in F}}a_g\delta_g$ be as in the statement of the lemma. Take $g\in F\setminus pP^*p\inv$. By \lemref{lem:minilemma}, $gpP\neq pP$ so we have either $gpP\cap pP\subsetneq pP$, or $gpP\cap pP\subsetneq gpP$. This latter situation is equivalent to $pP\cap g^{-1}pP\subsetneq pP$. Now since $a_g$ lies in the linear span~$\{\1_{K(\alpha)}\mid \dot{\alpha}=g^{-1}\}$, we can find~$m\in \NN$ and words $\alpha_1,\ldots, \alpha_m\in \Hilm[W]$ with $\dot{\alpha_i}=g^{-1}$ such that $a_g=\sum_{\substack{i=1}}^m\lambda_i\1_{K(\alpha_i)}$, where $\lambda_i\in\CC$ for all $i\in\{1,\ldots, m\}$. For each~$i=1,\ldots, m$, we define 
$$d^p_{\alpha_i}=\begin{cases}
\1_{ K((p,e)\alpha_i)}& \text{if } gpP\cap pP\subsetneq pP,\\
\1_{K((p,e)\tilde{\alpha_i})}&  \text{if } g^{-1}pP\cap pP\subsetneq pP.
\end{cases}$$ 
We claim that $d^p_{\alpha_i}(p)=0$ for all~$i\in\{1,\ldots, m\}$. Indeed, if $gpP\cap pP\subsetneq pP$, we have that $d^p_{\alpha_i}(p)= 0$ for all~$i=1,\ldots,m$ because $p\not \in gpP$ and $K((p,e)\alpha_i)\subseteq gpP\cap P$. In case $g^{-1}pP\cap pP\subsetneq pP$, we see that $d^p_{\alpha_i}(p)= 0$ because $K((p,e)\tilde{\alpha_i})\subseteq g^{-1}pP\cap P$ and $p\not\in g^{-1}pP$. 

We set $Q_g\coloneqq\prod_{\substack{i=1}}^m(\1-d^p_{\alpha_i})$. We pause here to show that  $Q_g\1_{pP}a_g\delta_g\1_{pP}Q_g=0$.  Suppose that $gpP\cap pP\subsetneq pP$. Using the multiplication rule~\eqref{eq:multiplicationmap} in the partial crossed product $D_r\rtimes_{\gamma, r}G$, we compute 
\begin{equation*}
\begin{aligned}
Q_g\1_{pP}\1_{K(\alpha_i)}\delta_g\1_{pP} Q_g&=Q_g\1_{pP}\gamma_g(\gamma_{g^{-1}}(\1_{K(\alpha_i)})\1_{pP})\delta_gQ_g\\&=\1_{pP}Q_g\gamma_g(\1_{K(\tilde{\alpha_i})}\1_{K(e,p,p,e)})\delta_gQ_g\\&=\1_{pP}Q_g\gamma_g(\1_{K(\tilde{\alpha_i}(e,p,p,e))})\delta_gQ_g
\\&=\1_{pP}Q_g\1_{K((e,p,p,e)\alpha_i)}\delta_gQ_g\\&=\1_{pP}Q_g\1_{K((p,e)\alpha_i)}\delta_gQ_g.
\end{aligned}
\end{equation*} 
Since $Q_g$ has a factor $\1-\1_{K((p,e)\alpha_i)}$, we deduce that $Q_g\1_{pP}\1_{K(\alpha_i)}\delta_g\1_{pP}Q_g=0$. 

Assume we are in the case $g^{-1}P\cap pP\subsetneq pP$. Then
\begin{equation*}
\begin{aligned} Q_g\1_{pP}\1_{K(\alpha_i)}\delta_g\1_{pP}Q_g&=Q_g\1_{K(\alpha_i(e,p,p,e))}\delta_g\1_{pP}Q_g\\&=Q_g\1_{K(\alpha_i(e,p,p,e))}\1_{K(\alpha_i(e,p,p,e))}\delta_g\1_{pP}Q_g\\&=Q_g\1_{K(\alpha_i(e,p,p,e))}\delta_g\gamma_{g^{-1}}(\1_{K(\alpha_i(e,p,p,e))})Q_g\1_{pP}\\&=Q_g\1_{K(\alpha_i(e,p,p,e))}\delta_g\1_{K((e,p,p,e)\tilde{\alpha_i})}Q_g\1_{pP}\\&=Q_g\1_{K(\alpha_i(e,p,p,e))}\delta_g\1_{K((p,e)\tilde{\alpha_i})}Q_g\1_{pP}.
\end{aligned}
\end{equation*} Again this is zero because $Q_g$ has a factor $\1-\1_{K((p,e)\tilde{\alpha_i})}$. Hence $$Q_g\1_{pP}a_g\delta_g\1_{pP}Q_g=\sum_{\substack{i=1}}^m\lambda_iQ_g\1_{pP}\1_{K(\alpha_i)}\delta_g\1_{pP}Q_g=0.$$

Finally, we set $$Q\coloneqq Q_{F_p}\cdot\1_{pP}\cdot\prod_{\substack{g\in F}\setminus (pP^*p^{-1})}Q_g.$$ Then $Q$ is projection in~$D_r$ since it is a finite product of projections in~$D_r$. Also, $Q$ is a subprojection of~$Q_{F_p}$ with~$Q(p)=1$ and so $\|Qa_eQ\|=\|a_e\|$. That $Qa_g\delta_g Q=0$ for all $g\in F\setminus pP^*p^{-1}$ follows from the computation above. This completes the proof of the lemma.
\end{proof}
\end{lem}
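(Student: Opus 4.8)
The plan is to carry over, to the setting of an arbitrary submonoid of a group, the strategy of \cite[Lemma~3.2]{LACA1996415}, fed by the constructible-ideal computations from the proof of \cite[Theorem~5.7.2]{CELY}. We may assume $e\in F$, adjoining the term $a_e\coloneqq 0$ if necessary. First I would pick the point $p$: since $a_e\in D_r\subseteq\ell^\infty(P)$ is by hypothesis a finite linear combination $a_e=\sum_{S\in A}\lambda_S\1_S$ of characteristic functions of constructible right ideals, it takes only finitely many values, so there is $p\in P$ with $|a_e(p)|=\|a_e\|$; this is property~(1). Next I would build a first candidate for $Q$: put $F_p\coloneqq\{S\in A\mid p\in S\}$ and $Q_{F_p}\coloneqq\prod_{S\in F_p}\1_S\prod_{S'\in A\setminus F_p}(\1-\1_{S'})$. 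By the reasoning of \lemref{lem:orthogonaldecomp}, $Q_{F_p}$ is a projection in $D_r$ with $Q_{F_p}(p)=1$, on whose range $a_e$ acts as the scalar $a_e(p)$, so $\|Q_{F_p}a_eQ_{F_p}\|=\|a_e\|$.

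The core of the argument is to shrink $Q_{F_p}$ to a subprojection that annihilates the off-diagonal terms $a_g\delta_g$ with $g\notin pP^*p^{-1}$. Fix such a $g$. By \lemref{lem:minilemma}, $gpP\neq pP$, and since $gpP\cap pP$ is contained in both $pP$ and $gpP$, at least one of these two inclusions is proper. I would first record that $gpP\cap pP\subsetneq pP$ forces $p\notin gpP$ (otherwise $p=gps$ for some $s\in P$, so $g^{-1}pP=psP\subseteq pP$, hence $pP\subseteq gpP$ and $gpP\cap pP=pP$, a contradiction), and symmetrically that $gpP\cap pP\subsetneq gpP$ — equivalently $g^{-1}pP\cap pP\subsetneq pP$ — forces $p\notin g^{-1}pP$. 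Writing $a_g=\sum_{i=1}^m\lambda_i\1_{K(\alpha_i)}$ with $\dot\alpha_i=g^{-1}$, I would then compute $Q_g\,\1_{pP}\1_{K(\alpha_i)}\delta_g\1_{pP}\,Q_g$ inside the semidirect product bundle, using the multiplication rule \eqref{eq:multiplicationmap}, the formula $\gamma_h(\1_{K(\beta)})=\1_{K(\tilde\beta)}$ from \proref{prop:partial-const}, and the concatenation identities of \proref{pro:propertiesKalpha}: the upshot should be that in the first case a factor $\1_{K((p,e)\alpha_i)}$ can be brought next to a copy of $Q_g$, and in the second a factor $\1_{K((p,e)\tilde\alpha_i)}$, where $K((p,e)\alpha_i)\subseteq gpP\cap P$ and $K((p,e)\tilde\alpha_i)\subseteq g^{-1}pP\cap P$ by construction, so the relevant factor vanishes at $p$. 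Taking $d^p_{\alpha_i}$ to be whichever of these two projections is in force and setting $Q_g\coloneqq\prod_{i=1}^m(\1-d^p_{\alpha_i})$, I would get $Q_g(p)=1$ and $Q_g\,\1_{pP}a_g\delta_g\1_{pP}\,Q_g=0$.

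Finally I would set $Q\coloneqq Q_{F_p}\cdot\1_{pP}\cdot\prod_{g\in F\setminus pP^*p^{-1}}Q_g$. As a finite product of mutually commuting projections of $D_r$ this is a projection in $D_r$; the factor $\1_{pP}$ gives property~(4); every factor is $1$ at $p$, so $Q(p)=1$, which is property~(2); and property~(1) was already arranged. For property~(3), from $Q\leq\1_{pP}$ and $Q\leq Q_g$ I would conclude $Qa_g\delta_gQ=Q\,\1_{pP}a_g\delta_g\1_{pP}\,Q=Q\,\big(Q_g\,\1_{pP}a_g\delta_g\1_{pP}\,Q_g\big)\,Q=0$ for every $g\in F\setminus pP^*p^{-1}$.

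The hard part will be the middle step: one must keep the two branches of the dichotomy ($gpP\cap pP\subsetneq pP$ versus $g^{-1}pP\cap pP\subsetneq pP$) apart and, in each, pin down the exact subprojection $d^p_{\alpha_i}$ that both vanishes at $p$ and, once subtracted from $\1$, kills $\1_{pP}\1_{K(\alpha_i)}\delta_g\1_{pP}$. That rests on a delicate bookkeeping computation in the Fell bundle, repeatedly invoking \proref{prop:partial-const} together with parts~(5)--(7) of \proref{pro:propertiesKalpha}. The remaining ingredients — attaining the norm of $a_e$, the atom decomposition, and the assembly of $Q$ — are routine.
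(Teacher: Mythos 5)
Your proposal is correct and follows essentially the same route as the paper's proof: the same choice of $p$, the same projection $Q_{F_p}$, the same dichotomy $gpP\cap pP\subsetneq pP$ versus $g^{-1}pP\cap pP\subsetneq pP$ leading to the same projections $d^p_{\alpha_i}$, $Q_g$, and the same final $Q=Q_{F_p}\cdot\1_{pP}\cdot\prod_g Q_g$. The Fell-bundle computation you defer (bringing a factor $\1_{K((p,e)\alpha_i)}$, resp.\ $\1_{K((p,e)\tilde\alpha_i)}$, next to $Q_g$ via the multiplication rule and the concatenation identities for $K(\cdot)$) is exactly the verification the paper carries out, so the plan goes through as stated.
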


\begin{rem}
When we apply  \thmref{thm:faithfulreps}  to a right LCM monoid $P$ that embeds in a group, we recover the group embeddable case of 
 \cite[Theorem~7.4]{NNN1} without having to assume condition (C1) of \cite[Definition 2.6]{NNN1}.
 Our result is about the reduced Toeplitz $\Cst$\nb-algebra $\Toepr(P)$, but so is \cite[Theorem~7.4]{NNN1}, in view of \cite[Corollary 7.5]{NNN1}. 
 The necessary and sufficient conditions match because the inner core $\mc_I $ is naturally isomorphic to $D_r\rtimes_r P^*$ by a standard argument using the assumed faithful conditional expectation from $\mc_I$ onto $D_r$. 
\end{rem}

In the right LCM case, condition (C1) and the extra assumptions on the quotient semigroup $P/P^*$ 
are needed to produce a conditional expectation in the proof of \cite[Theorem~7.4]{NNN1}. Thus,
it is natural to wonder whether a faithful conditional expectation from $\Toepr(P)$ to $D_r\rtimes_r P^*$ always exists when $P$ is a submonoid of a group.
We show next  that this is indeed the case.
  
\begin{prop} Let $P$ be a submonoid of a group~$G$. View $\Toepr(P)$ as the reduced partial crossed product $D_r\rtimes_{\gamma,r}G$. Then the map 
\[
a\delta_g \mapsto \begin{cases}a\delta_g & \text{ if } g\in P^*, \\ 0 & \text{ otherwise,}\end{cases}
\]
 extends, by linearity and continuity, to a faithful conditional expectation of $D_r \rtimes_{\gamma, r} G$ onto $D_r \rtimes_{\gamma, r} P^*$.
\begin{proof} Let $\lambda\colon G\to \Hilm[B](\ell^2(G))$ be the left regular representation of~$G$. Let $Q_*\in\Hilm[B](\ell^2(G))$ denote the orthogonal projection of~$\ell^2(G)$ onto $\ell^2(P^*)$. Because $P^*$ is a group, we have $$
Q_*\lambda_gQ_* = \begin{cases}\lambda^{P^*}_g& \text{ if } g\in P^*, \\ 0 & \text{ otherwise,}\end{cases}
$$ where $\lambda^{P^*}\colon P^*\to \Hilm[B](\ell^2(P^*))$ denotes the left regular representation of~$P^*$. 

Now  let $\rho\colon D_r\rtimes_{\gamma,r}G\to \Hilm[B](\Hilm[H])$ be a faithful representation of $D_r\rtimes_{\gamma,r}G$ on a Hilbert space $\Hilm[H]$. Observe that
\[
(1\otimes Q_*)(\rho(a\delta_g)\otimes \lambda_g)(1\otimes Q_*)= \begin{cases}\rho(a\delta_g)\otimes \lambda_g^{P^*}& \text{ if } g\in P^*, \\ 0 & \text{ otherwise.}\end{cases}
\] By Fell's absorption principle \cite[Proposition~18.4]{Exel:Partial_dynamical}, the map $$a\delta_g\in A_g\delta_g\mapsto \rho(a\delta_g)\otimes\lambda_g\in\Hilm[B](\Hilm[H]\otimes\ell^2(G))$$ yields a representation of $\gamma=(\{A_g\}_{g\in G},\{\gamma_g\}_{g\in G})$ whose integrated form factors through the reduced partial crossed product $D_r\rtimes_{\gamma,r}G$. Let $\tilde{\rho}\colon D_r\rtimes_{\gamma,r}G\to\Hilm[B](\Hilm[H]\otimes\ell^2(G))$ be the induced homomorphism. Then $\tilde{\rho}$ is faithful because $\rho$ is injective on~$D_r$. A similar reasoning shows that the map $$a\delta_u\in D_r\rtimes_{\gamma,r}P^*\mapsto \rho(a\delta_u)\otimes\lambda^{P^*}_u\in \Hilm[B](\Hilm[H]\otimes \ell^2(P^*))$$ induces a faithful representation of $D_r\rtimes_{\gamma,r}P^*$ on~$\Hilm[H]\otimes \ell^2(P^*)$. We obtain a map $E_*\colon D_r\rtimes_{\gamma,r}G\to D_r\rtimes_{\gamma,r}P^*$ by sending an element $b$ to $(1\otimes Q_*)\tilde{\rho}(b)(1\otimes Q_*)$ and then identifying the result with the corresponding element of~$D_r\rtimes_{\gamma,r}P^*$. This is the desired conditional expectation. It is faithful because the map obtained by composing $E_*$ with the canonical conditional expectation of $D_r\rtimes_{\gamma, r}P^*$ onto~$D_r$ is precisely the usual diagonal conditional expectation $E_r\colon \Toepr(P)\to D_r$.
\end{proof}
\end{prop}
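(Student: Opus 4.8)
The plan is to realize the prescribed slice as a spatial compression inside the regular picture of the reduced partial crossed product. First I would fix a faithful representation $\rho$ of $D_r$ on a Hilbert space $\H$ and form the regular representation of the semidirect product bundle $\Hilm[B]_\gamma$ on $\H\otimes\ell^2(G)$; writing $\lambda$ for the left regular representation of $G$, a monomial $a\delta_g$ then acts as $\rho(a)\otimes\lambda_g$, and after identifying $D_r\rtimes_{\gamma,r}G$ with the resulting image this integrated representation $\tilde{\rho}$ is faithful precisely because $\rho$ is faithful on the coefficient algebra $D_r$; here one uses Fell's absorption principle \cite[Proposition~18.4]{Exel:Partial_dynamical} together with the description of the kernel of a reduced representation via the canonical conditional expectation onto $D_r$, \cite[Proposition~19.7]{Exel:Partial_dynamical}. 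Recall also from the discussion preceding \thmref{thm:faithfulreps} that, since $P^*=P\cap P^{-1}$ is a subgroup of $G$, the restriction of $\gamma$ to $P^*$ is a genuine group action, so $D_r\rtimes_{\gamma,r}P^*$ is an ordinary reduced crossed product, which sits in $D_r\rtimes_{\gamma,r}G$ as the closed span of the monomials $a\delta_u$ with $u\in P^*$.

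Next let $Q_*\in\B(\ell^2(G))$ be the orthogonal projection onto $\ell^2(P^*)$. Because $P^*$ is a subgroup, $Q_*\lambda_gQ_*$ corresponds to the left regular representation $\lambda^{P^*}_g$ of $P^*$ when $g\in P^*$ and vanishes when $g\notin P^*$; hence compression by $1\otimes Q_*$ sends the monomial $a\delta_g$ to $\rho(a)\otimes\lambda^{P^*}_g$ for $g\in P^*$ and to $0$ for $g\notin P^*$. By linearity and continuity the map $E_*\colon b\mapsto (1\otimes Q_*)\,\tilde{\rho}(b)\,(1\otimes Q_*)$, followed by the evident identification of the compressed monomials with the canonical generators of $D_r\rtimes_{\gamma,r}P^*$, therefore maps $D_r\rtimes_{\gamma,r}G$ onto $D_r\rtimes_{\gamma,r}P^*$; it is unital and completely positive, it restricts to the identity on $D_r\rtimes_{\gamma,r}P^*$, and on monomials it agrees with the map in the statement. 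A norm-one projection of a $\Cst$\nb-algebra onto a $\Cst$\nb-subalgebra is automatically a conditional expectation by Tomiyama's theorem, so $E_*$ is the conditional expectation we are after, and the prescribed assignment extends by linearity and continuity to $E_*$.

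For faithfulness I would compose $E_*$ with the canonical faithful conditional expectation $E^{P^*}\colon D_r\rtimes_{\gamma,r}P^*\to D_r$ of the reduced crossed product onto its coefficient algebra. Inspecting the effect on monomials shows that $E^{P^*}\circ E_*$ coincides with the usual diagonal conditional expectation $E_r\colon\Toepr(P)\to D_r$, which is faithful: under the isomorphism $\Toepr(P)\cong D_r\rtimes_{\gamma,r}G$ of \thmref{thm:partial-picture} it is the canonical conditional expectation of the reduced crossed product onto $D_r$, again faithful by \cite[Proposition~19.7]{Exel:Partial_dynamical}. Hence $E_*(b^*b)=0$ implies $E_r(b^*b)=E^{P^*}\big(E_*(b^*b)\big)=0$, and therefore $b=0$.

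The only step that requires genuine care is the first one: pinning down that the $(1\otimes Q_*)$-compressed picture faithfully models both $D_r\rtimes_{\gamma,r}G$ and its subalgebra $D_r\rtimes_{\gamma,r}P^*$, which is exactly where Fell absorption for partial crossed products, and for the honest crossed product by $P^*$, does the work. Everything after that is a routine compression-and-slice computation. One could instead run the argument directly on the Hilbert-module completion of $\bigoplus_{g\in G}A_g$, taking $E_*$ to be the compression by the module projection onto the $P^*$-graded part; the faithfulness step is unchanged.
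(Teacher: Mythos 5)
Your overall architecture --- compress a faithful ``tensor with $\lambda$'' picture by $1\otimes Q_*$, identify the compressed corner with $D_r\rtimes_{\gamma,r}P^*$, and deduce faithfulness by composing with the canonical expectation onto $D_r$ and recognizing $E_r$ --- is the same as the paper's, but your key first step has a genuine gap. You take $\rho$ to be a faithful representation of $D_r$ alone and assert that in the regular representation of the bundle $\Hilm[B]_\gamma$ on $\Hilm[H]\otimes\ell^2(G)$ the monomial $a\delta_g$ acts as $\rho(a)\otimes\lambda_g$. That is not the regular representation (the induced representation twists the coefficient by the partial action fibrewise over $\ell^2(G)$), and worse, the assignment $a\delta_g\mapsto\rho(a)\otimes\lambda_g$ is in general not even multiplicative: by the multiplication rule \eqref{eq:multiplicationmap}, $(a\delta_g)(b\delta_h)=\gamma_g(\gamma_{g^{-1}}(a)b)\delta_{gh}$, so multiplicativity would force $\rho(ab)=\rho\big(\gamma_g(\gamma_{g^{-1}}(a)b)\big)$ for $a\in A_g$, $b\in A_h$. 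Taking $a=\1_{pP}\in A_p$ and $b=\1_{qP}\in A_e$ this requires $\1_{pP\cap qP}=\1_{pqP}$, which already fails for $P=\NN$, $p=q=1$. Consequently Fell's absorption principle does not apply to your map, and the claimed faithful homomorphism $\tilde\rho$ does not exist as described; since the whole compression argument lives inside $\tilde\rho$, the proof does not get off the ground in this form.

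The fix is precisely the paper's move: take $\rho$ to be a faithful representation of the entire reduced crossed product $D_r\rtimes_{\gamma,r}G$, so that $b\mapsto\rho(b)$ is already a representation of the bundle, and apply Fell's absorption principle to $a\delta_g\mapsto\rho(a\delta_g)\otimes\lambda_g$; the integrated form $\tilde\rho$ then factors through $D_r\rtimes_{\gamma,r}G$ and is faithful because it is injective on $D_r$. The same issue resurfaces where you declare the identification of the compressed monomials with the generators of $D_r\rtimes_{\gamma,r}P^*$ ``evident'': one needs the analogous statement that $a\delta_u\mapsto\rho(a\delta_u)\otimes\lambda^{P^*}_u$ is a faithful representation of $D_r\rtimes_{\gamma,r}P^*$, again via Fell absorption and faithfulness on $D_r$. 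With the spatial model set up this way, your compression/Tomiyama step and the faithfulness argument via $E^{P^*}\circ E_*=E_r$ do go through and coincide with the paper's proof. Your closing alternative --- compressing the Hilbert-module regular picture of $\bigoplus_{g\in G}A_g$ onto its $P^*$-graded part --- is a legitimate route, but as written it is only a remark, not a carried-out argument.
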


\subsection{The action of $P^*$ on the spectrum.} The underlying reason why the strategy of \cite[Section~3]{LACA1996415} works here is that the set of characters determined by evaluation at points in~$P$ is dense in~$\Omega_P$. 
 However, a quick comparison to the original result from \cite{LACA1996415} reveals a modification; indeed, for general submonoids of groups, \thmref{thm:faithfulreps} only reduces faithfulness of representations of $\Toepr(P)$  to faithfulness on the subalgebra $D_r \rtimes_{\gamma,r} P^*$ instead of on $D_r$.  
 If we still wish to know whether a representation of  $\Toepr(P)$ is faithful by looking at its restriction to  $D_r$,  we must rely on
 the topological freeness of the action of~$P^*$. Recall that 
the action of the discrete group $P^*$ on the compact space  $\Omega_P$ is said to be {\em topologically free} if for every element $e\neq x\in P^*$, 
the set of fixed points $\{\chi\in \Omega_P\mid x\cdot \chi = \chi\}$ has empty interior. Topological freeness for partial actions is defined similarly in~\cite{ELQ}.

In  order to decide whether the action of $P^*$ on $\Omega_P$ is topologically free it is helpful to review first  the description of the spectrum 
$\Omega_P$ of the diagonal~$D_r$ given in \cite[Corollary~5.6.28]{CELY}.
View~$\J$ as a semilattice with multiplication given by intersections of constructible ideals. The space $\hat{\J}$ of characters on~$\J$ is described in~\cite[p. 184]{CELY}, following a general construction for the semilattice of idempotents in an inverse semigroup \cite{MR2534230, MR2419901}: it consists of nonzero functions $ \J\to \{0,1\}$ that are compatible with the semilattice structure of~$\J$. That is, a function $\chi\colon\J\to\{0,1\}$ belongs to~$\hat{\J}$ if it is not identically~$0$ and $\chi(R\cap S)=\chi(R)\chi(S)$ for all $S,R\in \J$, 
where the multiplication in~$\{0,1\}$ is inherited from the multiplication in~$\CC$.  In the case that $\emptyset\in \J$
we require $\chi(\emptyset)=0$. 
The topology on~$\hat{\J}$ is the one  induced by pointwise convergence, so $\hat \J$ is  a compact Hausdorff space

By \cite[Corollary~5.6.28]{CELY}, the spectrum of~$D_r$ is the subspace $\Omega_P$ of~$\hat{\J}$ given by characters $\chi\colon \J\to\{0,1\}$ satisfying the following additional property: if $\chi(S)=1$  and $\{S_i\mid i=1,\ldots,n\}\subset\J$ are such that $S=\bigcup_{i=1}^nS_i$, then there is $i\in\{1,\ldots,n\}$ with $\chi(S_i)=1$.  
Equivalently, $\Omega_P$ is the set of points $\chi \in \hat\J$ such that the relation (T4) holds at 
$\chi$, namely, such that $\prod_i \big(\chi(S) - \chi(S_i) \big) =0$ whenever 
the constructible ideals $S$ and $ S_i$ with $ i=1,\ldots,n$ satisfy $S=\bigcup_{i=1}^nS_i$. Notice that $\Omega_P$ is closed in~$\hat{\J}$, hence compact
and that if we define $\omega_p(S) = \1_S(p)$ for $S\in \J$, then $\{\omega_p \mid p\in P\}$ is a dense subset of $\Omega_P$ \cite[Lemma~5.7.1]{CELY}.

We will also need the basis for the topology on $\Omega_P$ described in  \cite[equation~(4)]{Li:Semigroup_nuclearity}. This basis is
 also mentioned in \cite[p. 199]{CELY}, where we believe there is a typo in the negated inequality, which should read
 $e\not \leq e_i$ for the basic open set to contain the point $\chi_e$.
\begin{lem}\label{lem:basis-specP}  Let $P$ be a submonoid of a group. For each nonempty constructible right ideal $S\in\J$ and each finite (possibly empty) collection 
$\mc\subset \J$ of nonempty constructible right  ideals such that $S\not\subset \bigcup_{R\in\mc}R$, let 
 \[
V(S;\mc)\coloneqq\{\chi \in\Omega_P\mid \chi(S)=1;  \chi(R)=0\text{ for }R\in \mc\}.
 \]
 Then the collection $\{ V(S;\mc)\}$ indexed by the pairs ${(S,\mc)}$ is a basis for the topology of~$\Omega_P$
 consisting  of nonempty open sets.  
 When $P$ is not left reversible, we may assume $\mc$ to be nonempty.
\begin{proof} View $\Omega_P$ as a closed subspace of $\{0,1\}^{\J}$ with the relative topology of pointwise convergence. A basis for this topology is given by the open sets $N(A,B) $ indexed by  disjoint pairs of  finite subsets $A$ and $B$ of $\J$ and defined by
\[
N(A,B) \coloneqq \{ \omega\in \Omega_P\mid \omega(S)  =1 \text{ for all } S\in A \text{ and }  \omega(R)  = 0 \text{ for all } R\in B\}.
\] If we let $S_A \coloneqq \bigcap_{S\in A} S$, with $S_A =P$ for $A =\emptyset$, then $\omega(S_A) = \prod_A \omega(S)$, so we may rewrite 
$$N(A,B)= \{\omega\in \Omega_P\mid \omega(S_A)  =1  \text{ and }  \omega(R)  = 0 \text{ for }R\in B\}.$$ 
This shows that $N(A,B)=V(S_A;B)$. Now when  $S_A \subset \bigcup_{R \in B} R$, we have $S_A = \bigcup_{R \in B} (R\cap S_A)$. Since the corresponding relation (T4) 
 holds at every $\omega \in \Omega_P$, that is, $\prod_{R \in B} \big(\omega(S_A) - \omega(R\cap S_A) \big) =0$, we see that $N(A,B)=\emptyset$ in this case. 
 On the other hand, when $S_A \not\subset \bigcup_{R \in B} R$ we may choose $p\in S_A \setminus\bigcup_{R \in B} R$, in which case 
  $\omega_p \in N(A,B)$ and thus $N(A,B) =  V(S_A; B)$ is nonempty.
\end{proof}
\end{lem}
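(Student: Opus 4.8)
The plan is to realize $\Omega_P$ as a closed subspace of the compact product space $\{0,1\}^{\J}$ with the topology of pointwise convergence, and then to transport the standard cylinder basis of that space down to $\Omega_P$. Recall that the cylinder sets
\[
N(A,B)\coloneqq\{\chi\in\{0,1\}^{\J}\mid \chi(S)=1 \text{ for } S\in A,\ \chi(R)=0 \text{ for } R\in B\},
\]
indexed by pairs $A,B$ of disjoint finite subsets of $\J$, form a basis for $\{0,1\}^{\J}$, so their intersections with $\Omega_P$ form a basis for $\Omega_P$. We may assume $A,B\subset\J\setminus\{\emptyset\}$: a cylinder with $\emptyset\in A$ is empty, while deleting $\emptyset$ from $B$ has no effect since $\chi(\emptyset)=0$ for every $\chi\in\Omega_P$. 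It therefore suffices to show that each nonempty $N(A,B)\cap\Omega_P$ is one of the sets $V(S;\mc)$ with $S\not\subset\bigcup_{R\in\mc}R$, and, conversely, that each such $V(S;\mc)$ is nonempty.

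First I would collapse the finite set $A$ to a single ideal. Every $\chi\in\Omega_P$ is a semilattice homomorphism $\J\to\{0,1\}$ and $\J$ is closed under finite intersections (\proref{pro:characterizationconstructibleideals}(3)), so with $S_A\coloneqq\bigcap_{S\in A}S\in\J$ (and $S_A=P$ when $A=\emptyset$) we have $\chi(S_A)=\prod_{S\in A}\chi(S)$; hence $\chi(S)=1$ for all $S\in A$ precisely when $\chi(S_A)=1$, and $N(A,B)\cap\Omega_P=V(S_A;B)$. If $S_A=\emptyset$ this set is empty, so for a nonempty cylinder $S_A$ is a nonempty constructible ideal.

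The main step is then to characterize when $V(S_A;B)\neq\emptyset$, the claim being that this happens exactly when $S_A\not\subset\bigcup_{R\in B}R$. If $S_A\subset\bigcup_{R\in B}R$, then $S_A=\bigcup_{R\in B}(R\cap S_A)$ with each $R\cap S_A\in\J$; since $\Omega_P$ consists precisely of the $\chi\in\hat\J$ at which the pointwise form of relation (T4) holds, $\chi(S_A)=1$ forces $\chi(R\cap S_A)=1$, and hence $\chi(R)=\chi(R)\chi(S_A)=1$, for some $R\in B$, so $V(S_A;B)=\emptyset$. Conversely, if $S_A\not\subset\bigcup_{R\in B}R$, pick $p\in S_A\setminus\bigcup_{R\in B}R$; the evaluation character $\omega_p$, defined by $\omega_p(T)=\1_T(p)$ and lying in $\Omega_P$ by \cite[Lemma~5.7.1]{CELY}, satisfies $\omega_p(S_A)=1$ and $\omega_p(R)=0$ for $R\in B$, so $\omega_p\in V(S_A;B)$. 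Taking $A=\{S\}$ and $B=\mc$ now identifies the nonempty cylinders with exactly the sets $V(S;\mc)$ for which $S\not\subset\bigcup_{R\in\mc}R$, which proves both the basis statement and the nonemptiness of every member of the proposed basis.

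It remains to treat the final sentence. When $P$ is not left reversible there exist $p,q\in P$ with $pP\cap qP=\emptyset$; neither $p$ nor $q$ is a unit, so $pP$ and $qP$ are nonempty proper constructible ideals. Given $V(S;\mc)$ with $\mc=\emptyset$ and any $\chi\in V(S;\emptyset)$, from $\chi(S\cap pP)\,\chi(S\cap qP)=\chi(S\cap pP\cap qP)=\chi(\emptyset)=0$ one of $\chi(pP),\chi(qP)$ vanishes, say $\chi(qP)=0$; since $\chi(S)=1$ this also forces $S\not\subset qP$, whence $\chi\in V(S;\{qP\})\subseteq V(S;\emptyset)$ with $\{qP\}$ an admissible nonempty collection. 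Thus the subfamily of basic sets with nonempty $\mc$ still covers $\Omega_P$ and so remains a basis. I expect the only genuinely delicate points to be this last paragraph and the correct invocation of the pointwise (T4) condition characterizing $\Omega_P$ inside $\hat\J$; the remainder is a routine translation between pointwise convergence on $\{0,1\}^{\J}$ and the combinatorics of unions and intersections of constructible right ideals.
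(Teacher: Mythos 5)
Your proof is correct and follows essentially the same route as the paper: embed $\Omega_P$ in $\{0,1\}^{\J}$, collapse the cylinder data $A$ to the single ideal $S_A$, rule out the case $S_A\subset\bigcup_{R\in B}R$ via the pointwise (T4) condition defining $\Omega_P$, and exhibit an evaluation character $\omega_p$ otherwise. Your additional paragraph justifying the final sentence (reducing to nonempty $\mc$ when $P$ is not left reversible, via a pair $pP\cap qP=\emptyset$) is a correct supplement to a point the paper's proof leaves implicit.
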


The partial action $\gamma=(\{A_g\}_{g\in G}, \{\gamma_g\}_{g\in G})$ of~$G$ on~$D_r$ from Proposition~\ref{prop:partial-const}
induces a partial action of $G$ by partial homeomorphisms of $\Omega_P$, which we now describe. Following~\cite{CELY}, for each~$g\in G$, we identify the spectrum of~$A_{g^{-1}}$ with the subspace of $\Omega_P$ given by $$U_{g^{-1}}=\{ \chi\in \Omega_P\mid \chi(K(\alpha))=1\text{ for some } \alpha\in\Hilm[W], \text{ with } \dot{\alpha}=g\},$$ see \cite[p. 189]{CELY} and \cite[Lemma~5.6.40]{CELY}. Then $\{U_g\}_{g\in G}$ is a family of open subspaces of~$\Omega_P$. By abuse of notation we also denote by~$\gamma_g$ the bijection from $\{K(\alpha)\mid \alpha\in\Hilm[W], \dot{\alpha}=g\}\subset \J$ onto $\{K(\beta)\mid \beta\in\Hilm[W], \dot{\beta}=g^{-1}\}\subset\J$ that sends $K(\alpha)$ to $K(\tilde{\alpha})$. Define a map $\hat{\gamma}_g\colon U_{g^{-1}}\to U_g$ by $\hat{\gamma}_g(\chi)=\chi\circ\gamma_{g^{-1}}$. Then $\hat{\gamma}=(\{U_g\}_{g\in G},\{\hat{\gamma}_g\}_{g\in G})$ is a partial action of~$G$ on~$\Omega_P$. This gives rise to the transpose partial action $\hat{\gamma}^*=(\{\mathrm{C}_0(U_{g})\}_{g\in G}, \{\hat{\gamma}_g^*\}_{g\in G})$ on $\mathrm{C}(\Omega_P)$, where $\hat{\gamma}_g^*$ is given by 
\begin{equation}\label{eq:spec-formula} f\in\mathrm{C}_0(U_{g^{-1}})\mapsto f\circ \hat{\gamma}_{g^{-1}}\in\mathrm{C}_0(U_{g}).
\end{equation} 
 It is then clear that the Gelfand transform $D_r \cong \mathrm{C}(\Omega_P)$ intertwines $\gamma$ and $\hat{\gamma}^*$.  

\begin{thm}\label{thm:idealsifftopfree}
 Let $P$ be a submonoid of a group~$G$. The following are equivalent:
 \begin{enumerate}
 \item the partial action of $G$ on $\Omega_P$ is topologically free;
 \item the action of $P^*$ on $\Omega_P$ is topologically free;
 \item if $u\in P^*\setminus\{e\}$ and $\mc$ is a finite collection of proper constructible right ideals, 
  then there exists $t \in P \setminus \bigcup_{R\in \mc} R$ such that $utP \neq tP$ (or, equivalently, such that $ut\notin tP^*$);
\item every  ideal of $\Toepu(P)$ that has trivial intersection with $D_u$ is contained in the kernel of the left regular representation.
 \end{enumerate}
\begin{proof} The equivalence  (1)$\iff$(4) is from \cite[Theorem 4.5]{AbaAba}, and the implication  (1)$\implies$(2) is obvious. In order to prove that (2)$\implies$(3),  assume that the  action of $P^*$ is topologically free and 
let $u\in P^*\setminus \{e\}$ be a nontrivial unit. Let $\mc$ be a finite collection of proper constructible ideals.
Since the action of $P^*$ is topologically free, the  nonempty basic open set $V(P; \mc)$ must contain a point  $\chi$ that is not fixed by $\hat\gamma_u$.
By density, we may assume that such a point is of the form $\omega_t $ for some $t\in P \setminus \bigcup_{R\in \mc} R$. Then $\omega_{ut}  = \hat\gamma_u(\omega_t)  \neq \omega_t$, which means that there is a constructible ideal $S$ that contains one of $tP$ and $utP$ but not both, and this translates into
$utP \neq tP$. That $utP \neq tP$ is equivalent to $ut\notin tP^*$ is \lemref{lem:minilemma}. 

We finish the proof by showing that (3)$\implies$(1). 
Suppose $g\in G\setminus\{e\}$. It suffices to show that every nonempty basic open subset $V(S;\mc)$ contained in $U_{g\inv}$
contains  a point that is not fixed by $\hat\gamma_g$.  Since $V(S;\mc) \subset U_{g\inv}$, we know that $S\setminus \bigcup_{R\in \mc} R \subset g\inv P$.
Choose $q \in S\setminus \bigcup_{R\in \mc} R$; then $\omega_q \in V(qP;\mc) \subset V(S;\mc) $.  If $\hat\gamma_g(\omega_q) \neq \omega_q$ we are done.
If $\hat\gamma_g(\omega_q) = \omega_q$, then $gqP = qP$ and \lemref{lem:minilemma} shows that $g = qu q\inv$ 
for some nontrivial unit $u\in P^* \setminus \{e\}$.
Since  the ideal $(q\inv R) \cap P$ is proper for each $R\in \mc$, we may apply condition (3)  to $u$ and the collection $\mc' = \{(q\inv R) \cap P\mid R\in \mc\}$
to get $t\in P\setminus \bigcup_{R\in \mc} (q\inv R) \cap P$ with  $utP \neq tP$. This means that 
$(q\inv g q) tP \neq tP$, or $g (qtP) \neq qtP$. Since $q \in g\inv P$, we have $\hat\gamma_g(\omega_{qt}) = \omega_{gqt} \neq  \omega_{qt}$.
So the point $\omega_{qt}$ is not fixed by $\hat\gamma_g$. Since $qt \in qP \setminus \bigcup q (q\inv R \cap P)$, it follows that $\omega_{qt}$
is in $V(qP,\mc)$ and hence in $V(S;\mc)$. This shows that the set of the fixed points of $\hat\gamma_g$ has empty interior and completes the proof.
\end{proof}
\end{thm}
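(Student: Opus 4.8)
The plan is to prove the cycle of implications (1)$\iff$(4), (1)$\implies$(2), (2)$\implies$(3), (3)$\implies$(1). The equivalence (1)$\iff$(4) is not something I would prove from scratch: it should be quoted directly from \cite[Theorem~4.5]{AbaAba}, once I note that $\Toepu(P)\cong D_r\rtimes_\gamma G$ by \thmref{thm:partial-picture}, that $D_u\cong D_r$ by \corref{cor:faithfulondiagonal}, and that the reduced representation $\lambda^+$ corresponds to the regular representation of the full partial crossed product onto the reduced one; topological freeness of $\hat\gamma$ on $\Omega_P$ (the spectrum of $D_r$) is then exactly the hypothesis of that theorem. The implication (1)$\implies$(2) is immediate, since the $P^*$-action is the restriction of the $G$-partial action to the subgroup $P^*$ and fixed-point sets only shrink under passing to a subgroup — more precisely, the set of $\chi$ fixed by $\hat\gamma_u$ for $u\in P^*$ is the same whether computed inside the partial action of $G$ or the genuine action of $P^*$, so if the former has empty interior for all $g\ne e$, so does the latter for all $u\in P^*\setminus\{e\}$.

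For (2)$\implies$(3), I would fix $u\in P^*\setminus\{e\}$ and a finite collection $\mc$ of proper constructible right ideals, and consider the basic open set $V(P;\mc)$ from \lemref{lem:basis-specP}, which is nonempty because $P\not\subset\bigcup_{R\in\mc}R$ (as each $R$ is proper — strictly this needs the finite union of proper ideals to still be proper, which holds because $P\in\Omega_P$ via $\omega$ evaluated appropriately; more safely, nonemptiness of $V(P;\mc)$ is built into the hypothesis of \lemref{lem:basis-specP}). Topological freeness of the $P^*$-action gives a point $\chi\in V(P;\mc)$ not fixed by $\hat\gamma_u$; by density of $\{\omega_t\mid t\in P\}$ in $\Omega_P$ and openness of $V(P;\mc)$, I can take $\chi=\omega_t$ for some $t\in P\setminus\bigcup_{R\in\mc}R$. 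Then $\hat\gamma_u(\omega_t)=\omega_{ut}\ne\omega_t$, which by definition of $\omega$ means some constructible ideal $S$ has $\1_S(t)\ne\1_S(ut)$, i.e.\ $t$ and $ut$ lie in different ideals of the form $tP$ versus $utP$; this forces $utP\ne tP$, and the reformulation $ut\notin tP^*$ is \lemref{lem:minilemma}.

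The implication (3)$\implies$(1) is the heart of the argument and where I expect the main work. I would start with $g\in G\setminus\{e\}$ and a nonempty basic open $V(S;\mc)\subseteq U_{g^{-1}}$, and aim to produce a point in it not fixed by $\hat\gamma_g$. The containment in $U_{g^{-1}}$ tells us $S\setminus\bigcup_{R\in\mc}R\subseteq g^{-1}P$, so picking $q$ in that set gives $\omega_q\in V(qP;\mc)\subseteq V(S;\mc)$. If $\hat\gamma_g(\omega_q)\ne\omega_q$ we are done; otherwise $gqP=qP$, and \lemref{lem:minilemma} yields $g=quq^{-1}$ for some $u\in P^*\setminus\{e\}$. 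The trick is now to apply (3) not to $\mc$ but to the "pulled-back" collection $\mc'=\{(q^{-1}R)\cap P\mid R\in\mc\}$, whose members are proper constructible ideals by \proref{pro:characterizationconstructibleideals}(6) and properness of the $R$; this produces $t\in P\setminus\bigcup_{R\in\mc}(q^{-1}R\cap P)$ with $utP\ne tP$. Conjugating, $(q^{-1}gq)tP\ne tP$ gives $g(qtP)\ne qtP$, hence $\hat\gamma_g(\omega_{qt})=\omega_{gqt}\ne\omega_{qt}$ (legitimate since $q\in g^{-1}P$ means $qt\in g^{-1}P$ so $\omega_{qt}\in U_{g^{-1}}$). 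Finally one checks $qt\in qP\setminus\bigcup_{R\in\mc}q(q^{-1}R\cap P)$, which places $\omega_{qt}$ in $V(qP;\mc)\subseteq V(S;\mc)$. The main obstacle is keeping the bookkeeping of which set $t$ avoids straight — one must be careful that avoiding $\bigcup(q^{-1}R\cap P)$ translates, after multiplying by $q$, into $qt$ avoiding the right sets so that $\omega_{qt}\in V(S;\mc)$, and that the properness of $q^{-1}R\cap P$ genuinely follows from properness of $R$ together with $q\in g^{-1}P$.
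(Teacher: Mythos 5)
Your plan reproduces the paper's proof essentially verbatim: (1)$\iff$(4) is quoted from Abadie--Abadie via the partial crossed product picture, (2)$\implies$(3) uses the nonempty basic open set $V(P;\mc)$ together with density of the evaluation characters $\omega_t$ and \lemref{lem:minilemma}, and (3)$\implies$(1) uses exactly the paper's device of picking $q\in S\setminus\bigcup_{R\in\mc}R$, writing $g=quq\inv$, and applying (3) to the pulled-back collection $\{(q\inv R)\cap P\mid R\in\mc\}$. One small correction to your final check: properness of $(q\inv R)\cap P$ follows from $q\notin R$ (which your choice of $q$ guarantees), not from properness of $R$ together with $q\in g\inv P$.
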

\begin{cor}\label{cor:topfreeimpliesideals}
The equivalent  conditions of \thmref{thm:idealsifftopfree} imply that every nontrivial ideal of $\Toepr(P)$ has nontrivial intersection with $D_r$. 
If $\lambda^+ \colon \Toepu(P) \to \Toepr(P)$ is an isomorphism, then the converse  holds.
 \begin{proof}  Suppose $J$ is a nontrivial ideal in $\Toepr(P)$. By \thmref{thm:faithfulreps}, $J\cap (D_r\rtimes_{\gamma, r} P^*)$ is a nontrivial ideal in $D_r\rtimes_{\gamma, r}P^*$. Since the action of $P^*$ is topologically free,  \cite[Theorem 2]{AS} implies that  the ideal 
 $J \cap D_r = (J\cap (D_r\rtimes_{\gamma, r} P^*)) \cap D_r$ is nontrivial as wanted.   When the kernel of the left regular representation  $\lambda^+ \colon \Toepu(P) \to \Toepr(P)$ is trivial,  
 the converse simply becomes the implication (4)$\implies$(1) in \thmref{thm:idealsifftopfree}, which is from \cite[Theorem 4.5]{AbaAba}.
 \end{proof}
\end{cor}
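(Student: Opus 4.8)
The plan is to obtain the first assertion by combining the faithful-representation criterion of \thmref{thm:faithfulreps} with the classical Archbold--Spielberg uniqueness theorem \cite{AS} for reduced crossed products by topologically free actions of discrete groups. Let $J\idealin\Toepr(P)$ be a nontrivial ideal and identify $\Toepr(P)$ with $D_r\rtimes_{\gamma,r}G$ as in \thmref{thm:partial-picture}. Since, by the remark following \proref{prop:partial-const}, $A_u=D_r$ for every $u\in P^*$, the restriction of $\gamma$ to $P^*$ is a genuine (global) action of $P^*$ on $D_r$, so $D_r\rtimes_{\gamma,r}P^*$ is an ordinary reduced group crossed product sitting canonically as a $\Cst$-subalgebra of $\Toepr(P)$ with $D_r$ as coefficient algebra. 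First I would apply \thmref{thm:faithfulreps} to conclude that $J\cap(D_r\rtimes_{\gamma,r}P^*)$ is a nontrivial ideal of $D_r\rtimes_{\gamma,r}P^*$. Then, invoking the hypothesis that the equivalent conditions of \thmref{thm:idealsifftopfree} hold, in particular topological freeness of the action of $P^*$ on $\Omega_P$ (condition (2)), I would appeal to \cite[Theorem~2]{AS} to deduce that $\big(J\cap(D_r\rtimes_{\gamma,r}P^*)\big)\cap D_r$ is nonzero; as this set equals $J\cap D_r$, the first assertion follows.

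For the converse I would assume $\lambda^+\colon\Toepu(P)\to\Toepr(P)$ is an isomorphism and transport the ideal structure along it. Using \corref{cor:faithfulondiagonal} to identify $D_u$ with $D_r$, the statement ``every nontrivial ideal of $\Toepr(P)$ meets $D_r$ nontrivially'' becomes ``every ideal of $\Toepu(P)$ with trivial intersection with $D_u$ is zero''; and since $\ker\lambda^+=\{0\}$ here, this is precisely condition (4) of \thmref{thm:idealsifftopfree}. By that theorem, (4) implies (1), hence all of the equivalent conditions, so the converse holds.

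The step that I expect to require the most care --- though it is bookkeeping rather than anything deep --- is verifying that $D_r\rtimes_{\gamma,r}P^*$ really does sit inside $\Toepr(P)\cong D_r\rtimes_{\gamma,r}G$ as a $\Cst$-subalgebra on which $D_r$ is the canonical coefficient algebra, so that intersecting an ideal of $\Toepr(P)$ with it yields an ideal of the subalgebra and the hypotheses of \cite[Theorem~2]{AS} genuinely apply; this is exactly the point where one uses that $P^*$ acts on $D_r$ by honest automorphisms. Everything else is immediate once \thmref{thm:faithfulreps} and \thmref{thm:idealsifftopfree} are in hand.
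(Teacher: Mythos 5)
Your proof is correct and follows essentially the same route as the paper: the first assertion via \thmref{thm:faithfulreps} combined with topological freeness of the $P^*$-action and \cite[Theorem~2]{AS} applied to the genuine crossed product $D_r\rtimes_{\gamma,r}P^*$, and the converse by translating the ideal-intersection property through the isomorphism $\lambda^+$ into condition (4) of \thmref{thm:idealsifftopfree} and invoking (4)$\implies$(1). The extra care you take to note that $P^*$ acts by honest automorphisms (so the subalgebra is an ordinary reduced crossed product with coefficient algebra $D_r$) is a detail the paper leaves implicit, but it is the same argument.
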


 When we combine the results from \secref{sec:toeplitzalgebras} with topological freeness of the action of $P^*$,
we obtain the following uniqueness theorem for $\Cst$-algebras generated by jointly proper representations of $P$.
\begin{thm}\label{thm:uniqueiftopfreejointproper}
Let $P$ be a submonoid of a group~$G$. Suppose that any of the equivalent conditions of \thmref{thm:idealsifftopfree} hold 
and that the conditional expectation $E_u\colon\Toepu(P) \to D_u$ is faithful.
Let $\{W_p: p\in P\}$ be a family of elements of a $\Cst$\nb-algebra satisfying the presentation \textup{(T1)--(T4) }given in  \defref{def:toeplitz-semigroup}. 
Then there is a homomorphism $\pi_W\colon\Toepr(P)\to\Cst(W)$ such that $\pi_W(L_p)=W_p$ for all~$p\in P$, and $\pi_W$ is an isomorphism if and only if~$W$ is jointly proper.
\begin{proof} By the universal property of $\Toepu(P)$, there is a representation $\rho_W\colon\Toepu(P) \to \Cst(W)$ such that $\rho(t_p) = W_p$ for all $p\in P$. Since $E_u\colon\Toepu(P) \to D_u$ is faithful, the left regular representation $\lambda^+\colon\Toepu(P)\to\Toepr(P)$ is an isomorphism by \corref{cor:reg-kernel}. Thus  $\pi_W \coloneqq \rho_W \circ (\lambda^+)\inv$ is the required homomorphism.

It follows that $W$ is jointly proper if $\pi_W$ is faithful because the identity representation of $\Toepr(P)$ is obviously jointly proper. Assume now $W$ is jointly proper. Then $\pi_W$ is faithful on the diagonal subalgebra $D_r$ by \corref{cor:faithfulondiagonal}, and so it is faithful on $\Toepr(P)$ as well by \corref{cor:topfreeimpliesideals}.
\end{proof}
\end{thm}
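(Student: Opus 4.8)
The plan is to read off the proof from the structural results that precede the statement. The hypotheses say that one of the equivalent conditions of \thmref{thm:idealsifftopfree} holds — so the action of $P^*$ on $\Omega_P$ is topologically free — and that $E_u\colon\Toepu(P)\to D_u$ is faithful. First I would invoke the universal property of $\Toepu(P)$ from \defref{def:toeplitz-semigroup}: since $\{W_p\mid p\in P\}$ satisfies exactly the relations \textup{(T1)--(T4)}, there is a canonical \Star homomorphism $\rho_W\colon\Toepu(P)\to\Cst(W)$ with $\rho_W(t_p)=W_p$, and it is surjective because the $W_p$ generate $\Cst(W)$. Next, the faithfulness of $E_u$ together with \corref{cor:reg-kernel} — which identifies $\ker\lambda^+$ with $\{b\mid E_u(b^*b)=0\}$ — forces $\lambda^+\colon\Toepu(P)\to\Toepr(P)$ to be injective, hence an isomorphism (it is always surjective by \proref{prop:diagonal-sub}). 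Composing, $\pi_W\coloneqq\rho_W\circ(\lambda^+)^{-1}\colon\Toepr(P)\to\Cst(W)$ is the desired homomorphism, and $\pi_W(L_p)=\rho_W(t_p)=W_p$.

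For the equivalence, the ``only if'' direction is immediate: the identity representation of $\Toepr(P)$ is jointly proper because evaluation at $e$ shows $\prod_{\alpha\in F}(\1-\1_{K(\alpha)})\neq 0$ in $D_r$ whenever every $K(\alpha)$ is proper (this is the first line of the proof of \proref{pro:faithfulonD_u}), so if $\pi_W$ is faithful then $W=\pi_W\circ L$ inherits joint properness. For the ``if'' direction, assume $W$ is jointly proper. Viewing $\pi_W$ as a representation of $\Toepr(P)$, its restriction to the diagonal $D_r$ is faithful by \corref{cor:faithfulondiagonal}, since joint properness of $W$ (equivalently of $\rho_W$, equivalently of $\pi_W$) is exactly the condition there for faithfulness on $D_r$. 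Now \corref{cor:topfreeimpliesideals} — whose hypothesis, topological freeness of the $P^*$-action, is in force — says every nontrivial ideal of $\Toepr(P)$ meets $D_r$ nontrivially; since $\ker\pi_W$ meets $D_r$ trivially, it must be zero, so $\pi_W$ is faithful.

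The only point that requires any care is bookkeeping about which object ``jointly proper'' is being asserted of: \defref{def:jointlyproper} defines it for the family $W$, for the homomorphism $\rho_W$ of $\Toepu(P)$, and by transport through $\lambda^+$ for a representation of $\Toepr(P)$; since $\dot W_\alpha=\rho_W(\dot t_\alpha)=\pi_W(\dot L_\alpha)=\pi_W(\1_{K(\alpha)})$, these three notions coincide, and I would state this coincidence explicitly in one sentence before applying \corref{cor:faithfulondiagonal}. There is no real obstacle here — every substantive step (existence of $\rho_W$, $\lambda^+$ an isomorphism, faithfulness on $D_r\iff$ joint properness, ideals meet the diagonal under topological freeness) is already established in the excerpt — so the proof is essentially a four-line assembly, and the write-up can be kept to a short paragraph.

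\begin{proof} By the universal property of $\Toepu(P)$, there is a representation $\rho_W\colon\Toepu(P) \to \Cst(W)$ such that $\rho_W(t_p) = W_p$ for all $p\in P$. Since $E_u\colon\Toepu(P) \to D_u$ is faithful, the left regular representation $\lambda^+\colon\Toepu(P)\to\Toepr(P)$ is an isomorphism by \corref{cor:reg-kernel}. Thus  $\pi_W \coloneqq \rho_W \circ (\lambda^+)^{-1}$ is the required homomorphism, and $\pi_W(L_p)=\rho_W(t_p)=W_p$ for all $p\in P$.

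Notice that $\dot{W}_\alpha = \rho_W(\dot{t}_\alpha) = \pi_W(\dot{L}_\alpha) = \pi_W(\1_{K(\alpha)})$ for every neutral word $\alpha$, so the family $W$ is jointly proper if and only if $\pi_W$ is jointly proper as a representation of $\Toepr(P)$. If $\pi_W$ is faithful, then $W$ is jointly proper because the identity representation of $\Toepr(P)$ is jointly proper, as the first line of the proof of \proref{pro:faithfulonD_u} shows. Conversely, assume $W$ is jointly proper. Then $\pi_W$ is faithful on the diagonal subalgebra $D_r$ by \corref{cor:faithfulondiagonal}. Since the equivalent conditions of \thmref{thm:idealsifftopfree} hold, \corref{cor:topfreeimpliesideals} implies that every nontrivial ideal of $\Toepr(P)$ has nontrivial intersection with $D_r$. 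As $\ker\pi_W$ has trivial intersection with $D_r$, we conclude that $\ker\pi_W = 0$, so $\pi_W$ is faithful on $\Toepr(P)$ as well.
\end{proof}
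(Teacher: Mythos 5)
Your proof is correct and follows essentially the same route as the paper's: universal property gives $\rho_W$, faithfulness of $E_u$ plus \corref{cor:reg-kernel} makes $\lambda^+$ an isomorphism, and then \corref{cor:faithfulondiagonal} and \corref{cor:topfreeimpliesideals} give the faithfulness equivalence. Your extra sentence identifying $\dot{W}_\alpha=\pi_W(\1_{K(\alpha)})$ is a harmless (and welcome) clarification the paper leaves implicit.
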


\begin{rem}
\corref{cor:topfreeimpliesideals} generalizes \cite[Corollary 5.7.3]{CELY} 
to monoids with nontrivial units, provided their action is topologically free, for which we give a criterion in terms of the semigroup itself  in  \thmref{thm:idealsifftopfree}(3). 
We would also like to  note that there is a relation between \thmref{thm:uniqueiftopfreejointproper} and the uniqueness result  \cite[Theorem~4.3]{NNN1} and  postpone its discussion until \secref{sec:rightLCM}.
\end{rem}

\section{Strong covariance and a full boundary quotient}\label{sec:full-boundary}

In this section we  analyze a universal boundary quotient for $\Toepu(P)$. We will see that this differs from the boundary quotient $\partial\Toepr(P)$ in the sense of Li \cite[Definition~5.7.9]{CELY} only by the failure of an amenability condition. Our approach involves the covariance algebra $\CC\times_{\CC^P}P$  associated to the canonical product system~$\CC^P$ over $P$,  see \cite[Theorem~3.10]{SEHNEM2019558}.  
As we study representations of $\CC\times_{\CC^P}P$, a notion of  foundation sets emerges naturally,  generalizing the foundation sets from  \cite{Sims-Yeend:Cstar_product_systems} and \cite[Definition~3.4]{Crisp-Laca}. Thus, 
our findings extend \cite[Proposition 5.6]{Sims-Yeend:Cstar_product_systems} to submonoids of groups that are not  quasi-lattice ordered. The partial crossed product structure of $\Toepu(P)$ is not needed to define the covariance algebra, but it will be used to give sufficient (and in some cases necessary) conditions for simplicity of $\partial\Toepr(P)$. 

\subsection{Strongly covariant representations} Let $P$ be a submonoid of a group~$G$. Following~\cite{Sims-Yeend:Cstar_product_systems}, we let $\CC^P=(\CC_p)_{p\in P}$ denote the canonical product system over~$P$ with one-dimensional fibers. That is, $\CC_p=\CC$ carries the natural structure of a Hilbert space over~$\CC$ and the left action of~$\CC$ given by multiplication, and the multiplication map $\CC_p\otimes_{\CC}\CC_q\cong\CC_{pq}$ is also just multiplication in~$\CC$. Thus every isometric representation of~$P$ in a unital $\Cst$\nb-algebra~$B$ extends uniquely by linearity to a nondegenerate representation of~$\CC^P$ in~$B$, and every (nondegenerate) representation of~$\CC^P$ in~$B$ arises this way. Since each fiber of $\CC^P$ is a copy of~$\CC$, the Fock space of $\CC^P$ can be naturally identified with $\ell^2(P)$. Under this identification, the Fock representation of~$\CC^P$ corresponds to the left regular representation of~$P$ on~$\ell^2(P)$. We refer to \cite{Fowler:Product_systems} for further details on product systems. 

 We aim to find necessary and sufficient conditions for a representation $\rho \colon\Toep_u(P)\to B$ to factor through the covariance algebra~$\CC\times_{\CC^P}P$ of $\CC^P$ \cite{SEHNEM2019558}. First of all, we need to ensure that $\CC\times_{\CC^P}P$ is indeed a quotient of~$\Toep_u(P)$, via a \Star homomorphism that identifies the canonical generating isometries. 

Let us give a description of strongly covariant isometric representations of~$P$ that is equivalent to \cite[Definition~3.2]{SEHNEM2019558}
in the special case of $\CC^P$, and also recall how~$\CC\times_{\CC^P}P$ is constructed. For each finite set~$F\subseteq G$, we define a subset~$\Delta_F$ of~$P$ as follows: an element~$p\in P$ lies in~$\Delta_F$ if and only if for all $g\in F$, either $pP\cap gP=\emptyset$  or $pP\cap gP=pP$. This gives rise to a closed subspace $\ell^2(\Delta_F)\subset \ell^2(P)$. Observe that~$\ell^2(\Delta_F)$ corresponds to the Hilbert $\CC$\nb-module $\CC^P_{F}$ from  \cite[equation~(3.1)]{SEHNEM2019558}. If $F_1\subset F_2$ are finite subsets of~$G$, then $\Delta_{F_2}\subset \Delta_{F_1}$ and so $\ell^2(\Delta_{F_2})$ is a closed subspace of~$\ell^2(\Delta_{F_1})$. 

\begin{defn}\label{def:covariance-alg} We let $F$ range in the directed set formed by all finite subsets of~$G$ ordered by inclusion and define an ideal in~$D_r$ by 
$$D_r^{\rm{o}}\coloneqq\{b\in D_r\mid \lim_{\substack{F}}\|b\restriction_{\ell^2(\Delta_F)}\|=0\}. $$
We will say that an isometric representation $w\colon P\to B$ of~$P$ in a $\Cst$\nb-algebra~$B$ is \emph{strongly covariant} if the map that sends $\dot{L}_{\alpha}\in D_r$ to $\dot{w}_\alpha\in B$ extends to a well-defined \Star homomorphism $D_r\to B$ that factors through the quotient $D_r/D_r^{\rm{o}}$. The \emph{covariance algebra}~$\CC\times_{\CC^P}P$ is the universal $\Cst$\nb-algebra for strongly covariant isometric representations of~$P$.
\end{defn}

 Let $\jj\colon P\to \CC\times_{\CC^P}P$ be the universal representation of~$P$ in $\CC\times_{\CC^P}P$. We will simply write $\jj_p$ instead of $\jj_p(1)$ for the range of $1\in \CC_p$ under the inclusion $\jj_p\colon \CC_p\to \CC\times_{\CC^P}P$. So $p\mapsto \jj_p$ is an isometric representation of~$P$ in~~$\CC\times_{\CC^P}P$.

\begin{note} We will denote by~$D_u^{\rm{o}}$ the ideal of~$D_u\subset\Toep_u(P)$ isomorphic to~$D_r^{\rm{o}}$ via the left regular representation. 
\end{note}

We will gradually work towards a more concrete description of the ideal $D_u^{\rm{o}}$, and thus also of strongly covariant representations. We begin by showing that $\CC\times_{\CC^P}P$ is a quotient of~$\Toepu(P)$ and by giving a characterization of strongly covariant isometric representations in terms of a dense subalgebra of~$D_u$. 

\begin{lem}\label{lem: lc-strg} Let $\jj \colon P\to \CC\times_{\CC^P}P$ be the universal representation of~$P$ in $\CC\times_{\CC^P}P$. Then there is a surjective \Star homomorphism $q_u\colon \Toep_u(P)\to \CC\times_{\CC^P}P$ that sends a generating element $t_p\in \Toep_u(P)$ to~$\jj_p\in \CC\times_{\CC^P}P$. A representation $\rho\colon \Toep_u(P)\to B$ factors through $\CC\times_{\CC^P}P$ if and only if $$\rho\big(\sum_{\substack{\alpha}\in A}\lambda_\alpha\dot{t}_\alpha\big)=0$$ for every finite collection of neutral words $A\subset\Hilm[W]$ and scalars $\{\lambda_\alpha\mid\alpha\in A\}\subset \CC$ such that $\sum_{\substack{\alpha}}\lambda_\alpha\dot{t}_\alpha\in D_u^{\rm{o}}$.
\begin{proof} We will first prove that $\jj\colon P\to  \CC\times_{\CC^P}P$ satisfies relations (T1)--(T4) of Definition \ref{def:toeplitz-semigroup}. Clearly $\jj_e=1$ since $\CC\times_{\CC^P}P$ is generated by $\jj(\CC^P)$ as a $\Cst$\nb-algebra. Also, notice that $\dot{j}_\alpha=0$ for every neutral word $\alpha\in\Hilm[W]$ such that $K(\alpha)=\emptyset$, because then $\dot{L}_\alpha = 0$ in~$D_r$, and $\dot{j}_\alpha=\dot{j}_\beta$ for every pair of neutral words  $\alpha, \beta\in\Hilm[W]$ such that $K(\alpha)=K(\beta)$,  because then $\dot{L}_\alpha - \dot{L}_\beta=0$. To see that $\jj$ also satisfies (T4),  suppose $K(\alpha)  =  \bigcup_{\beta\in A} K(\beta)$, where $\dot{\alpha}=e$ and $A$ is a finite collection of neutral words. Then $\prod_{\beta\in A} (\dot{L}_\alpha - \dot{L}_{\beta})  =  0$ in~$D_r$ and thus $\prod_{\beta\in A} (\dot{\jj}_\alpha - \dot{\jj}_{\beta})  =  0$ in the covariance algebra $\CC\times_{\CC^P}P$ by \defref{def:covariance-alg}. This shows that~$\jj$ also satisfies (T4) and hence $t_p\mapsto \jj_p$ gives rise to a surjective \Star homomorphism $q_u\colon \Toep_u(P)\to \CC\times_{\CC^P}P$ as wished.

It remains to establish the last assertion in the lemma. Clearly $\rho\big(\sum_{\substack{\alpha}\in A}\lambda_\alpha\dot{t}_\alpha\big)=0$ if $\rho\colon \Toep_u(P)\to B$ factors through $\CC\times_{\CC^P}P$ and $\sum_{\substack{\alpha}}\lambda_\alpha\dot{t}_\alpha\in D_u^{\rm{o}}$, where $A\subset\Hilm[W]$ is a finite collection of neutral words and the $\lambda_\alpha$'s are scalars. To prove the converse implication, let $\Hilm[C]\subset\J$ be an $\cap$\nb-closed finite collection of constructible right ideals of~$P$. For each $S\in \mc$, choose a neutral word $\alpha_S\in\Hilm[W]$ such that $K(\alpha_S)=S$. Then $$\ma(\mc)=\clsp\{\dot{t}_\alpha\mid K(\alpha)\in \Hilm[C],\, \dot{\alpha}=e\}=\mathrm{span}\{\dot{t}_{\alpha_S}\mid S\in\mc\}$$ is a finite dimensional \Star subalgebra of~$D_u$. By assumption, $$\rho\big(\sum_{\substack{S}}\lambda_S\dot{t}_{\alpha_S}\big)=0$$ in~$B$ whenever $\sum_{\substack{S}}\lambda_S\dot{t}_{\alpha_S}\in D_u^{\rm{o}}.$ We deduce that~$\rho$ vanishes on $\ma(\mc)\cap D_u^{\rm{o}}$. Hence it will factor through $\CC\times_{\CC^P}P$ because $\CC\times_{\CC^P}P$ is precisely the quotient of~$\Toepu(P)$ by the ideal generated by $D_u^{\rm{o}}$ and $D_u^{\rm{o}}=\lim_{\mc}\ma(\mc)\cap D_u^{\rm{o}}$.
\end{proof}
\end{lem}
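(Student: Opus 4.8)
The goal is to prove \lemref{lem: lc-strg}: there is a surjective $*$-homomorphism $q_u\colon\Toepu(P)\to\CC\times_{\CC^P}P$ sending $t_p$ to $\jj_p$, and a representation $\rho$ of $\Toepu(P)$ factors through $\CC\times_{\CC^P}P$ precisely when it kills every element of the distinguished spanning type lying in $D_u^{\mathrm o}$.

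My plan is to split the proof into two halves, the existence of $q_u$ and the characterization of factoring representations, exactly as the statement is phrased.

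\textbf{Existence of $q_u$.} First I would verify that the universal representation $\jj\colon P\to\CC\times_{\CC^P}P$ satisfies relations (T1)--(T4) of \defref{def:toeplitz-semigroup}, so that the universal property of $\Toepu(P)$ produces a $*$-homomorphism $t_p\mapsto\jj_p$. Relation (T1) is immediate since $\CC\times_{\CC^P}P$ is generated by $\jj(\CC^P)$ and $\jj$ is an isometric representation, so $\jj_e=1$. For (T2), (T3), (T4) the key observation is that all these relations are \emph{relations among products $\dot L_\alpha=\1_{K(\alpha)}$ in $D_r$}: by \lemref{lem:1_KalphaspansD_r}, $\dot L_\alpha=\1_{K(\alpha)}$, so $K(\alpha)=\emptyset$ forces $\dot L_\alpha=0$, $K(\alpha)=K(\beta)$ forces $\dot L_\alpha-\dot L_\beta=0$, and $K(\alpha)=\bigcup_{\beta\in F}K(\beta)$ forces $\prod_{\beta\in F}(\dot L_\alpha-\dot L_\beta)=0$ (the same computation as in \proref{prop:diagonal-sub}). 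Since by \defref{def:covariance-alg} a strongly covariant representation $w$ has the property that $\dot L_\alpha\mapsto\dot w_\alpha$ extends to a $*$-homomorphism $D_r\to B$ (factoring through $D_r/D_r^{\mathrm o}$, but in particular it \emph{is} a $*$-homomorphism on $D_r$), every polynomial identity holding among the $\dot L_\alpha$ in $D_r$ carries over to the $\dot w_\alpha$. Applying this with $w=\jj$ gives (T2), (T3), (T4) for $\jj$. Hence $q_u\colon\Toepu(P)\to\CC\times_{\CC^P}P$ exists; it is surjective because its image contains the generators $\jj_p$, which generate $\CC\times_{\CC^P}P$.

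\textbf{The characterization.} One direction is trivial: if $\rho$ factors as $\rho=\bar\rho\circ q_u$ and $\sum_{\alpha\in A}\lambda_\alpha\dot t_\alpha\in D_u^{\mathrm o}$, then since $q_u$ restricted to $D_u$ is (via the isomorphisms $D_u\cong D_r$ of \corref{cor:faithfulondiagonal} and the definitions of $D_u^{\mathrm o}$, $D_r^{\mathrm o}$) the quotient map $D_r\to D_r/D_r^{\mathrm o}$, it sends $\sum\lambda_\alpha\dot t_\alpha$ to $0$, whence $\rho(\sum\lambda_\alpha\dot t_\alpha)=0$. For the converse, suppose $\rho$ kills every such element of $D_u^{\mathrm o}$. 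I would argue that $\rho$ vanishes on the whole ideal $\langle D_u^{\mathrm o}\rangle\triangleleft\Toepu(P)$, which by definition of the covariance algebra (it is the quotient of $\Toepu(P)$ by exactly this ideal) yields the factorization. To see $\rho$ kills $D_u^{\mathrm o}$ itself: write $D_u$ as the direct limit $\lim_{\mc}\A(\mc)$ over $\cap$-closed finite subcollections $\mc$ of constructible ideals, as in the proof of \proref{pro:faithfulonD_u}; each $\A(\mc)=\lsp\{\dot t_{\alpha_S}\mid S\in\mc\}$ is finite dimensional and spanned by projections arising from constructible ideals. Any element of $D_u^{\mathrm o}$ is a norm-limit of elements of the $\A(\mc)\cap D_u^{\mathrm o}$, and each element of $\A(\mc)\cap D_u^{\mathrm o}$ is of the form $\sum_{S\in\mc}\lambda_S\dot t_{\alpha_S}$, i.e.\ exactly the type of element the hypothesis handles (with $A=\{\alpha_S\mid S\in\mc\}$). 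So $\rho$ vanishes on $\bigcup_{\mc}\A(\mc)\cap D_u^{\mathrm o}$, hence by continuity on $D_u^{\mathrm o}=\lim_{\mc}\A(\mc)\cap D_u^{\mathrm o}$, hence on the ideal it generates, giving the factorization through $\CC\times_{\CC^P}P$.

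\textbf{Main obstacle.} The routine steps are the (T1)--(T4) verifications and the direct-limit bookkeeping. The one point requiring care is the identification of $D_u^{\mathrm o}$ with $D_r^{\mathrm o}$ and of $q_u\restriction_{D_u}$ with the quotient map $D_r\to D_r/D_r^{\mathrm o}$: this rests on \corref{cor:faithfulondiagonal} ($D_u\cong D_r$ canonically) together with the \emph{Notation} preceding the lemma, which \emph{defines} $D_u^{\mathrm o}$ as the copy of $D_r^{\mathrm o}$ under $\lambda^+$. With those identifications in place the equality $D_u^{\mathrm o}=\lim_{\mc}\A(\mc)\cap D_u^{\mathrm o}$ is immediate from $D_u=\lim_{\mc}\A(\mc)$ and the fact that $D_u^{\mathrm o}$ is a closed subspace, so I expect no real difficulty; the proof is essentially an assembly of earlier results.
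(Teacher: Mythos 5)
Your proposal is correct and follows essentially the same route as the paper: verify (T1)--(T4) for $\jj$ by transporting the corresponding identities among the $\dot L_\alpha$ in $D_r$ through the strong covariance condition, and obtain the factorization criterion via the finite-dimensional subalgebras $\A(\mc)$ and the identity $D_u^{\rm o}=\lim_{\mc}\A(\mc)\cap D_u^{\rm o}$, together with the fact that $\CC\times_{\CC^P}P$ is the quotient of $\Toepu(P)$ by the ideal generated by $D_u^{\rm o}$. The only cosmetic difference is that for the easy direction you invoke the identification of $q_u\restriction_{D_u}$ with the quotient map $D_r\to D_r/D_r^{\rm o}$, whereas all that is needed (and all the paper uses) is that $q_u$ vanishes on $D_u^{\rm o}$, which is immediate from \defref{def:covariance-alg}.
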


\subsection{Foundation sets and generating projections of~$D_r^{\rm{o}}$} In order to find a more concrete description of the kernel of the quotient map $q_u\colon \Toep_u(P)\to \CC\times_{\CC^P}P$, we give in the next lemma a class of projections in~$D_u^{\rm{o}}$.

\begin{lem}\label{lem:charac-projections} Let $\alpha\in \Hilm[W]$ be a neutral word. Suppose that $A\subset\Hilm[W]$ is a finite collection of neutral words with $\bigcup_{\substack{\beta\in A}}K(\beta)\subset K(\alpha)$. Then $\prod_{\substack{\beta\in A}}(\dot{t}_\alpha-\dot{t}_{\beta})$ belongs to $D_u^{\rm{o}}$ if and only if for all $p\in K(\alpha)$, one has $$pP\cap \Big(\bigcup_{\substack{\beta\in A}}K(\beta)\Big)\neq\emptyset.$$
\begin{proof} We begin by observing that for each finite set $F\subset G$ such that 
$\prod_{\substack{\beta\in A}}(\1_{K(\alpha)}-\1_{K(\beta)}) $ does not vanish on~$\ell^2(\Delta_F)$, 
one has
$$\Big\|\lambda^+\Big(\prod_{\substack{\beta\in A}}(\dot{t}_\alpha-\dot{t}_{\beta})\Big)\restriction_{\ell^2(\Delta_F)}\Big\|=\Big\|\prod_{\substack{\beta\in A}}(\1_{K(\alpha)}-\1_{K(\beta)})
\1_{\Delta_F}\Big\|=1$$ 
 because $\prod_{\substack{\beta\in A}}(\1_{K(\alpha)}-\1_{K(\beta)})$ and $\1_{\Delta_F}$ are commuting projections. So $\prod_{\substack{\beta\in A}}(\dot{t}_\alpha-\dot{t}_{\beta})$ lies in~$D_u^{\rm{o}}$ if and only if there exists a finite set $F\subset G$ such that $\prod_{\substack{\beta\in A}}(\1_{K(\alpha)}-\1_{K(\beta)})$ vanishes on~$\ell^2(\Delta_F).$ Since $\prod_{\substack{\beta\in A}}(\1_{K(\alpha)}-\1_{K(\beta)}) $ already vanishes on the closed subspace of $\ell^2(P)$ determined by the union $\bigcup_{\substack{\beta\in A}}K(\beta)$,  the proof of the lemma reduces to showing that there is a finite set $F\subset G$  such that $\Delta_F\cap \big(K(\alpha)\setminus \bigcup_{\substack{\beta\in A}}K(\beta)\big)=\emptyset$ if and only if $pP\cap \big(\bigcup_{\substack{\beta\in A}}K(\beta)\big)\neq\emptyset$ for all $p\in K(\alpha)$.

Assume that $\Delta_F\cap \big(K(\alpha)\setminus \bigcup_{\substack{\beta\in A}}K(\beta)\big)=\emptyset$ for some finite set $F\subset G$. We will prove that $pP\cap \big(\bigcup_{\substack{\beta\in A}}K(\beta)\big)\neq\emptyset$ for all $p\in K(\alpha)$. In case $K(\alpha)=\bigcup_{\substack{\beta\in A}}K(\beta)$ we are done. Suppose that $K(\alpha)\setminus \bigcup_{\substack{\beta\in A}}K(\beta)\neq\emptyset$ and take $p\in K(\alpha)\setminus \bigcup_{\substack{\beta\in A}}K(\beta).$ It suffices to show that $pP\cap\Delta_F\neq\emptyset$ because $\Delta_F\cap \big(K(\alpha)\setminus \bigcup_{\substack{\beta\in A}}K(\beta)\big)=\emptyset$. Since $p\not\in\Delta_F$, we can find $g_0\in F$ with $pP\cap g_0P\neq\emptyset$ but $p\not\in g_0P$. Let $r_1\in pP\cap g_0P$. If $r_1\in\Delta_F$, we are done. Otherwise, the set $F_1\coloneqq F\setminus\{g_0\}$ must be nonempty and we have $r_1\not\in \Delta_{F_1}$ because $r_1\not\in \Delta_{F}$ and $r_1\in g_0P$. Then we can find $g_1\in F_1$ such that $r_1P\cap g_1P\neq\emptyset$ but $r_1\not\in g_1P$. Take $r_2\in r_1P\cap g_1P$. Again, if $r_2\in\Delta_{F}$, we are done. Otherwise, the set $F_2\coloneqq F\setminus\{g_0,g_1\}$ is nonempty. As above, we deduce that $r_2\not\in\Delta_{F_2}$ since $r_2\not\in\Delta_F$ and $r_2\in g_0P\cap g_1P$. Thus we can find $g_2\in F_2$ such that $r_2P\cap g_2P\neq\emptyset$ but $r_2\not\in g_2P$. Take $r_3\in r_2P\cap g_2P$. We are done in case $r_3\in\Delta_F$. Otherwise, notice that $F_2\subsetneq F_1\subsetneq F$. Since~$F$ is a finite set, this process must stop after finitely many steps, and so we can find $r_l\in \Delta_F\cap pP\subset K(\alpha)$. We conclude that $r_l\in  \bigcup_{\substack{\beta\in A}}K(\beta)$.

For the converse, assume that $pP\cap \big(\bigcup_{\substack{\beta\in A}}K(\beta)\big)\neq\emptyset$ for all $p\in K(\alpha)$. We have to find a finite set $F\subset G$ with $\Delta_F\cap \big(K(\alpha)\setminus \bigcup_{\substack{\beta\in A}}K(\beta)\big)=\emptyset$. Of course this holds whenever $K(\alpha)=\bigcup_{\substack{\beta\in A}}K(\beta)$. Otherwise, set $$F\coloneqq \bigcup_{\substack{\beta\in A}}Q(\beta).$$ We claim that $$\Delta_F\cap \Big(K(\alpha)\setminus \bigcup_{\substack{\beta\in A}}K(\beta)\Big)=\emptyset.$$ To see this, let $p\in K(\alpha)\setminus \bigcup_{\substack{\beta\in A}}K(\beta)$. Let $\beta'\in A$ be such that $pP\cap K(\beta')\neq \emptyset$. Since $p\not\in K(\beta')$, there must be $g\in Q(\beta')$ such that $p\not \in gP$. For such a~$g$, we have $pP\cap gP\neq\emptyset$ because $\emptyset\neq pP\cap K(\beta')\subset pP\cap gP$. Hence $p\not\in\Delta_F$. This completes the proof of the lemma.
\end{proof}
\end{lem}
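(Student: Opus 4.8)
\textbf{Proof plan for \lemref{lem:charac-projections}.}
The plan is to reduce the statement to a purely combinatorial question about the sets $\Delta_F$ and the constructible ideals, using the canonical isomorphism $D_u\cong D_r$ from \corref{cor:faithfulondiagonal} to transport everything into the concrete picture inside $\ell^\infty(P)$. First I would observe that $\lambda^+$ restricted to $D_u$ is faithful, so $\prod_{\beta\in A}(\dot t_\alpha-\dot t_\beta)\in D_u^{\rm o}$ if and only if its image $\prod_{\beta\in A}(\1_{K(\alpha)}-\1_{K(\beta)})$ lies in $D_r^{\rm o}$. Since the net $(\|b\restriction_{\ell^2(\Delta_F)}\|)_F$ is decreasing (because $F_1\subset F_2$ forces $\Delta_{F_2}\subset\Delta_{F_1}$) and each such norm is either $0$ or $1$ for a projection of the form $\prod_{\beta\in A}(\1_{K(\alpha)}-\1_{K(\beta)})\1_{\Delta_F}$ (as these are commuting $\{0,1\}$-valued multiplication operators), membership in $D_r^{\rm o}$ is equivalent to the existence of a single finite $F\subset G$ with $\prod_{\beta\in A}(\1_{K(\alpha)}-\1_{K(\beta)})\1_{\Delta_F}=0$, i.e.\ with
\[
\Delta_F\cap\Bigl(K(\alpha)\setminus\bigcup_{\beta\in A}K(\beta)\Bigr)=\emptyset .
\]
Thus the whole lemma reduces to the equivalence: such an $F$ exists $\iff$ $pP\cap\bigl(\bigcup_{\beta\in A}K(\beta)\bigr)\neq\emptyset$ for every $p\in K(\alpha)$.

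For the forward direction I would argue by a descent/termination argument. Suppose $\Delta_F\cap(K(\alpha)\setminus\bigcup_\beta K(\beta))=\emptyset$ and take $p\in K(\alpha)$; if $p$ already lies in $\bigcup_\beta K(\beta)$ there is nothing to prove, so assume $p\in K(\alpha)\setminus\bigcup_\beta K(\beta)$. The key is then to produce an element of $pP\cap\Delta_F$, since by hypothesis such an element cannot lie in $K(\alpha)\setminus\bigcup_\beta K(\beta)$, hence must lie in $\bigcup_\beta K(\beta)$, giving what we want. To find a point of $pP\cap\Delta_F$ I would start from $p\notin\Delta_F$, pick $g_0\in F$ with $pP\cap g_0P\neq\emptyset$ but $p\notin g_0P$, choose $r_1\in pP\cap g_0P$, and iterate: at each stage the new point $r_i$ lies in $g_0P\cap\cdots\cap g_{i-1}P$, so if it is still not in $\Delta_F$ the obstructing element of $F$ must come from the strictly smaller set $F\setminus\{g_0,\dots,g_{i-1}\}$. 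Since $F$ is finite this process terminates, producing $r_l\in pP\cap\Delta_F$, which is the desired element.

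For the converse, assuming $pP\cap\bigl(\bigcup_\beta K(\beta)\bigr)\neq\emptyset$ for all $p\in K(\alpha)$, I would exhibit an explicit $F$, namely $F\coloneqq\bigcup_{\beta\in A}Q(\beta)$ (the union of the iterated quotient sets of the $\beta$'s, recalling $K(\beta)=K_{Q(\beta)}=\bigcap_{g\in Q(\beta)}gP$). The verification is direct: given $p\in K(\alpha)\setminus\bigcup_\beta K(\beta)$, pick $\beta'\in A$ with $pP\cap K(\beta')\neq\emptyset$; since $p\notin K(\beta')=\bigcap_{g\in Q(\beta')}gP$ there is $g\in Q(\beta')\subset F$ with $p\notin gP$, yet $pP\cap gP\supseteq pP\cap K(\beta')\neq\emptyset$, so $p\notin\Delta_F$. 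Hence every $p\in K(\alpha)\setminus\bigcup_\beta K(\beta)$ fails to lie in $\Delta_F$, which is exactly the required emptiness.

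The main obstacle is the forward direction's iterative construction of a point in $pP\cap\Delta_F$: one must be careful that the ``obstructing'' elements $g_i\in F$ at successive stages are genuinely distinct (so that finiteness of $F$ forces termination), which is where the observation that $r_i$ accumulates membership in $g_0P\cap\cdots\cap g_{i-1}P$ is essential — it guarantees that no previously used $g_j$ can serve as the next obstruction. Everything else (the reduction via $D_u\cong D_r$, the monotonicity of the net, the $\{0,1\}$ dichotomy for the relevant norms) is routine once the setup in \corref{cor:faithfulondiagonal} and \lemref{lem:basis-specP}-style manipulations are in hand.
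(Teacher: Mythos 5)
Your proposal is correct and follows essentially the same route as the paper: the same reduction (the product is the characteristic function of $K(\alpha)\setminus\bigcup_{\beta\in A}K(\beta)$, so membership in $D_u^{\rm{o}}$ amounts to finding one finite $F$ with $\Delta_F\cap\bigl(K(\alpha)\setminus\bigcup_{\beta\in A}K(\beta)\bigr)=\emptyset$), the same finite-descent construction of a point of $pP\cap\Delta_F$ for the forward direction, and the same explicit choice $F=\bigcup_{\beta\in A}Q(\beta)$ for the converse.
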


 \lemref{lem:charac-projections} motivates the following generalization of the concept of a foundation set, originally defined in the context of quasi-lattice orders \cite{Sims-Yeend:Cstar_product_systems}, see also \cite[Definition~3.4]{Crisp-Laca}.

\begin{defn}\label{def:foundation-set} Let $P$ be a submonoid of a group~$G$ and let $S\in \J$ be a constructible right ideal of~$P$. We shall say that a finite collection $\mc\subset \J$ of constructible ideals  is a \emph{foundation set} for~$S$, or an \emph{$S$\nb-foundation set}, if $R\subset S$ for all $R\in\mc$ and for all~$p\in S$, one has $$pP\cap\Big(\bigcup_{\substack{R\in\mc}} R\Big)\neq\emptyset.$$ We will say that an $S$\nb-foundation set $\mc$ is \emph{proper} if $S\setminus\big(\bigcup_{\substack{R\in\mc}} R\big)\neq\emptyset$. In general, we will say that a finite collection of constructible ideals $\mc\in \J$ is a \emph{relative} foundation set for~$S$ if $\{S\cap R\mid R\in\mc\}$ is an~$S$\nb-foundation set.
\end{defn}

We can now describe a set of generating projections for~$D_u^{\rm{o}}$.

\begin{cor}\label{cor:generating-projections} The ideal $D_u^{\rm{o}}$ of~$D_u\subset\Toep_u(P)$ is the closed linear span of projections of the form $\prod_{\substack{\beta\in A}}(\dot{t}_{\alpha}-\dot{t}_{\beta})$, where $\alpha$ is a neutral word and
$A\subset\Hilm[W]$ is a finite collection of neutral words such that  $\{K(\beta)\mid \beta\in A\}$ is a proper foundation set for~$K(\alpha)$. Moreover, a map $w\colon P\to B$ into a $\Cst$\nb-algebra~$B$ is a strongly covariant isometric representation of~$P$ if and only if it satisfies the relations \textup{(T1)--(T4)} of \defref{def:toeplitz-semigroup} and the \emph{boundary relations}, that is,
 $$\prod_{\substack{\beta\in A}}(\dot{w}_{\alpha}-\dot{w}_{\beta})=0$$
for every neutral word $\alpha$ in $\W$ and   proper foundation set $\{K(\beta)\mid \beta\in A\}$ for~$K(\alpha)$, where $A$ is a finite collection of neutral words in $\W$.
\begin{proof}  Let $a\in D_u^{\rm{o}}$. Using that $D_u^{\rm{o}}=\lim_{\mc}(\ma(\mc)\cap D_u^{\rm{o}})$, where $\ma(\mc)$ is the (finite-dimensional) $\Cst$\nb-subalgebra of~$D_u$ spanned by the projections $\{\dot{t}_{\alpha}\mid \dot{\alpha}=e,\, K(\alpha)\in\mc\}$, we may assume that~$a$ is a finite linear combination~$\sum_{\substack{\alpha}\in F}\lambda_{\alpha}\dot{t}_\alpha$,  where $F\subset \Hilm[W]$ is a finite collection of neutral words and $\lambda_\alpha\in\CC$ for all~$\alpha\in F$. Thus by Lemma~\ref{lem:orthogonaldecomp}, $a$ can be decomposed as a finite linear combination of orthogonal projections $$a=\sum_{\emptyset\neq A\subset F}\lambda_AQ_A,$$ where $Q_A=\prod_{\alpha\in A}\dot{t}_\alpha \prod_{\beta\in F\setminus A} (t_e-\dot{t}_\beta)$ and $\lambda_A=\sum_{\substack{\alpha\in A}}\lambda_\alpha$. We rewrite $$Q_A= \prod_{\alpha\in A}\dot{t}_\alpha \prod_{\beta\in F\setminus A} (t_e-\dot{t}_\beta)= \prod_{\beta\in F\setminus A} \big(\dot{t}_{\prod_A\alpha}-\dot{t}_{\prod_A\alpha\beta}\big)$$ and use Lemma~\ref{lem:charac-projections} to conclude that $a\in D_u^{\rm{o}}$ if and only if $\{K(\prod_A\alpha\beta)\mid \beta\in F\setminus A\}$ is a foundation set for $K(\prod_A\alpha)$ whenever $\lambda_A\neq 0$. Now observe that if $K(\alpha)=\bigcup_{\substack{\beta\in A}}K(\beta)$ so that $\{K(\beta)\mid \beta\in A\}$ is obviously a foundation set for $K(\alpha)$, then $\prod_{\substack{\beta\in A}}(\dot{t}_{\alpha}-\dot{t}_{\beta})=0$ already holds in~$\Toep_u(P)$ by (T4). Hence the nonzero generators of~$D_u^{\rm{o}}$ arise from 
proper foundation sets. This proves the first assertion of the corollary. 
The second assertion follows because $\CC\times_{\CC^P}P$ is the quotient of~$\Toepu(P)$ by the ideal generated by $D_u^{\rm{o}}$. 
\end{proof}
\end{cor}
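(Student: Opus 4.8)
The plan is to prove the two assertions in turn: the description of $D_u^{\rm{o}}$ as a closed span of ``foundation-set'' projections carries all of the content, and the characterization of strongly covariant representations then follows formally from \lemref{lem: lc-strg}.

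For the first assertion I would take an arbitrary $a\in D_u^{\rm{o}}$ and invoke $D_u^{\rm{o}}=\lim_{\mc}(\A(\mc)\cap D_u^{\rm{o}})$, where $\A(\mc)$ is the finite-dimensional $\Cst$\nb-subalgebra of $D_u$ spanned by the projections $\dot t_\alpha$ with $\dot\alpha=e$ and $K(\alpha)\in\mc$, to reduce to the case $a=\sum_{\alpha\in F}\lambda_\alpha\dot t_\alpha$ for a finite collection $F$ of neutral words. Since $\{\dot t_\alpha\mid\alpha\in F\}$ is a family of commuting projections, \lemref{lem:orthogonaldecomp} rewrites $a=\sum_{\emptyset\neq A\subseteq F}\lambda_A Q_A$ with $\lambda_A=\sum_{\alpha\in A}\lambda_\alpha$ and the $Q_A=\prod_{\alpha\in A}\dot t_\alpha\prod_{\beta\in F\setminus A}(1-\dot t_\beta)$ mutually orthogonal. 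Because $D_u^{\rm{o}}$ is a closed ideal of $D_u$ and the $Q_A$ are orthogonal, $a\in D_u^{\rm{o}}$ if and only if $Q_A\in D_u^{\rm{o}}$ for every $A$ with $\lambda_A\neq 0$, so everything reduces to deciding which $Q_A$ lie in $D_u^{\rm{o}}$.

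Next I would rewrite each $Q_A$ in telescoping form as $\prod_{\beta\in F\setminus A}\bigl(\dot t_{\prod_A\alpha}-\dot t_{(\prod_A\alpha)\beta}\bigr)$, using $\prod_{\alpha\in A}\dot t_\alpha=\dot t_{\prod_A\alpha}$, the projection property, $\dot t_{\prod_A\alpha}\dot t_\beta=\dot t_{(\prod_A\alpha)\beta}$, and \proref{pro:propertiesKalpha}(5), which gives $K((\prod_A\alpha)\beta)=K(\prod_A\alpha)\cap K(\beta)\subseteq K(\prod_A\alpha)$; hence $\bigcup_{\beta\in F\setminus A}K((\prod_A\alpha)\beta)\subseteq K(\prod_A\alpha)$ as required for \lemref{lem:charac-projections} to apply. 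That lemma then says $Q_A\in D_u^{\rm{o}}$ exactly when $pP\cap\bigcup_{\beta\in F\setminus A}K((\prod_A\alpha)\beta)\neq\emptyset$ for every $p\in K(\prod_A\alpha)$, i.e.\ exactly when $\{K((\prod_A\alpha)\beta)\mid\beta\in F\setminus A\}$ is a foundation set for $K(\prod_A\alpha)$ in the sense of \defref{def:foundation-set}. It remains to dispose of the non-proper case: if $K(\prod_A\alpha)=\bigcup_{\beta\in F\setminus A}K((\prod_A\alpha)\beta)$, then relation (T4) already forces $Q_A=0$ in $\Toepu(P)$, so the only $Q_A$ contributing nontrivially come from \emph{proper} foundation sets; conversely, \lemref{lem:charac-projections} guarantees that every projection $\prod_{\beta\in A}(\dot t_\alpha-\dot t_\beta)$ built from a proper foundation set $\{K(\beta)\mid\beta\in A\}$ for $K(\alpha)$ lies in $D_u^{\rm{o}}$. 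Assembling these facts yields that $D_u^{\rm{o}}$ is the closed linear span of precisely the foundation-set projections.

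The second assertion I would read off from \lemref{lem: lc-strg}: $\CC\times_{\CC^P}P$ is the quotient of $\Toepu(P)$ by the ideal generated by $D_u^{\rm{o}}$, and any $w\colon P\to B$ satisfying (T1)--(T4) integrates to a \Star homomorphism $\rho_w\colon\Toepu(P)\to B$ with $\rho_w(\dot t_\alpha)=\dot w_\alpha$. So $w$ is strongly covariant iff $\rho_w$ factors through $\CC\times_{\CC^P}P$, iff $\rho_w$ annihilates $D_u^{\rm{o}}$, and by the first assertion this holds iff $\rho_w$ kills all the generating projections, i.e.\ iff the boundary relations $\prod_{\beta\in A}(\dot w_\alpha-\dot w_\beta)=0$ hold for every neutral $\alpha$ and every proper foundation set $\{K(\beta)\mid\beta\in A\}$ for $K(\alpha)$. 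The main obstacle I anticipate is the bookkeeping in the telescoping step --- keeping straight which neutral word ($\prod_A\alpha$, or its concatenations $(\prod_A\alpha)\beta$) plays which role when matching the conclusion of \lemref{lem:charac-projections} against \defref{def:foundation-set} --- together with checking that the reduction through $\lim_{\mc}(\A(\mc)\cap D_u^{\rm{o}})$ genuinely exhausts $D_u^{\rm{o}}$, so that no generator is overlooked.
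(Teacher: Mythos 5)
Your proposal is correct and follows essentially the same route as the paper: reduce to a finite linear combination via $D_u^{\rm{o}}=\lim_{\mc}(\A(\mc)\cap D_u^{\rm{o}})$, decompose with \lemref{lem:orthogonaldecomp} into the orthogonal projections $Q_A$, rewrite each $Q_A$ in telescoping form and apply \lemref{lem:charac-projections}, eliminate the non-proper foundation sets via (T4), and deduce the second assertion from the fact that $\CC\times_{\CC^P}P$ is the quotient of $\Toepu(P)$ by the ideal generated by $D_u^{\rm{o}}$. Your explicit remarks that orthogonality of the $Q_A$ plus the ideal property yield $a\in D_u^{\rm{o}}$ iff each $Q_A$ with $\lambda_A\neq 0$ lies in $D_u^{\rm{o}}$, and that the converse inclusion of foundation-set projections also comes from \lemref{lem:charac-projections}, are points the paper leaves implicit but are entirely in the spirit of its argument.
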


\begin{rem}
 Take an element $a=\sum_{\substack{\alpha}\in F}\lambda_{\alpha}\dot{t}_\alpha\in D_u$ and suppose that $a\not\in D_u^{\rm{o}}$. Using the same decomposition as in the first part of the proof of \corref{cor:generating-projections}, we see that the image of $a$ in the quotient $D_u/D_u^{\rm{o}}$ is the linear combination of the images of  mutually orthogonal basic projections that are not given by foundation sets. 
 \end{rem}

\begin{rem} In the case that $(G,P)$ is a quasi-lattice ordered group, the last statement of Corollary~\ref{cor:generating-projections} is proved in \cite[Proposition 5.6]{Sims-Yeend:Cstar_product_systems}. Indeed, in this case the nonempty constructible right ideals of~$P$ are the principal ideals. So let $p\in P$ and let $A\subset P$ be a finite set. Then $\{qP\mid q\in A\}$ is a foundation set for~$pP$ in the sense of Definition~\ref{def:foundation-set} if and only if~$\{p^{-1}qP\mid q\in A\}$ is a foundation set for~$P$ as defined in~\cite{Sims-Yeend:Cstar_product_systems}. A foundation set $\{qP\mid q\in A\}$ for~$pP$ is proper if and only if $p\not\in A$ or, equivalently, $e\not\in p^{-1}A\coloneqq \{p^{-1}q\mid q\in A\}$.
\end{rem}

We could deduce from \cite[Lemma~3.3]{SEHNEM2019558} that the ideal  of $\Toepu(P)$ generated by $D_u^{\rm{o}}$ is invariant under the canonical gauge coaction of~$G$ on~$\Toepu(P)$, and so an element $a\in D_u$ lies in the kernel of the quotient map $q_u\colon \Toepu(P)\to \CC\times_{\CC^P}P$ if and only if $a\in D_u^{\rm{o}}$ (see \cite[Lemma~3.4]{SEHNEM2019558}).  If we then apply  \lemref{lem:charac-projections}, we see that 
the kernel of the quotient map $q_u\colon\Toep_u(P)\to \CC\times_{\CC^P}P$ does not contain any projection of the form $\prod_{\substack{\beta\in A}}(\dot{t}_\alpha-\dot{t}_{\beta})$ when $\{K(\beta)\mid \beta\in A\}$ \emph{is not} a $K(\alpha)$\nb-foundation set.

We would like to give next a direct proof of  a stronger result that makes it clear why these projections cannot vanish  under any nontrivial representation of $\Toepu(P)$  when $\{K(\beta)\mid \beta\in A\}$ is not a foundation set for $K(\alpha)$. 
  
 \begin{prop}\label{prop:non-foundation-sets}
Let $\rho$ be a representation of $\Toepu(P)$ in a $\Cst$\nb-algebra~$B$. Let $\alpha\in\Hilm[W]$ be a neutral word and let $A\subset \Hilm[W]$ be a finite (possibly empty) collection of neutral words such that $K(\beta)\subset K(\alpha)$ for all $\beta\in A$. 
If $\{K(\beta)\mid \beta\in A\}$ is not a foundation set for $K(\alpha)$ and 
$$\rho\Big(\prod_{\beta\in A} (\dot{t}_{\alpha}-\dot{ t}_{\beta})\Big) =0,$$ 
then $\rho\equiv 0$. As a consequence, the restriction of $q_u$ to $D_u$ induces an embedding of $D_u/D_u^{\rm{o}}$  in~$\CC\times_{\CC^P}P$, and a \Star homomorphism $\hat{\rho}\colon\CC\times_{\CC^P}P\to B$ is faithful on $D_u/D_u^{\rm{o}}$ if and only if~$\hat{\rho}\neq 0$.
\begin{proof} The proof is based on an observation that goes back to \cite[Lemma~5.1]{purelinf}. 
 Regarding the first part of the proposition, it suffices to show that if $\{K(\beta)\mid \beta\in A\}$ is not a foundation set for $K(\alpha)$, then the projection $\rho\big(\prod_{\beta\in A} (\dot{t}_{\alpha}-\dot{ t}_{\beta})\big)$ dominates a
range projection of the form $t_pt_p^*$ for some~$p\in P$. Indeed, if $\{K(\beta)\mid \beta\in A\}$ is not a $K(\alpha)$\nb-foundation set, there must be $p\in K(\alpha)$ satisfying $$pP\cap\Big(\bigcup_{\beta\in A}K(\beta)\Big)=\emptyset.$$ It follows from the defining relation (T2) of~$\Toep_u(P)$ that $t_pt_p^*\dot{t}_{\beta}=0$ for all~$\beta\in A$. Also, from relations (T1) and (T3) of Definition~\ref{def:toeplitz-semigroup} we have $t_pt_p^*\dot{t}_\alpha=t_pt_p^*$. Hence $$t_pt_p^*\prod_{\beta\in A}(\dot{t}_{\alpha}-\dot{ t}_{\beta})=t_pt_p^*\dot{t}_\alpha=t_pt_p^*$$ as wished.  Thus $\rho\big(\prod_{\beta\in A}(\dot{t}_{\alpha}-\dot{ t}_{\beta})\big)=0$ forces $\rho(t_pt_p^*)=0$, and since $t_p$ is an isometry, this means that   $\rho$ has to be the zero representation.  

Suppose now that $\rho\colon\Toep_u(P)\to B$ is a nontrivial strongly covariant representation.
We will show that $D_u \cap \ker \rho = D_u^{\rm{o}}$.   By definition, $D_u^{\rm{o}} \subset D_u \cap \ker \rho$. For the reverse inclusion take $a\in D_u\cap\ker\rho$. As before, it suffices to consider elements of the form $a=\sum_{\alpha\in F}\lambda_\alpha\dot{t}_\alpha\in\mathrm{span}\{\dot{t}_\alpha\mid \alpha\in\Hilm[W],\dot{\alpha}=e\}$. Arguing as in the proof of \corref{cor:generating-projections}, we may write~$a$ as a finite linear combination of orthogonal projections $$a=\sum_{\emptyset\neq A\subset F}\lambda_AQ_A,$$ where $Q_A=\prod_{\alpha\in A}\dot{t}_\alpha \prod_{\beta\in F\setminus A} (t_e-\dot{t}_\beta).$ If we had $a\not\in D_u^{\rm{o}}$,  \corref{cor:generating-projections} would produce an $A\subset F$ such that $\lambda_A\neq 0$ and $\{K(\prod_{ A}\alpha\beta)\mid\beta\in F\setminus A\}$ is not a foundation set for $K(\prod_{A}\alpha)$.  In this case $\rho(a)=0$ would imply $\rho(Q_A)=0$ and thus $\rho\equiv 0$ by the first part.  To avoid the contradiction we must have $D_u\cap\ker\rho \subset D_u^{\rm{o}}$. 
 
 Taking now $\rho = q_u \colon \Toepu(P)\to\CC\times_{\CC^P}P$, which is a nontrivial strongly covariant representation by \cite[Theorem~3.10]{SEHNEM2019558}(C3),  by the preceding argument we conclude that $ D_u \cap \ker q_u =D_u^{\rm{o}}$.  This shows that $q_u$ induces an embedding of  $D_u/D_u^{\rm{o}}$  in~$\CC\times_{\CC^P}P$.
 
 Finally, if $\hat{\rho}\colon \CC\times_{\CC^P}P\to B$  is a nonzero representation, then the representation $\rho\coloneqq \hat\rho \circ q_u$ of $\Toepu(P)$ is strongly covariant and nontrivial, so $D_u \cap \ker \rho = D_u^{\rm{o}}$.
 Since the embedded copy of $D_u/D_u^{\rm{o}}$ in $\CC\times_{\CC^P}P$ is $q_u(D_u)$,  to finish the proof we only need to notice that if $\hat\rho(q_u(a)) = 0$, then $\rho(a) = 0$ and hence $a\in D_u^{\rm{o}}$.
\end{proof}
\end{prop}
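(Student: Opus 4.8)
The plan is to reduce everything to the first assertion of the proposition—that a non-foundation set forces the vanishing projection to dominate a range projection $t_pt_p^*$—and then bootstrap from there. First I would address this core claim: if $\{K(\beta)\mid\beta\in A\}$ fails to be a foundation set for $K(\alpha)$, then by definition there is a point $p\in K(\alpha)$ with $pP\cap\bigl(\bigcup_{\beta\in A}K(\beta)\bigr)=\emptyset$; equivalently $pP\cap K(\beta)=\emptyset$ for every $\beta\in A$. For each such $\beta$, the word $(p,e)\beta(e,p)$ is neutral and by \proref{pro:propertiesKalpha}(6) satisfies $K((p,e)\beta(e,p))=P\cap p\inv K(\beta)=\emptyset$, so relation (T2) gives $\dot t_{(p,e)\beta(e,p)}=0$, i.e.\ $t_p^*\dot t_\beta t_p=0$, hence $t_pt_p^*\dot t_\beta=t_pt_p^*\dot t_\beta t_pt_p^*=0$. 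On the other hand $p\in K(\alpha)$ means $K((p,p))=pP\subseteq K(\alpha)$, so $\dot t_\alpha\dot t_{(p,p)}=\dot t_{(p,p)}$ by (T3) applied to $K(\alpha(p,p))=K(\alpha)\cap pP=pP$; since $\dot t_{(p,p)}=t_pt_p^*$, this yields $t_pt_p^*\dot t_\alpha=t_pt_p^*$. Expanding the product $\prod_{\beta\in A}(\dot t_\alpha-\dot t_\beta)$ and multiplying on the left by $t_pt_p^*$, every factor involving a $\dot t_\beta$ is annihilated, leaving $t_pt_p^*\prod_{\beta\in A}(\dot t_\alpha-\dot t_\beta)=t_pt_p^*$. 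Therefore $\rho\bigl(\prod_{\beta\in A}(\dot t_\alpha-\dot t_\beta)\bigr)=0$ forces $\rho(t_pt_p^*)=0$, and since $t_p$ is an isometry (\proref{pro:propertiesofsharpgenerators}(1)) this means $\rho(1)=\rho(t_p^*t_p)=0$, so $\rho\equiv0$.

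Next I would establish that $D_u\cap\ker\rho=D_u^{\mathrm{o}}$ for any nontrivial strongly covariant $\rho$. The inclusion $D_u^{\mathrm{o}}\subseteq D_u\cap\ker\rho$ is immediate from \defref{def:covariance-alg} and \lemref{lem: lc-strg}. For the converse, take $a\in D_u\cap\ker\rho$; using $D_u^{\mathrm{o}}=\lim_{\mc}(\A(\mc)\cap D_u^{\mathrm{o}})$ as in \corref{cor:generating-projections} we may assume $a=\sum_{\alpha\in F}\lambda_\alpha\dot t_\alpha$ with $F$ a finite set of neutral words. Apply \lemref{lem:orthogonaldecomp} to decompose $a=\sum_{\emptyset\neq A\subseteq F}\lambda_A Q_A$ into mutually orthogonal projections $Q_A=\prod_{\alpha\in A}\dot t_\alpha\prod_{\beta\in F\setminus A}(t_e-\dot t_\beta)$ with $\lambda_A=\sum_{\alpha\in A}\lambda_\alpha$. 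Rewrite $Q_A=\prod_{\beta\in F\setminus A}\bigl(\dot t_{\prod_A\alpha}-\dot t_{\prod_A\alpha\beta}\bigr)$ as in the proof of \corref{cor:generating-projections}. If $a\notin D_u^{\mathrm{o}}$ then by \lemref{lem:charac-projections} (or \corref{cor:generating-projections}) there is some $A$ with $\lambda_A\neq0$ for which $\{K(\prod_A\alpha\beta)\mid\beta\in F\setminus A\}$ is not a foundation set for $K(\prod_A\alpha)$. But $\rho(a)=0$ together with orthogonality of the $Q_A$ and $\lambda_A\neq0$ gives $\rho(Q_A)=0$, which by the first part forces $\rho\equiv0$, contradicting nontriviality. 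Hence $a\in D_u^{\mathrm{o}}$.

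Finally I would assemble the consequences. Taking $\rho=q_u\colon\Toepu(P)\to\CC\times_{\CC^P}P$, which is a nontrivial strongly covariant representation by \cite[Theorem~3.10]{SEHNEM2019558}(C3), the previous paragraph gives $D_u\cap\ker q_u=D_u^{\mathrm{o}}$; since $q_u$ is a $*$-homomorphism this means exactly that $q_u$ restricts to an injective $*$-homomorphism on $D_u/D_u^{\mathrm{o}}$, realizing it as the $C^*$-subalgebra $q_u(D_u)$ of $\CC\times_{\CC^P}P$. For the last statement, let $\hat\rho\colon\CC\times_{\CC^P}P\to B$ be nonzero; then $\rho\coloneqq\hat\rho\circ q_u$ is a strongly covariant representation of $\Toepu(P)$ and is nontrivial because $q_u$ is surjective and $\hat\rho\neq0$. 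Applying the second paragraph again, $D_u\cap\ker\rho=D_u^{\mathrm{o}}$. Since $q_u(D_u)$ is the embedded copy of $D_u/D_u^{\mathrm{o}}$, faithfulness of $\hat\rho$ on it amounts to: $\hat\rho(q_u(a))=0$ implies $a\in D_u^{\mathrm{o}}$; but $\hat\rho(q_u(a))=\rho(a)$, so $\rho(a)=0$ gives $a\in\ker\rho\cap D_u=D_u^{\mathrm{o}}$, as required. Conversely if $\hat\rho=0$ it is certainly not faithful, completing the equivalence.

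The main obstacle I anticipate is purely bookkeeping rather than conceptual: in the core claim one must be careful that the rewriting $Q_A=\prod_{\beta\in F\setminus A}(\dot t_{\prod_A\alpha}-\dot t_{\prod_A\alpha\beta})$ is legitimate, which relies on $K(\prod_A\alpha\beta)=K(\prod_A\alpha)\cap K(\beta)\subseteq K(\prod_A\alpha)$ via \proref{pro:propertiesKalpha}(5) and on the commutativity from \proref{pro:propertiesofsharpgenerators}(3); and one must confirm that "not a foundation set" for the rewritten collection is the same condition that makes the first part applicable. Once these identifications are pinned down, the argument is a direct chain of the already-established lemmas.
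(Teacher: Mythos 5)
Your proof is correct and follows essentially the same route as the paper's: dominate the vanishing projection by a range projection $t_pt_p^*$ coming from a point $p\in K(\alpha)$ that witnesses the failure of the foundation-set property, then run the orthogonal decomposition from \corref{cor:generating-projections} to show $D_u\cap\ker\rho=D_u^{\mathrm{o}}$ for every nontrivial strongly covariant $\rho$, and finally specialize to $q_u$ and to $\hat\rho\circ q_u$. The only slip is notational: the range projection is $t_pt_p^*=\dot{t}_{(e,p,p,e)}$ with $K((e,p,p,e))=pP$, not $\dot{t}_{(p,p)}$ (which equals $t_p^*t_p=1$ and has $K((p,p))=P$); with that correction your (T3) argument for $t_pt_p^*\dot{t}_\alpha=t_pt_p^*$ is exactly the paper's.
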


\subsection{The covariance algebra as a full boundary quotient} We can now provide a few  characterizations of~$\CC\times_{\CC^P}P$. Among them, we show that $\CC\times_{\CC^P}G$ coincides with the full partial crossed product $\mathrm{C}(\partial \Omega_P)\rtimes G$, of the partial action of $G$ restricted to $\partial \Omega_P$, see \cite[Definition~5.7.8]{CELY} and \cite[Definition~5.7.9]{CELY}.  
In view of  \cite[Corollary~5.7.6]{CELY}, it is then natural to regard $\CC\times_{\CC^P}P$ as a \emph{full} boundary quotient for $\Toepu(P)$.
In order to establish the desired crossed product picture for $\CC\times_{\CC^P}P$, we need to show that the partial action $\gamma=(\{A_g\}_{g\in G},\{\gamma_g\}_{g\in G})$ of~$G$ on~$D_r$ from \thmref{thm:partial-picture} induces a partial action of~$G$ on the quotient $D_r/D_r^{\rm{o}}$.

Let $\gamma=(\{A_g\}_{g\in G},\{\gamma_g\}_{g\in G})$ be a partial action of~$G$ on a $\Cst$\nb-algebra~$A$. Recall from \cite{ELQ}*{Definition~2.7} that a $\Cst$\nb-subalgebra $D$ of~$A$ is \emph{invariant} under~$\gamma$ if $\gamma_g(D\cap A_{g^{-1}})\subset D$ for all~$g\in G$. If $I\idealin A$ is an invariant ideal, then $\gamma$ restricts to a partial action on~$I$ in a natural manner. The underlying collection of ideals is simply $\{I\cap A_g\}_{g\in G}$ and the isomorphism $I\cap A_{g^{-1}}\to I\cap A_{g}$ is simply the restriction of $\gamma_g$. The quotient $A/I$ also carries a partial action of~$G$ given by $\dot{\gamma}=(\{A_g/I\}_{g\in G},\{\dot{\gamma}\}_{g\in G})$, where $\dot{\gamma}_g\colon A_{g^{-1}}/I\to A_g/I$ is the isomorphism $a+I\mapsto \gamma_g(a)+I$. By \cite[Proposition 3.1]{ELQ}, there is a short exact sequence $$0\longrightarrow I\rtimes_{\gamma\restriction_I}G\longrightarrow A\rtimes_{\gamma}G\longrightarrow (A/I)\rtimes_{\dot{\gamma}}G\longrightarrow 0, $$ where the inclusion $ I\rtimes_{\gamma\restriction_I}G\hookrightarrow A\rtimes_{\gamma}G$ extends the embedding~$(I\cap A_g)\delta_g\hookrightarrow A_g\delta_g$ for~$g\in G$ while the second \Star homomorphism sends $a\delta_g\in A_g\delta_g$ to $(a+I)\delta_g\in (A_g/I)\delta_g$.

\begin{lem}\label{lem:invariance-Du} 
Let $P$ be a submonoid of a group $G$. Then the ideal $D_r^{\rm{o}}$ of $D_r$ is invariant under the partial action   $\gamma=(\{A_g\}_{g\in G},\{\gamma_g\}_{g\in G})$  from \proref{prop:partial-const}.
\begin{proof} Let $a\in D_r^{\rm{o}}\cap A_{g^{-1}}$. As usual we may assume that $$a=\sum_{\substack{\alpha\in F}}\lambda_\alpha\1_{K(\alpha)}=\sum_{\substack{\alpha\in F}}\lambda_\alpha\1_{K(\tilde{\alpha}\alpha)},$$ where $F\subset\Hilm[W]$ is a finite collection of words and $\lambda_\alpha\in\CC$ for all~$\alpha\in F$. What is not immediately clear is that we may choose such a linear combination so that~$\dot{\alpha}=g$. To see that we may, suppose first~$\dot{\alpha}=e$ for $\alpha \in F$ and consider the decomposition of the identity associated to~$F$ as in Lemma~\ref{lem:orthogonaldecomp}. Thus $a$ is a finite linear combination $a=\sum_{\substack{\emptyset\neq A\subset F}}\lambda_AQ_A$, where the $Q_A$'s are mutually orthogonal projections. 

Since $D_r^{\rm{o}}\cap A_{g^{-1}}$ is an ideal of~$D_r$,
it follows that $$Q_A=\prod_{\alpha\in A}\1_{K(\alpha)} \prod_{\alpha\in F\setminus A} (\1-\1_{K(\alpha)}) 
= \tfrac{1}{\lambda_\alpha} Q_A a \in  D_r^{\rm{o}}\cap A_{g^{-1}}$$ whenever $\lambda_A\neq 0$.
 Suppose $\emptyset\neq A\subset F$ satisfies $\lambda_A\neq 0$ and let $0<\varepsilon<\frac{1}{2}$. Let $b=\sum_{\beta\in E} \lambda_\beta\1_{K(\beta)}\in A_{g^{-1}}$ be such that $\|Q_A-b\|<\varepsilon$, where $E\subset\Hilm[W]$ is a finite collection of words with $\dot{\beta}=g$ for all $\beta\in E$. Using  \proref{pro:propertiesKalpha}(5) write $\prod_{\alpha\in A}\1_{K(\alpha)} =\1_{K(\alpha_A)}$ with  $\alpha_A\coloneqq\prod_A\alpha$,
 (the order of this concatenation is irrelevant because each $\alpha$ is neutral). Since we have chosen $\varepsilon<\frac{1}{2}$, the support of $Q_A$ is contained in $\bigcup_{\beta\in E} K(\beta)$, and we see that 
\begin{equation*}\begin{aligned}K\big(\alpha_A\big)&=K\big(\alpha_A\big)\bigcap\bigg(\bigg(\bigcup_{\alpha\in F\setminus A} K(\alpha)\bigg)\bigcup\bigg(\bigcup_{\beta\in E}K(\beta)\bigg)\bigg)\\
&=\bigg(\bigcup_{\alpha\in F\setminus A} K\big(\alpha\alpha_A\big)\bigg)\bigcup\bigg(\bigcup_{\beta\in E}K\big(\beta\alpha_A\big)\bigg),\qquad \text{(by (5) of \proref{pro:propertiesKalpha}).}
\end{aligned}
\end{equation*} 
Then 
\begin{equation}\label{eqn:1KalphaAexpansion}
\1_{K(\alpha_A)} = \sum_{\emptyset \neq B \subset ((F\setminus A)\cup E)\alpha_A}(-1)^{|B|+1}  \prod_{\beta'\in B}\1_{K(\beta')},
\end{equation}
where $((F\setminus A)\cup E)\alpha_A$ stands for the collection of words $\{\beta\alpha_A\mid \beta\in (F\setminus A)\cup E\}$. 
If $B\cap (F\setminus A)\alpha_A\neq\emptyset$, then one of the factors in the product  
$\prod_{\beta'\in B}\1_{K(\beta')} $ equals $\1_{K(\alpha_A)} \1_{K(\alpha)}$ because $\alpha_A$ is neutral, hence
$$\prod_{\beta'\in B}\1_{K(\beta')} \prod_{\alpha\in F\setminus A} (\1-\1_{K(\alpha)})=0.$$  
Thus only the factors of \eqref{eqn:1KalphaAexpansion} with $B\subset E\alpha_A$  contribute to $Q_A$, so that
\begin{equation*}\begin{aligned}Q_A= \1_{K(\alpha_A)} \prod_{\alpha\in F\setminus A} (\1-\1_{K(\alpha)}) =\sum_{\substack{\emptyset\neq B \subset  E\alpha_A}}(-1)^{|B|+1}  \prod_{\beta'\in B}\1_{K(\beta')} \prod_{\alpha\in F\setminus A} (\1-\1_{K(\alpha)}).\end{aligned}\end{equation*}

Considering now each term in the sum on its own, suppose $\emptyset\neq B \subset E\alpha_A$ and take $\beta'_B\in B$. Let $\beta_B\in E$ be such that $\beta'_B=\beta_B\alpha_A$. Define the concatenation
 \[
 \sigma_B \coloneqq \beta'_B\prod_{\beta'\in B\setminus\{\beta'_B\}}\tilde{\beta}'\beta'.
 \] 
 Then $\dot\sigma_B= \dot\beta_B' = \dot \beta_B\dot \alpha_A = \dot \beta_B=  g$ and
  $\prod_{\beta'\in B}\1_{K(\beta')} = \1_{K(\sigma_B)} $,  by properties (3) and (5) of 
 \proref{pro:propertiesKalpha}. Hence 
 $$ \prod_{\beta'\in B}\1_{K(\beta')} \prod_{\alpha\in F\setminus A} (\1-\1_{K(\alpha)})=\1_{K(\sigma_B)}\prod_{\alpha\in F\setminus A} (\1-\1_{K(\alpha)}) = 
 \sum_{A' \subset  F \setminus A}(-1)^{|A'|}  \1_{K(\sigma_B)}\prod_{\alpha\in A'}\1_{K(\alpha)} $$ 
 is a finite linear combination of projections of the form $\1_{K(\sigma)}$, where $\sigma\in\Hilm[W]$ satisfies $\dot{\sigma}=g$. So the same will be true for~$Q_A$ and for $a=\sum_{\substack{\emptyset\neq A\subset F}}\lambda_AQ_A$.

For the remainder of the proof,  assume $$a=\sum_{\substack{\alpha\in F}}\lambda_\alpha\1_{K(\alpha)},$$ where $F\subset\Hilm[W]$ is a finite collection of words with $\dot{\alpha}=g$ and $\lambda_\alpha\in\CC$ for all~$\alpha\in F$. We can see in the proof of Corollary~\ref{cor:generating-projections} using the identification $D_u\cong D_r$ that each projection $Q_A$ appearing in the decomposition of~$a$ will have the form $\prod_{i=1}^{n_A}(\1_{K(\alpha_A)}-\1_{K(\beta_{i,A})})$, where $\dot{\alpha}_A=\dot{ \beta}_{i,A}=g$ and $K(\beta_{i,A})\subset K(\alpha_A)$ for all $i=1,\ldots,n_A$, by another application of  (3) and (5) of \proref{pro:propertiesKalpha}. Since $a$ also lies in $D_r^{\rm{o}}$,  Lemma~\ref{lem:charac-projections} implies that $\{K(\beta_{i,A})\mid i=1,\ldots,n\}$ has to be a foundation set for $K(\alpha_A)$ whenever $\lambda_A\neq 0$. 

Thus in order to prove that $\gamma_g(a)\in D_r^{\rm{o}}$, all we have to show is that if $\lambda_A\neq 0$, then $$\gamma_g(Q_A)=\prod_{i=1}^{n_A}(\1_{K(\tilde{\alpha'}_A)}-\1_{K(\tilde{\beta}_{i,A})})\in D_r^{\rm{o}}.$$ But this happens if and only if $\{K(\tilde{\beta}_{i,A})\mid i=1,\ldots,n\}$ is a foundation set for $K(\tilde{\alpha}_A)$. So let $p\in K(\tilde{\alpha}_A)\setminus\bigcup_{\substack{i=1}}^{n_A}K(\tilde{\beta}_{i,A})$. Notice that $g^{-1}p \in K(\alpha_A)$. Using that $\{K(\beta_{i,A})\mid i=1,\ldots,n\}$ is a foundation set for $K(\alpha_A)$, we can find $r\in g^{-1}pP\cap\left(\bigcup_{\substack{i=1}}^{n_A}K(\beta_{i,A})\right)$. It follows that $$gr\in pP\cap g\left(\bigcup_{\substack{i=1}}^{n_A}K(\beta_{i,A})\right)=pP\cap \left(\bigcup_{\substack{i=1}}^{n_A}K(\tilde{\beta}_{i,A})\right).$$ So $\gamma_g(a)\in D_r^{\rm{o}}$ as wished.
\end{proof}
\end{lem}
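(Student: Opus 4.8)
The goal is to show that $\gamma_g(D_r^{\rm{o}}\cap A_{g^{-1}})\subseteq D_r^{\rm{o}}$ for every $g\in G$, so that the ideal $D_r^{\rm{o}}$ is $\gamma$-invariant. The plan is to take a typical element $a\in D_r^{\rm{o}}\cap A_{g^{-1}}$ and show $\gamma_g(a)\in D_r^{\rm{o}}$ by reducing everything to the combinatorial characterization of $D_r^{\rm{o}}$-membership given in \lemref{lem:charac-projections}. The first and most delicate step is a \emph{normal form} step: although $a$ lies in $A_{g^{-1}}=\clsp\{\1_{K(\alpha)}\mid\dot\alpha=g\}$, when we approximate $a$ by finite linear combinations $\sum\lambda_\alpha\1_{K(\alpha)}$ there is no reason the words $\alpha$ occurring are already neutral (so that the orthogonal decomposition of \lemref{lem:orthogonaldecomp} applies cleanly) or, conversely, already of ``weight $g$''. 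So I would first argue that $a$ can be written, up to arbitrarily small error, as a finite linear combination $\sum_{\alpha\in F}\lambda_\alpha\1_{K(\alpha)}$ with $\dot\alpha=g$ for all $\alpha\in F$. To get there: start from an approximation with $\dot\alpha=e$; apply \lemref{lem:orthogonaldecomp} to write $a$ as a linear combination of mutually orthogonal basic projections $Q_A=\prod_{\alpha\in A}\1_{K(\alpha)}\prod_{\alpha\in F\setminus A}(\1-\1_{K(\alpha)})$; observe that each such $Q_A$ with nonzero coefficient lies again in the ideal $D_r^{\rm{o}}\cap A_{g^{-1}}$ (it is a scalar multiple of $Q_Aa$), so it is approximated by finite sums of $\1_{K(\beta)}$ with $\dot\beta=g$; then use an inclusion--exclusion expansion, as in \eqref{eqn:1KalphaAexpansion}, to absorb the new weight-$g$ pieces $\beta$ into the product defining $Q_A$. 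The key observation that makes this work is that since $Q_A$ has support inside $\bigcup_{\beta\in E}K(\beta)$ (once $\varepsilon<\tfrac12$), one can rewrite $K(\alpha_A)$ as a union of the sets $K(\alpha\alpha_A)$ and $K(\beta\alpha_A)$; terms hitting the ``complement'' part $F\setminus A$ vanish because $\alpha_A$ is neutral, so only weight-$g$ words survive, and properties (3) and (5) of \proref{pro:propertiesKalpha} collapse the surviving products of characteristic functions back to single $\1_{K(\sigma)}$ with $\dot\sigma=g$.

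Having achieved the normal form, the second step is to apply the orthogonal decomposition once more, now inside the module $A_{g^{-1}}$, to write $a=\sum_A\lambda_AQ_A$ where each $Q_A$ has the shape $\prod_{i=1}^{n_A}(\1_{K(\alpha_A)}-\1_{K(\beta_{i,A})})$ with $\dot\alpha_A=\dot\beta_{i,A}=g$ and $K(\beta_{i,A})\subseteq K(\alpha_A)$ — exactly the form handled in the proof of \corref{cor:generating-projections}, transported through the identification $D_u\cong D_r$. Since $a\in D_r^{\rm{o}}$, \lemref{lem:charac-projections} tells us that whenever $\lambda_A\neq0$ the collection $\{K(\beta_{i,A})\mid i\}$ is a $K(\alpha_A)$-foundation set, i.e.\ $pP\cap\bigcup_iK(\beta_{i,A})\neq\emptyset$ for all $p\in K(\alpha_A)$.

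The third and final step is the actual verification that $\gamma_g(a)\in D_r^{\rm{o}}$. Because $\gamma_g(\1_{K(\alpha)})=\1_{K(\tilde\alpha)}$ by \proref{prop:partial-const}, we have $\gamma_g(Q_A)=\prod_{i=1}^{n_A}(\1_{K(\tilde\alpha_A)}-\1_{K(\tilde\beta_{i,A})})$, and by \lemref{lem:charac-projections} again this lies in $D_r^{\rm{o}}$ iff $\{K(\tilde\beta_{i,A})\mid i\}$ is a foundation set for $K(\tilde\alpha_A)$. So I would take $p\in K(\tilde\alpha_A)$; since $K(\tilde\alpha_A)=gK(\alpha_A)$ by \proref{pro:propertiesKalpha}(1), we have $g^{-1}p\in K(\alpha_A)$; apply the foundation-set property in the original picture to get $r\in g^{-1}pP\cap\bigcup_iK(\beta_{i,A})$; then $gr\in pP\cap\bigcup_ig K(\beta_{i,A})=pP\cap\bigcup_iK(\tilde\beta_{i,A})$, which is exactly the foundation-set condition for the image. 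Summing over $A$ with $\lambda_A\neq0$ gives $\gamma_g(a)\in D_r^{\rm{o}}$, and passing to the closure finishes the proof.

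I expect the main obstacle to be the normal-form step — turning an arbitrary approximating sum with neutral words into one whose words all have weight exactly $g$. The bookkeeping with the inclusion--exclusion expansion \eqref{eqn:1KalphaAexpansion}, the cancellation of the terms meeting $F\setminus A$ (using neutrality of $\alpha_A$), and the repeated use of \proref{pro:propertiesKalpha}(3),(5) to keep the surviving products in the form $\1_{K(\sigma)}$ with $\dot\sigma=g$ is where all the care is needed; once that is in place, steps two and three are essentially direct translations of \lemref{lem:charac-projections} under the map $\1_{K(\alpha)}\mapsto\1_{K(\tilde\alpha)}$.
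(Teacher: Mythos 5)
Your plan reproduces the paper's argument step for step: the same normal-form reduction to words of weight $g$ (via the orthogonal decomposition of \lemref{lem:orthogonaldecomp}, the $\varepsilon<\tfrac12$ support argument, the inclusion--exclusion expansion with cancellation of terms meeting $F\setminus A$ by neutrality of $\alpha_A$, and collapsing products via \proref{pro:propertiesKalpha}(3),(5)), the same identification of each $Q_A$ as $\prod_i(\1_{K(\alpha_A)}-\1_{K(\beta_{i,A})})$ with $\{K(\beta_{i,A})\}$ a foundation set by \lemref{lem:charac-projections}, and the same translation of the foundation-set property under $\gamma_g$ using $K(\tilde\alpha_A)=gK(\alpha_A)$. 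This is essentially the paper's own proof, and it is correct.
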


Li introduced a notion of a boundary quotient $\partial\Toepr(P)$ of $\Toepr(P)$ \cite[Definition~5.7.9]{CELY}, cf. \cite[Definition~7.14]{Li:Semigroup_nuclearity}, based on a more general construction in the context of inverse semigroups due to Exel \cite{MR2419901, MR2534230}. Before we can relate the covariance algebra $\CC\times_{\CC^P}P$ to  $\partial\Toepr(P)$, we recall the description of the boundary $\partial\Omega_P$ of $\Omega_P$. 
 Let $\hat{\J}_{\max}$ be the subset of~$\hat{\J}$ formed by all characters $\chi\in \hat{\J}$ such that $\chi^{-1}(1)=\{S\in \J\mid \chi(S)=1\}$ is maximal among all characters. That is, if $\chi'\in\hat{\J}$, $\chi'\neq\chi$, then $\chi^{-1}(1)\not\subset\chi'^{-1}(1).$ By \cite[Lemma~5.7.7]{CELY}, the closure $\overline{\hat{\J}_{\max}}$ is contained in~$\Omega_P$, and it is invariant under the partial action $\hat{\gamma}=(\{U_g\}_{g\in G},\{\hat{\gamma}_g\}_{g\in G})$ of~$G$ on~$\Omega_P$ by \cite[Lemma~5.7.5]{CELY}. Set $\partial\Omega_P \coloneqq\overline{\hat{\J}_{\max}}$. Then $\mathrm{C}(\partial\Omega_P)$ is invariant under the partial action $\hat{\gamma}^*$ on~$\mathrm{C}(\Omega_P)$, see \eqref{eq:spec-formula}. By \cite[Corollary~5.7.6]{CELY}, the \emph{boundary quotient} $\partial\Toepr(P)$  of~$\Toepr(P)$ as defined in \cite[Definition~5.7.9]{CELY} is canonically isomorphic to the reduced partial crossed product $\mathrm{C}(\partial\Omega_P)\rtimes_r G $ of~$\mathrm{C}(\partial\Omega_P)$ by the partial action of~$G$ obtained by restricting $\hat{\gamma}^*$ to~$\mathrm{C}(\partial\Omega_P)$. We will now see that  under the canonical identification of $\Omega_P$ with the spectrum $\hat{D_r}$ of~$D_r$, we have $\Omega_P\setminus\partial\Omega_P\cong\hat{D_r^{\rm{o}}}$, and consequently $\partial\Toepr(P) \cong (D_r/D_r^{\rm{o}})\rtimes_{\dot{\gamma}, r} G$. Notice that  $\partial\Toepr(P)$ is indeed a quotient of $\Toepr(P)$ 
 by \cite[Proposition~21.3]{Exel:Partial_dynamical}.

\begin{lem}\label{lem:basis-boundary} Let $P$ be a submonoid of a group $G$.  Let $V(S;\mc)=\{\tau\in\Omega_P\mid \tau(S)=1;\,\tau(R)=0\text{ for }R\in\mc\}$ be a basic open set for the topology of~$\Omega_P$ as in \lemref{lem:basis-specP}. Then $V(S;\mc)\cap\partial\Omega_P\neq\emptyset$ if and only if $\mc$ is \emph{not} a relative $S$\nb-foundation set. In particular, the collection of subsets of~$\partial\Omega_P$ given by
\[
\{V(S;\mc)\cap\partial\Omega_P\mid \mc\text{ is not a relative $S$\nb-foundation set }\}
\]
is a basis of nonempty open sets for the topology of~$\partial\Omega_P$. Moreover,   $\partial\Omega_P=\{\tau\in\Omega_P\mid \tau\restriction_{D_r^{\rm{o}}}=0\}$ and $D_r/D_r^{\rm{o}}\cong \mathrm{C}(\partial \Omega_P)$.
\begin{proof}  In case $\emptyset\not\in \J$, that is, when $P$ is left reversible, then $\partial\Omega_P$ consists of a single point, given by the character $\tau\colon \J\to\{0,1\}$, $\tau(S)=1$ for all~$S\in\J$. So clearly $V(S;\mc)\cap \partial\Omega_P\neq\emptyset$ if and only if 
$\mc=\emptyset$. Since any nonempty collection $\mc $ of constructible ideals contained in $R\subset S$ produces a foundation set in this case, this proves the result for left reversible $P$.

Assume $\emptyset\in\J$. Let us argue by contradiction and suppose that $\mc$ is a relative foundation set for~$S$ and there is some $\tau\in \partial\Omega_P\cap V(S;\mc)$. We may assume $\tau\in\hat{\J}_{\max}$ because $V(S;\mc)$ is open. Let $R\in\mc$. Since $\tau(R)=0$, there is $\emptyset\neq S_R\in\J$ with $\tau(S_R)=1$ and $S_R\cap R=\emptyset$ by \cite[Lemma~5.7.4]{CELY}. Put $R'\coloneqq\bigcap_{R\in\mc}S_R\in\J$. Notice that $R'\cap \Big(\bigcup_{R\in \mc}R\Big)=\emptyset$ and $S\cap R'\neq\emptyset$ because $\tau(S\cap R')=\tau(S)\tau(R')=1$. Take $p\in S\cap R'$. Using that $\{S\cap R\mid R\in\mc\}$ is an $S$\nb-foundation set, we can find $r\in pP\cap \Big(\bigcup_{R\in\mc}R\Big)$. But $r\in R'$ because $R'$ is a right ideal. This gives a contradiction. Hence $V(S;\mc)\cap \partial\Omega_P\neq \emptyset$ only if $\mc$ is not a relative foundation set for~$S$.

To see that $V(S;\mc)\cap\partial\Omega_P\neq\emptyset$ if $\mc$ is not a relative foundation set for~$S$, notice that the image of the projection $$\1_{S}\prod_{R\in\mc}(\1-\1_{R})=\prod_{R\in\mc}(\1_S-\1_{S\cap R})$$ in~$\mathrm{C}(\partial\Omega_P)$ has to be nonzero by \proref{prop:non-foundation-sets}. This is so because $\partial\Toepr(P)$ is nonzero. Hence any character $\tau\in\partial\Omega_P$ with $$\tau\bigg(\prod_{R\in\mc}(\1_S-\1_{S\cap R})\bigg)\neq 0$$ will belong to $V(S;\mc)$ and so $V(S;\mc)\cap\partial\Omega_P$ is nonempty.

It remains to establish the identification $\partial\Omega_P=\{\tau\in\Omega_P\mid \tau\restriction_{D_r^{\rm{o}}}=0\}.$ We identify the spectrum~$\hat{D_r^{\rm{o}}}$ of~$D_r^{\rm{o}}$ with the open subspace~$U^{\rm{o}}$ of~$\Omega_P$ consisting of the characters $\tau\in\Omega_P$ satisfying the following property: there exist a nonempty constructible ideal $S$, and a finite collection of nonempty constructible ideals $\mc\subset\J$ such that $\mc$ is a proper foundation set for~$S$, with $\tau(S)=1$ and $\tau(R)=0$ for $R\in\mc$. From this identification and from the above we immediately see the inclusion $\partial\Omega_P\subset\{\tau\in\Omega_P\mid \tau\restriction_{D_r^{\rm{o}}}=0\}$. To prove the reverse inclusion, take $\tau \in \Omega_P$ with $\tau\equiv0$ on~$D_r^{\rm{o}}$. Let $V(S;\mc)\subset \Omega_P$ be a basic open set containing $\tau$. Then $\mc$ is not a relative foundation set for $S$ because $\tau$ vanishes on~$D_r^{\rm{o}}$. By the first part of the proof, we have $V(S;\mc)\cap\partial\Omega_P\neq\emptyset$. So $\tau\in\partial\Omega_P$ since $\partial\Omega_P$ is closed and $V(S;\mc)$ is an arbitrary basic open set around~$\tau$. This implies the identification $\Omega_P\setminus\partial\Omega_P\cong\hat{D_r^{\rm{o}}}$ and the isomorphism $D_r/D_r^{\rm{o}}\cong \mathrm{C}(\partial \Omega_P)$, and completes the proof of the lemma.
 \end{proof}
\end{lem}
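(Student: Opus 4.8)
The plan is to first dispose of the left reversible case $\emptyset\notin\J$: there $\hat{\J}_{\max}$, and hence $\partial\Omega_P$, is the single point $\tau$ with $\tau(S)=1$ for every $S\in\J$, so $V(S;\mc)$ meets $\partial\Omega_P$ exactly when $\mc=\emptyset$, while every nonempty finite collection of constructible ideals contained in some $R\subset S$ is automatically an $S$\nb-foundation set in this situation; hence the equivalence and the basis claim hold trivially. For the rest I assume $\emptyset\in\J$ and use two inputs: the density of $\hat{\J}_{\max}$ in $\partial\Omega_P$ together with the maximality property in the form of \cite[Lemma~5.7.4]{CELY} (if $\tau\in\hat{\J}_{\max}$ and $\tau(R)=0$, there is $S_R\in\J$ with $\tau(S_R)=1$ and $S_R\cap R=\emptyset$); and, in the opposite direction, \proref{prop:non-foundation-sets}.

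For the ``only if'' half I would argue by contraposition: assuming $\mc$ is a relative $S$\nb-foundation set, suppose toward a contradiction that $\tau\in V(S;\mc)\cap\partial\Omega_P$. Since $V(S;\mc)$ is open I may take $\tau\in\hat{\J}_{\max}$. For each $R\in\mc$ pick $S_R$ with $\tau(S_R)=1$, $S_R\cap R=\emptyset$, and set $R'\coloneqq\bigcap_{R\in\mc}S_R\in\J$, so that $\tau(R')=1$, $R'\cap\bigcup_{R\in\mc}R=\emptyset$, and $R'$ is a right ideal. Then $\tau(S\cap R')=1$, so $S\cap R'\neq\emptyset$; choosing $p\in S\cap R'$ and applying that $\{S\cap R\mid R\in\mc\}$ is an $S$\nb-foundation set produces $r\in pP\cap\big(\bigcup_{R\in\mc}(S\cap R)\big)$. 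But $r\in pP\subset R'$, contradicting $R'\cap\bigcup_{R\in\mc}R=\emptyset$.

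For the ``if'' half, suppose $\mc$ is \emph{not} a relative $S$\nb-foundation set, i.e.\ $\{S\cap R\mid R\in\mc\}$ is not an $S$\nb-foundation set. By \lemref{lem:charac-projections} the projection $\1_S\prod_{R\in\mc}(\1-\1_R)=\prod_{R\in\mc}(\1_S-\1_{S\cap R})$ does not lie in $D_r^{\rm{o}}$, so by \proref{prop:non-foundation-sets} (applied to the nontrivial strongly covariant representation $q_u$, and using that $\partial\Toepr(P)$ is a nonzero quotient of $\CC\times_{\CC^P}P$) its image in $\mathrm{C}(\partial\Omega_P)$ is nonzero; hence some $\tau\in\partial\Omega_P$ has $\tau\big(\prod_{R\in\mc}(\1_S-\1_{S\cap R})\big)\neq 0$, which forces $\tau(S)=1$ and $\tau(S\cap R)=0$, hence $\tau(R)=0$, for all $R\in\mc$, i.e.\ $\tau\in V(S;\mc)$. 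The basis assertion is then immediate: intersecting the basis $\{V(S;\mc)\}$ of $\Omega_P$ from \lemref{lem:basis-specP} with $\partial\Omega_P$ and discarding the empty intersections leaves precisely the sets with $\mc$ not a relative foundation set.

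Finally, to identify $\partial\Omega_P$, I would identify the spectrum $\hat{D_r^{\rm{o}}}$ with the open set $U^{\rm{o}}\subset\Omega_P$ of characters $\tau$ for which there exist a nonempty $S\in\J$ and a proper $S$\nb-foundation set $\mc$ with $\tau(S)=1$ and $\tau(R)=0$ for all $R\in\mc$; this uses the description of the generators of $D_u^{\rm{o}}$ from \corref{cor:generating-projections} under the isomorphism $D_u\cong D_r$, so that $\{\tau\in\Omega_P\mid\tau\restriction_{D_r^{\rm{o}}}=0\}=\Omega_P\setminus U^{\rm{o}}$. The inclusion $\partial\Omega_P\subset\Omega_P\setminus U^{\rm{o}}$ follows from the equivalence already proved, since a point of $U^{\rm{o}}$ lies in some $V(S;\mc)$ with $\mc$ a proper (hence relative) foundation set, which misses $\partial\Omega_P$. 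For the reverse inclusion, if $\tau$ vanishes on $D_r^{\rm{o}}$ then every basic $V(S;\mc)$ containing $\tau$ has $\mc$ not a relative $S$\nb-foundation set (otherwise either $\prod_{R\in\mc}(\1_S-\1_{S\cap R})\in D_r^{\rm{o}}$ and $\tau$ of it equals $1$, or $S=\bigcup_{R\in\mc}(S\cap R)$ and $\tau(S)=1$ forces some $\tau(S\cap R)=1$, contradicting $\tau(R)=0$), hence $V(S;\mc)\cap\partial\Omega_P\neq\emptyset$ by what was shown, and closedness of $\partial\Omega_P$ gives $\tau\in\partial\Omega_P$; Gelfand duality then yields $D_r/D_r^{\rm{o}}\cong\mathrm{C}(\partial\Omega_P)$. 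I expect the main obstacle to be the ``if'' direction together with this last bookkeeping: one must carefully separate the case $\bigcup_{R\in\mc}(S\cap R)=S$ (where the relevant projection is zero, not merely in $D_r^{\rm{o}}$) from the genuinely proper case, and pin down exactly which projections generate $D_r^{\rm{o}}$; the nonvanishing of the image in $\mathrm{C}(\partial\Omega_P)$, which is where the real content lies, rests on \proref{prop:non-foundation-sets} and the nontriviality of $\partial\Toepr(P)$.
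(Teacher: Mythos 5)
Your proposal is correct and follows essentially the same route as the paper: the same treatment of the left reversible case, the same maximal-character argument with $S_R$, $R'=\bigcap_{R\in\mc}S_R$ and the right-ideal contradiction for the ``only if'' direction, the same appeal to \proref{prop:non-foundation-sets} and the nontriviality of $\partial\Toepr(P)$ for the ``if'' direction, and the same identification of $\hat{D_r^{\rm{o}}}$ with $U^{\rm{o}}$ followed by the closedness argument for the final assertion. Your explicit splitting of the last step into the proper and non-proper foundation-set cases just spells out what the paper states more tersely.
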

 
\begin{thm}\label{thm: several-characteriz} Let $P$ be a submonoid of a group $G$.  The following $\Cst$\nb-algebras are isomorphic, via isomorphisms that identify the canonical generating elements:

\begin{enumerate}
\item[\rm{(1)}]  the covariance algebra $\CC\times_{\CC^P} P$ of the canonical product system $\CC^P$ over~$P$;

 \item[\rm{(2)}] the universal $\Cst$\nb-algebra with generating set $\{v_p\mid p\in P\}$ subject to the relations \textup{(T1)--(T3)} of \defref{def:toeplitz-semigroup} together with the  relations
\begin{equation}\label{eq:boundary-relations}
  \prod_{\substack{\beta\in A}}(\dot{v}_{\alpha}-\dot{v}_{\beta})=0
\end{equation}  for every neutral word $\alpha$ in $\W$ and  foundation set $\{K(\beta)\mid \beta\in A\}$ for~$K(\alpha)$, where $A$ is a finite collection of neutral words in $\W$;

\item[\rm{(3)}]  the universal $\Cst$\nb-algebra with generating set $\{v_p\mid p\in P\}$ subject to the relations   \textup{(T1)--(T4)} of \defref{def:toeplitz-semigroup} together with the boundary 
relations from \corref{cor:generating-projections};

\item[\rm{(4)}] the full partial crossed product $(D_r/D_r^{\rm{o}})\rtimes_{\dot{\gamma}} G$;

\item[\rm{(5)}] the full partial crossed product $\mathrm{C}(\partial \Omega_P)\rtimes G$ of~$\mathrm{C}(\partial \Omega_P)$ by $\hat{\gamma}^*\restriction_{\mathrm{C}(\partial \Omega_P)}$.
\end{enumerate}

\begin{proof} The  relation \eqref{eq:boundary-relations} corresponds to relation (T4) of Definition~\ref{def:toeplitz-semigroup} when $K(\alpha)=\bigcup_{\beta\in A}K(\beta)$. Hence the universal $\Cst$\nb-algebra from item (2) 
is canonically isomorphic to the one with presentation (T1)--(T4) of Definition~\ref{def:toeplitz-semigroup} and the  boundary relations. These in turn are isomorphic to $\CC\times_{\CC^P}P$ by Corollary~\ref{cor:generating-projections}.

In order to establish the isomorphism $\CC\times_{\CC^P}P\cong(D_r/D_r^{\rm{o}})\rtimes_{\dot{\gamma}} G$, observe that $\CC\times_{\CC^P}P$ is the quotient of $\Toep_u(P)$ by the ideal generated by~$D_u^{\rm{o}}\cong D_r^{\rm{o}}$. Such an ideal is simply $\overline{\bigoplus}_{\substack{g\in G}} (D_u^{\rm{o}}\cap A_g)\delta_g$ by $\gamma$\nb-invariance of~$D_r^{\rm{o}}$ obtained in Lemma~\ref{lem:invariance-Du}. Hence it is canonically isomorphic to $D_r^{\rm{o}}\rtimes_{\gamma\restriction_{D_r^{\rm{o}}}}G$ by the observation preceding the statement of Lemma~\ref{lem:invariance-Du} and so $$\CC\times_{\CC^P}P\cong \Toep_u(P)/\langle D_u^{\rm{o}}\rangle\cong (D_r\rtimes_\gamma G)/(D_r^{\rm{o}}\rtimes_{\gamma\restriction_{D_r^{\rm{o}}}}G)\cong (D_r/D_r^{\rm{o}})\rtimes_{\dot{\gamma}}G$$ as asserted.

Finally, to see that $\CC\times_{\CC^P}P$ is also canonically isomorphic to~$\mathrm{C}(\partial \Omega_P)\rtimes G $, we apply \lemref{lem:basis-boundary}.  The isomorphism $\Toep_u(P)\cong D_r\rtimes_{\gamma}G\cong \mathrm{C}(\Omega_P)\rtimes G$ identifies $D_r^{\rm{o}}\rtimes_{\gamma\restriction_{D_r^{\rm{o}}}}G$ with $\mathrm{C}(\Omega_P\setminus \partial\Omega_P)\rtimes G$ since 
$\Omega_P\setminus \partial\Omega_P\cong \hat{D_r^{\rm{o}}}$. Hence $$\CC\times_{\CC^P}P\cong (D_r/D_r^{\rm{o}})\rtimes_{\dot{\gamma}} G\cong \mathrm{C}(\partial \Omega_P)\rtimes G$$ canonically.
\end{proof}
\end{thm}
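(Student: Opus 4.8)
The plan is to exploit the equivalences already assembled in the preceding lemmas and to reduce everything to two core facts: (i) $\CC\times_{\CC^P}P$ is the quotient of $\Toepu(P)$ by the ideal generated by $D_u^{\rm{o}}$, and (ii) that ideal is, via the partial crossed product picture of \thmref{thm:partial-picture}, exactly $D_r^{\rm{o}}\rtimes_{\gamma\restriction_{D_r^{\rm{o}}}}G$ because $D_r^{\rm{o}}$ is $\gamma$-invariant by \lemref{lem:invariance-Du}. First I would handle the equivalence of (1), (2), and (3). The relation \eqref{eq:boundary-relations} indexed by a foundation set $\{K(\beta)\mid\beta\in A\}$ for $K(\alpha)$ with $K(\alpha)=\bigcup_{\beta\in A}K(\beta)$ is literally an instance of (T4); when the union is a \emph{proper} subset of $K(\alpha)$ we get the boundary relations of \corref{cor:generating-projections}. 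So the presentation in (2)---relations (T1)--(T3) plus \eqref{eq:boundary-relations} for all foundation sets---is the same as the presentation (T1)--(T4) plus the proper-foundation-set boundary relations, which is the presentation in (3). That the $\Cst$-algebra with presentation (3) is $\CC\times_{\CC^P}P$ is precisely the content of the second assertion of \corref{cor:generating-projections}, since $\CC\times_{\CC^P}P$ is by construction the universal object for maps $w\colon P\to B$ satisfying exactly those relations. This takes care of $(1)\cong(2)\cong(3)$ with generators matched.

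Next I would establish $(1)\cong(4)$. By \lemref{lem: lc-strg}, $\CC\times_{\CC^P}P$ is the quotient of $\Toepu(P)$ by the (two-sided, closed) ideal generated by $D_u^{\rm{o}}$; under the isomorphism $\Toepu(P)\cong D_r\rtimes_\gamma G$ of \thmref{thm:partial-picture}, $D_u^{\rm{o}}$ is carried onto $D_r^{\rm{o}}$ inside the unit fibre. By \lemref{lem:invariance-Du}, $D_r^{\rm{o}}$ is invariant under the partial action $\gamma$, so the ideal it generates in the partial crossed product is $\overline{\bigoplus}_{g\in G}(D_r^{\rm{o}}\cap A_g)\delta_g$, which by the Exel--Laca--Quigg short exact sequence \cite[Proposition~3.1]{ELQ} recalled just before \lemref{lem:invariance-Du} is canonically $D_r^{\rm{o}}\rtimes_{\gamma\restriction_{D_r^{\rm{o}}}}G$, with quotient $(D_r/D_r^{\rm{o}})\rtimes_{\dot\gamma}G$. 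Hence
\[
\CC\times_{\CC^P}P\cong \Toepu(P)/\langle D_u^{\rm{o}}\rangle\cong (D_r\rtimes_\gamma G)/(D_r^{\rm{o}}\rtimes_{\gamma\restriction}G)\cong (D_r/D_r^{\rm{o}})\rtimes_{\dot\gamma}G,
\]
and one checks that the generator $\jj_p$ of $\CC\times_{\CC^P}P$ corresponds to the class of $\1_{pP}\delta_p$, matching the canonical generators. Finally, for $(4)\cong(5)$, I invoke \lemref{lem:basis-boundary}, which gives the identification $\Omega_P\setminus\partial\Omega_P\cong\widehat{D_r^{\rm{o}}}$, equivalently $D_r/D_r^{\rm{o}}\cong \mathrm{C}(\partial\Omega_P)$ as $G$-$\Cst$-algebras (the transpose partial action $\hat\gamma^*$ restricts to $\mathrm{C}(\partial\Omega_P)$ and corresponds to $\dot\gamma$ under Gelfand duality). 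Passing to full partial crossed products yields $(D_r/D_r^{\rm{o}})\rtimes_{\dot\gamma}G\cong \mathrm{C}(\partial\Omega_P)\rtimes G$ canonically.

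The main obstacle is the bookkeeping in the $(1)\cong(4)$ step: one must be careful that the ideal of $\Toepu(P)$ generated by $D_u^{\rm{o}}$ really is the \emph{graded} ideal $\overline{\bigoplus}_g(D_r^{\rm{o}}\cap A_g)\delta_g$ and not something larger. This is where $\gamma$-invariance of $D_r^{\rm{o}}$ is essential: without it the ideal generated by a subalgebra of the unit fibre need not be graded. Invariance is supplied by \lemref{lem:invariance-Du}, whose proof is the genuinely substantial piece of work; granting it (as we may, since it precedes the theorem), the remaining identifications are formal applications of the exact sequence for partial crossed products and of Gelfand duality, and the verification that canonical generators are matched is routine in each case.
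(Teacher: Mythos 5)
Your proposal is correct and follows essentially the same route as the paper: identify (2) and (3) by splitting foundation sets into the non-proper case (an instance of (T4)) and the proper case (the boundary relations), invoke \corref{cor:generating-projections} for (1), use \lemref{lem: lc-strg}, the $\gamma$-invariance of $D_r^{\rm{o}}$ from \lemref{lem:invariance-Du} and the exact sequence for partial crossed products to get (4), and \lemref{lem:basis-boundary} with Gelfand duality for (5). Your explicit remark that invariance is what guarantees the ideal generated by $D_u^{\rm{o}}$ is graded is exactly the point the paper's proof relies on as well.
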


Let us recount the type of relations we have encountered thus far.  First we had the original relations, corresponding to (T3), saying that
 for neutral $\alpha$ the expression $\dot v_\alpha$ depends only on the  ideal $K(\alpha)$; these  give rise to 
 Li's $\Cst_s(P)$. We then introduced the additional relations (T4) saying that a product of defect projections must vanish if it does in the reduced diagonal; these give rise to our $\Toepu(P)$. When we add the  boundary relations, saying that products corresponding to proper  $S$-foundation sets must vanish, \thmref{thm: several-characteriz}(3), we obtain $\CC\times_{\CC^P}P$.  No more relations of this type can be added without causing a total collapse, by \proref{prop:non-foundation-sets}.
In view of \thmref{thm: several-characteriz}(5), we will say that $\CC\times_{\CC^P}P$  is the \emph{full boundary quotient} of $\Toepu(P)$.

\begin{note} 
Let  $\Lambda\colon \mathrm{C}(\partial \Omega_P)\rtimes G\to \mathrm{C}(\partial \Omega_P)\rtimes_r G$
be the canonical \Star homomorphism of the full crossed product of the partial action on the boundary onto the reduced one.
We will denote by $$\Lambda_\partial: \CC\times_{\CC^P}P\to\partial\Toepr(P)$$  the canonical \Star homomorphism obtained by combining the isomorphism $\CC\times_{\CC^P}P\cong \mathrm{C}(\partial \Omega_P)\rtimes G$ from \thmref{thm: several-characteriz} with $\Lambda$ and then with the isomorphism $\mathrm{C}(\partial \Omega_P)\rtimes_r G \cong \partial\Toepr(P)$ from \cite[Corollary~5.7.6]{CELY}.
\end{note}

\begin{cor}\label{cor:reduced-boundary} Let $P$ be a submonoid of a group $G$.  Suppose that~$G$ is exact. Then a \Star homomorphism $\rho\colon\Toepr(P)\to B$ factors through the boundary quotient $\partial\Toepr(P)$ if and only if $$\rho\Big(\prod_{R\in\mc}(\1_{S}-\1_{R})\Big)=0$$ whenever $\mc$ is a foundation set for~$S\in\J$.  
\begin{proof} Recall that $(D_r/D_r^{\rm{o}})\rtimes_{\dot{\gamma}, r} G $ is canonically isomorphic to  $\partial\Toepr(P) $, see \lemref{lem:basis-boundary} and the preceding comment.
View $\Toepr(P)$ as the reduced partial crossed product $D_r\rtimes_{\gamma,r}G$ via the isomorphism given in Theorem~\ref{thm:partial-picture}. Since $\Toepr(P)$ carries a faithful conditional expectation onto~$D_r$, the inclusion $D_r^{\rm{o}}\hookrightarrow D_r$ induces an embedding $D_r^{\rm{o}}\rtimes_{\gamma\restriction_{D_r^{\rm{o}}}, r}G\hookrightarrow\Toepr(P)$ by \cite[Proposition~21.3]{Exel:Partial_dynamical}. Now suppose that~$G$ is exact. By \cite[Theorem 21.18]{Exel:Partial_dynamical}, there is a short exact sequence $$0\longrightarrow D_r^{\rm{o}}\rtimes_{\gamma\restriction_{D_r^{\rm{o}}}, r}G\longrightarrow \Toepr(P)\longrightarrow (D_r/D_r^{\rm{o}})\rtimes_{\dot{\gamma}, r} G\longrightarrow 0.$$ Since $(D_r/D_r^{\rm{o}})\rtimes_{\dot{\gamma}, r} G\cong\mathrm{C}(\partial \Omega_P)\rtimes_rG\cong \partial\Toepr(P) $, the result follows from an application of \corref{cor:generating-projections} because $D_r^{\rm{o}}\rtimes_{\gamma\restriction_{D_r^{\rm{o}}}, r}G$ is precisely the ideal of~$\Toepr(P)\cong D_r\rtimes_{\gamma,r}G$ generated by $D_r^{\rm{o}}$.
\end{proof}
\end{cor}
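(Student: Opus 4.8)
The plan is to realize the boundary quotient and the kernel of the boundary quotient map as reduced partial crossed products, and then to read off the factorization criterion from the explicit set of generators for $D_r^{\rm{o}}$ obtained in \corref{cor:generating-projections}.

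First I would set up the crossed-product picture. By \thmref{thm:partial-picture} we may identify $\Toepr(P)$ with $D_r\rtimes_{\gamma,r}G$, and by \lemref{lem:invariance-Du} the ideal $D_r^{\rm{o}}\idealin D_r$ is $\gamma$\nb-invariant, so $\gamma$ restricts to a partial action on $D_r^{\rm{o}}$ and descends to a partial action $\dot{\gamma}$ on $D_r/D_r^{\rm{o}}$, which by \lemref{lem:basis-boundary} is isomorphic to $\mathrm{C}(\partial\Omega_P)$. The quotient map $D_r\to D_r/D_r^{\rm{o}}$ induces a surjection $D_r\rtimes_{\gamma,r}G\to(D_r/D_r^{\rm{o}})\rtimes_{\dot{\gamma},r}G$ by \cite[Proposition~21.3]{Exel:Partial_dynamical}, and combining the identifications of \lemref{lem:basis-boundary} with $\partial\Toepr(P)\cong\mathrm{C}(\partial\Omega_P)\rtimes_rG$ from \cite[Corollary~5.7.6]{CELY}, this surjection is precisely the canonical boundary quotient map $\Toepr(P)\to\partial\Toepr(P)$. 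Hence a $*$-homomorphism $\rho\colon\Toepr(P)\to B$ factors through $\partial\Toepr(P)$ if and only if it annihilates the kernel $\mathcal{J}$ of this surjection.

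Next I would identify $\mathcal{J}$ with $D_r^{\rm{o}}\rtimes_{\gamma\restriction_{D_r^{\rm{o}}},r}G$, equivalently, with the ideal of $\Toepr(P)$ generated by $D_r^{\rm{o}}$. One inclusion is soft: since $\Toepr(P)\cong D_r\rtimes_{\gamma,r}G$ carries the faithful diagonal conditional expectation onto $D_r$, the inclusion $D_r^{\rm{o}}\hookrightarrow D_r$ induces a faithful embedding $D_r^{\rm{o}}\rtimes_{\gamma\restriction_{D_r^{\rm{o}}},r}G\hookrightarrow\Toepr(P)$, again by \cite[Proposition~21.3]{Exel:Partial_dynamical}, whose image lies in $\mathcal{J}$. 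The reverse inclusion is the crux, and it is the only point where the exactness hypothesis is used: by \cite[Theorem~21.18]{Exel:Partial_dynamical}, exactness of $G$ makes the sequence $0\to D_r^{\rm{o}}\rtimes_{\gamma\restriction_{D_r^{\rm{o}}},r}G\to\Toepr(P)\to(D_r/D_r^{\rm{o}})\rtimes_{\dot{\gamma},r}G\to0$ exact, which forces $\mathcal{J}=D_r^{\rm{o}}\rtimes_{\gamma\restriction_{D_r^{\rm{o}}},r}G$. I expect this identification of $\mathcal{J}$ to be the main obstacle.

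Finally I would read off the criterion. Since $\mathcal{J}$ is the ideal of $\Toepr(P)$ generated by $D_r^{\rm{o}}$, a $*$-homomorphism $\rho$ annihilates $\mathcal{J}$ if and only if $\rho\restriction_{D_r^{\rm{o}}}=0$. By \corref{cor:generating-projections}, transported from $D_u^{\rm{o}}$ to $D_r^{\rm{o}}$ along the left regular representation (which sends $\dot{t}_\alpha$ to $\1_{K(\alpha)}$), the ideal $D_r^{\rm{o}}$ is the closed linear span of the projections $\prod_{R\in\mc}(\1_S-\1_R)$ where $\mc$ runs over the proper foundation sets for the constructible ideals $S\in\J$; so by continuity and linearity $\rho\restriction_{D_r^{\rm{o}}}=0$ if and only if $\rho$ kills every such projection. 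It remains only to note that a foundation set $\mc$ for $S$ that is not proper satisfies $S=\bigcup_{R\in\mc}R$, whence $\prod_{R\in\mc}(\1_S-\1_R)=0$ already in $D_r$ and imposes no condition; thus the displayed requirement, stated for all $S$\nb-foundation sets, is equivalent to the one for proper foundation sets. This gives exactly the asserted equivalence.
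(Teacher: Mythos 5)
Your proposal is correct and follows essentially the same route as the paper: the identification $\Toepr(P)\cong D_r\rtimes_{\gamma,r}G$, the embedding of $D_r^{\rm{o}}\rtimes_{\gamma\restriction_{D_r^{\rm{o}}},r}G$ via the faithful conditional expectation and \cite[Proposition~21.3]{Exel:Partial_dynamical}, the short exact sequence from \cite[Theorem~21.18]{Exel:Partial_dynamical} using exactness of $G$, and then reading off the criterion from the generators of $D_r^{\rm{o}}$ given in \corref{cor:generating-projections}. Your extra remark that non-proper foundation sets impose no condition (since (T4) already kills those products in $D_r$) is a harmless clarification that the paper leaves implicit.
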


As an application of \thmref{thm: several-characteriz}, we can characterize the situation in which the universal Toeplitz algebra coincides with the full boundary quotient.

\begin{cor}\label{cor:qu-isomor} Let $P$ be a submonoid of a group $G$.  The following are equivalent:
\begin{enumerate}
\item the quotient map $q_u\colon \Toepu(P)\to\CC\times_{\CC^P}P$ is an isomorphism;
\item no finite collection of proper constructible right ideals is a foundation set for~$P$;
\item for every nonempty constructible right ideal $S$ in $P$, no finite collection of constructible right ideals is a proper foundation set for~$S$.
\end{enumerate}

\begin{proof} That (1) is equivalent to (3) is a consequence of the presentation given in \thmref{thm: several-characteriz}(3) for $\CC\times_{\CC^P}P$, and that (3) implies (2) is obvious.

Next we prove that (2) implies (3) by contrapositive. Suppose there is a nonempty constructible ideal $S\in \J$ and a finite collection of ideals $\mc\subset\J$ such that $\mc$ is a proper foundation set for~$S$. Take 
$$p\in S\setminus\Big(\bigcup_{\substack{R\in\mc}}R\Big).$$ 
We claim that $\{p^{-1}R\cap P\mid R\in\mc\}$ is a proper foundation set for~$P$. Indeed, let $q\in P$. Then $pq\in S$ and so $pqP\cap \big(\bigcup_{\substack{R\in\mc}}R\big)\neq\emptyset$ since $\mc$ is a foundation set for $S$. 
Thus $$qP\cap \big(\bigcup_{\substack{R\in\mc}}p^{-1}R\cap P\big)\neq\emptyset,$$ and hence $\{p^{-1}R\cap P\mid R\in\mc\}$ is a proper foundation set for~$P$ because $e\in P\setminus \big(\bigcup_{\substack{R\in\mc}}p^{-1}R\cap P \big)$.
\end{proof}
\end{cor}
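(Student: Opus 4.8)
The plan is to prove the three-way equivalence by establishing $(1)\iff(3)$ and $(3)\implies(2)$ as essentially bookkeeping consequences of the presentation in \thmref{thm: several-characteriz}(3), and then to do the real work in $(2)\implies(3)$. For $(1)\iff(3)$: by \thmref{thm: several-characteriz}(3), $\CC\times_{\CC^P}P$ is $\Toepu(P)$ with the additional boundary relations $\prod_{\beta\in A}(\dot v_\alpha-\dot v_\beta)=0$ imposed for every proper foundation set $\{K(\beta)\mid\beta\in A\}$ of $K(\alpha)$. These relations are precisely the nonzero generators of the kernel $D_u^{\mathrm o}$ by \corref{cor:generating-projections}; so $q_u$ is an isomorphism exactly when $D_u^{\mathrm o}=0$, i.e. when there are no proper foundation sets for any nonempty constructible right ideal $S$ — which is exactly condition (3). (One should note that $K(\alpha)$ nonempty may be assumed since for $K(\alpha)=\emptyset$ the relation is already forced by (T2).) The implication $(3)\implies(2)$ is immediate: a proper foundation set for $P$ would be a proper foundation set for the constructible ideal $S=P$.

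The substance is $(2)\implies(3)$, which I would prove by contrapositive exactly as drafted, by a "pull back to the vertex $P$" argument. Assume (3) fails: there is a nonempty $S\in\J$ and a finite $\mc\subset\J$ which is a proper $S$-foundation set, so $R\subset S$ for all $R\in\mc$, for every $p\in S$ we have $pP\cap\big(\bigcup_{R\in\mc}R\big)\neq\emptyset$, and there exists $p\in S\setminus\big(\bigcup_{R\in\mc}R\big)$. Fix such a $p$ and form the collection $\mc'\coloneqq\{p^{-1}R\cap P\mid R\in\mc\}$; by \proref{pro:characterizationconstructibleideals}(6) each $p^{-1}R\cap P$ is again a constructible right ideal, so $\mc'\subset\J$. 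I claim $\mc'$ is a proper foundation set for $P$. Properness: $e\notin\bigcup_{R\in\mc}(p^{-1}R\cap P)$, since $e\in p^{-1}R$ would give $p\in R$, contradicting the choice of $p$. Foundation property: given any $q\in P$, since $pq\in pP\subseteq S$ (using that $S$ is a right ideal, \proref{pro:characterizationconstructibleideals}(4)) and $\mc$ is an $S$-foundation set, there is $r\in pqP\cap\big(\bigcup_{R\in\mc}R\big)$; write $r=pqs$ with $s\in P$ and $r\in R_0$ for some $R_0\in\mc$, whence $qs\in p^{-1}R_0\cap P$ and thus $qs\in qP\cap\big(\bigcup_{R\in\mc}(p^{-1}R\cap P)\big)$, which is therefore nonempty. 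Hence $\mc'$ witnesses the failure of (2), completing the contrapositive.

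I expect the main obstacle to be purely organizational rather than deep: one must be careful that the "foundation set" condition in \defref{def:foundation-set} includes the containment requirement $R\subseteq S$ (here $R\subseteq P$ is automatic for elements of $\J$) and that one invokes \proref{pro:characterizationconstructibleideals}(4) and (6) at the right moments so that $\mc'$ genuinely consists of constructible right ideals and the foundation inequalities are verified with the correct ideals. There is a minor subtlety in the $(1)\iff(3)$ step: strictly one should phrase it as "$q_u$ is injective $\iff$ $D_u^{\mathrm o}=0$", which follows because $\ker q_u$ is the ideal generated by $D_u^{\mathrm o}$ (from \lemref{lem: lc-strg} / \defref{def:covariance-alg}) and, by \proref{prop:non-foundation-sets}, $q_u$ is faithful on $D_u$ precisely when $D_u^{\mathrm o}=0$; combined with the fact that a representation of $\Toepu(P)$ is injective iff it is injective on $D_u$ in the sense used via the conditional expectation $E_u$ (\corref{cor:reg-kernel}) — but in fact the cleanest route is simply: $q_u$ is an isomorphism iff $D_u^{\mathrm o}=0$, and $D_u^{\mathrm o}=0$ iff there are no proper foundation sets, which is \corref{cor:generating-projections} together with \defref{def:foundation-set}. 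No new ideas are needed beyond what is already in the section.
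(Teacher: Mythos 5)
Your proof is correct and follows essentially the same route as the paper: (1)$\iff$(3) via the presentation of $\CC\times_{\CC^P}P$ (equivalently $D_u^{\rm{o}}=0$), (3)$\implies$(2) trivially, and (2)$\implies$(3) by the identical contrapositive argument translating a proper $S$-foundation set $\mc$ to the proper $P$-foundation set $\{p^{-1}R\cap P\mid R\in\mc\}$ for $p\in S\setminus\bigcup_{R\in\mc}R$. The extra care you take with \proref{pro:characterizationconstructibleideals}(4),(6) and with phrasing (1)$\iff$(3) through $\ker q_u=\langle D_u^{\rm{o}}\rangle$ only makes explicit what the paper leaves implicit.
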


\subsection{Characterization of purely infinite simple boundary quotients.}
Let $P$ be a submonoid of~$G$ and let $G_0$ be the subgroup of~$G$ given by 
$$G_0=\{g\in G\mid gP\cap S\neq\emptyset,\, g^{-1}P\cap S\neq\emptyset, \text{ for all } \emptyset\neq S\in \J\}.$$ 
By \cite[Proposition 5.7.13]{CELY}, the partial action of~$G$ on $\partial\Omega_P$ is topologically free if and only if its restriction to~$G_0$ is so. We give next equivalent characterizations of topological freeness of this action in terms of constructible ideals of~$P$.

\begin{thm} \label{thm:equivalenttopfree} Let $P$ be a submonoid of a group~$G$. The following statements are equivalent:
\begin{enumerate}

\item[\rm(1)] The partial action of~$G$ on $\partial\Omega_P$ is topologically free;

\item[\rm(2)] The partial action of~$G_0$ on $\partial\Omega_P$ is topologically free;

\item[\rm(3)] For every $g\in G_0$, $g\neq e$, and for every $p\in P$, there is $q\in pP$ with $qP\cap gqP=\emptyset;$

\item[\rm(4)] For all $p, t\in P$ with $p\neq t$, there is $s\in P$ such that $psP\cap tsP=\emptyset$;

\item[\rm(5)] Every proper ideal of $\CC\times_{\CC^P}P$ is contained in the kernel of the canonical map $$\Lambda_\partial\colon\CC\times_{\CC^P}P\to\partial\Toepr(P).$$
\end{enumerate}
\begin{proof} The equivalence (1)$\Leftrightarrow$(2) is \cite[Proposition 5.7.13]{CELY} and (1)$\Leftrightarrow$(5) follows from~\cite[Theorem~4.5]{AbaAba}. So we prove (2)$\Leftrightarrow$(3)$\Leftrightarrow$(4). Let $g\in G_0$, $g\neq e$, and $p\in P$. Consider the open subset $$V(pP)\coloneqq V(pP;\emptyset)\cap\partial\Omega_P=\{\tau\in\partial\Omega_P\mid \tau(pP)=1\}.$$ We claim that $V(pP)\cap U_{g^{-1}}\neq\emptyset$. Indeed, considering that $pP$ is a nonempty constructible ideal and $g\in G_0$,  take $s\in  g^{-1}P \cap pP$ and let $t\in P$ be such that $s=g^{-1}t$. Put $\alpha\coloneqq (e,t,s,e)$. Then $\dot{\alpha}=g$ and $K(\alpha)=P \cap sP \cap st\inv P = sP$, so that $\1_{sP}\in A_{g^{-1}}$. We deduce that any character $\tau\in \partial\Omega_P$ satisfying $\tau(sP)=1$ lies in $V(pP)\cap U_{g^{-1}}$. Hence $V(pP)\cap U_{g^{-1}}$ is nonempty. 

Now using that the action of~$G_0$ on $\partial\Omega_P$ is topologically free and $g\neq e$, we can find $\tau\in V(pP)\cap U_{g^{-1}}$ 
 such that $\tau \neq  \hat{\gamma}_g(\tau)=\tau\circ\gamma_{g^{-1}}$. Thus for some 
 $\beta\in\Hilm[W]$ with $\dot{\beta}=g^{-1}$ we have $$\tau(K(\beta))\neq(\tau\circ\gamma_{g^{-1}})(K(\beta))=\tau(g^{-1}K(\beta))= \tau(K(\tilde{\beta})).$$   
 Since $\tau$ is a character and $\tau(pP)=1$, this implies that $\tau(pP\cap K(\beta))\neq \tau(pP\cap K(\tilde{\beta}))$. If $\tau(K(\beta))=1$, we see that $pP\cap K(\tilde{\beta})\cap K(\beta)$ is not a foundation set for $pP\cap K(\beta)$ since $\tau\in\partial\Omega_P$. So there must be $q\in pP\cap K(\beta)$ with $qP\cap K(\tilde{\beta})=\emptyset$. It follows that $qP\cap g^{-1}qP=\emptyset$ and thus $qP\cap gqP=\emptyset$ as wished, because $g^{-1}qP\subset g^{-1}K(\beta)=K(\tilde{\beta})$. In the case $\tau(K(\tilde{\beta}))=1$ and $\tau(K(\beta))=0$, 
we exchange the roles of $\beta$ and $\tilde{\beta}$ and reason as above to obtain an element $q\in pP\cap K(\tilde{\beta})$ with $qP\cap K(\beta)=\emptyset$. This implies $qP\cap gqP=\emptyset$, completing the proof of $(2)\Rightarrow(3)$.

Assume now that condition (3) holds. Let $e\neq g\in G_0$ and let $V(S;\mc)\cap\partial\Omega_P$ be a basic open set of the topology of~$\partial\Omega_P$ as in \lemref{lem:basis-boundary}. Because $\{ R\mid R\in\mc\}$ is not a relative $S$\nb-foundation set, there is $p\in S$ satisfying $$pP\cap \bigg(\bigcup_{R\in\mc}R\bigg)=\emptyset.$$ Take $s\in pP\cap g^{-1}P$ and let $t\in P$ be such that $s=g^{-1}t$. Then $\1_{sP}\in A_{g^{-1}}$ because $sP=K(\alpha)$, where $\alpha=(e,t,s,e)$. Using our hypothesis, we can find $q\in sP\subset pP$ such that $qP\cap gqP=\emptyset$. We have $\1_{qP}\in A_{g^{-1}}$ since $q\in sP$. Let $\tau\in\partial\Omega_P$ be a character with $\tau(qP)=1$. Then $\tau\in V(S;\mc)\cap U_{g^{-1}}$ because $q\in sP\subset pP\subset S$ . Also,  $qP\cap gqP=\emptyset$ gives $1=\hat{\gamma}_g(\tau)(gqP)=\tau(qP)\neq\tau(gqP)=0$. We conclude that for every $e\neq g\in G_0,$ the set $$\{\tau'\in U_{g^{-1}}\cap\partial\Omega_P\mid \hat{\gamma}_g(\tau')=\tau'\}$$ has empty interior. Thus the action of $G_0$ on~$\partial\Omega_P$ is topologically free. We have established (2)$\Leftrightarrow$(3).

In order to prove the implication (3)$\Rightarrow$(4), take $p, t\in P$ with $p\neq t$. Suppose first that there exists $q\in pP$ such that $qP\cap tP =\emptyset$. Setting $s\coloneqq p^{-1}q$, we obtain $$psP\cap tsP \subset qP\cap tP=\emptyset.$$ In case there is $q\in tP$ satisfying $qP\cap pP=\emptyset$, we also have $psP\cap tsP=\emptyset$ with $s\coloneqq t^{-1}q$. Otherwise, if those two previous cases do not occur, then $pP\cap tP$ is a foundation set for both $pP$ and $tP$. We claim that $p^{-1}t\in G_0$. Indeed, let $q\in P$. Using that $pP\cap tP$ is a $tP$\nb-foundation set, take $r, s\in P$ such that $ps=tqr$. Then $qr=t^{-1}ps\in t^{-1}pP$ and so $qP\cap t^{-1}pP\neq\emptyset$. Because $pP\cap tP$ is also a foundation set for $pP$, one can similarly show that $qP\cap p^{-1}tP\neq\emptyset$. Since $q\in P$ is arbitrary, we obtain $p^{-1}t\in G_0$, proving the claim. Applying (3) with $g=p^{-1}t$ and the identity~$e$ playing the role of~$p$, we can find $q\in P$ such that $p^{-1}tqP\cap qP=\emptyset$. Thus $pqP\cap tqP=\emptyset$ as wished. This gives (3)$\Rightarrow $(4).

Next we prove (4)$\Rightarrow$(3). Let $g\in G_0$, $g\neq e$, and $p\in P$. Using that $g\in G_0$, we can find $r\in pP\cap g^{-1}P$. Let $t\in P$ be such that $r=g^{-1}t$.  Thus $g=tr^{-1}$ and notice that $r\neq t$ because $g\neq e$. By (4), there is $s\in P$ such that $tsP\cap rsP=\emptyset$. Put $q\coloneqq rs$. Then $q\in pP$ and $gqP=tr^{-1}rsP=tsP$. So $gqP\cap qP=\emptyset$. This establishes the implication (4)$\Rightarrow $(3) and completes the proof of the proposition.
\end{proof}
\end{thm}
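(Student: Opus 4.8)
The plan is to prove the five-way equivalence in \thmref{thm:equivalenttopfree} by first disposing of the two equivalences that follow immediately from the literature, and then establishing the cycle $(2)\Rightarrow(3)\Rightarrow(4)\Rightarrow(3)$ together with $(3)\Rightarrow(2)$ by working directly with the basis of $\partial\Omega_P$ from \lemref{lem:basis-boundary}. The equivalence $(1)\Leftrightarrow(2)$ is precisely \cite[Proposition~5.7.13]{CELY}, since $G_0$ is the subgroup controlling the relevant part of the partial action on $\partial\Omega_P$, so nothing new is needed there. For $(1)\Leftrightarrow(5)$ I would invoke \cite[Theorem~4.5]{AbaAba}: topological freeness of the partial action of $G$ on $\partial\Omega_P$ is equivalent to the statement that every proper ideal of the \emph{full} crossed product $\mathrm{C}(\partial\Omega_P)\rtimes G$ lies in the kernel of the regular representation onto $\mathrm{C}(\partial\Omega_P)\rtimes_r G$; combining this with the identifications $\CC\times_{\CC^P}P\cong\mathrm{C}(\partial\Omega_P)\rtimes G$ from \thmref{thm: several-characteriz} and $\mathrm{C}(\partial\Omega_P)\rtimes_r G\cong\partial\Toepr(P)$ from \cite[Corollary~5.7.6]{CELY}, and noting that the composite of these isomorphisms sends the regular representation to $\Lambda_\partial$, gives $(1)\Leftrightarrow(5)$.

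The core of the argument is $(2)\Leftrightarrow(3)$. For $(2)\Rightarrow(3)$, fix $e\neq g\in G_0$ and $p\in P$, and consider the nonempty open set $V(pP):=V(pP;\emptyset)\cap\partial\Omega_P$. I would first check that $V(pP)\cap U_{g^{-1}}\neq\emptyset$: since $g\in G_0$ and $pP$ is a nonempty constructible ideal, choose $s\in g^{-1}P\cap pP$ and write $s=g^{-1}t$; then for $\alpha=(e,t,s,e)$ one computes $\dot\alpha=ts^{-1}=g$ and $K(\alpha)=P\cap sP\cap st^{-1}P=sP$ (using that $s\in tP$ is automatic here — actually $st^{-1}=g^{-1}\in G$, so one rewrites $K(\alpha)$ using \proref{pro:propertiesKalpha}), so $\1_{sP}\in A_{g^{-1}}$ and any character with $\tau(sP)=1$ lies in $V(pP)\cap U_{g^{-1}}$. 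By topological freeness of the $G_0$-action, $V(pP)\cap U_{g^{-1}}$ contains some $\tau$ with $\tau\neq\hat\gamma_g(\tau)$, so there is a word $\beta$ with $\dot\beta=g^{-1}$ and $\tau(K(\beta))\neq\tau(K(\tilde\beta))$, hence (since $\tau$ is a character and $\tau(pP)=1$) $\tau\big(pP\cap K(\beta)\big)\neq\tau\big(pP\cap K(\tilde\beta)\big)$. WLOG $\tau(K(\beta))=1$; because $\tau\in\partial\Omega_P$, the failure of $\tau$ to see $pP\cap K(\tilde\beta)\cap K(\beta)$ while seeing $pP\cap K(\beta)$ means, via \lemref{lem:basis-boundary}, that $\{pP\cap K(\tilde\beta)\cap K(\beta)\}$ is not a relative foundation set for $pP\cap K(\beta)$, so there is $q\in pP\cap K(\beta)$ with $qP\cap K(\tilde\beta)=\emptyset$; then $g^{-1}qP\subseteq g^{-1}K(\beta)=K(\tilde\beta)$ gives $qP\cap g^{-1}qP=\emptyset$, equivalently $qP\cap gqP=\emptyset$, which is (3). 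The case $\tau(K(\tilde\beta))=1$, $\tau(K(\beta))=0$ is handled by swapping $\beta\leftrightarrow\tilde\beta$. For $(3)\Rightarrow(2)$, I would start from an arbitrary basic open set $V(S;\mc)\cap\partial\Omega_P$ with $e\neq g\in G_0$; since $\mc$ is not a relative $S$-foundation set there is $p\in S$ with $pP\cap\bigcup_{R\in\mc}R=\emptyset$, then choose $s\in pP\cap g^{-1}P$, apply (3) to get $q\in sP\subseteq pP$ with $qP\cap gqP=\emptyset$, and a character $\tau$ with $\tau(qP)=1$ lies in $V(S;\mc)\cap U_{g^{-1}}$ and satisfies $\hat\gamma_g(\tau)(gqP)=1\neq 0=\tau(gqP)$, so $\tau$ is not fixed by $\hat\gamma_g$; thus the fixed-point set has empty interior.

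It remains to close the loop between (3) and (4), both of which are statements purely about the semigroup $P$. The implication $(3)\Rightarrow(4)$ I would argue by cases: given $p\neq t$ in $P$, if some $q\in pP$ has $qP\cap tP=\emptyset$ then $s:=p^{-1}q$ works; symmetrically if some $q\in tP$ has $qP\cap pP=\emptyset$; otherwise $pP\cap tP$ is a foundation set for both $pP$ and $tP$, and a short computation (for each $q\in P$, using the $tP$-foundation property to write $ps=tqr$ gives $qr=t^{-1}ps\in t^{-1}pP$, so $qP\cap t^{-1}pP\neq\emptyset$, and symmetrically $qP\cap p^{-1}tP\neq\emptyset$) shows $p^{-1}t\in G_0$; applying (3) with $g=p^{-1}t$ and $e$ in the role of $p$ yields $q$ with $p^{-1}tqP\cap qP=\emptyset$, hence $pqP\cap tqP=\emptyset$. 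For $(4)\Rightarrow(3)$: given $e\neq g\in G_0$ and $p\in P$, pick $r\in pP\cap g^{-1}P$, write $r=g^{-1}t$ so $g=tr^{-1}$ and $r\neq t$ since $g\neq e$; by (4) there is $s$ with $tsP\cap rsP=\emptyset$, and $q:=rs\in pP$ satisfies $gqP=tr^{-1}rsP=tsP$, so $gqP\cap qP=\emptyset$. The main obstacle I anticipate is the careful bookkeeping in $(2)\Rightarrow(3)$ — correctly translating ``$\tau$ is not fixed by $\hat\gamma_g$'' into the non-foundation-set statement of \lemref{lem:basis-boundary} and then extracting the concrete element $q$ with the disjointness property, keeping straight which of $K(\beta)$ and $K(\tilde\beta)$ the character sees; everything else is routine manipulation of constructible ideals via \proref{pro:propertiesKalpha}.
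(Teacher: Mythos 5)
Your proposal is correct and follows essentially the same route as the paper's own proof: the same appeals to \cite[Proposition~5.7.13]{CELY} and \cite[Theorem~4.5]{AbaAba} for (1)$\Leftrightarrow$(2) and (1)$\Leftrightarrow$(5), and the same arguments — including the word $\alpha=(e,t,s,e)$ with $K(\alpha)=sP$, the non-foundation-set extraction of $q$ with $qP\cap gqP=\emptyset$, and the identical case analyses for (3)$\Leftrightarrow$(4). The minor parenthetical hesitation in your computation of $K(\alpha)$ is harmless, since $st^{-1}=g^{-1}$ and $sP\subset g^{-1}P$ give exactly the simplification you state.
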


It follows from \cite[Corollary 5.7.17]{CELY} that $\partial\Toepr(P)$ is purely infinite simple provided the partial action of $G_0$ on~$\partial\Omega_P$ is topologically free and $P\neq\{e\}$. By recent work of Abadie--Abadie \cite{AbaAba}, there is a converse to \cite[Corollary 5.7.17]{CELY} whenever the full and reduced partial crossed products associated with the partial action of~$G$ on~$\partial\Omega_P$ are the same. Combining this with the criteria for topological freeness given in \thmref{thm:equivalenttopfree}, we then get a characterization of purely infinite simple boundary quotients in terms of properties of the semigroup. We specify this in the next corollary.

\begin{cor}\label{cor:charac-inf-simple} Let $P$ be a submonoid of a group~$G$ with $P\neq\{e\}$. Any of the conditions from \thmref{thm:equivalenttopfree} implies that $\partial\Toepr(P)$ is purely infinite simple. The converse implication also holds if we further have $\CC\times_{\CC^P}P\cong \partial\Toepr(P)$ via the canonical map $\Lambda_\partial$.
\begin{proof} If the partial action of~$G_0$ on~$\partial\Omega_P$ is topologically free, then $\partial\Toepr(P)$ is purely infinite simple by~\cite[Corollary~5.7.17]{CELY}.  This gives the first assertion in the statement. For the last assertion, suppose that $\Lambda_\partial$ is an isomorphism and that $\partial\Toepr(P)$ is purely infinite simple. It follows that the left regular representation $\Lambda\colon\mathrm{C}(\partial\Omega_P)\rtimes G\to\mathrm{C}(\partial\Omega_P)\rtimes_r G$ is an isomorphism and $\mathrm{C}(\partial\Omega_P)\rtimes G$ has no nontrivial proper ideal. Thus the partial action of~$G$ on~$\partial\Omega_P$ is topologically free by~\cite[Theorem~4.5]{AbaAba}. So condition (1) of \thmref{thm:equivalenttopfree} is satisfied. This completes the proof of the corollary.
\end{proof}
\end{cor}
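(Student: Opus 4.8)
The plan is to derive this from two external inputs together with the equivalences already in hand: the pure infiniteness/simplicity criterion \cite[Corollary~5.7.17]{CELY} for the reduced boundary quotient, and the topological freeness characterization of Abadie--Abadie \cite[Theorem~4.5]{AbaAba}, both channeled through \thmref{thm:equivalenttopfree} and the identifications of \thmref{thm: several-characteriz}.

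First I would handle the forward implication. Assume one of the conditions (1)--(4) of \thmref{thm:equivalenttopfree}; by the equivalences established there we may assume condition (2), that the partial action of $G_0$ on $\partial\Omega_P$ is topologically free. Since $P\neq\{e\}$, the hypotheses of \cite[Corollary~5.7.17]{CELY} are met, and that result gives at once that $\partial\Toepr(P)$ is purely infinite and simple. No further work is needed for this direction.

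For the converse, suppose $\Lambda_\partial\colon\CC\times_{\CC^P}P\to\partial\Toepr(P)$ is an isomorphism and $\partial\Toepr(P)$ is purely infinite simple. Simplicity of $\partial\Toepr(P)$ then forces $\CC\times_{\CC^P}P$ to be simple, so its only proper ideal is $\{0\}$; in particular every proper ideal of $\CC\times_{\CC^P}P$ is contained in $\ker\Lambda_\partial$. This is exactly condition (5) of \thmref{thm:equivalenttopfree}, whence condition (1) of that theorem holds, i.e. the partial action of $G$ on $\partial\Omega_P$ is topologically free. Unwinding the identifications: via \thmref{thm: several-characteriz} and \cite[Corollary~5.7.6]{CELY} the map $\Lambda_\partial$ corresponds to the left regular representation $\mathrm{C}(\partial\Omega_P)\rtimes G\to\mathrm{C}(\partial\Omega_P)\rtimes_r G$, so the hypothesis that $\Lambda_\partial$ is an isomorphism, together with simplicity of the reduced crossed product, are precisely the two inputs that \cite[Theorem~4.5]{AbaAba} needs in order to yield topological freeness of the boundary partial action.

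I do not expect a genuine obstacle here, since the statement is a repackaging of results already available. The only point requiring care is bookkeeping: keeping straight which of $\CC\times_{\CC^P}P$ and $\partial\Toepr(P)$ is the full and which is the reduced crossed product of the boundary partial action, and checking that the canonical identifications of \thmref{thm: several-characteriz} and \cite[Corollary~5.7.6]{CELY} turn $\Lambda_\partial$ into the regular representation, so that \cite[Theorem~4.5]{AbaAba} applies in exactly the form used to prove the equivalence (1)$\iff$(5) of \thmref{thm:equivalenttopfree}.
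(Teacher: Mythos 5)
Your proposal is correct and takes essentially the same route as the paper: the forward direction is the identical application of \cite[Corollary~5.7.17]{CELY} via condition (2) of \thmref{thm:equivalenttopfree}, and your converse—deducing simplicity of $\CC\times_{\CC^P}P$ and verifying condition (5) of \thmref{thm:equivalenttopfree}, then using (5)$\implies$(1)—is only a repackaging of the paper's direct appeal to \cite[Theorem~4.5]{AbaAba}, since that equivalence was itself established by the Abadie--Abadie result. No gaps.
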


\begin{rem} In the case that $P$ is a right LCM monoid that embeds in a group, the characterization of purely infinite simple boundary quotients in terms of constructible ideals from Corollary~\ref{cor:charac-inf-simple} follows from Theorem~4.12 and Theorem 4.15 of~\cite{STARLING}. Indeed, by \cite[Theorem~4.12]{STARLING} it suffices to verify condition (4) of~\thmref{thm:equivalenttopfree} for elements $p$ and $t$ in the \emph{core submonoid} $$P_0\coloneqq\{p\in P\mid pP\cap qP\neq\emptyset \text{ for all } q\in P\}\subset P$$ to deduce that the action of~$G_0$ on~$\partial\Omega_P$ is topologically free. Thus if $\CC\times_{\CC^P}P\cong\partial\Toepr(P)$, then $\partial\Toepr(P)$ is simple if and only if for all $p,t\in P_0$, $p\neq t$, there exists $s\in P$ with $psP\cap tsP=\emptyset$. This is so because if $p,t\in P$ are such that $e\neq p^{-1}t\in G_0$ and $r\in P$ satisfies $pP\cap tP=rP$, then $p^{-1}r, t^{-1}r\in P_0$. So $p^{-1}rsP\cap t^{-1}rsP=\emptyset $ yields $pt^{-1}rs\cap rsP=\emptyset$. Hence condition (4) of \thmref{thm:equivalenttopfree} restricted to pairs of elements in~$P_0$ implies \thmref{thm:equivalenttopfree}(3). In general, if $P$ is not a right LCM monoid we do not know whether \thmref{thm:equivalenttopfree}(4) restricted to elements in the core submonoid implies topological freeness of the partial action of~$G_0$ on~$\partial\Omega_P$.
\end{rem}

 \section{A numerical semigroup}\label{sec:numerical}
An additive submonoid $P$ of $\NN$ such that $\NN\setminus P$ is finite is called a \emph{numerical semigroup}. As pointed out by Li \cite[Section 5.6.5]{CELY} numerical semigroups do not satisfy independence. We wish to demonstrate the concrete  application of condition (T4) to the specific example $\Sigma\coloneqq \NN\setminus \{1\}$ studied in \cite{Rae-Vit}. We begin with an explicit description of the set $\J(\Sigma)$ of constructible ideals of $\Sigma$.

\begin{lem} \label{lem:numericalideallist} Every nonempty ideal of $\Sigma = \{0,2,3,4,5, \ldots\}$ is constructible, and 
\[
\J(\Sigma) =  \{p+\Sigma \mid p\in \Sigma\} \sqcup \{p+(2+\NN) \mid p\in \Sigma\} \sqcup \{3+\NN\}.
\]
\begin{proof}
We first  use \proref{pro:propertiesKalpha} to compute two key  constructible nonprincipal ideals: \begin{itemize}
\item[] $K(3,2,2,3) = \Sigma \cap (-3+2+\Sigma)  = \Sigma \cap (-1+\Sigma) = 2+\NN$;
\item[] $ K(2,3,3,2) =\Sigma \cap (-2+3+\Sigma)  = \Sigma \cap (1+\Sigma) = 3+\NN$.
\end{itemize}
If $I \subset \Sigma $ is a nonempty ideal and $m$ is its smallest element, then
 $m+\Sigma \subset I$. If $m+1 \notin I$, then $I = m+\Sigma$, which  is in the first set. If $m+1 \in I$ then $I = m+\NN =(m-2) + (2+\NN)$, which is in the second set unless $m = 3$, in which case $I = 3+\NN$. Since $p+(2+\NN) = K(0,p,3,2,2,3,p,0)$, the three sets on the right consist of constructible ideals.  Finally, $\emptyset \notin \J(\Sigma)$ because $\Sigma$ is abelian. 
\end{proof}
\end{lem}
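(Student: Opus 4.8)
The plan is to split the statement into two essentially independent tasks: a purely combinatorial classification of the nonempty ideals of $\Sigma$, and then a check that every ideal on the resulting list is of the form $K(\alpha)$.

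For the classification I would take a nonempty ideal $I\subseteq\Sigma$, set $m=\min I$, and note that $m+\Sigma\subseteq I$ while every element of $I$ is $\geq m$. Everything hinges on whether $m+1\in I$. If $m+1\notin I$, then any $n\in I$ with $n>m$ has $n-m\geq2$, hence $n-m\in\{2,3,\dots\}\subseteq\Sigma$ and $n\in m+\Sigma$; thus $I=m+\Sigma$ with $m\in\Sigma$, the first family. If $m+1\in I$, then $m+\Sigma$ and $(m+1)+\Sigma$ both lie in $I$, and their union is $m+\NN$ (it contains $m$, $m+1$, and everything $\geq m+2$); since also $I\subseteq m+\NN$, we get $I=m+\NN$. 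The inclusion $I\subseteq\Sigma$ forces $1\notin m+\NN$, i.e. $m\geq2$, so $I=(m-2)+(2+\NN)$; this is the second family whenever $m-2\in\Sigma$, i.e. for every $m\neq3$, and the one leftover value $m=3$ produces exactly $I=3+\NN$. I would then record that the three families are pairwise disjoint: each $p+\Sigma$ omits the point $p+1$ but contains everything larger, so it is never an interval $\{k,k+1,\dots\}$, whereas the members of the other two families are such intervals, and their left endpoints are pairwise distinct (the interval $p+(2+\NN)$ has minimum $p+2$, which for $p\in\Sigma$ lies in $\{2,4,5,6,\dots\}$, never equal to the minimum $3$ of $3+\NN$, precisely because $1\notin\Sigma$).

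For constructibility I would exhibit explicit words. The principal ideal $p+\Sigma$ equals $K(0,p,p,0)$; equivalently, $p+\Sigma=p\cdot\Sigma$ and $\Sigma=K(0,0)\in\J$, so this also follows from \proref{pro:characterizationconstructibleideals}(5). The two nonprincipal generators are computed straight from \proref{pro:propertiesKalpha}: $K(3,2,2,3)=\Sigma\cap(-1+\Sigma)=2+\NN$ and $K(2,3,3,2)=\Sigma\cap(1+\Sigma)=3+\NN$, both words being neutral. Hence $3+\NN\in\J(\Sigma)$ directly, and $p+(2+\NN)=p\cdot(2+\NN)=K\bigl((e,p)(3,2,2,3)(p,e)\bigr)\in\J(\Sigma)$ by \proref{pro:propertiesKalpha}(6) (or \proref{pro:characterizationconstructibleideals}(5)). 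Since $\Sigma$ is abelian it is left reversible, so $\emptyset\notin\J(\Sigma)$ by \remref{rem:alternative1}(2), and by \proref{pro:characterizationconstructibleideals} every $K(\alpha)$ is a nonempty ideal of $\Sigma$; combining this with the classification pins down $\J(\Sigma)$ as exactly the displayed disjoint union.

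The only place any genuine content enters is the handling of the exceptional ideal $3+\NN$: the reason it must be listed separately is exactly that $3-2=1$ is the gap of $\Sigma$, so $3+\NN$ cannot be shifted back to the form $p+(2+\NN)$ with $p\in\Sigma$. Everything else is routine; I expect the main nuisance to be keeping the additive notation of $\Sigma\subset\ZZ$ consistently aligned with the multiplicative conventions for $K(\alpha)$ and the iterated quotient sets of \secref{sec:neutralwords}, rather than any real mathematical difficulty.
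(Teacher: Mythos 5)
Your proof is correct and follows essentially the same route as the paper: classify a nonempty ideal $I$ by its minimum $m$ and whether $m+1\in I$, compute $K(3,2,2,3)=2+\NN$ and $K(2,3,3,2)=3+\NN$ via \proref{pro:propertiesKalpha}, and obtain the translates $p+(2+\NN)$ as $K((e,p)(3,2,2,3)(p,e))$, with nonemptiness of all $K(\alpha)$ coming from left reversibility of the abelian monoid $\Sigma$. The extra verification of pairwise disjointness of the three families is a harmless addition the paper leaves implicit.
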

It is clear that  independence can only fail at nonprincipal ideals. We choose the equality
\begin{equation}\label{eqn:failindepperf}
2+\NN = \{2, 3, 4, 5, \ldots\} = \{2, 4, 5, 6,\ldots \} \cup \{ 3, 5, 6,7, \ldots\} = (2+\Sigma) \cup (3+\Sigma),
\end{equation}
as the basic failure of independence, and  then show  that all other failures follow from this one. 

\begin{lem}\label{lem:num-failureduct}
Suppose that independence fails at the constructible ideal $S$ of $\Sigma$, in the sense that 
  $S= \bigcup _{R\in \mc} R$ for   a finite family $\mc$ of constructible ideals not containing $S$. Let $m = \min S$.
\begin{enumerate}
\item If $m\neq 3$, then   $S = p + (2+\NN)$ for $p\coloneqq m-2 \in \Sigma$, and there exist $R_1, R_2\in \mc$ such that  $R_1 = p +(2+\Sigma)$ and $R_2  \supset  p +(3+\Sigma)$. In this case 
\[
 p + (2+\NN) = S=  \bigcup _{R\in \mc} R = R_1 \cup R_2 = (p +(2+\Sigma)) \cup (p +(3+\Sigma)), 
\]
which is simply the $p$-translate of \eqref{eqn:failindepperf}.

\item  If $m=3$, then $S = 3+\NN $ and there exist $R_1, R_2\in \mc$ such that $R_1 =  (3+\Sigma) $ and $ R_2 \supset 4+\Sigma$.  In this case 
\[
3+\NN =S= \bigcup _{R\in \mc} R = R_1 \cup R_2 = (3+\Sigma) \cup (4+\Sigma),
\]
which is the $(-2+3)$\nb-translate of  \eqref{eqn:failindepperf}. 
\end{enumerate}
\begin{proof}
The ideal $ m+\NN$ is the only nonprincipal ideal containing $m$ as its smallest element, so it must be equal to $S$.  
Let $R_1$ be an ideal in $\mc$ that contains $m$, necessarily as its smallest element; since $R_1 \neq S$ we must have  $R_1 = m+\Sigma$. Since $m+1$ is in $S$ but not in $R_1$,  there must be another ideal $R_2 \in \mc$ that contains $m+1$, again, necessarily as its smallest element; hence $R_2$ is either  $m+1+\Sigma$ or $m+1+\NN$.
The rest consists of rewriting this in terms of $p=m-2$ when $m\neq 3$.
\end{proof}
\end{lem}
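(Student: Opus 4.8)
The plan is to read off everything from the explicit list of constructible ideals in \lemref{lem:numericalideallist}, together with the observation recorded just above the lemma that independence can only fail at nonprincipal ideals. So the argument starts from the fact that $S$ is nonprincipal. Every nonprincipal ideal of $\Sigma$ has the form $m + \NN$ with $m = \min S \geq 2$, and \lemref{lem:numericalideallist} shows that for each such $m$ there is exactly one constructible nonprincipal ideal with smallest element $m$, namely $m + \NN$ itself; hence $S = m + \NN$.

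The next step is to locate $R_1$ and $R_2$. Since $m \in S = \bigcup_{R\in\mc} R$, choose $R_1 \in \mc$ with $m \in R_1$. Being an ideal of $\Sigma$, $R_1 \supseteq m + \Sigma$, and being contained in $S$ it has $m$ as its smallest element; by \lemref{lem:numericalideallist} the only constructible ideals with minimum $m$ are $m + \Sigma$ and $m + \NN$, and the latter is excluded because $R_1 \neq S$. Thus $R_1 = m + \Sigma$. Now $m + 1 \geq 3$, so $m + 1 \in \Sigma$ and hence $m + 1 \in S$, but $m + 1 \notin m + \Sigma = R_1$ because $1 \notin \Sigma$; therefore some $R_2 \in \mc$ contains $m + 1$, and being an ideal it satisfies $R_2 \supseteq (m+1) + \Sigma$.

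It remains to collapse the union back onto $S$, which I would do by splitting on whether $m = 3$. If $m \neq 3$ then $p \coloneqq m - 2 \in \Sigma$, so $S = p + (2 + \NN)$, $R_1 = p + (2 + \Sigma)$ and $R_2 \supseteq p + (3 + \Sigma)$; the $p$-translate of \eqref{eqn:failindepperf} reads $\bigl(p + (2 + \Sigma)\bigr) \cup \bigl(p + (3 + \Sigma)\bigr) = p + (2 + \NN) = S$, so $S \supseteq R_1 \cup R_2 \supseteq S$, forcing $R_1 \cup R_2 = S$; this gives the displayed chain of equalities and the stated description of $R_1$ and $R_2$. If $m = 3$ the same bookkeeping applies verbatim with $3 + \NN$ in place of $p + (2 + \NN)$: one gets $R_1 = 3 + \Sigma$, $R_2 \supseteq 4 + \Sigma$, and $(3 + \Sigma) \cup (4 + \Sigma) = 3 + \NN = S$, which is \eqref{eqn:failindepperf} translated by $1$, again forcing $R_1 \cup R_2 = S$. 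There is no substantial obstacle here: once \lemref{lem:numericalideallist} is in hand the proof is pure bookkeeping, and the only point that repays a little care is not over-determining $R_2$ — only the inclusion $R_2 \supseteq (m+1) + \Sigma$ is needed, and it already suffices to close the union, so one should not try to decide whether $R_2$ equals $(m+1) + \Sigma$ or $(m+1) + \NN$.
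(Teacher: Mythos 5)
Your proof is correct and follows essentially the same route as the paper: identify $S=m+\NN$ as the unique nonprincipal constructible ideal with minimum $m$, pin down $R_1=m+\Sigma$ as the member of $\mc$ containing $m$, and find $R_2\in\mc$ containing $m+1$, hence containing $(m+1)+\Sigma$, so the union collapses onto $S$. The only (harmless) difference is that you deliberately stop at the inclusion $R_2\supseteq(m+1)+\Sigma$ rather than noting, as the paper does, that $R_2$ must be $(m+1)+\Sigma$ or $(m+1)+\NN$; the inclusion is all the statement asserts and all the later application needs.
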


 \begin{prop}
Suppose $\{V_p\}_{p\in \Sigma}$  is a family of elements of a $\Cst$-algebra  satisfying the relations \textup{(T1)--(T3)} together with the  extra relation
 \begin{enumerate}
\item[] \textup{(T4)$_{2+\NN}$:} $\quad  V_3^*V_2 V_2^*V_3=   V_2V_2^* + V_3V_3^* - V_2V_2^* V_3V_3^*$. 
 \end{enumerate}
 Then $L_p \mapsto V_p$ extends to a \Star homomorphism $\pi_V\colon\Toepr(\Sigma) \to \Cst(V_p\mid p\in \Sigma)$.
 Moreover, $\pi_V$ is an isomorphism if and only it $V_3^*V_2 V_2^*V_3 \neq 1$.
  \begin{proof}
 Since $\Sigma$ is a submonoid of the abelian group $\ZZ$, we know that $ \Toepr(\Sigma)$ has the universal property of \defref{def:toeplitz-semigroup}, by \cite{CELY}*{Corollary 5.6.45} (see also \corref{cor:ifamenallequiv} above).   In order to conclude that $L_p \mapsto V_p$ extends,  it suffices 
 to show that $V$ satisfies  relation (T4) in that definition. 
 By \lemref{lem:equivalentto(T4)}(2) we only need to consider the cases in which independence fails. 
 So let $S= \bigcup _{R\in \mc} R$ and  $m =  \min S$ as in \lemref{lem:num-failureduct}. 
 
 If $m \neq 3$ we put $p = m-2$. If $I $  is a constructible ideal, let $E_I \coloneqq \dot V_\alpha$
 for any  neutral word $\alpha\in\W(\Sigma)$ such that $I = K(\alpha)$.  Then, using the earlier computation of $2+\NN$ and $3+\NN$ and  \proref{pro:propertiesKalpha}(6), we can write  $E_S = E_{p+(2+\NN)} = V_pE_{ 2+\NN}V_p^* = V_p(V_3^*V_2 V_2^*V_3)V_p^*$, and similarly  $E_{p+(2+\Sigma)} = V_pE_{(2+\Sigma)}V_p^* = V_p(V_2V_2^*) V_p^*$ and $E_{p+(3+\Sigma)} = V_p E_{(3+\Sigma)} V_p^* =V_p (V_3V_3^*)V_p^*$.  By \lemref{lem:num-failureduct}  we  have
 \begin{eqnarray*}
0 &\leq& \prod_{R\in \mc} (E_S -E_R) \leq (E_S -E_{R_1}) (E_S - E_{R_2}) \\&\leq&  V_p(E_{ 2+\NN} - E_{2+\Sigma}) V_p^* 
 V_p(E_{ 2+\NN} - E_{3+\Sigma}) V_p^*\\
 &=& V_p (V_3^*V_2 V_2^*V_3 -  V_2V_2^*)(V_3^*V_2 V_2^*V_3 -   V_3V_3^*) V_p^*,
  \end{eqnarray*}
which vanishes because of \textup{(T4)}$_{2+\NN}$. 
 If $m=3$ then 
 \begin{eqnarray*}
0 &\leq& \prod_{R\in \mc} (E_S -E_R) \leq (E_S -E_{R_1}) (E_S - E_{R_2}) \\
&\leq&  (E_{ 3+\NN} - E_{3+\Sigma}) (E_{ 3+\NN} - E_{4+\Sigma}) \\
&=& (V_2^*V_3V_3^*V_2 - V_3V_3^*) ( V_2^*V_3V_3^*V_2 - V_4V_4^*) \\
&=& \big(V_2^*V_3(V_3^*V_2 V_2^*V_3 -  V_2V_2^*) V_3^*V_2 \big) \big( V_2^*V_3 (V_3^*V_2 V_2^*V_3 -   V_3V_3^*)V_3^*V_2\big)\\
&=& V_2^*V_3(V_3^*V_2 V_2^*V_3 -  V_2V_2^*) (V_3^*V_2 V_2^*V_3 -   V_3V_3^*)V_3^*V_2,
\end{eqnarray*}
which, again, vanishes because of \textup{(T4)}$_{2+\NN}$.
This completes the proof that $V$ satisfies (T4), giving a canonical \Star homomorphism $\pi_V\colon\Toepr(\Sigma) \to \Cst(V)$.

If $\pi_V$ is injective, then $1 - V_3^*V_2 V_2^*V_3 \neq 0$ because $1 - L_3^*L_2 L_2^*L_3 $ is the projection onto the subspace generated by $\delta_0 \in \ell^2(\Sigma)$. 
For the converse, assume $1-V_3^*V_2 V_2^*V_3 \neq 0$ and let  $K(\alpha)$ be a proper ideal for each $\alpha$ in a finite collection $A\subset\W(\Sigma)$ of neutral words.  It is easy to see that $2+\NN$ is the largest proper ideal of $\Sigma$,  so $V_3^*V_2 V_2^*V_3 = E_{K(3,2,2,3)} \geq E_{K(\alpha)} = \dot V_\alpha$ for every  $\alpha \in A$. Hence $\prod_{\alpha \in A} (1- \dot V_\alpha) \geq 1-V_3^*V_2 V_2^*V_3 \neq 0$, proving that  $V$ is jointly proper. By \thmref{thm:uniqueiftopfreejointproper}, $\pi_V\colon\Toepr(\Sigma) \to \Cst(V)$ is faithful.
 \end{proof}
 \end{prop}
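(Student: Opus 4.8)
The plan is to use the already-established universal property of $\Toepr(\Sigma)$ coming from \corref{cor:ifamenallequiv} (since $\Sigma \subset \ZZ$ is abelian, hence amenable, so $\lambda^+\colon \Toepu(\Sigma)\to\Toepr(\Sigma)$ is an isomorphism), together with \thmref{thm:uniqueiftopfreejointproper}. Concretely, I would first verify that a family $\{V_p\}$ satisfying (T1)--(T3) together with the single extra relation (T4)$_{2+\NN}$ automatically satisfies the full relation (T4) of \defref{def:toeplitz-semigroup}; by \lemref{lem:equivalentto(T4)}(2) only the failures of independence need to be checked, and by \lemref{lem:num-failureduct} every such failure is a translate of the basic failure \eqref{eqn:failindepperf}. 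This gives the canonical $*$-homomorphism $\pi_V\colon\Toepr(\Sigma)\to\Cst(V)$. For the isomorphism claim I would verify joint properness under the stated nontriviality hypothesis and invoke \thmref{thm:uniqueiftopfreejointproper}, which also requires knowing the action of the units is topologically free — but $\Sigma^* = \{0\}$ since $\Sigma\subset\ZZ$, so condition \thmref{thm:idealsifftopfree}(3) holds vacuously and $E_u$ is automatically faithful by \corref{cor:ifamenallequiv}.

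The first step — showing $V$ satisfies (T4) — is the computational heart. Using the notation $E_I\coloneqq\dot V_\alpha$ for any neutral word $\alpha$ with $K(\alpha)=I$ (well-defined by (T3)), I would record from \proref{pro:propertiesKalpha}(6) the translation identity $E_{p+I} = V_p E_I V_p^*$ for $p\in\Sigma$, and from the computations in \lemref{lem:numericalideallist} that $E_{2+\NN} = V_3^*V_2V_2^*V_3$ and $E_{3+\NN} = V_2^*V_3V_3^*V_2$. Then, given a failure $S = \bigcup_{R\in\mc} R$ with $m=\min S$: if $m\neq 3$, set $p = m-2$, use \lemref{lem:num-failureduct}(1) to extract $R_1 = p+(2+\Sigma)$ and $R_2\supseteq p+(3+\Sigma)$ from $\mc$, bound $0\leq \prod_{R\in\mc}(E_S - E_R) \leq (E_S - E_{R_1})(E_S - E_{R_2})$ using $E_{R}\leq E_S$ (which follows from $R\subseteq S$ via (T3) and the fact that each $E$ is a projection), conjugate out $V_p$, and observe the resulting expression $V_p(E_{2+\NN} - E_{2+\Sigma})(E_{2+\NN} - E_{3+\Sigma})V_p^*$ vanishes by (T4)$_{2+\NN}$. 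If $m=3$, do the same with $p$ replaced by the reversed-word trick: $E_{3+\NN} = V_2^*V_3 E_{2+\NN} V_3^* V_2$ (since $3+\NN = K((0,2,3,2,2,3,2,0))$ up to the computation $3+\NN = \Sigma\cap(1+\Sigma)$), so $E_{3+\NN} - E_{3+\Sigma} = V_2^*V_3(E_{2+\NN} - E_{2+\Sigma})V_3^*V_2$ and similarly for $4+\Sigma$, and again (T4)$_{2+\NN}$ kills the product.

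For the equivalence, I would argue as follows. If $\pi_V$ is injective, then $1 - V_3^*V_2V_2^*V_3 = \pi_V(1 - L_3^*L_2L_2^*L_3) \neq 0$ because $1 - \dot L_{(3,2,2,3)} = \1_\Sigma - \1_{2+\NN}$ is the nonzero rank-one projection onto $\CC\delta_0$ in $\ell^2(\Sigma)$. Conversely, assuming $1 - V_3^*V_2V_2^*V_3\neq 0$: I claim $2+\NN$ is the unique largest proper constructible ideal of $\Sigma$. Indeed from \lemref{lem:numericalideallist} every proper ideal has smallest element $\geq 2$, hence is contained in $2+\NN$. So for any finite collection $A$ of neutral words with each $K(\alpha)$ proper, we have $K(\alpha)\subseteq 2+\NN$, hence $\dot V_\alpha = E_{K(\alpha)} \leq E_{2+\NN} = V_3^*V_2V_2^*V_3$, so $1 - \dot V_\alpha \geq 1 - V_3^*V_2V_2^*V_3$ for every $\alpha\in A$; taking products of these commuting projections (they commute by \proref{pro:propertiesofsharpgenerators}(3)) gives $\prod_{\alpha\in A}(1 - \dot V_\alpha)\geq 1 - V_3^*V_2V_2^*V_3 \neq 0$. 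Thus $V$ is jointly proper in the sense of \defref{def:jointlyproper}, and \thmref{thm:uniqueiftopfreejointproper} (with its hypotheses met as explained above) yields that $\pi_V$ is an isomorphism.

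I expect the main obstacle to be the bookkeeping in the $m=3$ case: confirming the identity $E_{3+\NN} = V_2^* V_3 E_{2+\NN} V_3^* V_2$ in a way that is legitimate from the relations (T1)--(T3) alone, i.e. exhibiting an explicit neutral word whose $K$ equals $3+\NN$ and which (via (T3) and the concatenation formula $\dot V_\alpha\dot V_\beta = \dot V_{\alpha\beta}$) factors as $V_2^*V_3$ times a word for $2+\NN$ times $V_3^*V_2$; this amounts to the computation $K((2,e)\beta(e,2)) = \Sigma\cap 2^{-1}K(\beta)$ from \proref{pro:propertiesKalpha}(7) applied with $\beta$ a neutral word for $2+\NN$, noting $\Sigma\cap(-2+(2+\NN)) = \Sigma\cap\NN = 3+\NN$ — wait, $\Sigma\cap\NN = \Sigma$, so one instead uses that $3+\NN = \Sigma\cap(1+\Sigma)$ directly as in \lemref{lem:numericalideallist} and builds the word accordingly. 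Everything else is a routine translation of \lemref{lem:num-failureduct} into projection inequalities, using repeatedly that $R\subseteq S$ implies $E_R\leq E_S$ and that all the relevant projections commute.
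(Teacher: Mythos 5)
Your proposal is correct and follows essentially the same route as the paper: reduce (T4) to the failures of independence via \lemref{lem:equivalentto(T4)}(2) and \lemref{lem:num-failureduct}, conjugate the basic relation (T4)$_{2+\NN}$ by $V_p$ (resp.\ by $V_2^*V_3$ when $m=3$, using $E_{3+\NN}=V_2^*V_3E_{2+\NN}V_3^*V_2$, which is exactly the paper's computation), and then deduce faithfulness from joint properness, the maximality of $2+\NN$ among proper ideals, and \thmref{thm:uniqueiftopfreejointproper}, whose hypotheses you rightly note hold trivially since $\Sigma^*=\{0\}$ and $\ZZ$ is amenable. Your only hiccup, the tentative word $(0,2,3,2,2,3,2,0)$ (whose $K$ is actually $4+\NN$), is self-corrected in your own text by falling back on $3+\NN=\Sigma\cap(1+\Sigma)=K(2,3,3,2)$, which is what the paper uses.
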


Since $\Sigma$ is generated by the elements $2$ and $3$, we would like to characterize representations satisfying (T1)--(T3) and (T4)$_{2+\NN}$ in terms of  generating isometries $W_2$ and $W_3$.  Notice that (T1) is obvious and (T2) never applies because $\Sigma$ is abelian, so the issue is to characterize pairs that generate a family satisfying (T3). 
Fortunately, in the present case we can rely on the spatial results from \cite{Rae-Vit} to characterize the pairs that yield  a presentation of $\Toepr(\Sigma)$ and thus obtain the following simplification of our uniqueness result.
\begin{cor} Suppose  $W_2$ and $W_3$ are two commuting  isometries in a $\Cst$\nb-algebra  having commuting range projections
and satisfying  $$\textup{(T3)}_\Sigma\colon\, W_2^3 = W_3^2\quad\text{ and }\quad\textup{(T4)}_{2+\NN}\colon\, W_3^*W_2 W_2^*W_3=   W_2W_2^* + W_3W_3^* - W_2W_2^* W_3W_3^*.$$
Then there exists a unique \Star homomorphism $\pi_W\colon \Toepr(\Sigma) \to \Cst(W_2, W_3)$ such that  $\pi_W(L_2) = W_2$ and $\pi_W(L_3) = W_3$, and $\pi_W$ is an isomorphism if and only if $W_3^*W_2 W_2^*W_3 \neq 1$. Therefore,  all the  $\Cst$-algebras generated by pairs $W_2$ and $W_3$ of isometries as above such that $W_3^*W_2 W_2^*W_3 \neq 1$ are canonically isomorphic.
\begin{proof}
By \cite[Proposition~1.5]{Rae-Vit} $W_2$ and $W_3$ generate a representation $V$ of $\Sigma$ by isometries with commuting range projections. By \cite[Theorem~2.1]{Rae-Vit} $V$ has a decomposition into three subrepresentations. Clearly the extra condition $W_3^*W_2 W_2^*W_3=   W_2W_2^* + W_3W_3^* - W_2W_2^* W_3W_3^*$ has to be satisfied separately in the three subrepresentations. But if we compute with the representation $S$ from \cite[Example~1.2]{Rae-Vit} we see that $S_3^*S_2 S_2^*S_3 = 1 \neq S_2S_2^* =   S_2S_2^* + S_3S_3^* - S_2S_2^* S_3S_3^*$. The only alternative is that the $S$\nb-component of $V$  is trivial, so $V$ is unitarily equivalent to a unitary representation and a multiple of the left regular representation of $ \Sigma$ on $\ell^2(\Sigma)$, the latter multiple being nontrivial if and only if $W_3^*W_2 W_2^*W_3 \neq 1$, in which case $\pi_W$ is injective.  
\end{proof}
\end{cor}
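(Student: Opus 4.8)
The plan is to reduce everything to the spatial analysis of $\Sigma$-representations already carried out in \cite{Rae-Vit}. First I would invoke \cite[Proposition~1.5]{Rae-Vit}: since $W_2$ and $W_3$ commute, have commuting range projections, and satisfy the relation $W_2^3=W_3^2$, they determine an isometric representation $V$ of the numerical semigroup $\Sigma$ with commuting range projections, given by $V_{2a+3b}=W_2^aW_3^b$ (well-defined because $2$ and $3$ generate $\Sigma$ and the only relation in $\Sigma$ between them is $3\cdot 2=2\cdot 3$, reflected in $\textup{(T3)}_\Sigma$). By \cite[Theorem~2.1]{Rae-Vit} this $V$ decomposes as a direct sum of three subrepresentations; call them the unitary part, the (multiple of the) left-regular part on $\ell^2(\Sigma)$, and the ``defect'' part, the last being a multiple of the representation $S$ from \cite[Example~1.2]{Rae-Vit}.

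The next step is to observe that the relation $\textup{(T4)}_{2+\NN}$ passes to each summand (both sides are expressions in the $W_i$'s, hence block-diagonal), so each of the three subrepresentations must individually satisfy $\textup{(T4)}_{2+\NN}$. Then I would compute $S_3^*S_2S_2^*S_3$ versus $S_2S_2^*+S_3S_3^*-S_2S_2^*S_3S_3^*$ directly in the representation $S$ of \cite[Example~1.2]{Rae-Vit}; the point is that $S_3^*S_2S_2^*S_3=1$ there (this is exactly the projection realizing $2+\NN$ as all of $\Sigma$ for that particular representation), whereas $S_2S_2^*+S_3S_3^*-S_2S_2^*S_3S_3^*$ is the projection onto $2\cdot\Sigma\cup 3\cdot\Sigma$ inside that representation, which is a proper subprojection. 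Hence the defect part cannot occur in $V$, so $V$ is unitarily equivalent to a direct sum of a unitary representation and a (possibly zero) multiple of the left-regular representation. On such a $V$ all the relations (T1)--(T4) of $\Toepr(\Sigma)$ hold automatically—(T3) because the unitary and left-regular pieces satisfy it, (T4) as in the previous proposition since $\Sigma$ embeds in the amenable group $\ZZ$ and \corref{cor:ifamenallequiv} gives the universal property of $\Toepr(\Sigma)$—so $L_p\mapsto V_p$ extends to a $\Star$ homomorphism $\pi_W\colon\Toepr(\Sigma)\to\Cst(W_2,W_3)$, which is surjective and uniquely determined by $\pi_W(L_2)=W_2$, $\pi_W(L_3)=W_3$.

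Finally, for the uniqueness criterion: if $\pi_W$ is injective then $W_3^*W_2W_2^*W_3=\pi_W(L_3^*L_2L_2^*L_3)\neq 1$ since $L_3^*L_2L_2^*L_3=\1_{2+\NN}$ is the projection killing $\delta_0\in\ell^2(\Sigma)$, so it is a proper projection. Conversely, if $W_3^*W_2W_2^*W_3\neq 1$ then the left-regular summand of $V$ is nontrivial, and in particular $V$ is not purely unitary; the unitary part is absorbed because $\Toepr(\Sigma)$ has the left-regular representation as a quotient and every proper ideal is detected by jointly proper representations. More directly, I would argue via \thmref{thm:uniqueiftopfreejointproper}: the action of the trivial unit group $\Sigma^*=\{0\}$ is vacuously topologically free, $E_u$ is faithful because $\Sigma\subset\ZZ$ amenable, and $V$ is jointly proper since $2+\NN$ is the largest proper constructible ideal and $\prod_{\alpha\in A}(1-\dot V_\alpha)\geq 1-W_3^*W_2W_2^*W_3\neq 0$ for any finite family of proper constructible ideals $K(\alpha)$, exactly as in the preceding proposition. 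Hence $\pi_W$ is an isomorphism, and the last sentence follows since any two such $\Cst$-algebras are then each canonically isomorphic to $\Toepr(\Sigma)$. The main obstacle I anticipate is the bookkeeping needed to justify that the relation $\textup{(T4)}_{2+\NN}$ genuinely restricts to each block of the Raeburn--Vittadello decomposition and that it fails on the $S$-block; once those two spatial computations are pinned down, the rest is an application of the universal property and of \thmref{thm:uniqueiftopfreejointproper}.
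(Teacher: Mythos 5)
Your proposal follows essentially the same route as the paper: invoke Raeburn--Vittadello to decompose the representation generated by $W_2,W_3$ into three blocks, observe that $\textup{(T4)}_{2+\NN}$ must hold blockwise, rule out the $S$-block by the computation $S_3^*S_2S_2^*S_3=1$ versus the proper projection on the right-hand side, and conclude that only a unitary part and a multiple of the left regular representation survive, the latter nontrivial exactly when $W_3^*W_2W_2^*W_3\neq 1$. The only (harmless) deviation is that you settle injectivity by rerunning the joint-properness/uniqueness argument of the preceding proposition instead of citing directly that a nonzero multiple of the left regular representation is faithful, which is how the paper closes the proof.
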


\section{$ax+b$-monoids of integral domains}\label{sec:integral}

\subsection{Basics on $ax+b$-monoids of integral domains} Let $R$ be an integral domain, which we will always assume to have a unit $1\neq 0$, and let $R^\times\coloneqq R\setminus\{0\}$ be its multiplicative semigroup. The $ax+b$-monoid associated to~$R$ is $R\rtimes R^\times$, where the action of~$R^\times$ by endomorphisms of~$R$ is given by multiplication. Hence the operation in~$R\rtimes R^\times$ is given by $$(b,a)(d,c)=(b+ad,ac),\qquad b,d\in R,\,  a,c\in R^\times.$$ The monoid $R\rtimes R^\times$ embeds in the group $G \coloneqq Q\rtimes Q^*$, where~$Q$ denotes the field of fractions of~$R$. 

\begin{note} Following \cite{Li:ax+b}, we will denote by $\I(R)$ the set of constructible ring-theoretic ideals of $R$, $$\I(R)=\big\{\bigcap_{\substack{g\in F}}gR\mid F\subset Q^\times,\, F\text{ is finite and }1\in F\big\}.$$
\end{note}
Thus, a nonzero ideal $I$ of $R$ lies in~$\I(R)$ if and only if it is the intersection of finitely many principal fractional ideals,
which is the same as saying that $I^\times $ is a constructible ideal in $R^\times$.

 By \cite[Lemma~2.11]{Li:ax+b}, the nonempty constructible right ideals in 
$ R\rtimes R^\times$ are indexed by pairs $(r,I)$ in which $I\in\I(R)$ and $r\in R$ represents a class in $R/I$. Specifically, when~$R$ is not a field,
\[
\J(R\rtimes R^\times) = \{ (r+I) \times I^\times \mid  r\in R   \text{ \ and \ }   I \in \I(R)\}\cup\{\emptyset\}.
\]
 \subsection{Topological freeness for $ax+b$-monoids of integral domains} 
The following proposition gives algebraic conditions on~$R$ that are equivalent to \thmref{thm:idealsifftopfree}(3) for the associated $ax+b$\nb-monoid $R\rtimes R^\times$, enabling the application of \thmref{thm:uniqueiftopfreejointproper}.

\begin{prop}\label{pro:manyideals} 
Let $R$ be an integral domain. Suppose that $R/I$ is finite for every  ideal~$I\in\I(R)$. Then $R\rtimes R^\times$ satisfies condition \textup{(3)} of \thmref{thm:idealsifftopfree} if and only if for every $x\in R^\times$ and every finite (possibly empty) collection $\mc \subset \I(R)$ of proper constructible ring-theoretic ideals there exists $a\in R \setminus \bigcup_{\substack{I\in\mc}} I$ such that $x\notin aR$.
\begin{proof} 
Condition (3)  of \thmref{thm:idealsifftopfree} says that for every unit $(x,u) \in  R\rtimes R^* \setminus \{(0,1)\}$ and
every finite collection $\mc_\rtimes = \{(r+ I) \times I^\times\mid r\in R, I\in\mc\}$ of proper constructible ideals in $R\rtimes R^\times$,
there exists $(s,a) \in R\rtimes R^\times \setminus \bigcup_{\substack{(r+I)\times I^\times\in\mc_{\rtimes}}} (r +I) \rtimes I^\times $ such that $(x,u)(s,a)P \neq (s,a)P$. 

We claim that the above condition is equivalent to the existence, for every nontrivial unit $(x,u)$ and every collection $\mc_\rtimes$ as above,  of $a\in R\setminus \bigcup_{\substack{I\in\mc}} I$ such that
 \begin{equation}\label{eqn:arithmprogr}
 (u-1)R +x \not\subset aR. 
 \end{equation} First notice that since the condition is to be satisfied by every family $\mc_\rtimes$, and since each $|R/I| <\infty$, we may assume without loss of generality that $\mc_\rtimes$ has been augmented so that all the classes modulo each of the $I$ are covered. Thus $(s,a) \in R\rtimes R^\times \setminus \bigcup_{\substack{I\in\mc}}\bigcup_ {r\in R/I}(r +I) \rtimes I^\times $ for some~$s$ if and only if $a\in R\setminus \bigcup_{I\in\mc} I$.
Recall from  \lemref{lem:minilemma} that $(x,u)(s,a)P \neq (s,a)P$ is equivalent to 
\[
(x+ us, ua) = (x,u)(s,a) \neq (s,a)(y,v)= (s+ay, av)  \quad \forall (y,v) \in P^*.
\]  
The multiplicative parts can always be matched by taking $v = u$ so this is really a condition on the additive parts, which says
that $a$ and $s$ satisfy $x+ (u - 1) s \neq ay$ for every $y\in R$. This completes the proof of the claim.

Suppose now that $R\rtimes R^\times$ satisfies condition (3) of \thmref{thm:idealsifftopfree} and, given $x\in R^\times$, consider the nontrivial unit $(x,1)$ of $R\rtimes R^\times$. By the equivalence proved above,  there exists $a\in R\setminus \bigcup_{\substack{I\in\mc}}I$ such that  \eqref{eqn:arithmprogr} holds, which in this case says that $\{x\} = (1-1) R + x \not \subset aR$, so $x\notin aR$.

To prove the converse, suppose that $\mc_\rtimes=\{(r+ I) \times I^\times\mid r\in R, I\in\mc\}$ is a finite collection of proper constructible ideals in $R\rtimes R^\times$, and let $(x,u)$ be a nontrivial unit. Assume first $x\neq 0$. Then there exists  $a\in R\setminus \bigcup_{\substack{I\in\mc}} I$ such that $x\notin aR$. Since $x = (u-1) 0 + x  \in (u-1) R +x$, we conclude that  \eqref{eqn:arithmprogr} holds. Assume now $x=0$. Then $u-1 \neq 0$, and there exists $a\in R\setminus \bigcup_{\substack{I\in\mc}} I$ such that $u-1\notin aR$.
Since $u-1 = (u-1) 1 + 0 \in (u-1) R +x$, we conclude that  \eqref{eqn:arithmprogr} holds in this case too.
\end{proof}
\end{prop}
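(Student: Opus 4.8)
The strategy is to reduce Condition (3) of \thmref{thm:idealsifftopfree} for $R\rtimes R^\times$ to a purely arithmetic statement about divisibility in $R$, via two successive translations: first from elements of $R\rtimes R^\times$ and their constructible ideals to the description coming from \cite[Lemma~2.11]{Li:ax+b}, and second from the condition $(x,u)(s,a)P\neq(s,a)P$ to a non-containment of the form $(u-1)R+x\not\subset aR$. Throughout I would use the structure $R\rtimes R^\times\subset Q\rtimes Q^*$ and the hypothesis that $R/I$ is finite for every $I\in\I(R)$; the latter is what lets me always augment any given family $\mc$ so that \emph{every} residue class modulo each $I\in\mc$ appears among the ideals of the (augmented) family, which is the key simplification that makes the additive part of the problem separate cleanly from the multiplicative part.

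\textbf{Key steps, in order.} First I would spell out what Condition (3) of \thmref{thm:idealsifftopfree} asserts in this concrete setting: a nontrivial unit of $R\rtimes R^\times$ is a pair $(x,u)$ with $u\in R^*$ and $(x,u)\neq(0,1)$, and a finite collection of proper constructible right ideals is a family $\mc_\rtimes=\{(r+I)\times I^\times\mid r\in R,\ I\in\mc\}$ with $\mc\subset\I(R)$ a finite collection of proper constructible ring-theoretic ideals. So (3) reads: for each such $(x,u)$ and $\mc_\rtimes$ there is $(s,a)\in R\rtimes R^\times\setminus\bigcup_{(r+I)\times I^\times\in\mc_\rtimes}(r+I)\rtimes I^\times$ with $(x,u)(s,a)P\neq(s,a)P$. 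Second, I would invoke \lemref{lem:minilemma}: $(x,u)(s,a)P\neq(s,a)P$ is equivalent to $(x,u)(s,a)\notin(s,a)P^*$, i.e.\ to $(x+us,\,ua)\neq(s+ay,\,av)$ for all $(y,v)\in P^*$. Since $P^*=R^*\rtimes R^*$ contains every $(0,v)$ with $v\in R^*$, the multiplicative coordinate $ua$ can always be matched by choosing $v=u$; hence the condition is about the additive coordinate only, and says $x+(u-1)s\neq ay$ for all $y\in R$, equivalently $x+(u-1)s\notin aR$. Third, I would use the finiteness of $R/I$ to observe that, up to augmenting $\mc_\rtimes$ by all residue classes, the requirement ``$(s,a)$ avoids $\bigcup(r+I)\rtimes I^\times$ for some choice of $s$'' is equivalent to ``$a\in R\setminus\bigcup_{I\in\mc}I$''; moreover once $a$ is fixed, $s$ ranges over all of $R$, so $x+(u-1)s$ ranges over the coset-union $(u-1)R+x$. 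Thus (3) becomes: for every nontrivial unit $(x,u)$ and every finite collection $\mc$ of proper ideals in $\I(R)$ there exists $a\in R\setminus\bigcup_{I\in\mc}I$ with $(u-1)R+x\not\subset aR$.

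\textbf{Closing the equivalence and the main obstacle.} With this arithmetic reformulation in hand, the two implications are then short. For the forward direction, given $x\in R^\times$ I would apply (3) to the nontrivial unit $(x,1)$: here $(u-1)R+x=\{x\}$, so the reformulated condition gives $a\in R\setminus\bigcup_{I\in\mc}I$ with $x\notin aR$, which is exactly the hypothesis we want. For the converse, given a nontrivial unit $(x,u)$ I would split into the cases $x\neq 0$ and $x=0$: if $x\neq 0$, apply the hypothesis to this $x$ to get $a$ with $x\notin aR$, and since $x=(u-1)\cdot 0+x\in(u-1)R+x$ we get $(u-1)R+x\not\subset aR$; if $x=0$ then necessarily $u\neq 1$, so $u-1\in R^\times$, and applying the hypothesis to $u-1$ yields $a$ with $u-1\notin aR$, while $u-1=(u-1)\cdot 1+0\in(u-1)R+x$, again giving the non-containment. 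I do not anticipate a genuine obstacle here; the only place requiring care is the ``augmentation'' step, where one must be sure that enlarging $\mc_\rtimes$ to cover all residue classes modulo each $I$ is legitimate (it is, because Condition (3) quantifies over \emph{all} finite collections of proper constructible ideals, and adding more proper ideals only makes the avoidance requirement harder) and that this legitimately decouples the choice of $s$ from the choice of $a$. This bookkeeping, together with the clean use of \lemref{lem:minilemma} to eliminate the multiplicative coordinate, is the heart of the argument.
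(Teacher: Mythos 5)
Your proposal is correct and follows essentially the same route as the paper's own proof: the same reformulation of condition (3) via \lemref{lem:minilemma}, the same augmentation of $\mc_\rtimes$ by all residue classes (justified by finiteness of $R/I$) to decouple $a$ from $s$, the same arithmetic condition $(u-1)R+x\not\subset aR$, and the same forward/converse arguments with the unit $(x,1)$ and the case split $x\neq 0$, $x=0$. One small slip: the unit group of $R\rtimes R^\times$ is $R\rtimes R^*$, not $R^*\rtimes R^*$, but the conclusion you draw from it (that $y$ ranges over all of $R$ in the additive condition) is the correct one, so the argument is unaffected.
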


\subsection{Boundaries for $ax+b$-monoids of integral domains} 
A characterization of pure infiniteness and simplicity of $\Cst$\nb-algebras constructed from rings that may have zero-divisors is given in \cite[Theorem~2]{Li:RingC*}. As a consequence, for $R$ an integral domain that is not a field, the boundary quotient $\partial\Toepr(R\rtimes R)$ is purely infinite simple \cite[Corollary~8]{Li:RingC*}. 
We can give a direct proof of this result, by verifying \thmref{thm:equivalenttopfree}(4) and then applying \corref{cor:charac-inf-simple}.

\begin{cor} Let $R$ be an integral domain that is not a field and let $R\times R^\times$ be the associated $ax+b$\nb-monoid. Then condition \textup{(4)} of \thmref{thm:equivalenttopfree} holds in~$R\rtimes R^\times$, and hence the boundary quotient $\partial\Toepr(R\rtimes R^\times)$ is purely infinite simple.
\begin{proof} In order to lighten the notation, we write $P\coloneqq R\rtimes R^\times$. Let $p,t\in P$ with $p\neq t$. We need to find $s\in P$ satisfying $psP\cap tsP=\emptyset$. Let $b,d\in R$, $a,c\in R^\times$ be such that $p=(b,a)$ and $t=(d,c)$.  It suffices to find such an element $s$ when  $b\neq d$ because the case $b=d$ entails $a\neq c$ and we may take a nonzero element $r\in R$
 and substitute $p$ and $t$ by 
  $$p(r,1)=(b+ar,a),\qquad t(r,1)=(d+cr,c),$$   which satisfy $b+ar\neq d+cr$.  Finding $s'$  for the pair of elements $p(r,1)$ and $t(r,1)$, gives $s =(r,1)s'$ for the pair of elements $p$ and $t$.
 Assuming thus $b-d\neq 0$, we separate the proof of existence of~$s$ into two cases.

\textbf{Case 1:} $b-d\not\in acR$. Set $s\coloneqq (0,ac)$. Then $$ps=(b,a^2c)\quad\text{ and }\quad ts=(d,c^2a).$$ We claim that $psP\cap tsP=\emptyset$. Indeed, looking for a contradiction, assume that there are $q_1=(f_1,e_1)$ and $q_2=(f_2,e_2)$ in $P$ such that $psq_1=tsq_2$. We would then have $$psq_1=(b+a^2cf_1,a^2ce_1)\quad\text{ and }\quad tsq_2=(d+c^2af_2,c^2ae_2).$$ Thus the equality $psq_1=tsq_2$ would imply $b+a^2cf_1=d+c^2af_2$ and hence $$b-d=ac(cf_2-af_1).$$ This contradicts our assumption that $b-d\not\in acR$.

\textbf{Case 2:} $b-d\in ac R^\times$. Let $\bar{x}\in R^\times$ be the unique element satisfying $b-d=ac\bar{x}$. Let $r\in R$ be a noninvertible element. We set $s\coloneqq (0,ac\bar{x}r)$. Thus $$ps=(b,a^2c\bar{x}r)\quad\text{ and }\quad ts=(d,c^2a\bar{x}r).$$ We claim that $psP\cap tsP=\emptyset$. Looking for a contradiction, assume that there are  $q_1=(f_1,e_1)$ and $q_2=(f_2,e_2)$ in $P$ such that $psq_1=tsq_2$. In this case we would have that $$b+a^2c\bar{x}rf_1=d+c^2a\bar{x}rf_2,$$ and hence $$b-d=ac\bar{x}(crf_2-arf_1)=(b-d)(crf_2-arf_1).$$  Since $R$ has no zero-divisors we would have  $crf_2-arf_1=1$ and thus $r(cf_2-af_1)=1$. We have arrived at a contradiction because~$r$ is not invertible in~$R^\times$. So we must have $psP\cap tsP=\emptyset$.

We have shown that \thmref{thm:equivalenttopfree}(4) holds, so \corref{cor:charac-inf-simple} gives the rest.
\end{proof}
\end{cor}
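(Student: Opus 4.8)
The plan is to verify condition (4) of \thmref{thm:equivalenttopfree} directly for $P\coloneqq R\rtimes R^\times$ and then invoke \corref{cor:charac-inf-simple}. So given $p=(b,a)$ and $t=(d,c)$ in $P$ with $p\neq t$, I must produce $s\in P$ with $psP\cap tsP=\emptyset$.

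First I would reduce to the case $b\neq d$. If $b=d$ then $a\neq c$, and replacing $p,t$ by $p(r,1),t(r,1)$ for any $r\in R^\times$ changes the additive coordinates to $b+ar$ and $d+cr=b+cr$, which differ because $a\neq c$ and $R$ has no zero-divisors. A solution $s'$ for the modified pair yields $s=(r,1)s'$ for the original pair, since right multiplication by $(r,1)$ (which is invertible in the enveloping group) does not affect emptiness of the intersection of right ideals. Hence I may assume $b-d\neq 0$.

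Next I would split into two cases according to the membership of $b-d$ in the principal ideal $acR$. If $b-d\notin acR$, take $s\coloneqq(0,ac)$, so $ps=(b,a^2c)$ and $ts=(d,ac^2)$; writing a hypothetical common element $psq_1=tsq_2$ with $q_i=(f_i,e_i)$ gives $b+a^2cf_1=d+ac^2f_2$, i.e.\ $b-d=ac(cf_2-af_1)\in acR$, a contradiction. If instead $b-d\in acR^\times$, let $\bar x$ be the unique element with $b-d=ac\bar x$, pick a noninvertible $r\in R$ (which exists because $R$ is not a field), and set $s\coloneqq(0,ac\bar x r)$; a hypothetical common element forces $b-d=(b-d)(crf_2-arf_1)$, so $crf_2-arf_1=1$ by cancellation, whence $r(cf_2-af_1)=1$ contradicts non-invertibility of $r$. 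This exhausts all cases and establishes \thmref{thm:equivalenttopfree}(4), so $\partial\Toepr(R\rtimes R^\times)$ is purely infinite simple by \corref{cor:charac-inf-simple}.

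The bookkeeping is entirely elementary, so the only real subtlety is making sure the case distinction is exhaustive and correctly handles the multiplicative coordinates: the multiplicative parts of $psq_1$ and $tsq_2$ must also agree, but since we only need to rule out equality, deriving the contradiction from the additive parts alone suffices, and the existence of a noninvertible $r$ in the second case is exactly where the hypothesis that $R$ is not a field is used. I expect the step requiring the most care to be confirming that the reduction to $b\neq d$ is legitimate, i.e.\ that translating on the right by an invertible element of the ambient group preserves the relevant emptiness, which is immediate from $psP\cap tsP=\emptyset\iff p(r,1)s'P\cap t(r,1)s'P=\emptyset$ when $s=(r,1)s'$.
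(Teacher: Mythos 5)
Your proof is correct and follows essentially the same route as the paper: the same reduction to $b\neq d$ via right multiplication by $(r,1)$, the same choice $s=(0,ac)$ when $b-d\notin acR$, and the same choice $s=(0,ac\bar{x}r)$ with $r$ noninvertible when $b-d\in acR^\times$, with the contradictions read off from the additive coordinates exactly as in the paper. The only cosmetic difference is your aside about invertibility in the enveloping group, which is unnecessary since, as you note at the end, $psP=p(r,1)s'P$ and $tsP=t(r,1)s'P$ already give the reduction.
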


\section{$ax+b$-monoids of orders in number fields}\label{sec:orders}
   Recall that an algebraic number field~$K$ is a finite degree extension of the field~$\QQ$ of rational numbers. The  ring of algebraic integers $\OO_K$ of $K$ is the integral closure of $\ZZ$ in $K$; it is a free $\ZZ$-module of rank $d\coloneqq [K:\QQ]$. The rings of integers in algebraic number fields form an important class of integral domains, in fact of Dedekind domains. 
The Toeplitz $\Cst$\nb-algebras of the corresponding $ax+b$\nb-monoids and their equilibrium states were studied in \cite{CDLMathAnn2013}, and their ideal structure and $K$\nb-theory in \cite{EL2013} and \cite{Li:ax+b}, respectively. 

Here we will be interested in the more general class of  {\em orders}  in $K$. These are
subrings  of $\OO_K$ that are free of full rank~$d$ as $\ZZ$-modules. We will use the letter $\OO$ to denote an order in a number field; $\OO^*$ will denote the set of invertible elements of~$\OO$ and $\OO^\times$  the multiplicative monoid $ \OO\setminus \{0\}$.  The fraction field of $\OO$ equals that of~$\OO_K$, so that  $(\OO^\times)\inv \OO =K$.  We refer to \cite{Neu-99} for the basic definitions and results about rings of integers in number fields and to \cite{Ste-08} for orders; we also found K. Conrad's set of notes \cite{Con} very helpful.

\subsection{A uniqueness result for $\Toepr({\OO\rtimes\OO^\times})$} As an application of~\proref{pro:manyideals} and \thmref{thm:uniqueiftopfreejointproper}, we prove next a uniqueness result for the Toeplitz $\Cst$\nb-algebra $\Toep_\lambda({\OO\rtimes\OO^\times})$ of the $ax+b$\nb-monoid of an order~$\OO$ in a number field~$K$.

\begin{cor}\label{cor:uniqueness-orders}
Let $\OO$ be an order in an algebraic number field~$K$, and let $W \colon \OO \rtimes \OO^\times \to B$ be a map into a $\Cst$-algebra~$B$ satisfying relations \textup{(T1)--(T4)} from \defref{def:toeplitz-semigroup}. 
Then there is a  \Star homomorphism $ \pi_W\colon\Toepr(\OO\rtimes\OO^\times) \to \Cst(W)$ such that 
$\pi_W(L_p) =  W_p$ for all $p\in \OO\rtimes\OO^\times$, and $\pi_W$ is an isomorphism if and only if 
$W$ is jointly proper. 
\begin{proof} We will establish that  the assumptions of \thmref{thm:uniqueiftopfreejointproper} hold for $\OO\rtimes\OO^\times$. First, since the group $K\rtimes K^*$ is amenable, \corref{cor:ifamenallequiv} and \thmref{thm:modifiedresult} 
show that the left regular representation $\lambda^+$ is faithful. Then so is the conditional expectation $E_u\colon\Toepu(\OO\rtimes\OO^\times)\to D_u$ by \corref{cor:reg-kernel}.

Next we show that ${\OO\rtimes\OO^\times}$ satisfies condition (3) of  \thmref{thm:idealsifftopfree}. Since $\OO$ satisfies the assumptions of  \proref{pro:manyideals}, it suffices to show that for every~$x\in\OO^\times$ and  every finite collection $\mc\subset\I(\OO)$ of proper ideals there exists $a\in \OO\setminus \bigcup_{S\in\mc} S$ such that $x\notin a\OO$. Let $x\in \OO^\times$ and $\mc$ be such a collection.  Suppose first that $x$ is not invertible, that is, $x\OO$ is a proper ideal in $\OO$. The collection  $\mc_x\coloneqq \{ S'\mid S' \text{ is a proper ideal  in $\OO$ and } x\in S'\}$ is finite because $\OO/x\OO$ is finite. Consider now the finite collection
$\mc \cup \mc_x$. Since each proper ideal in $\OO$ can contain at most one rational prime, we may choose a prime number $p\in \NN$ such that 
\[
p\notin  \Big(\bigcup_{S \in \mc} S \Big) \cup \Big(\bigcup_{S' \in \mc_x} S' \Big).
\]
Then $p\in \OO\setminus \bigcup_{S\in\mc} S$, and $x\notin p\OO$ because, otherwise,
$p\OO$ would be a proper ideal between~$x\OO$ and~$\OO$,  hence in $\mc_x$ and this would  contradict
the choice of $p$.
Suppose now that $x$ is invertible, that is, $x\OO=\OO$ and just take a rational prime 
\[p\notin  \Big(\bigcup_{S \in \mc} S \Big).\]
Then $p\in \OO\setminus \bigcup_{S\in\mc} S$, and $x\notin p\OO$ because  $1/p$ is not an algebraic integer.

We have verified that the conditional expectation $E_u\colon\Toepu(\OO\rtimes\OO^\times)\to D_u$ is faithful, and that the equivalent conditions of \thmref{thm:idealsifftopfree} hold, so
 \thmref{thm:uniqueiftopfreejointproper}  completes the proof. \end{proof}
\end{cor}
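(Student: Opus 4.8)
The statement to prove is \corref{cor:uniqueness-orders}, and the strategy is to reduce it to the already-established general uniqueness theorem \thmref{thm:uniqueiftopfreejointproper}. That theorem has two hypotheses: first, that the conditional expectation $E_u\colon \Toepu(\OO\rtimes\OO^\times)\to D_u$ is faithful, which (via \corref{cor:reg-kernel}) is equivalent to the left regular representation $\lambda^+$ being an isomorphism; second, that one of the equivalent conditions of \thmref{thm:idealsifftopfree} holds, for which the most convenient formulation is condition (3): topological freeness of the action of the unit group $P^*$ on constructible ideals. So the plan is to verify these two hypotheses for $P = \OO\rtimes\OO^\times$ and then quote \thmref{thm:uniqueiftopfreejointproper} verbatim.

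\textbf{Step 1: amenability and faithfulness of $E_u$.} The monoid $\OO\rtimes\OO^\times$ embeds in the group $G = K\rtimes K^*$, where $K$ is the fraction field of $\OO$. This group is solvable, hence amenable, so \corref{cor:ifamenallequiv} (or \thmref{thm:modifiedresult}) applies and gives that $\lambda^+\colon \Toepu(P)\to\Toepr(P)$ is faithful. Then \corref{cor:reg-kernel} tells us $\ker\lambda^+ = \{b : E_u(b^*b) = 0\} = \{0\}$, so $E_u$ is faithful. This step is essentially a citation.

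\textbf{Step 2: topological freeness via \proref{pro:manyideals}.} Here I would invoke \proref{pro:manyideals}, which reduces \thmref{thm:idealsifftopfree}(3) for $\OO\rtimes\OO^\times$ to the following purely ring-theoretic statement, valid because $\OO/I$ is finite for every $I\in\I(\OO)$ (orders have finite index in $\OO_K$, and constructible ideals have finite index): for every $x\in\OO^\times$ and every finite collection $\mc\subset\I(\OO)$ of proper constructible ideals, there exists $a\in\OO\setminus\bigcup_{S\in\mc}S$ with $x\notin a\OO$. To produce such an $a$, I would use a rational prime $p\in\NN$: since each proper ideal of $\OO$ contains at most one rational prime (the ideal $p\OO\cap\ZZ$ is a prime of $\ZZ$), and there are only finitely many ideals to avoid, one can choose $p$ not lying in any $S\in\mc$ nor in any of the finitely many proper ideals containing $x$ (when $x$ is non-invertible; the collection $\mc_x$ of such ideals is finite because $\OO/x\OO$ is finite). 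Then $p\in\OO\setminus\bigcup_{S\in\mc}S$ and $x\notin p\OO$: if $x\in p\OO$ with $x$ non-invertible, $p\OO$ would be a proper ideal containing $x$, contradicting the choice of $p$; if $x$ is invertible then $1/p$ would be an algebraic integer, which is false. This dispatches both cases.

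\textbf{Main obstacle.} The conceptual content is all in the earlier machinery; the only genuinely new work is Step 2, and the subtle point there is the choice of $p$: one must notice that proper ideals of $\OO$ contain at most one rational prime so that finitely many primes suffice to evade all forbidden ideals, and one must correctly set up the auxiliary finite family $\mc_x$ for non-invertible $x$. The rest is assembling citations. I do not expect any serious difficulty, but care is needed to handle the invertible-$x$ case separately (where $\mc_x$ is empty and the obstruction $x\notin p\OO$ comes instead from $1/p\notin\OO$).
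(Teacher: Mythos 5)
Your proposal is correct and follows essentially the same route as the paper's proof: faithfulness of $E_u$ via amenability of $K\rtimes K^*$ together with \corref{cor:ifamenallequiv}, \thmref{thm:modifiedresult} and \corref{cor:reg-kernel}, then verification of \thmref{thm:idealsifftopfree}(3) through \proref{pro:manyideals} by choosing a rational prime avoiding the finitely many forbidden ideals (using that each proper ideal of $\OO$ contains at most one rational prime, with the auxiliary finite family $\mc_x$ for non-invertible $x$ and the observation $1/p\notin\OO_K$ for invertible $x$). No substantive differences from the paper's argument.
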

\begin{rem} The key feature in the proof of \corref{cor:uniqueness-orders} is the availability of an infinite set of rational primes. So a slight modification of the proof  also yields a uniqueness theorem for representations of  the $ax+b$\nb-monoid of a congruence monoid. See \cite[Theorem~6.1]{MR4042889}  for a sharper result that only requires the jointly proper condition  at a restricted class of ideals. 
\end{rem}

\subsection{Independence fails for every nonmaximal order.}  The multiplicative monoid $\OO_K^\times$  and the $ax+b$-monoid $\OO_K\rtimes\OO_K^\times$ of the ring $\OO_K$ of algebraic integers in a number field~$K$ both satisfy independence  \cite[Lemma~5.6.36]{CELY}, see also \cite[Lemma~2.30]{Li:Semigroup_amenability}. We will show that, in contrast, when $\OO$ is a nonmaximal order  in~$K$, then $\OO^\times$ does not satisfy  independence, and  neither does 
$\OO\rtimes\OO^\times$, because \cite[Lemma~2.12]{Li:ax+b} shows that independence of $R^\times$ and of  $R\rtimes R^\times$ are equivalent for all integral domains. 
It follows that the behaviour exhibited by the example $\ZZ[\sqrt{-3}]$ given in \cite[Section~5.6.5]{CELY} is typical for nonmaximal orders. We will discuss this example later in detail following a recipe that could be used with every nonmaximal order, see Example~\ref{exa:failuresqrt-3}.
 
 Let $\OO$ be an order in an algebraic  number field~$K$. The  \emph{conductor} of $\OO$ is  given by $$\mfc\coloneqq(\OO :\OO_K)=\{x\in K\mid x\OO_K\subset\OO\},$$ 
 where we use $(R:I) \coloneqq \{x\in Q \mid xI \subset R\}$ to indicate  the ideal quotient of an integral domain $R$ with quotient field $Q$ over a fractional ideal $I$. Notice that $\mfc$ is an ideal in both~$\OO$ and~$\OO_K$, and is actually the largest ideal of~$\OO_K$ contained in~$\OO$.
 Since~$\OO$ has full rank in~$\OO_K$, there is a positive integer $m\in \ZZ$ such that $m\OO_K\subset \OO$. For example, one such integer is the index $[\OO_K\colon\OO]$. In particular, we may indeed regard $\OO_K$ as a fractional $\OO$\nb-ideal. Whenever $m\OO_K\subset \OO$, then $m\OO_K$ is an ideal of~$\OO_K$ contained in the conductor $\mfc$ of~$\OO$. 
 
  A key concept for us to analyze the failure of independence in $\OO$ is that of a divisorial fractional ideal. Recall that  a nonzero fractional ideal $I$ of an integral domain $R$ is said to be \emph{divisorial} whenever $(R:(R:I)) =I$;
equivalently, whenever $I$ is the intersection of an arbitrary collection of principal fractional ideals.  Notice that a nonzero ideal $I$ of a noetherian integral domain $R$ is divisorial if and only if it is a constructible ring-theoretic ideal, e.g. if and only if $I^\times$ is a constructible ideal in $R^\times$, because the arbitrary intersection can be replaced by a finite one.
Interested in the question of whether $m\OO_K$ is a constructible ring-theoretic ideal of~$\OO$, we prove next a number-theoretic result that is possibly well-known to the experts, but for which we have been unable to find a reference.   

\begin{lem}\label{lem:ok-divisorial} Let $K$ be an algebraic number field and $\OO_K$ the ring of integers in~$K$. Let $\OO$ be an order in~$K$. Then $\OO_K$ is divisorial as a fractional $\OO$\nb-ideal. 
\begin{proof} Of course we may assume that $\OO$ is a nonmaximal order in~$K$. We have to show that $$\OO_K=(\OO:(\OO:\OO_K))=(\OO:\mfc)=\{x\in K\mid x\mfc\subset\OO\}.$$ Let $g\in K$ be such that $g\mfc\subset\OO$. If $h\in \mfc$, then $h\OO_K\subset \mfc$ and so  $$gh\OO_K\subset g\mfc\subset\OO.$$ In particular, $gh\in\mfc$ and hence $g\mfc$ is contained in~$\mfc$, not just in~$\OO$. 

Now since $\OO_K$ is a Dedekind domain and $\mfc\subset\OO_K$ is a nonzero ideal, it follows that $\mfc$ is invertible (as a fractional $\OO_K$\nb-ideal) and so there exists a fractional $\OO_K$\nb-ideal $\mfc^{-1}$ with $\mfc^{-1}\mfc=\OO_K.$ Thus $$g\OO_K=\mfc^{-1}g\mfc\subset\mfc^{-1}\mfc=\OO_K.$$ This shows that $g\OO_K\subset \OO_K$ and therefore $g\in \OO_K$ as wished.
\end{proof}
\end{lem}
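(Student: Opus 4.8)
The statement to prove is \lemref{lem:ok-divisorial}: for an order $\OO$ in a number field $K$, the ring of integers $\OO_K$ is divisorial as a fractional $\OO$\nb-ideal, i.e.\ $(\OO:(\OO:\OO_K)) = \OO_K$. The plan is to compute the inner ideal quotient first, then the outer one. The inner quotient $(\OO:\OO_K)$ is by definition the conductor $\mfc = \{x\in K\mid x\OO_K\subset\OO\}$, so the whole task reduces to showing $(\OO:\mfc) = \OO_K$. One inclusion is automatic: since $\mfc$ is an ideal of $\OO_K$ (indeed the largest ideal of $\OO_K$ contained in $\OO$), every $x\in\OO_K$ satisfies $x\mfc\subseteq\mfc\subseteq\OO$, giving $\OO_K\subseteq(\OO:\mfc)$. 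So the real content is the reverse inclusion $(\OO:\mfc)\subseteq\OO_K$.

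For the reverse inclusion, the first step is to upgrade the defining condition $g\mfc\subseteq\OO$ to the stronger $g\mfc\subseteq\mfc$. This is where one uses the internal structure of $\mfc$: because $\mfc$ is an $\OO_K$-module, for any $h\in\mfc$ we have $h\OO_K\subseteq\mfc$, hence $gh\OO_K\subseteq g\mfc\subseteq\OO$; the latter says precisely that $gh\in\mfc$ (again by definition of $\mfc$ as the conductor), and since $h\in\mfc$ was arbitrary, $g\mfc\subseteq\mfc$. The second step invokes Dedekind theory: $\OO_K$ is a Dedekind domain and $\mfc$ is a nonzero ideal of $\OO_K$ (nonzero because, e.g., the index $m=[\OO_K:\OO]$ gives $m\OO_K\subseteq\OO$, so $m\in\mfc$), hence $\mfc$ is an invertible fractional $\OO_K$-ideal with inverse $\mfc^{-1}$ satisfying $\mfc^{-1}\mfc=\OO_K$. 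Multiplying $g\mfc\subseteq\mfc$ through by $\mfc^{-1}$ yields $g\OO_K = \mfc^{-1}(g\mfc)\subseteq\mfc^{-1}\mfc=\OO_K$, and $g\OO_K\subseteq\OO_K$ forces $g\in\OO_K$ (take $x=g\cdot 1$). This closes the proof.

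The main conceptual obstacle — and the only genuinely nontrivial point — is the passage from $g\mfc\subseteq\OO$ to $g\mfc\subseteq\mfc$; without it, multiplying by $\mfc^{-1}$ only gives $g\OO_K\subseteq\mfc^{-1}\OO$, which is not obviously contained in $\OO_K$. The trick that makes it work is that the conductor is not merely an $\OO$-ideal sitting inside $\OO$ but is closed under multiplication by all of $\OO_K$, and this self-absorption propagates: $g$-translates of $\mfc$ that land in $\OO$ automatically land back in $\mfc$. Everything else is routine: the identification of $(\OO:\OO_K)$ with $\mfc$ is just unwinding definitions, the nonvanishing of $\mfc$ follows from full rank of $\OO$ in $\OO_K$, and invertibility of nonzero ideals is a standard property of Dedekind domains.
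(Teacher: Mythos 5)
Your proposal is correct and follows essentially the same route as the paper's proof: identify $(\OO:\OO_K)$ with the conductor $\mfc$, upgrade $g\mfc\subseteq\OO$ to $g\mfc\subseteq\mfc$ using that $\mfc$ is an $\OO_K$-module, and then multiply by $\mfc^{-1}$, which exists because $\OO_K$ is Dedekind and $\mfc\neq 0$. The only differences are cosmetic: you make explicit the easy inclusion $\OO_K\subseteq(\OO:\mfc)$ and the nonvanishing of $\mfc$ via the index $[\OO_K:\OO]$, details the paper leaves implicit.
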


In order to prove that $\OO^\times$ and $\OO\rtimes\OO^\times$ do not satisfy independence whenever $\OO$ is a nonmaximal order in $K$, we introduce first some notation. Given $h\in \OO_K$, we denote by $n_h$ the positive generator of the ideal $h\OO_K \cap \ZZ$ of $\ZZ$; equivalently, $n_h$ is the least positive integer such that $n_h\OO_K\subset h\OO_K$. Then $h$ divides $n_h$ in $\OO_K$ and we let 
  \[
  h' \coloneqq \frac{n_h} {h}\in\OO_K.
  \]

\begin{prop}\label{prop:dependence} Let $\OO$ be a nonmaximal order in a number field~$K$. Let $m\in \ZZ$ be the least positive integer such that $m\OO_K\subset \OO$ and let $H\subset \OO_K$ be a complete set of representatives for the nontrivial cosets of $m\OO_K$ in~$\OO_K$. For each $h\in H$  define a fractional $\OO$-ideal $I_h$ by 
$$I_h\coloneqq  \begin{cases}
 \tfrac{1}{h'}m\OO\cap \OO_K & \text{ if }  h' \in \mfc,\\
\tfrac{1}{h'}\OO\cap \OO_K &\text{ if } h'\not\in\mfc.
\end{cases}$$ 
Then  $h + m\OO_K \subset I_h \subsetneq \OO_K$ and
$$\OO_K=\bigcup_{\substack{h\in H}}I_h.$$ As a consequence, the monoids $\OO^\times$ and ${\OO\rtimes\OO^\times}$ do not satisfy independence.

\begin{proof} We begin by proving that $\OO_K=\bigcup_{\substack{h\in H}}I_h$. Since $H$ is a complete set of representatives for the nontrivial cosets of~$m\OO_K$ in~$\OO_K$, we have $$\OO_K=m\OO_K\sqcup\Big(\bigsqcup_{\substack{h\in H}}\Big(h+m\OO_K\Big)\Big).$$
Next we show that each nontrivial class $h+m\OO_K$ can be replaced  by the corresponding $I_h$. We let $h\in H$ and compute
\begin{equation*}
\begin{aligned}
 h+m\OO_K&=\frac{n_hh}{n_h}+\frac{n_h}{n_h}m\OO_K\\&=\frac{n_hh}{n_h}+\frac{hh'}{n_h}m\OO_K\\&=\frac{h}{n_h}\Big(n_h+mh'\OO_K\Big)\\&=\frac{1}{h'}\Big(n_h+mh'\OO_K\Big)\subset \frac{1}{h'}(\ZZ +\OO) =\frac{1}{h'}\OO.
\end{aligned}
\end{equation*} 
Therefore $h+m\OO_K\subset  \frac{1}{h'}\OO\cap\OO_K$. When $h'$ belongs to the conductor~$\mfc$ of~$\OO$, we further have that $n_h = h h' \in \mfc \cap \ZZ = m\ZZ$ and $h'\OO_K\subset \mfc \subset \OO$. Thus the inclusion in the last line of the above computation can be strengthened to $\frac{1}{h'}\big(n_h+mh'\OO_K\big)\subset \frac{1}{h'}\big(m \ZZ + m\OO \big)= \frac{1}{h'}m\OO$. 
This yields  
$$ h+m\OO_K\subset  \frac{1}{h'}m\OO\cap\OO_K$$ 
in the case $h'\in\mfc$
and completes the proof  that $\OO_K= m\OO_K \cup \big(\bigcup_{\substack{h\in H}}I_h\big)$.
In order to see that the trivial class $m\OO_K$  is superfluous in this union, observe that $m\OO_K=h+m\OO_K-h\subset I_h$ for every $h\in H$. Hence $\OO_K= \bigcup_{\substack{h\in H}}I_h$. 

It remains to show that  $I_h$ is properly contained in~$\OO_K$ for every $h\in H$. Let $h\in H$ and suppose first that  $h'\not\in\mfc$. Looking for a contradiction, assume that  $$\OO_K= \tfrac{1}{h'}\OO\cap \OO_K.$$ Multiplying this equality by $h'\in\OO_K$, we deduce that $h'\OO_K\subset\OO$ and so $h'$ is in the conductor~$\mfc$. This contradicts our assumption that $h'\not\in\mfc$ and thus we must have $I_h\neq\OO_K$ whenever $h\in H$ satisfies $h'\not\in\mfc$.

Suppose next that $h'\in\mfc$.  Again looking for a contradiction,  assume that $$\OO_K= \tfrac{1}{h'}m\OO\cap \OO_K.$$ In this case we deduce that $\frac{h'}{m}\OO_K\subset \OO$, and hence $\frac{h'}{m}$ is in the conductor~$\mfc$ of~$\OO$. In particular $\frac{h'}{m}$ is in~$\OO_K$, so that
$\frac{n_h}{m} = h\frac{h'}{m} \in \QQ_+ \cap \OO_K $ is a positive integer such that $\frac{n_h}{m} \OO_K =  h\frac{h'}{m} \OO_K \subset  h\OO_K$. But since $$0\neq h\frac{h'}{m}=\frac{n_h}{m}< n_h$$ because $m\neq 1$, this contradicts our choice of~$n_h$ as the smallest positive integer satisfying $n_h\OO_K\subset h\OO_K$. Therefore $I_h\neq \OO_K$ also for $h\in H$ with $h'\in\mfc$.

Finally, by \lemref{lem:ok-divisorial} $\OO_K$ is divisorial as a fractional $\OO$-ideal, and thus it is a finite intersection of principal fractional $\OO$\nb-ideals because~$\OO$ is a noetherian domain. Hence $m\OO_K$ is a constructible ring-theoretic ideal of~$\OO$ and so is~$ mI_h$ for each $h\in H$; moreover, $mI_h\neq m\OO_K$. Thus  we have realized~$m\OO_K^\times$ as a union 
 $$m\OO_K^\times=\bigcup_{\substack{h\in H}} mI_h^{\times}$$ 
 of constructible ideals in $\OO^\times$ none of which is equal to $m\OO_K^\times$.
 We conclude that $\OO^\times$ and the corresponding $ax+b$\nb-semigroup $\OO\rtimes\OO^\times$ do not satisfy independence.
\end{proof}
\end{prop}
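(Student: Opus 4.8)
The plan is to verify the displayed set-theoretic equality $\OO_K=\bigcup_{h\in H}I_h$, then check that each $I_h$ is a \emph{proper} fractional $\OO$-ideal, and finally translate this into a failure of independence via \lemref{lem:ok-divisorial} and \cite[Lemma~2.12]{Li:ax+b}. The starting point is the partition $\OO_K=m\OO_K\sqcup\bigsqcup_{h\in H}(h+m\OO_K)$ coming from the choice of $H$. The crucial computation is to rewrite each nontrivial coset $h+m\OO_K$: writing $n_h=hh'$ and factoring out $h/n_h=1/h'$, one gets
\[
h+m\OO_K=\tfrac{1}{h'}\bigl(n_h+mh'\OO_K\bigr)\subset\tfrac{1}{h'}(\ZZ+\OO)=\tfrac{1}{h'}\OO,
\]
hence $h+m\OO_K\subset\tfrac{1}{h'}\OO\cap\OO_K=I_h$ when $h'\notin\mfc$. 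When $h'\in\mfc$ one upgrades this: $n_h=hh'\in\mfc\cap\ZZ=m\ZZ$ and $h'\OO_K\subset\mfc\subset\OO$, so $n_h+mh'\OO_K\subset m\ZZ+m\OO=m\OO$, giving $h+m\OO_K\subset\tfrac1{h'}m\OO\cap\OO_K=I_h$. Together these inclusions show $\bigsqcup_h(h+m\OO_K)\subset\bigcup_h I_h\subset\OO_K$; and the trivial class is absorbed because $m\OO_K=(h+m\OO_K)-h\subset I_h$ for any single $h\in H$ (using that $I_h$ is an $\OO$-module containing $h$). This proves $\OO_K=\bigcup_{h\in H}I_h$.

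Next I would show $I_h\subsetneq\OO_K$ for every $h\in H$, arguing by contradiction in the two cases. If $h'\notin\mfc$ and $I_h=\tfrac1{h'}\OO\cap\OO_K=\OO_K$, multiplying by $h'$ gives $h'\OO_K\subset\OO$, i.e.\ $h'\in\mfc$, a contradiction. If $h'\in\mfc$ and $I_h=\tfrac1{h'}m\OO\cap\OO_K=\OO_K$, multiplying by $h'/m$ gives $\tfrac{h'}{m}\OO_K\subset\OO$, so $\tfrac{h'}m\in\mfc\subset\OO_K$; then $\tfrac{n_h}m=h\tfrac{h'}m\in\OO_K\cap\QQ_{>0}=\ZZ_{>0}$ satisfies $\tfrac{n_h}m\OO_K\subset h\OO_K$, yet $0<\tfrac{n_h}m=h\tfrac{h'}{m}<n_h$ (strict since $m\neq1$, as $\OO$ is nonmaximal), contradicting the minimality of $n_h$. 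This is the step I expect to be the most delicate, since it is where the hypothesis that $\OO$ is nonmaximal (hence $m\neq 1$) is really used and where one must be careful with which ambient ring the ideal quotients are taken in; the rest is bookkeeping.

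Finally I would assemble the conclusion. By \lemref{lem:ok-divisorial}, $\OO_K$ is divisorial as a fractional $\OO$-ideal, and since $\OO$ is noetherian a divisorial ideal is a finite intersection of principal fractional ideals; hence $m\OO_K$ is a constructible ring-theoretic ideal of $\OO$, and likewise each $mI_h$ (being $m\OO\cap h'_h^{-1}m^{j}\OO_K$-type intersections of finitely many principal fractional $\OO$-ideals, using that $\OO_K$ itself is such an intersection). Since $mI_h\subsetneq m\OO_K$ for each $h$, scaling the identity $\OO_K=\bigcup_h I_h$ by $m$ and removing $0$ exhibits
\[
m\OO_K^\times=\bigcup_{h\in H} mI_h^\times
\]
as a union of finitely many constructible ideals of the multiplicative monoid $\OO^\times$, none of which equals $m\OO_K^\times$. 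This is precisely a failure of the independence condition for $\OO^\times$ (equivalently, the characteristic functions of these constructible ideals are linearly dependent). By \cite[Lemma~2.12]{Li:ax+b}, independence of $R^\times$ and of $R\rtimes R^\times$ are equivalent for any integral domain $R$, so $\OO\rtimes\OO^\times$ also fails independence, completing the proof.
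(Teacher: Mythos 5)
Your proposal is correct and follows essentially the same route as the paper: the same coset decomposition $\OO_K=m\OO_K\sqcup\bigsqcup_{h\in H}(h+m\OO_K)$, the same factorization $h+m\OO_K=\tfrac{1}{h'}(n_h+mh'\OO_K)$ with the strengthened inclusion when $h'\in\mfc$, the same two contradiction arguments for properness of $I_h$ (using $m\neq 1$ and the minimality of $n_h$), and the same endgame via \lemref{lem:ok-divisorial} and \cite[Lemma~2.12]{Li:ax+b}. Only your parenthetical describing why $mI_h$ is constructible is garbled in notation, but the underlying reason (each $I_h$ is a principal fractional ideal intersected with $\OO_K$, itself a finite intersection of principal fractional ideals) is the one the paper uses.
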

\begin{rem} In an algebraic number field $K$ with maximal order $\OO_K$, every nonzero fractional $\OO_K$-ideal is invertible, hence divisorial. By \cite[Corollary~7.1]{Li:ax+b}, among the orders in $K$ for which all   fractional ideals are divisorial, $\OO_K$ is the only one that satisfies independence. We have shown in \lemref{lem:ok-divisorial} that for an arbitrary order $\OO$ in $K$ the fractional $\OO$-ideal $\OO_K$ is divisorial, and this was enough to conclude that nonmaximal orders do not satisfy independence.  
We do not know whether all fractional $\OO$-ideals are divisorial, but many are. For example, since the map $\mfa \mapsto  \mfa\cap\OO$ is a bijection of the set of (integral)
 ideals in $\OO_K$ that are relatively prime to the conductor $\mfc$ onto the  ideals in $\OO$ relatively prime to $\mfc$,
 then the latter inherit from the former the property of being divisorial.
Since the fraction field of $\OO$  is the same as that of $\OO_K$, it also follows from \lemref{lem:ok-divisorial}
that the integral ideals of $\OO_K$ that are contained in $\OO$ are divisorial, in particular, the conductor is divisorial.  \end{rem}

\begin{example} \label{exa:failuresqrt-3} Let $K = \QQ(\sqrt{-3})$ and consider the order $\OO=\ZZ[\sqrt{-3}]$ in~$K$.  So $\OO$ is a nonmaximal order as the ring of algebraic integers~$\OO_K$ is given by
\[
\OO_K=\ZZ[\tfrac{1+\sqrt{-3}}{2}]=\ZZ + \tfrac{1+i\sqrt{3}}{2} \ZZ =  \big\{\tfrac{1}{2} (x  +y i\sqrt{3} ) \mid x, y \in \ZZ \text{ and } x-y = 0\pmod 2\big\}
\]
because $-3\equiv 1 \pmod 4$. We have $$2\OO_K= \big\{a  +b i\sqrt{3} \mid a, b \in \ZZ \text{ and } a-b= 0\pmod 2\big\}\subset\OO,$$ and notice that in this case $2\OO_K$ is the conductor~$\mfc$ of~$\OO$. Let $\omega\coloneqq\tfrac{1+\sqrt{-3}}{2}$. Then $$H\coloneqq\{1,\omega,\omega^2\}\subset\OO_K$$ is a complete set of representatives for the nontrivial cosets of $2\OO_K$ in $\OO_K$. Since each $h\in H$ is invertible in~$\OO_K$, it follows that $n_h=1$, $h=\tfrac{1}{h'}$ and so $h'\not\in\mfc$. Hence \proref{prop:dependence} gives $$\OO_K=\OO\cup\omega\OO\cup\omega^2\OO.$$ Multiplying this equality by~$2$ and removing the zero element from the ideals involved, we obtain \begin{equation}\label{eq:basic-failure-3} 2\OO_K^\times=2\OO^\times\cup2\omega\OO^\times\cup2\omega^2\OO^\times.\end{equation} Notice that $2\OO_K^\times$ corresponds to the constructible ideal $K(2,2\omega)=K(2,1+i\sqrt{3})=K(2\omega,2,2,2\omega)$ of $\OO^\times$ because $$2\OO_K^\times=2(\tfrac{1}{2}\OO^\times\cap\tfrac{1}{2\omega}\OO^\times)=\OO^\times\cap\tfrac{2}{2\omega}\OO^\times,$$ while the ideals appearing in the union above are simply principal ideals of~$\OO^\times$. We have arrived exactly at the instance of failure of independence for $\OO^\times=\ZZ[\sqrt{-3}]^\times$ provided in \cite[Section~5.6.5]{CELY}.
\end{example}

\begin{example} Let $K=\QQ(\sqrt{-1})$ and $\OO=\ZZ[2\sqrt{-1}]=\ZZ+2\ZZ i$. In this case $\OO_K$ is the ring of Gaussian integers $\ZZ[\sqrt{-1}]=\ZZ+\ZZ i$. The conductor of~$\OO$ is $\mfc=2\OO_K=2\ZZ+2\ZZ i$. One can show that $H=\{1,i,1+i\}$ is a complete set of representatives for the nontrivial cosets of~$2\OO_K$ in~$\OO_K$. We have $2\OO_K=(1+i)(1-i)\OO_K\subset (1+i)\OO_K$, so that $n_{1+i}=2$. Thus \proref{prop:dependence} gives $$\OO_K=\OO\cup i\OO\cup\big(\tfrac{1+i}{2}\OO\cap\OO_K\big).$$ We then obtain a concrete failure of independence in the monoid $\OO^\times$ given by $$2\OO_K^\times=\OO^\times\cap i\OO^\times=2\OO^\times\cup2i\OO^\times\cup\big((1+i)\OO^\times\cap\OO^\times\cap i\OO^\times\big).$$
\end{example}
 Orders in quadratic fields have the special feature that the conductor $\mfc$ coincides with the ideal $m\OO_K$, so it is interesting to analyze a more generic example, in which  $m\OO_K$ is properly contained in $\mfc$.
 \begin{example}
Consider the cubic field $K=\QQ(\sqrt[3]{19})=\QQ+\QQ\sqrt[3]{19}+\QQ\sqrt[3]{19^2}$, and let $\OO$ be the order $\ZZ[\sqrt[3]{19}]=\ZZ+\ZZ\sqrt[3]{19}+\ZZ\sqrt[3]{19^2}$ in~$K$, see \cite[Example~2.3]{Con}. The ring of algebraic integers $\OO_K$ is $$\OO_K=\ZZ+\ZZ\sqrt[3]{19}+\ZZ\frac{1+\sqrt[3]{19}+\sqrt[3]{19^2}}{3}.$$ Clearly $3$ is the smallest positive integer~$m$ with the property that $m\OO_K\subset\OO$. The conductor $\mfc$ of~$\OO$ is $$\mfc=\{a+b\sqrt[3]{19}+c\sqrt[3]{19^2}\mid a+b+c\equiv 0\pmod3\},$$ while $3\OO_K$ consists of the ideal in~$\OO$ given by all elements of the form $a+b\sqrt[3]{19}+c\sqrt[3]{19}$ with $a,b,c$ integers that are equal modulo~$3$. Hence  $3\OO_K\subsetneq\mfc$. 

We set $\omega\coloneqq \frac{1+\sqrt[3]{19}+\sqrt[3]{19^2}}{3}$. So $\{1,\sqrt[3]{19},\omega\}$ is a $\ZZ$\nb-basis for~$\OO_K$. One can show that the set $$H\coloneqq\{q_1+q_2\sqrt[3]{19}+q_3\omega\mid q_i\in\{0,1,2\}\text{ for $i=1,2,3$ and }q_1+q_2+q_3\neq 0\}$$ is a complete set of representatives for the nontrivial cosets of~$3\OO_K$ in~$\OO_K$. Let $h=q_1+q_2\sqrt[3]{19}+q_3 \omega\in H$ and let $n\in \ZZ$. Then $h$ divides~$n$ in~$\OO_K$ if and only if there are integers $r_1,r_2$ and $r_3$ such that $$h(r_1+r_2\sqrt[3]{19}+r_3\omega)=n.$$ Thus, given $h$, we seek the smallest positive integer $n_h$, for which the system
\begin{equation}\begin{array}{rcrcrcc}\label{eq:linear-system}
q_1x&+&(6q_3-q_2)y&+&(6q_2+4q_3)z&=&n_h\\
q_2x&+&(q_1-q_2)y&+&2q_3z&=&0\\
q_3x&+&(3q_2+q_3)y&+&(q_1+q_2+q_3)z&=&0
\end{array}
\end{equation}
has a solution $(r_1,r_2,r_3) \in \ZZ^3$, which gives $h' = r_1+r_2\sqrt[3]{19}+r_3\omega$.

All the possible fractional $\OO$\nb-ideals obtained from the elements in~$H$ as in \proref{prop:dependence} are displayed below in Table~\ref{tab:table1}. We have eliminated the obvious repetitions resulting from pairs
$h_1, h_2\in H$ with $h_2=2h_1$, which yield  $h_1'=h_2'$ and hence $I_{h_1}=I_{h_2}$. Notice that the table has three elements $h'$ that lie in the conductor, namely $-1+\dn$, $-4-2\dn+3\omega$ and $-37-2\dn+18\omega$; their respective 
rows are indexed by $(0,0,1)$, $(2,1,1)$, and $(1,2,2)$.
{\small
\begin{table}[h!]\def\arraystretch{1.6}
    \caption{}
    \label{tab:table1}
    \begin{tabular}{|c|c|c|c|c|} 
             \hline
         \textbf{$(q_1,q_2,q_3)$} &  \textbf{$n_h$} & \textbf{$h'$} & \textbf{$I_h$}\\
          \hline
 $(1,0,0)$ &   $1$ & $1$ & $\OO$ \\
      \hline
$(0,1,0)$ &   $19$ & $-1-\sqrt[3]{19}+3\omega$ & $\tfrac{\sqrt[3]{19}}{19}\OO\cap \OO_K$ \\
      \hline
$(1,1,0)$ &   $20$ & $-2\sqrt[3]{19}+3\omega$ & $\tfrac{1+\sqrt[3]{19}}{20}\OO\cap \OO_K$ \\
      \hline
$(2,1,0)$ &   $9$ & $1-\sqrt[3]{19}+\omega$ & $\tfrac{2+\sqrt[3]{19}}{9}\OO\cap \OO_K$ \\
      \hline
$(1,2,0)$ &   $51$ & $-1-2\sqrt[3]{19}+4\omega$ & $\tfrac{1+2\sqrt[3]{19}}{51}\OO\cap\OO_K$ \\
      \hline
 $(0,0,1)$ &   $6$ & $-1+\dn$ & $\tfrac{\omega}{2}\OO\cap\OO_K$ \\
      \hline
 $(1,0,1)$ &   $8$ & $2\dn-\omega$ & $\tfrac{1+\omega}{8}\OO\cap\OO_K$ \\
      \hline
 $(2,0,1)$ &   $6$ & $2+\dn-\omega$ & $\tfrac{2+\omega}{6}\OO\cap\OO_K$ \\
      \hline
$(0,1,1)$ &   $10$ & $-2+\omega$ & $\tfrac{\dn+\omega}{10}\OO\cap\OO_K$ \\
      \hline
$(1,1,1)$ &   $27$ & $-8-\dn+4\omega$ & $\tfrac{1+\dn+\omega}{27}\OO\cap\OO_K$ \\
      \hline
$(2,1,1)$ &   $12$ & $-4-2\dn+3\omega$ & $\tfrac{2+\dn+\omega}{4}\OO\cap\OO_K$ \\
      \hline
$(0,2,1)$ &   $60$ & $-5-\dn+4\omega$ & $\tfrac{2\dn+\omega}{60}\OO\cap\OO_K$ \\
      \hline
$(1,2,1)$ &   $66$ & $-6-2\dn+5\omega$ & $\tfrac{1+2\dn+\omega}{66}\OO\cap\OO_K$ \\
      \hline
$(2,2,1)$ &   $82$ & $-7-4\dn+7\omega$ & $\frac{2+2\dn+\omega}{82}\OO\cap\OO_K$ \\
          \hline
 $(1,0,2)$ &   $75$ & $-5+8\dn-2\omega$ & $\tfrac{1+2\omega}{75}\OO\cap\OO_K$ \\
      \hline
$(0,1,2)$ &   $153$ & $-23+5\dn+7\omega$ & $\tfrac{\dn+2\omega}{153}\OO\cap\OO_K$ \\
      \hline
$(1,1,2)$ &   $94$ & $-20+4\dn+5\omega$ & $\tfrac{1+\dn+2\omega}{94}\OO\cap\OO_K$ \\
      \hline
$(2,1,2)$ &   $15$ & $-5+\dn+\omega$ & $\tfrac{2+\dn+2\omega}{15}\OO\cap\OO_K$ \\
      \hline
$(1,2,2)$ &   $303$ & $-37-2\dn+18\omega$ & $\tfrac{1+2\dn+2\omega}{101}\OO\cap\OO_K$ \\
      \hline
    \end{tabular}
\end{table}
}

We already know from \lemref{lem:ok-divisorial} that $\OO_K$ is a divisorial fractional $\OO$\nb-ideal, and hence $3\OO_K$ is a  constructible ring-theoretic ideal of~$\OO$. But it is not difficult to describe them explicitly as 

$$ \OO_K = \tfrac{1}{1-\sqrt[3]{19}}\OO\cap \tfrac{1}{3}\OO \qquad \text{and} \qquad 3\OO_K=\tfrac{3}{1-\sqrt[3]{19}}\OO\cap \OO .$$ 
Indeed, take an arbitrary element $x=a+b\sqrt[3]{19}+c\sqrt[3]{19^2}\in\OO$ and compute $$(1-\sqrt[3]{19})x=(1-\sqrt[3]{19})(a+b\sqrt[3]{19}+c\sqrt[3]{19^2})=a-19c+(b-a)\sqrt[3]{19}+(c-b)\sqrt[3]{19^2}.$$ Observe that $a-19c\equiv a-c\pmod 3$, and so $\frac{1-\sqrt[3]{19}}{3}x\in \OO$ if and only if $x\in 3\OO_K$. Consequently, $$3\OO_K=\tfrac{3}{1-\sqrt[3]{19}}\OO\cap \OO$$ 
as claimed  and thus $\OO_K=\tfrac{1}{1-\sqrt[3]{19}}\OO\cap \tfrac{1}{3}\OO$. This also shows that
$3\OO_K^\times=K(3,1-\sqrt[3]{19})$ as a constructible ideal of $\OO^\times$.
By \proref{prop:dependence}, after multiplying by~$3$ and removing the zero element, each fractional $\OO$\nb-ideal in the last column of Table~\ref{tab:table1} yields a constructible ideal of~$\OO^\times$ that is properly contained in~$3\OO_K^\times$.  The union of all these ideals is~$3\OO_K^\times$. This illustrates a concrete failure of independence~in~$\OO^\times$.
\end{example}

\subsection{Reduction of relation (T4) for $\ZZ[\sqrt{-3}]\rtimes\ZZ[\sqrt{-3}]^\times$}  In analogy to what we saw in \secref{sec:numerical},  it sometimes suffices to verify relation (T4) from \defref{def:toeplitz-semigroup} on a particular instance of failure of independence also for $ax+b$-monoids of orders.
We would like to illustrate this by examining in more detail the example $\ZZ[\sqrt{-3}]$. 

 It follows from \cite[Lemma~6.3]{MR3474102} that if $\mathcal{F}$ is a finite collection of fractional $\OO$\nb-ideals such that $S\subsetneq\OO_K$ for each $S\in \mathcal{F}$ and $$\OO_K=\bigcup_{S\in \mathcal{F}} S,$$ then we must have $\{\OO,\omega\OO,\omega^2\OO\}\subset\mathcal{F}$. The constructible ideals of~$\OO^\times$ are  given in \cite{MR3474102}, see also \cite[Example~4.2]{Ste-08}; specifically, $$\J(\OO^\times)=\{x\OO^\times\mid x\in \OO^\times\}\cup\{2y\OO_K^\times\mid y\in\OO_K^\times\}.$$ Yet another description of $\J(\OO^\times)$ will be more convenient for our purposes: because $\OO_K=\bigcup_{\substack{j=0}}^2\omega^j\OO$, an arbitrary element $y\in\OO_K^\times$ has the form $y=\omega^jx$ for some $j\in\{0,1,2\}$ and $x\in\OO^\times$. Using that $\omega\in\OO_K^*$ and thus $\omega\OO_K=\OO_K$, we may rewrite $\J(\OO^\times)$ as $$\J(\OO^\times)=\{x\OO^\times\mid x\in\OO^\times\}\cup\{2x\OO_K^\times\mid x\in \OO^\times\}.$$ It will also be convenient to represent $\J(\OO^\times)$ in terms of neutral words in~$\mathcal{W}(\OO^\times)$ by $$\J(\OO^\times)=\{K(1,x,x,1)\mid x\in\OO^\times\}\cup\{K(1,x,2\omega,2,2,2\omega,x,1)\mid x\in \OO^\times\}.$$

We observe that \cite[Lemma~6.3]{MR3474102} also implies that if $x\in\OO^\times$ and $\mc\subset\J(\OO^\times)$ is a finite collection of constructible ideals such that $2x\OO_K^\times=\cup_{\substack{S\in\mc}}S$ and $S\subsetneq 2x\OO_K^\times$ for all~$S\in\mc$, then $\mc$ must contain $\{2x\OO^\times, 2x\omega\OO^\times,2x\omega^2\OO^\times\}$. Therefore, all instances of failure of independence for $\OO^\times$ can be obtained from \eqref{eq:basic-failure-3} via translations by elements in~$\OO^\times$. This leads to the following proposition at the level of representations of~$\OO^\times$. 

\begin{prop}\label{prop:t4-otimes} Let $K=\QQ(\sqrt{-3})$ and $\OO=\ZZ[\sqrt{-3}]$. Let $w\colon \OO^\times\to B$ be a map into a $\Cst$\nb-algebra~$B$ satisfying relations \textup{(T1)--(T3)} from \defref{def:toeplitz-semigroup}. Then $w$ satisfies relation \textup{(T4)} if and only if $w$ satisfies \textup{(T4)} at \eqref{eq:basic-failure-3}, that is, $$\prod_{\substack{j=0}}^2(w_{2\omega}^*w_2w_2^*w_{2\omega}-w_{2\omega^j}w_{2\omega^j}^*)=0.$$ 

\begin{proof} The `only if' direction is clear. In order to prove the converse, suppose $$\prod_{\substack{j=0}}^2(w_{2\omega}^*w_2w_2^*w_{2\omega}-w_{2\omega^j}w_{2\omega^j}^*)=0.$$ By \lemref{lem:equivalentto(T4)}, all we need to show is that~$w$ satisfies (T4) at special cases in which the independence condition fails. Let $\alpha\in\mathcal{W}(\OO^\times)$ be a neutral word and let $F\subset \mathcal{W}(\OO^\times)$ be a finite set of neutral words such that $K(\beta)\subsetneq K(\alpha)$ for all $\beta\in F$ and $$K(\alpha)=\bigcup_{\substack{\beta\in F}}K(\beta).$$ We may assume that $\alpha=(1,x,2\omega,2,2,2\omega,x,1)$ for some $x\in \OO^\times$ because~$w$ satisfies relations (T1)--(T3) from \defref{def:toeplitz-semigroup} and the independence condition can only fail at the nonprincipal ideals of~$\OO^\times$. By the discussion preceding the statement of the proposition, we deduce that $$\{2x\OO^\times, 2x\omega\OO^\times, 2x\omega^2\OO^\times\}\subset\{K(\beta)\mid \beta\in F\}.$$ Thus, for each $j\in \{0,1,2\}$, we can find a word $\beta_j\in F$ with $K(\beta_j)=2x\omega^j\OO^\times$. By relations (T1) and (T3), we have $$\dot{w}_{\beta_j}=w_{2x\omega^j}w_{2x\omega^j}^*=w_xw_{2\omega^j}w_{2\omega^j}^*w_x^*,\qquad (j=0,1,2).$$ Also, $\dot{w}_\alpha=w_xw_{2\omega}^*w_2w_2^*w_{2\omega}w_x^*.$ Then $$\prod_{\substack{j=0}}^2(\dot{w}_\alpha-\dot{w}_{\beta_j})=w_x\Big(\prod_{\substack{j=0}}^2(w_{2\omega}^*w_2w_2^*w_{2\omega}-w_{2\omega^j}w_{2\omega^j}^*)\Big)w_x^*=0.$$ Hence $$\prod_{\substack{\beta\in F}}(\dot{w}_\alpha-\dot{w}_\beta)= \prod_{\substack{j=0}}^2(\dot{w}_\alpha-\dot{w}_{\beta_j})\prod_{\substack{\beta\in F}}(\dot{w}_\alpha-\dot{w}_\beta)=0$$ as wished. So $w$ satisfies relation (T4) from \defref{def:toeplitz-semigroup}.
\end{proof}
\end{prop}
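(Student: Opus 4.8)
The plan is to treat the two directions asymmetrically. The forward implication is immediate: the displayed identity is precisely the instance of (T4) associated with the decomposition \eqref{eq:basic-failure-3}, so if $w$ satisfies all of (T4) it certainly satisfies this one. All the content is in the converse, and for that the first move is to invoke \lemref{lem:equivalentto(T4)}, which tells us that, given that $w$ already obeys (T1)--(T3), it suffices to verify (T4) only at genuine failures of independence. Concretely, I would fix a neutral word $\alpha \in \W(\OO^\times)$ and a finite set $F$ of neutral words with $K(\alpha) = \bigcup_{\beta\in F} K(\beta)$ and $K(\beta) \subsetneq K(\alpha)$ for every $\beta\in F$, and aim to show $\prod_{\beta\in F}(\dot{w}_\alpha - \dot{w}_\beta) = 0$.

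Next I would pin down what $K(\alpha)$ can be. The constructible ideals of $\OO^\times$ are either principal, $x\OO^\times$, or of the scaled form $2x\OO_K^\times$, and a principal ideal is never a union of proper constructible subideals (its generator lies in none of them, since $2$ is not a unit of $\OO_K$). Hence $K(\alpha)$ must be nonprincipal, say $K(\alpha) = 2x\OO_K^\times$ for some $x\in\OO^\times$. Because $w$ satisfies (T3), I may replace $\alpha$ by the distinguished neutral word $(1,x,2\omega,2,2,2\omega,x,1)$ representing $2x\OO_K^\times$ without changing $\dot{w}_\alpha$, which yields the explicit form $\dot{w}_\alpha = w_x\,(w_{2\omega}^* w_2 w_2^* w_{2\omega})\,w_x^*$. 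The decisive structural input is then the number-theoretic fact \cite[Lemma~6.3]{MR3474102}: any cover of $2x\OO_K^\times$ by proper constructible subideals must contain the three principal translates $2x\OO^\times$, $2x\omega\OO^\times$, $2x\omega^2\OO^\times$. So I can select $\beta_0,\beta_1,\beta_2\in F$ with $K(\beta_j)=2x\omega^j\OO^\times$; using the semigroup law $w_{2x\omega^j}=w_x w_{2\omega^j}$ together with (T1)--(T3), these satisfy $\dot{w}_{\beta_j} = w_x\,(w_{2\omega^j} w_{2\omega^j}^*)\,w_x^*$.

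Finally I would assemble the product. Each difference factors as $\dot{w}_\alpha - \dot{w}_{\beta_j} = w_x(w_{2\omega}^* w_2 w_2^* w_{2\omega} - w_{2\omega^j}w_{2\omega^j}^*)w_x^*$, and since $w_x$ is an isometry the conjugations telescope, giving $\prod_{j=0}^2(\dot{w}_\alpha - \dot{w}_{\beta_j}) = w_x\big(\prod_{j=0}^2(w_{2\omega}^* w_2 w_2^* w_{2\omega}-w_{2\omega^j}w_{2\omega^j}^*)\big)w_x^*$, which vanishes by the standing hypothesis. Because all the $\dot{w}_\beta$ and $\dot{w}_\alpha$ are mutually commuting projections by \proref{pro:propertiesofsharpgenerators}(3), the full product over $F$ contains the vanishing subproduct over $\{\beta_0,\beta_1,\beta_2\}$ as a commuting factor, whence $\prod_{\beta\in F}(\dot{w}_\alpha-\dot{w}_\beta)=0$ as required. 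The main obstacle is the middle step: everything hinges on the assertion that every proper cover of a nonprincipal ideal is forced to contain exactly the three $\omega$-translates, so that all failures of independence in $\OO^\times$ are $\OO^\times$-conjugates of the single basic relation \eqref{eq:basic-failure-3}; I expect the cleanest route is to quote \cite[Lemma~6.3]{MR3474102} directly rather than reprove the covering property from scratch.
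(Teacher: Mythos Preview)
Your proposal is correct and follows essentially the same route as the paper's proof: reduce via \lemref{lem:equivalentto(T4)} to genuine failures of independence, identify $K(\alpha)$ as a nonprincipal ideal $2x\OO_K^\times$, invoke \cite[Lemma~6.3]{MR3474102} to force $\{2x\omega^j\OO^\times\}_{j=0}^2$ into the cover, conjugate the basic relation by $w_x$, and absorb the vanishing subproduct into the full product. One tiny remark: your justification that the generator of a principal ideal lies in no proper constructible subideal does not actually need the observation about $2$ not being a unit---any proper sub-right-ideal of $y\OO^\times$ omits $y$, since containing $y$ would force it to contain all of $y\OO^\times$.
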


We now turn our attention to the $ax+b$\nb-monoid $\OO\rtimes\OO^\times$. It follows from \cite[Lemma~2.11]{Li:ax+b} that the family of constructible right ideals of $\OO\rtimes\OO^\times$ is given by \begin{equation*}
\begin{aligned}
\J(\OO\rtimes\OO^\times)&=\{(r+x\OO)\times x\OO^\times\mid x\in\OO^\times, r\in\OO\}\cup\{(r+2x\OO_K)\times 2x\OO_K^\times\mid x\in\OO^\times, r\in\OO\}\cup\{\emptyset\}\\
&=\{(r,x)(\OO\rtimes\OO^\times)\mid r\in\OO, x\in\OO^\times\}\cup\{(r,x)(2\OO_K\rtimes 2\OO_K^\times)\mid r\in\OO, x\in \OO^\times\}\cup\{\emptyset\}.
\end{aligned}
\end{equation*} The nonprincipal ideal $(r,x)(2\OO_K\rtimes 2\OO_K^\times)$ equals $K(\alpha)$, where $\alpha\in\mathcal{W}^4(\OO\rtimes\OO^\times)$ is the neutral (and symmetric) word $$\alpha=((0,1),(r,x),(0,2\omega),(0,2),(0,2),(0,2\omega),(r,x),(0,1)).$$ 

In $\OO\rtimes\OO^\times$, we find a failure of independence at the ideal $2\OO_K\rtimes 2\OO_K^\times$ because $$2\OO_K\times2\OO_K^\times=\Big(\bigcup^2_{\substack{j=0}}2\omega^j\OO\times 2\omega^j\OO^\times\Big)\cup\Big(\bigcup^2_{\substack{j=0}}(r_j+2\omega^j\OO)\times 2\omega^j\OO^\times\Big),$$ where $r_j\in 2\OO_K$ is any element in the unique nontrivial class of the quotient ring $2\OO_K/2\omega^j\OO$ for each $j=0,1,2$. Because $2\OO_K=2\omega^j\OO\sqcup (r_j+2\omega_j\OO)$ for each $j\in \{0,1,2\}$, another application of \cite[Lemma~6.3]{MR3474102} shows that if $\mathcal{F}\subset\J(\OO\rtimes\OO^\times)$ is a finite collection of constructible right ideals such that $2\OO_K\times2\OO_K^\times=\bigcup_{\substack{S\in\mathcal{F}}}S$, then 
$\mathcal{F}$ contains $$\{2\omega^j\OO\times 2\omega^j\OO^\times, (r_j+2\omega^j\OO)\times 2\omega^j\OO^\times\}$$ for $j=\{0,1,2\}$. An analogue of this fact also holds with an arbritary constructible ideal $(r+2x\OO_K)\times 2x\OO_K^\times\in\J(\OO\rtimes\OO^\times)$ in place of $2\OO_K\times2\OO_K^\times$ and $$\{2x\omega^j\OO\times 2x\omega^j\OO^\times, (r+xr_j+2x\omega^j\OO)\times 2x\omega^j\OO^\times\}$$ in place of~$\{2\omega^j\OO\times 2\omega^j\OO^\times, (r_j+2\omega^j\OO)\times 2\omega^j\OO^\times\}$ for $j=0,1,2$.

Before giving a simplification of relation (T4) for the $ax+b$\nb-monoid $\OO\rtimes\OO^\times$, let us first introduce some notation, following~\cite{CDLMathAnn2013}. Given an isometric representation $w\colon \OO\rtimes \OO^\times\to B$ in a $\Cst$\nb-algebra~$B$, we regard $\OO$ as a group with its additive operation and let $u\colon \OO\to B$ be the unitary representation given by $r\mapsto w_{(r,1)}$. Similarly, $w$ gives rise to an isometric representation $s\colon \OO^\times\to B$ via $x\mapsto w_{(0,x)}$. The next result is at the same time a simplification of relation (T4) for $\OO\rtimes\OO^\times$  and an application of \corref{cor:uniqueness-orders}.

\begin{cor} Let $K=\QQ(\sqrt{-3})$ and $\OO=\ZZ[\sqrt{-3}]$. Let $W\colon \OO\rtimes\OO^\times\to B$ be a map into a $\Cst$\nb-algebra satisfying relations \textup{(T1)--(T3)} from \defref{def:toeplitz-semigroup}. Let~$s$ and~$u$ be the restrictions of~$W$ to the multiplicative and additive parts of $\OO\rtimes\OO^\times$, respectively, and suppose that 
$$
\prod_{\substack{j=0}}^2(s_{2\omega}^*s_2s_2^*s_{2\omega}-s_{2\omega^j}s_{2\omega^j}^*)\prod_{\substack{j=0}}^2(s_{2\omega}^*s_2s_2^*s_{2\omega}-u_{r_j}s_{2\omega^j}s_{2\omega^j}^*u_{r_j}^*)=0,
$$ 
where $r_j$ is any element in the nontrivial class of $2\OO_K/2\omega^j\OO$ for $j=0,1,2$. Then there is a \Star homomorphism $\pi_W\colon \Toep_{\lambda}(\OO\rtimes\OO^\times)\to B$ that sends~$L_{(r,x)}$ to~$W_{(r,x)}$. Moreover, $\pi_{W}$ is faithful if and only if for every finite set $F$ of primes  in~$\OO \setminus 2\OO_K$, one has \begin{equation}\label{eq:prime-joint-proper}Q_F\coloneqq\Big(1-(\sum_{\substack{r\in \OO/2\OO_K}}u_rs_{2\omega}^*s_2s_2^*s_{2\omega}u_r^*)\Big)\prod_{p\in F}\Big(1-(\sum_{\substack{r\in\OO/p\OO}}u_rs_ps_p^*u_r^*)\Big)\neq 0.\end{equation}

\begin{proof} In order to see that~$w$ satisfies relation (T4) from \defref{def:toeplitz-semigroup}, observe that if~$\alpha$ is a neutral word in~$\mathcal{W}(\OO\rtimes\OO^\times)$ with $K(\alpha)=(r+2x\OO_K)\times 2x\OO_K^\times,$ where $r\in\OO$ and $x\in\OO^\times$, then $$\dot{W}_\alpha= u_rs_xs_{2\omega}^*s_2s_2^*s_{2\omega}s_x^*u_r^*$$ since $w$ satisfies relations (T1)--(T3) from \defref{def:toeplitz-semigroup}. Similarly, if $\beta\in\mathcal{W}(\OO\rtimes\OO^\times)$ is neutral and $K(\beta)=(r+xr_j+2x\omega^j\OO)\times 2x\omega^j,$ then $$\dot{W}_\beta=u_rs_xu_{r_j}s_{2\omega^j}s_{2\omega^j}^*u_{r_j}^*s_x^*u_r^*.$$ Hence, the same reasoning used to prove \proref{prop:t4-otimes} shows that $w$ satisfies relation (T4). By \corref{cor:uniqueness-orders}, there exists a \Star homomorphism $\pi_W\colon \Toep_{\lambda}(\OO\rtimes\OO^\times)\to B$ mapping a canonical generator $L_{(r,x)}$ to $W_{(r,x)}=u_rs_x$, and $\pi_W$ is faithful if and only if~$W$ is jointly proper. Thus all we need to prove is that $W$ is jointly proper if and only if \eqref{eq:prime-joint-proper} holds for every finite set $F$ of primes  in~$\OO \setminus 2\OO_K$.

The `only if' direction is clear. For the converse, suppose that \eqref{eq:prime-joint-proper} holds  for every finite set $F$ of primes  in~$\OO \setminus 2\OO_K$. Since~$2\OO_K$ is a maximal ideal in~$\OO$, it follows that an ideal in~$\OO$ is either contained in~$2\OO_K$ or it is relatively prime to~$2\OO_K$. If $I$ is a proper ideal in~$\OO$ that is relatively prime to~$2\OO_K$, then $I$ is contained in a prime ideal $\mfp$ of~$\OO$ that is itself relatively prime to~$2\OO_K$ because $$\OO\supset \mfp+2\OO_K\supset I+2\OO_K=\OO.$$ Since the ideals of~$\OO$ that are relatively prime to~$2\OO_K$ are principal ideals, we have $\mfp=p\OO$ for a prime element $p\in\OO$.

Now take a constructible ideal $S=(r+I)\times I^\times$ in $\J(\OO\rtimes\OO^\times)$ with $S\subsetneq \OO\rtimes\OO^\times$. By the above paragraph, we have either $S\subset (r+2\OO_K)\times2\OO_K^\times$ or $S\subset(r+p\OO)\times p\OO^\times$ for some prime element $p\in\OO$ that is relatively prime to $2\OO_K$. Then if $\alpha\in\mathcal{W}(\OO\rtimes\OO^\times)$ is a neutral word with $K(\alpha)=S$, it follows that 
$$
1-\dot{W}_\alpha\geq\Big(1-\big(\sum_{\substack{r\in \OO/2\OO_K}}u_rs_{2\omega}^*s_2s_2^*s_{2\omega}u_r^*\big)\Big)\Big(1-\big(\sum_{\substack{r\in\OO/p\OO}}u_rs_ps_p^*u_r^*\big)\Big)=Q_{\{p\}}.
$$ for some prime  $p\in \OO \setminus 2\OO_K$.
So if we  take a finite set~$A$ of neutral words in~$\OO\rtimes\OO^\times$ such that $K(\alpha)\subsetneq\OO\rtimes\OO^\times$ for all $\alpha\in A$, we can find a finite set $F\subset\OO \setminus 2\OO_K$ of primes such that $$\prod_{\alpha\in A}(1-\dot{W}_\alpha)\geq \prod_{\alpha\in A}(1-\dot{W}_\alpha)Q_F =Q_F\neq 0.$$ This shows that $W$ is jointly proper.
\end{proof}
\end{cor}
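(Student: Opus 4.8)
The plan is to mirror the proof of \proref{prop:t4-otimes}, now carried out in the $ax+b$\nb-monoid, and then to unwind the joint properness criterion delivered by \corref{cor:uniqueness-orders} into the concrete inequality \eqref{eq:prime-joint-proper}.

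First I would check that $W$ satisfies relation (T4) of \defref{def:toeplitz-semigroup}; once this is done, \corref{cor:uniqueness-orders} immediately produces a \Star homomorphism $\pi_W$ with $\pi_W(L_{(r,x)})=W_{(r,x)}$, which is faithful exactly when $W$ is jointly proper. By \lemref{lem:equivalentto(T4)} it suffices to verify (T4) at the instances where independence fails, and by the discussion preceding the corollary every such instance is obtained from \eqref{eq:basic-failure-3} by an $(r,x)$\nb-translation. Concretely, if $\alpha\in\W(\OO\rtimes\OO^\times)$ is neutral with $K(\alpha)=(r+2x\OO_K)\times 2x\OO_K^\times$ and $A$ is a finite set of neutral words with $K(\beta)\subsetneq K(\alpha)$ for all $\beta\in A$ and $\bigcup_{\beta\in A}K(\beta)=K(\alpha)$, then the analogue of \cite[Lemma~6.3]{MR3474102} recalled there forces $A$ to contain, for each $j=0,1,2$, words $\beta_j$ and $\beta_j'$ producing the two cosets $r+2x\omega^j\OO$ and $r+xr_j+2x\omega^j\OO$ that make up $r+2x\OO_K$. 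Relations (T1)--(T3) then give $\dot{W}_\alpha=u_rs_x(s_{2\omega}^*s_2s_2^*s_{2\omega})s_x^*u_r^*$, $\dot{W}_{\beta_j}=u_rs_x(s_{2\omega^j}s_{2\omega^j}^*)s_x^*u_r^*$ and $\dot{W}_{\beta_j'}=u_rs_x(u_{r_j}s_{2\omega^j}s_{2\omega^j}^*u_{r_j}^*)s_x^*u_r^*$; since $u_r$ is unitary and $s_x$ an isometry, conjugation by $u_rs_x$ is multiplicative on the subalgebra they generate, so the displayed hypothesis becomes $\prod_{j=0}^2(\dot{W}_\alpha-\dot{W}_{\beta_j})\prod_{j=0}^2(\dot{W}_\alpha-\dot{W}_{\beta_j'})=0$. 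As all of these elements lie in the commutative algebra $D_u$, a vanishing sub-product forces $\prod_{\beta\in A}(\dot{W}_\alpha-\dot{W}_\beta)=0$, which is (T4).

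For the second statement I must show that $W$ is jointly proper if and only if $Q_F\neq 0$ for every finite set $F$ of primes in $\OO\setminus 2\OO_K$. The forward implication is immediate: using $1-\sum_ip_i=\prod_i(1-p_i)$ for pairwise orthogonal projections, together with disjointness of the translates $(r+2\OO_K)\times 2\OO_K^\times$ over $r\in\OO/2\OO_K$ and of the $(r+p\OO)\times p\OO^\times$ over $r\in\OO/p\OO$, one sees that $Q_F$ is itself of the form $\prod_{\alpha\in A}(1-\dot{W}_\alpha)$ for a finite family $A$ of neutral words with every $K(\alpha)$ a proper constructible ideal, so joint properness yields $Q_F\neq 0$. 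For the converse, the crux is that every proper constructible right ideal $S=(r+I)\times I^\times$ of $\OO\rtimes\OO^\times$ sits inside one of the large ideals entering $Q_F$: since $2\OO_K$ is a maximal ideal of $\OO$, either $I\subset 2\OO_K$, whence $S\subset(r'+2\OO_K)\times 2\OO_K^\times$; or $I$ is coprime to $2\OO_K$ and hence principal ($\OO_K$ is a PID and conductor-coprime ideals of $\OO$ correspond to those of $\OO_K$), so $I\subset p\OO$ for a prime $p\in\OO\setminus 2\OO_K$ and $S\subset(r'+p\OO)\times p\OO^\times$. Hence, for a neutral word $\alpha$ with $K(\alpha)$ proper, (T1)--(T3) give
\[1-\dot{W}_\alpha\ \geq\ 1-\sum_{r\in\OO/2\OO_K}u_rs_{2\omega}^*s_2s_2^*s_{2\omega}u_r^*\qquad\text{or}\qquad 1-\dot{W}_\alpha\ \geq\ 1-\sum_{r\in\OO/p\OO}u_rs_ps_p^*u_r^*\]
for some prime $p\in\OO\setminus 2\OO_K$. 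Given a finite family $A$ of neutral words with all $K(\alpha)$ proper, collecting the primes $p$ arising in this way into a finite set $F$ and using that all the projections commute (they lie in $D_u$), so that domination passes to products, we obtain $\prod_{\alpha\in A}(1-\dot{W}_\alpha)\geq Q_F\neq 0$; thus $W$ is jointly proper.

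The step I anticipate as the main obstacle is this last reduction: identifying the proper constructible right ideals of $\OO\rtimes\OO^\times$ (the principal ones with non-unit scaling part, together with the conductor-type ones), using maximality of $2\OO_K$ in $\OO$ to split a proper ideal as either contained in the conductor or coprime to it, and invoking that conductor-coprime ideals are principal (via $\OO_K$ being the Eisenstein integers and the bijection $\mfa\mapsto\mfa\cap\OO$). These ingredients are all available from the explicit description of $\J(\OO\rtimes\OO^\times)$ recorded in the text and from \cite[Example~4.2]{Ste-08} and \cite{MR3474102}; with them in hand, what remains is the routine (T1)--(T3) computation of the $\dot{W}_\alpha$ and the bookkeeping of dominations in the commutative algebra $D_u$.
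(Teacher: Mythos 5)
Your proposal is correct and follows essentially the same route as the paper: verify (T4) by translating the basic failure of independence at $2\OO_K\rtimes 2\OO_K^\times$ and conjugating by $u_rs_x$, invoke \corref{cor:uniqueness-orders} to get $\pi_W$ and reduce faithfulness to joint properness, and then use maximality of $2\OO_K$ together with principality of the conductor-coprime prime ideals to dominate every defect projection $1-\dot W_\alpha$ by a factor of some $Q_F$. The only cosmetic difference is that you spell out the ``only if'' direction (which the paper calls clear) and you phrase the coprime case as ``$I$ is principal'' where the cleaner step (as in the paper) is that $I$ lies in a conductor-coprime prime ideal, which is principal, giving $I\subset p\OO$.
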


\section{Right LCM semigroups} \label{sec:rightLCM}
Our  results  give new insight also in the  particular -- and important -- case of right LCM semigroups.  In order  to demonstrate this point we present in this section two applications of our uniqueness results to the universal Toeplitz algebras of right LCM submonoids of groups.
\subsection{Topological freeness and uniqueness} The first application stems from the observation that there is an obvious  parallel between \thmref{thm:uniqueiftopfreejointproper} and \cite[Theorem~4.3]{NNN1}
because the condition given in \cite[equation~(4.1)]{NNN1} corresponds to our joint properness condition, \defref{def:jointlyproper}, 
when applied to right LCM semigroups. We would like to elaborate on this parallel here in order to provide a simplification of the hypothesis  and a strengthening  of \cite[Theorem~4.3]{NNN1}. Along the way, we also shed conceptual light on the technical condition (D2) from \cite[Definition 4.1]{NNN1}:
 \begin{enumerate}
\item [(D2)] if $s_0 \in P$, $s_1\in s_0P$ and $F\subset P$ is a finite subset with $s_1P\cap \Big(P\setminus \bigcup_{q\in F}qP \Big) \neq \emptyset$, then for every $x\in P^*\setminus \{e\}$, there is $s_2 \in s_1P$ satisfying 
\[
s_2P \cap \Big(P\setminus \bigcup_{q\in F}qP \Big) \neq \emptyset\quad \text{ and }\quad s_0\inv s_2 P \cap x s_0\inv s_2 P =\emptyset.
\]
\end{enumerate}

Recall that when $P$ is a right LCM semigroup, the nonempty constructible right ideals of $P$ are all of the form $qP$, $q\in P$, hence an ideal $qP\in\J$ is proper if and only if $q\in P\setminus P^*$. Thus, it seems worth recasting \thmref{thm:idealsifftopfree} specifically for right LCM monoids, giving algebraic conditions on $P$ that are equivalent to topological freeness of the partial action of the underlying group.

\begin{thm}\label{thm:rightlcm-topfree}
 Let $P$ be a right LCM submonoid of a group~$G$. The following are equivalent:
 \begin{enumerate}
 \item the partial action of $G$ on $\Omega_P$ is topologically free;
 \item the action of $P^*$ on $\Omega_P$ is topologically free;
 \item if $u\in P^*\setminus\{e\}$ and $F\subset P\setminus P^*$ is a finite set,
  then there exists $t \in P \setminus \bigcup_{q\in F} qP$ such that $utP \neq tP$ (or, equivalently, such that $ut\notin tP^*$);
\item every  ideal of $\Toepu(P)$ that has trivial intersection with $D_u$ is contained in the kernel of the left regular representation.
 \end{enumerate}
\end{thm}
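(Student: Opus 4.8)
The plan is to obtain \thmref{thm:rightlcm-topfree} as the specialization of \thmref{thm:idealsifftopfree} to the right LCM case, so the work consists almost entirely of translating the statements of conditions (1), (2) and (4) — which are unchanged — and in reconciling condition (3) with its right LCM reformulation. First I would note that (1), (2) and (4) are literally identical to conditions (1), (2) and (4) of \thmref{thm:idealsifftopfree}, so the equivalences $(1)\iff(2)\iff(4)$ are immediate from that theorem; no new argument is needed there. Thus the only thing to prove is that condition (3) of the present theorem is equivalent to condition (3) of \thmref{thm:idealsifftopfree} under the hypothesis that $P$ is right LCM.

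The key observation is the one recalled just before the statement: when $P$ is right LCM the nonempty constructible right ideals of $P$ are exactly the principal right ideals $qP$ for $q\in P$, and $qP$ is proper precisely when $q\in P\setminus P^*$. Hence a finite collection $\mc$ of proper constructible right ideals is the same datum as a finite subset $F\subset P\setminus P^*$ with $\mc=\{qP\mid q\in F\}$, and the set $\bigcup_{R\in\mc}R$ becomes $\bigcup_{q\in F}qP$. With this dictionary, condition (3) of \thmref{thm:idealsifftopfree} — ``for every $u\in P^*\setminus\{e\}$ and every finite collection $\mc$ of proper constructible right ideals there is $t\in P\setminus\bigcup_{R\in\mc}R$ with $utP\neq tP$'' — reads verbatim as condition (3) here. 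The parenthetical equivalence $utP\neq tP\iff ut\notin tP^*$ is just \lemref{lem:minilemma}, exactly as in \thmref{thm:idealsifftopfree}. So the proof is: invoke \thmref{thm:idealsifftopfree}, then substitute the right LCM description of $\J$ into condition (3) to see it coincides with the new (3).

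I would write this up as a short paragraph: ``Conditions (1), (2) and (4) coincide with the corresponding conditions of \thmref{thm:idealsifftopfree}, so it suffices to identify condition (3) above with condition (3) of that theorem. Since $P$ is right LCM, every nonempty constructible right ideal has the form $qP$ with $q\in P$, and it is proper if and only if $q\in P\setminus P^*$; consequently the finite collections $\mc$ of proper constructible right ideals are precisely the sets $\{qP\mid q\in F\}$ with $F\subset P\setminus P^*$ finite, and $\bigcup_{R\in\mc}R=\bigcup_{q\in F}qP$. Under this correspondence condition (3) of \thmref{thm:idealsifftopfree} becomes condition (3) above, and the parenthetical reformulation $utP\neq tP\iff ut\notin tP^*$ is \lemref{lem:minilemma}. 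The result now follows from \thmref{thm:idealsifftopfree}.''

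There is no real obstacle here, since the theorem is by design a corollary of the general result; the only thing to be careful about is making sure the empty-collection case is handled (the statement of (3) in the general theorem allows $\mc=\emptyset$, and the right LCM version should too, which it does by allowing $F=\emptyset$), and that the left-reversible subtlety — that $\emptyset$ may fail to be of the form $K(\alpha)$ — does not interfere, but this plays no role in condition (3) because we only ever quantify over \emph{proper} (hence in particular nonempty) constructible ideals. So the main point worth stating explicitly is simply the right LCM description of $\J$ together with its interaction with properness, after which the equivalence is a direct transcription.
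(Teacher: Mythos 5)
Your proposal is correct and is exactly the argument the paper intends: the theorem is stated as a direct recasting of \thmref{thm:idealsifftopfree}, using only the fact that for a right LCM monoid the nonempty constructible right ideals are the principal ideals $qP$, proper precisely when $q\in P\setminus P^*$, together with \lemref{lem:minilemma} for the parenthetical reformulation. Your remarks on the empty collection and the empty ideal are harmless (an empty ideal in $\mc$ contributes nothing to the union), so nothing further is needed.
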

 When we apply this characterization, we see that (D2) implies topological freeness.
\begin{cor}\label{cor:D2-topfree}
Let $P$ be a right LCM submonoid of a group. If $P$ satisfies \textup{(D2)}, then the action of $P^*$ on  $\Omega_P$ is topologically free.
\begin{proof} It suffices to show that condition (3) of \thmref{thm:rightlcm-topfree} holds. Suppose $u \in P^*\setminus \{e\}$ and let $F\subset P\setminus P^*$ be a finite set.
 When we apply (D2) to the set $F$ and to $s_0 = e$,  $s_1 = e$ and $x = u$ we obtain
 an element  $s_2 \in s_1P = P$ such that $s_2P \cap (P\setminus \bigcup_{q\in F} qP) \neq \emptyset$ 
 and $u s_2P \cap s_2P = \emptyset$. That is, $us_2P$ and $s_2P$ are disjoint subsets of~$P$. This  implies condition (3) of \thmref{thm:rightlcm-topfree}, which only requires those subsets to be different.
\end{proof}
\end{cor}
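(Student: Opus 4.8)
The plan is to prove Corollary~\ref{cor:D2-topfree} by reducing it directly to condition~(3) of \thmref{thm:rightlcm-topfree}, which is the algebraic characterization of topological freeness of the action of $P^*$ on $\Omega_P$ for right LCM monoids. Since \thmref{thm:rightlcm-topfree} already establishes the equivalence of (1)--(4), it suffices to verify (3).

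First I would fix a nontrivial unit $u \in P^* \setminus \{e\}$ and an arbitrary finite subset $F \subset P \setminus P^*$; the goal is to produce $t \in P \setminus \bigcup_{q\in F} qP$ with $utP \neq tP$. The natural move is to invoke hypothesis (D2) with the specializations $s_0 = e$, $s_1 = e$, and $x = u$. With these choices the hypothesis of (D2) reads: $F \subset P$ is a finite subset with $s_1 P \cap (P \setminus \bigcup_{q\in F} qP) = P \cap (P\setminus\bigcup_{q\in F}qP) \neq\emptyset$, i.e. $P \neq \bigcup_{q\in F} qP$. This nonemptiness needs a brief justification: each $q\in F$ lies in $P\setminus P^*$, so $qP$ is a proper right ideal, and since $P$ embeds in a group, $e \notin qP$ for every $q\in F$; hence $e \in P \setminus \bigcup_{q\in F} qP$ and the set is nonempty. (If one worries about a possible empty $F$, then $\bigcup_{q\in F}qP = \emptyset \subsetneq P$ and the same conclusion holds trivially.)

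With the hypothesis of (D2) verified, (D2) yields an element $s_2 \in s_1 P = P$ such that
\[
s_2 P \cap \Big(P \setminus \bigcup_{q\in F} qP\Big) \neq \emptyset \qquad\text{and}\qquad s_0^{-1} s_2 P \cap x\, s_0^{-1} s_2 P = s_2 P \cap u s_2 P = \emptyset.
\]
I would then take $t$ to be any element of $s_2 P \cap (P\setminus\bigcup_{q\in F}qP)$; so $t \in P \setminus \bigcup_{q\in F}qP$ as required, and $t \in s_2 P$. Since $s_2 P \cap u s_2 P = \emptyset$ and $tP \subseteq s_2 P$ while $utP \subseteq u s_2 P$, we get $tP \cap utP = \emptyset$, and in particular $tP \neq utP$. (Here $tP$ is nonempty, containing $t$, so disjointness does force inequality.) This establishes condition~(3) of \thmref{thm:rightlcm-topfree}, and therefore the action of $P^*$ on $\Omega_P$ is topologically free by the equivalence (2)$\iff$(3) in that theorem.

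There is essentially no serious obstacle here; the only points that require a moment's care are (a) checking that the hypothesis of (D2) is met, which hinges on the elementary observation that a proper right ideal $qP$ of a group-embeddable monoid cannot contain the identity, so that $P\neq\bigcup_{q\in F}qP$; and (b) noting that (D2) delivers the genuinely stronger conclusion $s_2 P \cap u s_2 P = \emptyset$ (disjointness), whereas condition~(3) only asks for the inequality $utP \neq tP$. This gap in strength is exactly the conceptual point the corollary makes: condition~(D2) from \cite{NNN1} is more than is needed for topological freeness, and the proof makes that transparent.
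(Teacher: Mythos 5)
Your proof is correct and follows essentially the same route as the paper: apply (D2) with $s_0=s_1=e$ and $x=u$ to get $s_2$ with $us_2P\cap s_2P=\emptyset$, and deduce condition (3) of the theorem. Your write-up is in fact slightly more careful than the paper's, since you explicitly pick $t\in s_2P\cap(P\setminus\bigcup_{q\in F}qP)$ (rather than implicitly using $s_2$ itself) and you verify the hypothesis of (D2) via $e\notin qP$ for $q\in P\setminus P^*$.
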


If~$P$ is a right LCM monoid, an isometric representation $w\colon P\to B$ induces a representation of $\Toepu(P)$ if and only if it satisfies the relation
\begin{equation}\label{eq:nicacovariance} w_pw_p^*w_qw_q^* = \begin{cases} w_rw_r^*&\text{ if }pP\cap qP=rP,\\
0 & \text{ if } pP\cap qP=\emptyset.
\end{cases}
\end{equation} The diagonal subalgebra~$D_u\subset \Toepu(P)$ is the closed linear span of the range projections $\{t_pt_p^*\mid p\in P\}$. We simplify \thmref{thm:uniqueiftopfreejointproper} when $P$ is a right LCM monoid, and obtain, in particular, the group-embeddable case of \cite[Theorem~4.3]{NNN1} as a consequence of \corref{cor:D2-topfree}.

\begin{thm}[cf.\ \cite{NNN1}*{Theorem~4.3}]\label{thm:uniqueiftopfreerightlcm}
Let $P$ be a right LCM submonoid of a group~$G$. Suppose that any of the equivalent conditions of \thmref{thm:rightlcm-topfree} holds 
and that the conditional expectation $E_u\colon\Toepu(P) \to D_u$ is faithful.
Let $W\colon P\to B$ be an isometric representation of~$P$ satisfying \eqref{eq:nicacovariance}. Then the canonical map $\rho_W\colon\Toepu(P) \to \Cst(W)$ is an isomorphism if and only if $$\prod_{p\in F}(1-W_pW_p^*)\neq 0$$ for every finite subset $F\subset P\setminus P^*$.
\end{thm}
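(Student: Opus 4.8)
The statement is a specialization of \thmref{thm:uniqueiftopfreejointproper} to the right LCM setting, so the plan is essentially to translate the hypotheses and conclusion of that theorem into the language of range projections and to verify that the joint properness condition of \defref{def:jointlyproper} becomes exactly the displayed condition $\prod_{p\in F}(1-W_pW_p^*)\neq 0$ over finite subsets $F\subset P\setminus P^*$. First I would recall that for a right LCM monoid the nonempty constructible right ideals are precisely the principal ideals $qP$ with $q\in P$, by the discussion in \secref{sec:rightLCM}; hence the diagonal $D_u$ is the closed linear span of the range projections $t_pt_p^*$, and an isometric representation $W$ factors through $\Toepu(P)$ exactly when it satisfies the Nica-type relation \eqref{eq:nicacovariance}. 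This is the translation of relations (T1)--(T4): (T1) and (T2) are built into the definition of an isometric representation and \eqref{eq:nicacovariance}, (T3) is the statement that $\dot W_\alpha$ depends only on $K(\alpha)=rP$ (or $\emptyset$), which for principal ideals is exactly \eqref{eq:nicacovariance}, and (T4) is automatic because a union of finitely many principal ideals that happens to be principal must already contain a maximal member, i.e.\ the independence condition holds for right LCM monoids.

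Granting that, the plan is a direct application of \thmref{thm:uniqueiftopfreejointproper}. Under the hypotheses that one of the equivalent conditions of \thmref{thm:rightlcm-topfree} holds (which is the specialization of \thmref{thm:idealsifftopfree} to right LCM monoids) and that $E_u\colon\Toepu(P)\to D_u$ is faithful, \thmref{thm:uniqueiftopfreejointproper} produces the canonical homomorphism $\rho_W\colon\Toepu(P)\to\Cst(W)$ with $\rho_W(t_p)=W_p$ (note that here, because $E_u$ is faithful, $\lambda^+$ is an isomorphism by \corref{cor:reg-kernel}, so $\Toepu(P)\cong\Toepr(P)$ and the statement about $\rho_W$ and about the map in \thmref{thm:uniqueiftopfreejointproper} coincide). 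That theorem then asserts that $\rho_W$ is an isomorphism if and only if $W$ is jointly proper in the sense of \defref{def:jointlyproper}. So the remaining work is to identify joint properness with the displayed condition. In the right LCM case a proper constructible right ideal is exactly $qP$ with $q\in P\setminus P^*$, and for a neutral word $\alpha$ with $K(\alpha)=qP$ relation \eqref{eq:nicacovariance} together with (T1)--(T3) gives $\dot W_\alpha = W_qW_q^*$; conversely every range projection $W_qW_q^*$ arises this way from $\alpha=(e,q,q,e)$. Hence $\prod_{\alpha\in F}(1-\dot W_\alpha)\neq 0$ for all finite families of neutral words with proper $K(\alpha)$ is equivalent to $\prod_{p\in F}(1-W_pW_p^*)\neq 0$ for all finite $F\subset P\setminus P^*$, using also the remark after \defref{def:jointlyproper} that it suffices to check joint properness on a collection of words generating all constructible ideals, which here is just the principal-ideal words $(e,q,q,e)$.

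The one genuinely non-routine point, and the step I would treat most carefully, is making sure that the indexing in the joint properness condition matches: in \defref{def:jointlyproper} one ranges over finite collections $F$ of \emph{neutral words} $\alpha$ with $K(\alpha)$ a proper constructible right ideal, whereas the theorem ranges over finite subsets of $P\setminus P^*$; the subtlety is that several elements $p$ can give the same range projection (when $pP=p'P$, i.e.\ $p'=pu$ for $u\in P^*$), but this causes no problem because $W_pW_p^*=W_{p'}W_{p'}^*$ in that case, so the two families of finite products coincide as sets. I would also note explicitly, as the proof of \thmref{thm:uniqueiftopfreerightlcm} already does implicitly, that the hypothesis that $E_u$ is faithful is exactly what lets us invoke \corref{cor:reg-kernel} to get $\Toepu(P)=\Toepr(P)$, so that there is no discrepancy between a statement about $\rho_W$ on $\Toepu(P)$ and the uniqueness of $\Toepr(P)$. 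Finally, to get the group-embeddable case of \cite[Theorem~4.3]{NNN1} as the promised consequence of \corref{cor:D2-topfree}, one simply observes that (D2) implies the topological freeness hypothesis by \corref{cor:D2-topfree}, and the faithfulness of $E_u$ is available under the standing assumptions there; no extra argument is needed beyond citing those two results.
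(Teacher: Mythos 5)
Your proposal is correct and follows exactly the route the paper intends: the paper states this theorem without proof as an immediate specialization of \thmref{thm:uniqueiftopfreejointproper}, and you supply precisely the routine translation (right LCM monoids satisfy independence so \eqref{eq:nicacovariance} gives relations (T1)--(T4), proper constructible ideals are the $qP$ with $q\in P\setminus P^*$, and via (T3) joint properness becomes the displayed condition on range projections, with the $E_u$-faithfulness hypothesis identifying $\Toepu(P)$ with $\Toepr(P)$ through \corref{cor:reg-kernel}). No discrepancies with the paper's implicit argument.
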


It is important to keep in mind that \cite[Theorem 4.3]{NNN1} also applies to $\Cst$\nb-algebras of 
 right LCM semigroups that do not embed in groups, which are not covered by our results.  It is  nonetheless plausible 
 that replacing (D2) by \thmref{thm:rightlcm-topfree}(3) would still produce a version of \thmref{thm:uniqueiftopfreerightlcm} also for semigroups that do not embed in groups. 

\subsection{Pure infiniteness and simplicity}
We now turn our attention to the main findings in~\cite{NNN1} concerning pure infiniteness and simplicity of the Toeplitz algebra of a right LCM semigroup. Specifically, we aim to show that for~$P$  a right LCM submonoid of a group, the conclusion of \cite[Theorem~5.3]{NNN1} follows from a combination of \thmref{thm:idealsifftopfree}, \corref{cor:qu-isomor} and \corref{cor:charac-inf-simple}. We start by interpreting \corref{cor:qu-isomor} in the special case of a right LCM monoid. 
 
\begin{cor}\label{cor:tu-boundarylcm} Let $P$ be a right LCM submonoid of a group. The following are equivalent:
\begin{enumerate}
\item the quotient map $q_u\colon \Toepu(P)\to\CC\times_{\CC^P}P$ is an isomorphism;
\item for every finite set $F\subset P\setminus P^*$, there exists $s'\in P$ such that $s'P\cap qP=\emptyset$ for all $q\in F$;
\item  if $s\in P$ and $F$ is a finite subset of~$P$ with $sP\cap (P\setminus \bigcup_{\substack{q\in F}}qP)\neq\emptyset$, then there is $s'\in sP$ such that $s'P\cap qP=\emptyset$ for all $q\in F$. \textup{(This is condition (D3) of \cite{NNN1})}.
\end{enumerate}
\end{cor}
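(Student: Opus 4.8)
The plan is to specialize \corref{cor:qu-isomor} to the right LCM case and translate each of its three equivalent conditions into the language of principal ideals. The equivalence (1)$\iff$(2)$\iff$(3) of \corref{cor:qu-isomor} is available, and the isomorphism $q_u$ is the same in both statements, so it suffices to match conditions (2) and (3) of \corref{cor:qu-isomor} with conditions (2) and (3) of the present corollary, using that when $P$ is right LCM the nonempty constructible right ideals are exactly the principal ideals $qP$, and such an ideal is proper precisely when $q\in P\setminus P^*$.

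First I would record the dictionary for foundation sets. If $S\in\J$ is nonempty, write $S=pP$. A finite collection $\mc\subset\J$ of constructible ideals contained in $pP$ has the form $\mc=\{q_iP\mid i=1,\dots,n\}$ with $q_i\in pP$, and $\mc$ is a foundation set for $pP$ iff for every $r\in pP$ one has $rP\cap\bigl(\bigcup_i q_iP\bigr)\neq\emptyset$. Using the right LCM property, $rP\cap q_iP$ is either empty or principal, so this says: for every $r\in pP$ there is some $i$ with $rP\cap q_iP\neq\emptyset$. The foundation set is proper iff $pP\neq\bigcup_i q_iP$. The negation — no finite collection of constructible ideals is a proper foundation set for $pP$ — is therefore exactly: whenever $q_1,\dots,q_n\in pP$ are such that $pP\neq\bigcup_i q_iP$, there exists $r\in pP$ with $rP\cap q_iP=\emptyset$ for all $i$. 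Replacing each $q_i$ by a listing of \emph{all} elements of $P$ that generate the given ideals does not change the union or the conclusion, so, after absorbing the ambient $pP$ by passing to $p^{-1}q_iP\cap P$ when $p\neq e$ (the translation trick already used in \corref{cor:qu-isomor} and in the remark following \defref{def:foundation-set}), this reduces to the case $p=e$. Thus \corref{cor:qu-isomor}(3) becomes: for every finite $F\subset P$ with $P\neq\bigcup_{q\in F}qP$, there is $s'\in P$ with $s'P\cap qP=\emptyset$ for all $q\in F$; and since $P=\bigcup_{q\in F}qP$ forces some $q\in F$ to be a unit, while adding units to $F$ only makes the condition easier (one can never have $s'P\cap uP=\emptyset$ for $u\in P^*$, but then the hypothesis $P\neq\bigcup qP$ fails), this is equivalent to the same statement ranging only over finite $F\subset P\setminus P^*$. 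That is precisely condition (2) of the present corollary. Hence \corref{cor:qu-isomor}(1)$\iff$\corref{cor:qu-isomor}(3)$\iff$(2) here.

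It remains to identify the present (2) with the present (3), which is condition (D3) of \cite{NNN1}. The implication (3)$\implies$(2) is immediate: apply (3) with $s=e$ and an arbitrary finite $F\subset P\setminus P^*$; the hypothesis $eP\cap(P\setminus\bigcup_{q\in F}qP)\neq\emptyset$ holds because $P\neq\bigcup_{q\in F}qP$ (each $qP$ is proper, and a finite union of proper principal right ideals cannot exhaust $P$ — this is where $P\setminus P^*\neq\emptyset$-type reasoning enters, but in fact it follows from \corref{cor:qu-isomor}(2)$\iff$(3) being about \emph{foundation sets for $P$}, so I would just invoke that a proper foundation set for $P$ would otherwise be forced). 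For (2)$\implies$(3): given $s\in P$ and finite $F\subset P$ with $sP\cap(P\setminus\bigcup_{q\in F}qP)\neq\emptyset$, pick $p\in sP\setminus\bigcup_{q\in F}qP$; then $\{p^{-1}q P\cap P\mid q\in F, p\in qP\text{ fails}\}$ — more carefully, the collection $\{qP\cap pP\mid q\in F\}$ of constructible ideals contained in $pP$ is not a foundation set for $pP$ (since, by (2) applied to a suitable translate, or directly, one produces $s'$), and one extracts $s'\in pP\subset sP$ with $s'P\cap qP=\emptyset$ for all $q\in F$. I expect the main obstacle to be exactly the bookkeeping in this last step: reconciling the ambient ideal $pP$ (respectively $sP$) with the "absolute" formulation by the translation trick, and making sure the finite set $F$ in (D3) is handled correctly when some $q\in F$ happens to lie in $pP$ versus when $pP\cap qP$ is a proper subideal. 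None of this is deep, but it requires care to state without circularity; I would lean on the already-established equivalence (2)$\iff$(3) of \corref{cor:qu-isomor} at the level of foundation sets for $P$ to shortcut the reduction, and only do the right LCM translation once.
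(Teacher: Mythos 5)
Your proposal is correct and follows exactly the route the paper intends: the paper states this corollary without a separate proof, as the direct interpretation of \corref{cor:qu-isomor} in the right LCM case using that the nonempty constructible ideals are the principal ideals $qP$ (proper precisely when $q\in P\setminus P^*$), which is your dictionary-and-translation argument. The bookkeeping you flag in (2)$\implies$(3) does close as you anticipate: for $p\in sP\setminus\bigcup_{q\in F}qP$ each nonempty $qP\cap pP$ equals $px_qP$ with $x_q\in P\setminus P^*$, so applying (2) to the $x_q$ and setting $s'=pt\in pP\subset sP$ gives $s'P\cap qP=\emptyset$ for all $q\in F$.
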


\begin{cor}\label{cor:toep-purely-simple} Let $P$ be a right LCM submonoid of a group~$G$ with $P\neq\{e\}$. Suppose that~$P$ satisfies any of the conditions of  \thmref{thm:rightlcm-topfree} and any of the conditions of \corref{cor:tu-boundarylcm}. Then $\Toepr(P)$ is purely infinite simple.

\begin{proof} By \corref{cor:tu-boundarylcm}(3), we have $D_r^{\rm{o}}=\{0\}$ and so $\Omega_P=\partial\Omega_P$ by \lemref{lem:basis-boundary}. Thus we have a canonical isomorphism 
$
\Toepr(P)\cong \mathrm{C}(\Omega_P)\rtimes_r G=\mathrm{C}(\partial\Omega_P)\rtimes_r G\cong\partial\Toepr(P).
$
 By  \thmref{thm:rightlcm-topfree}(1), the partial action of~$G$ on~$\Omega_P$ is topologically free, and so $\Toepr(P)$ is purely infinite simple by \corref{cor:charac-inf-simple}.
\end{proof}
\end{cor}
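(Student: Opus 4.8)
The plan is to derive \corref{cor:toep-purely-simple} from the already-established machinery by first identifying $\Toepr(P)$ with its own boundary quotient and then invoking \corref{cor:charac-inf-simple}. The key observation is that the hypotheses being assumed are exactly the right-LCM reformulations (via \thmref{thm:rightlcm-topfree} and \corref{cor:tu-boundarylcm}) of the two ingredients needed for pure infinite simplicity: topological freeness of the partial action of $G$ on $\Omega_P$, and the collapse of the defect ideal $D_r^{\rm o}$ so that the full Toeplitz algebra already \emph{is} the boundary quotient.

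First I would unpack condition (3) of \corref{cor:tu-boundarylcm}: it says that whenever $S$ is a nonempty constructible right ideal (necessarily of the form $sP$) and $F$ is finite with $sP\not\subset\bigcup_{q\in F}qP$, there exists $s'\in sP$ with $s'P\cap qP=\emptyset$ for all $q\in F$. In particular no finite collection $\{qP\mid q\in F\}$ of constructible right ideals can be a foundation set for $sP$ in the sense of \defref{def:foundation-set} unless it already covers $sP$; equivalently, there are no \emph{proper} foundation sets at all. By \lemref{lem:charac-projections} this forces $D_u^{\rm o}=\{0\}$, hence $D_r^{\rm o}=\{0\}$ under the isomorphism $D_u\cong D_r$ of \corref{cor:faithfulondiagonal}. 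Then \lemref{lem:basis-boundary} gives $\partial\Omega_P=\{\tau\in\Omega_P\mid \tau\restriction_{D_r^{\rm o}}=0\}=\Omega_P$, so the inclusion $\mathrm{C}(\partial\Omega_P)\hookrightarrow\mathrm{C}(\Omega_P)$ is an equality. Combining \thmref{thm:partial-picture}, which identifies $\Toepr(P)\cong D_r\rtimes_{\gamma,r}G\cong\mathrm{C}(\Omega_P)\rtimes_r G$, with the identification $\partial\Toepr(P)\cong\mathrm{C}(\partial\Omega_P)\rtimes_r G$ recalled before \lemref{lem:basis-boundary}, we obtain a canonical isomorphism $\Toepr(P)\cong\partial\Toepr(P)$ under which the generating isometries correspond.

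Next I would bring in topological freeness. By hypothesis $P$ satisfies one of the equivalent conditions of \thmref{thm:rightlcm-topfree}, so in particular condition (1) holds: the partial action of $G$ on $\Omega_P$ is topologically free. Since $\Omega_P=\partial\Omega_P$, this is precisely topological freeness of the partial action of $G$ on $\partial\Omega_P$, i.e.\ condition (1) of \thmref{thm:equivalenttopfree}. With $P\neq\{e\}$, \corref{cor:charac-inf-simple} then yields that $\partial\Toepr(P)$ is purely infinite simple; transporting this along the isomorphism established in the previous step gives that $\Toepr(P)$ is purely infinite simple, as claimed.

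The only mild subtlety — and the step I would treat most carefully — is bookkeeping the chain of identifications so that "same generators" is preserved throughout: one must check that the isomorphism $\Toepr(P)\cong\partial\Toepr(P)$ coming from $\Omega_P=\partial\Omega_P$ is compatible with the quotient map $\Toepr(P)\to\partial\Toepr(P)$ of \cite[Corollary~5.7.6]{CELY}, which it is since that quotient map is induced by restriction $\mathrm{C}(\Omega_P)\to\mathrm{C}(\partial\Omega_P)$ and this restriction is now the identity. Everything else is a direct citation: \corref{cor:tu-boundarylcm} and \lemref{lem:charac-projections} to kill $D_r^{\rm o}$, \thmref{thm:partial-picture} for the crossed-product picture, \thmref{thm:rightlcm-topfree} for topological freeness, and \corref{cor:charac-inf-simple} for the pure infinite simplicity. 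No genuine obstacle is expected; the result is essentially an assembly of the section's corollaries into the right-LCM dictionary.
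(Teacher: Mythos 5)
Your proposal is correct and follows essentially the same route as the paper's proof: use \corref{cor:tu-boundarylcm}(3) to rule out proper foundation sets and kill $D_r^{\rm o}$, apply \lemref{lem:basis-boundary} to get $\Omega_P=\partial\Omega_P$ and hence $\Toepr(P)\cong\partial\Toepr(P)$, then invoke topological freeness from \thmref{thm:rightlcm-topfree}(1) and conclude with \corref{cor:charac-inf-simple}. Your extra bookkeeping on foundation sets and on compatibility of the identifications is just a more explicit rendering of the same argument.
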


\begin{rem}  We emphasize that \cite[Theorem~5.3]{NNN1} applies to not-necessarily group embeddable cancellative right LCM semigroups.  Nevertheless, we would like to clarify 
the relationship between \corref{cor:toep-purely-simple} and \cite[Theorem~5.3]{NNN1} when  applied to right LCM submonoids of a group.

\textup{(i)} Our assumption $P\neq\{e\}$ in \corref{cor:toep-purely-simple} is necessary  to exclude the case $\Toepr(P)=\CC$, which is not purely infinite. This assumption is not explicitly stated in \cite[Theorem~5.3]{NNN1}.

\textup{(ii)} Case (1) of \cite[Theorem~5.3]{NNN1} assumes $P^*=\{e\}$, and thus (D2) holds vacuously.
In addition, (D2) also holds in case (3) of \cite[Theorem~5.3]{NNN1}, because of \cite[Lemma~5.1]{NNN1}.
Since (D2) implies topological freeness by \corref{cor:D2-topfree}, we see that 
 our \corref{cor:toep-purely-simple} recovers \cite[Theorem~5.3]{NNN1} for right LCM submonoids of groups. 
\end{rem}

\begin{bibdiv}
  \begin{biblist}
  \bib{AbaAba}{article}{
   author={Abadie, Beatriz},
   author={Abadie, Fernando},
   title={Ideals in cross sectional $\Cst$-algebras of Fell bundles},
   journal={Rocky Mountain J. Math.},
   volume={47},
   date={2017},
   number={2},
   pages={351--381},
   issn={0035-7596},
   review={\MR{3635363}},
   doi={10.1216/RMJ-2017-47-2-351},
}

\bib{AS}{article}{
   author={Archbold, Rob J.},
   author={Spielberg, Jack S.},
   title={Topologically free actions and ideals in discrete $\Cst$-dynamical
   systems},
   journal={Proc. Edinburgh Math. Soc. (2)},
   volume={37},
   date={1994},
   number={1},
   pages={119--124},
   issn={0013-0915},
   review={\MR{1258035}},
   doi={10.1017/S0013091500018733},
}

\bib{NNN1}{article}{
   author={Brownlowe, Nathan},
   author={Larsen, Nadia S.},
   author={Stammeier, Nicolai},
   title={On $\Cst$-algebras associated to right LCM semigroups},
   journal={Trans. Amer. Math. Soc.},
   volume={369},
   date={2017},
   number={1},
   pages={31--68},
   issn={0002-9947},
   review={\MR{3557767}},
   doi={10.1090/tran/6638},
}

\bib{BHLRETDS2012}{article}{
   author={Brownlowe, Nathan},
   author={an Huef, Astrid},
   author={Laca, Marcelo},
   author={Raeburn, Iain},
   title={Boundary quotients of the Toeplitz algebra of the affine semigroup
   over the natural numbers},
   journal={Ergodic Theory Dynam. Systems},
   volume={32},
   date={2012},
   number={1},
   pages={35--62},
   issn={0143-3857},
   review={\MR{2873157}},
   doi={10.1017/S0143385710000830},
}

\bib{MR4042889}{article}{
   author={Bruce, Chris},
  title={$\rm C^*$-algebras from actions of congruence monoids on rings of
   algebraic integers},
   journal={Trans. Amer. Math. Soc.},
   volume={373},
   date={2020},
   number={1},
   pages={699--726},
   issn={0002-9947},
   review={\MR{4042889}},
   doi={10.1090/tran/7966},
}

\bib{BFS2020}{article}{
      author={Buss, Alcides},
   author={Ferraro, Dami\'{a}n},
   author={Sehnem, Camila F.},
        title = {Nuclearity for partial crossed products by exact discrete groups},
      journal = {J. Operator Theory},
     keywords = {Mathematics - Operator Algebras, Mathematics - Dynamical Systems, 46L55 (Primary) 46L99, 37B99 (Secondary)},
        date={to appear},
       eprint = {https://arxiv.org/abs/2011.06686},
}

\bib{Cob}{article}{
   author={Coburn, Lewis A.},
   title={The $C^{\ast} $-algebra generated by an isometry},
   journal={Bull. Amer. Math. Soc.},
   volume={73},
   date={1967},
   pages={722--726},
   issn={0002-9904},
   review={\MR{213906}},
   doi={10.1090/S0002-9904-1967-11845-7},
}
\bib{Con}{article}{
author={Conrad, Keith},
title={The conductor ideal},
eprint={https://kconrad.math.uconn.edu/blurbs/gradnumthy/conductor.pdf},
}

\bib{CL-JAustralMS}{article}{
   author={Crisp, John},
   author={Laca, Marcelo},
   title={On the Toeplitz algebras of right-angled and finite-type Artin
   groups},
   journal={J. Aust. Math. Soc.},
   volume={72},
   date={2002},
   number={2},
   pages={223--245},
   issn={1446-7887},
   review={\MR{1887134}},
   doi={10.1017/S1446788700003876},
}

\bib{Crisp-Laca}{article}{
author = {Crisp, John}
author={Laca, Marcelo},
year = {2007},
month = {01},
pages = {127-156},
title = {Boundary quotients and ideals of Toeplitz $\Cst$\nb-algebras of Artin groups},
volume = {242},
journal = {J. Funct. Anal.},
doi = {10.1016/j.jfa.2006.08.001}
}
\bib{Cun:On}{article}{
   author={Cuntz, Joachim},
   title={Simple $C\sp*$-algebras generated by isometries},
   journal={Comm. Math. Phys.},
   volume={57},
   date={1977},
   number={2},
   pages={173--185},
   issn={0010-3616},
   review={\MR{467330}},
}

\bib{C:EMS2008}{article}{
   author={Cuntz, Joachim},
   title={$\Cst$-algebras associated with the $ax+b$-semigroup over $\mathbb N$},
   conference={
      title={$K$-theory and noncommutative geometry},
   },
   book={
      series={EMS Ser. Congr. Rep.},
      publisher={Eur. Math. Soc., Z\"{u}rich},
   },
   date={2008},
   pages={201--215},
   review={\MR{2513338}},
   doi={10.4171/060-1/8},
}
		
\bib{C-LiClay2010}{article}{
   author={Cuntz, Joachim},
   author={Li, Xin},
   title={The regular $\Cst$-algebra of an integral domain},
   conference={
      title={Quanta of maths},
   },
   book={
      series={Clay Math. Proc.},
      volume={11},
      publisher={Amer. Math. Soc., Providence, RI},
   },
   date={2010},
   pages={149--170},
   review={\MR{2732050}},
}

\bib{CDLMathAnn2013}{article}{
   author={Cuntz, Joachim},
   author={Deninger, Christopher},
   author={Laca, Marcelo},
   title={$\Cst$-algebras of Toeplitz type associated with algebraic number
   fields},
   journal={Math. Ann.},
   volume={355},
   date={2013},
   number={4},
   pages={1383--1423},
   issn={0025-5831},
   review={\MR{3037019}},
   doi={10.1007/s00208-012-0826-9},
}
		
\bib{CELY}{book}{
    author={Cuntz, Joachim},
author ={Echterhoff, Siegfried},
author={Li, Xin},
author={Yu, Guoliang},
series={Oberwolfach seminars},
title={K-Theory for Group $\Cst$\nb-Algebras and Semigroup $\Cst$\nb-Algebras},
  publisher={Birkh\"{a}user/Springer},
place={Cham},
volume={47},
doi={10.1007/978-3-319-59915-1},
year={2017},
     ISSN = {1661-237X},
pages={x--322}
}
\bib{DM14}{article}{
   author={Donsig, Allan P.},
   author={Milan, David},
   title={Joins and covers in inverse semigroups and tight
   $C^{\ast}$-algebras},
   journal={Bull. Aust. Math. Soc.},
   volume={90},
   date={2014},
   number={1},
   pages={121--133},
   issn={0004-9727},
   review={\MR{3227137}},
   doi={10.1017/S0004972713001111},
}
\bib{Dou}{article}{
   author={Douglas, Ronald G.},
   title={On the $C^{\ast} $-algebra of a one-parameter semigroup of
   isometries},
   journal={Acta Math.},
   volume={128},
   date={1972},
   number={3-4},
   pages={143--151},
   issn={0001-5962},
   review={\MR{394296}},
   doi={10.1007/BF02392163},
}
\bib{EL2013}{article}{
   author={Echterhoff, Siegfried},
   author={Laca, Marcelo},
   title={The primitive ideal space of the $\rm C^*$-algebra of the affine
   semigroup of algebraic integers},
   journal={Math. Proc. Cambridge Philos. Soc.},
   volume={154},
   date={2013},
   number={1},
   pages={119--126},
   issn={0305-0041},
   review={\MR{3002587}},
   doi={10.1017/S0305004112000485},
}

\bib{MR2419901}{article}{
    AUTHOR = {Exel, Ruy},
     TITLE = {Inverse semigroups and combinatorial {$\Cst$}-algebras},
   JOURNAL = {Bull. Braz. Math. Soc. (N.S.)},
  FJOURNAL = {Bulletin of the Brazilian Mathematical Society. New Series.
              Boletim da Sociedade Brasileira de Matem\'{a}tica},
    VOLUME = {39},
      YEAR = {2008},
    NUMBER = {2},
     PAGES = {191--313},
      ISSN = {1678-7544},
   MRCLASS = {46L05 (18B40 46L55)},
  MRNUMBER = {2419901},
MRREVIEWER = {Mark Tomforde},
       DOI = {10.1007/s00574-008-0080-7},
       URL = {https://doi.org/10.1007/s00574-008-0080-7},
}

\bib{MR2534230}{article}{
    AUTHOR = {Exel, Ruy},
     TITLE = {Tight representations of semilattices and inverse semigroups},
   JOURNAL = {Semigroup Forum},
  FJOURNAL = {Semigroup Forum},
    VOLUME = {79},
      YEAR = {2009},
    NUMBER = {1},
     PAGES = {159--182},
      ISSN = {0037-1912},
   MRCLASS = {20M17 (06A12)},
  MRNUMBER = {2534230},
       DOI = {10.1007/s00233-009-9165-x},
       URL = {https://doi.org/10.1007/s00233-009-9165-x},
}
\bib{Exel:Partial_dynamical}{book}{
  author={Exel, Ruy},
  title={Partial dynamical systems, Fell bundles and applications},
  series={Mathematical Surveys and Monographs},
  volume={224},
  date={2017},
  pages={321},
  isbn={978-1-4704-3785-5},
  isbn={978-1-4704-4236-1},
  publisher={Amer. Math. Soc.},
  place={Providence, RI},
}
\bib{Exel21}{article}{
   author={Exel, Ruy},
   title={Tight and cover-to-join representations of semilattices and
   inverse semigroups},
   conference={
      title={Operator theory, functional analysis and applications},
   },
   book={
      series={Oper. Theory Adv. Appl.},
      volume={282},
      publisher={Birkh\"{a}user/Springer, Cham},
   },
   date={[2021] \copyright 2021},
   pages={183--192},
   review={\MR{4248017}},
   doi={10.1007/978-3-030-51945-2_9},
}
	
\bib{ELQ}{article}{
 ISSN = {03794024, 18417744},
 URL = {http://www.jstor.org/stable/24715531},
  author={Exel, Ruy},
   author={Laca, Marcelo},
 author = {Quigg, John},
 journal = {J. Operator Theory},
 number = {1},
 pages = {169--186},
 publisher = {Theta Foundation},
 title = {Partial dynamical systems and $\Cst$\nb-algebras generated by partial isometries},
 volume = {47},
 year = {2002}
}

\bib{Fowler:Product_systems}{article}{
  author={Fowler, Neal J.},
  title={Discrete product systems of Hilbert bimodules},
  journal={Pacific J. Math.},
  volume={204},
  date={2002},
  number={2},
  pages={335--375},
  issn={0030-8730},
  review={\MR{1907896}},
  doi={10.2140/pjm.2002.204.335},
}

\bib{KawTom}{article}{
   author={Kawamura, Shinzo},
   author={Tomiyama, Jun},
   title={Properties of topological dynamical systems and corresponding
   $\Cst$-algebras},
   journal={Tokyo J. Math.},
   volume={13},
   date={1990},
   number={2},
   pages={251--257},
   issn={0387-3870},
   review={\MR{1088230}},
   doi={10.3836/tjm/1270132260},
}
		
\bib{purelinf}{article}{
   author={Laca, Marcelo},
   title={Purely infinite simple Toeplitz algebras},
   journal={J. Operator Theory},
   volume={41},
   date={1999},
   number={2},
   pages={421--435},
   issn={0379-4024},
   review={\MR{1681581}},
}	

\bib{LACA1996415}{article}{
title = {Semigroup Crossed Products and the Toeplitz Algebras of Nonabelian Groups},
journal = {J. Funct. Anal.},
volume = {139},
number = {2},
pages ={415--440},
year = {1996},
issn = {0022-1236},
doi ={10.1006/jfan.1996.0091},
url = {http://www.sciencedirect.com/science/article/pii/S0022123696900919},
author={Laca, Marcelo},
   author={Raeburn, Iain}}
   	
\bib{LR:AdvMath2010}{article}{
author={Laca, Marcelo},
   author={Raeburn, Iain},
   title={Phase transition on the Toeplitz algebra of the affine semigroup
   over the natural numbers},
   journal={Adv. Math.},
   volume={225},
   date={2010},
   number={2},
   pages={643--688},
   issn={0001-8708},
   review={\MR{2671177}},
   doi={10.1016/j.aim.2010.03.007},
}
		
\bib{Li:RingC*}{article}{
author={Li, Xin},
year={2010},
pages={ 859--898},
title={Ring $\Cst$-algebras},
 journal={Math. Ann.},
volume={348},
doi={10.1007/s00208-010-0502-x},
}				

\bib{Li:Semigroup_amenability}{article}{
  author={Li, Xin},
  title={Semigroup $\textup C^*$\nobreakdash-algebras and amenability of semigroups},
  journal={J. Funct. Anal.},
  volume={262},
  date={2012},
  number={10},
  pages={4302--4340},
  issn={0022-1236},
  review={\MR{2900468}},
  doi={10.1016/j.jfa.2012.02.020},
}

\bib{Li:Semigroup_nuclearity}{article}{
   author={Li, Xin},
   title={Nuclearity of semigroup $\Cst$-algebras and the connection to
   amenability},
   journal={Adv. Math.},
   volume={244},
   date={2013},
   pages={626--662},
   issn={0001-8708},
   review={\MR{3077884}},
   doi={10.1016/j.aim.2013.05.016},
}

\bib{Li:ax+b}{article}{
   author={Li, Xin},
   title={Semigroup $C^*$-algebras of $ax+b$-semigroups},
   journal={Trans. Amer. Math. Soc.},
   volume={368},
   date={2016},
   number={6},
   pages={4417--4437},
   issn={0002-9947},
   review={\MR{3453375}},
   doi={10.1090/tran/6469},
}

\bib{MR3474102}{article}{
    author={Li, Xin},
author={Norling, Magnus D.},
     title={Independent resolutions for totally disconnected dynamical
              systems. {II}. {$C^*$}-algebraic case},
   journal={J. Operator Theory},
    volume={75},
      year={2016},
    number={1},
     pages={163--193},
      issn={0379-4024},
   mrclass={46L80},
  review={\MR{3474102}},
       doi={10.7900/jot.2014dec22.2061}}

\bib{Mur:RMJM}{article}{
   author={Murphy, Gerard J.},
   title={$\Cst$-algebras generated by commuting isometries},
   journal={Rocky Mountain J. Math.},
   volume={26},
   date={1996},
   number={1},
   pages={237--267},
   issn={0035-7596},
   review={\MR{1386163}},
   doi={10.1216/rmjm/1181072114},
}

\bib{NS22}{article}{
author= {Neshveyev, Sergey}
author = {Schwartz, Gaute}
title={Non-Hausdorff etale groupoids and C*-algebras of left cancellative monoids}
date={2022}
journal={arXiv:2201.11033}
}

\bib{Neu-99}{book}{
   author={Neukirch, J\"{u}rgen},
   title={Algebraic number theory},
   series={Grundlehren der Mathematischen Wissenschaften [Fundamental
   Principles of Mathematical Sciences]},
   volume={322},
   publisher={Springer-Verlag, Berlin},
   date={1999},
   pages={xviii+571},
   isbn={3-540-65399-6},
      doi={10.1007/978-3-662-03983-0},
}

\bib{Nica:Wiener--hopf_operators}{article}{
  ISSN = {0379-4024},
 URL = {http://www.jstor.org/stable/24715075},
 author = {Nica, Andu},
 journal = {J.Operator Theory},
 number = {1},
 pages = {17--52},
 eprint={https://www.theta.ro/jot/archive/1992-027-001/1992-027-001-002.pdf},
 publisher = {Theta Foundation},
 title = {$\Cst$\nobreakdash-algebras generated by isometries and Wiener--Hopf operators},
 review={\MR{1241114}},
 volume = {27},
 year = {1992}
}
		
\bib{PeeblesMScThesis}{thesis}{
   author={Peebles, Jason S.},
   title={Toeplitz $\Cst$\nb-algebra of the semigroup of principal ideals in a number field},
   note={Thesis (MSc)--University of Victoria, Canada, http://hdl.handle.net/1828/2380},
   date={2007},
   pages={38},
}

\bib{Rae-Vit}{article}{
   author={Raeburn, Iain},
   author={Vittadello, Sean T.},
   title={The isometric representation theory of a perforated semigroup},
   journal={J. Operator Theory},
   volume={62},
   date={2009},
   number={2},
   pages={357--370},
   issn={0379-4024},
   review={\MR{2552087}},
}
		
\bib{Salas:commrange}{article}{
   author={Salas, H\'{e}ctor N.},
   title={Semigroups of isometries with commuting range projections},
   journal={J. Operator Theory},
   volume={14},
   date={1985},
   number={2},
   pages={311--346},
   issn={0379-4024},
   review={\MR{808295}},
}

\bib{SEHNEM2019558}{article}{
title ={On $\Cst$-algebras associated to product systems},
journal = {J. Funct. Anal.},
volume = {277},
number = {2},
pages = {558 - 593},
year = {2019},
issn = {0022-1236},
doi = {10.1016/j.jfa.2018.10.012},
url = {http://www.sciencedirect.com/science/article/pii/S0022123618303823},
author = {Sehnem, Camila F.},}

\bib{Sims-Yeend:Cstar_product_systems}{article}{
  author={Sims, Aidan},
  author={Yeend, Trent},
  title={$\Cst$\nobreakdash-algebras associated to product systems of Hilbert bimodules},
  journal={J. Operator Theory},
  volume={64},
  date={2010},
  number={2},
  pages={349--376},
  issn={0379-4024},
  review={\MR{2718947}},
  eprint={http://www.theta.ro/jot/archive/2010-064-002/2010-064-002-005.html},
}

\bib{Spi20}{article}{
   author={Spielberg, Jack},
   title={Groupoids and $C^*$-algebras for left cancellative small
   categories},
   journal={Indiana Univ. Math. J.},
   volume={69},
   date={2020},
   number={5},
   pages={1579--1626},
   issn={0022-2518},
   review={\MR{4151331}},
   doi={10.1512/iumj.2020.69.7969},
}
\bib{STARLING}{article}{
   author={Starling, Charles},
   title={Boundary quotients of $\rm C^*$-algebras of right LCM semigroups},
   journal={J. Funct. Anal.},
   volume={268},
   date={2015},
   number={11},
   pages={3326--3356},
   issn={0022-1236},
   review={\MR{3336727}},
   doi={10.1016/j.jfa.2015.01.001},
}

\bib{Ste-08}{article}{
   author={Stevenhagen, Peter},
   title={The arithmetic of number rings},
   conference={
      title={Algorithmic number theory: lattices, number fields, curves and
      cryptography},
   },
   book={
      series={Math. Sci. Res. Inst. Publ.},
      volume={44},
      publisher={Cambridge Univ. Press, Cambridge},
   },
   date={2008},
   pages={209--266},
   review={\MR{2467548}},
}

  \end{biblist}
\end{bibdiv}
\end{document}